\numberwithin{equation}{section}
\definecolor{darkred}{RGB}{100,0,0}
\definecolor{darkgreen}{RGB}{0,100,0}
\definecolor{darkblue}{RGB}{0,0,150}
\definecolor{citecol}{RGB}{30,80,150}
\definecolor{tabcol}{RGB}{200,230,255}
\numberwithin{equation}{section}
\newtheorem{theorem}{Theorem}[section]
\newtheorem{proposition}[theorem]{Proposition}
\newtheorem{lemma}[theorem]{Lemma}
\newtheorem{definition}[theorem]{Definition}
\theoremstyle{remark}
\newtheorem{remark}[theorem]{Remark}
\newcommand{\E}{\mathbb{E}}
\newcommand{\N}{\mathbb{N}}
\renewcommand{\P}{\mathbb{P}}
\newcommand{\R}{\mathbb{R}}
\newcommand{\V}{\mathbb{V}}
\newcommand{\W}{\mathbb{W}}
\newcommand{\X}{\mathbb{X}}
\newcommand{\Y}{\mathbb{Y}}
\newcommand{\BB}{\mathcal{B}}
\newcommand{\CC}{\mathcal{C}}
\newcommand{\DD}{\mathcal{D}}
\newcommand{\GG}{\mathcal{G}}
\newcommand{\LL}{\mathcal{L}}
\newcommand{\MM}{\mathcal{M}}
\newcommand{\OO}{\mathcal{O}}
\newcommand{\PP}{\mathcal{P}}
\newcommand{\QQ}{\mathcal{Q}}
\newcommand{\RR}{\mathcal{R}}
\renewcommand{\SS}{\mathcal{S}}
\newcommand{\TT}{\mathcal{T}}
\newcommand{\eps}{\varepsilon}
\newcommand{\p}[1]{\left(#1 \right)}
\renewcommand{\t}[1]{\tilde{#1}}
\newcommand{\defeq}{\vcentcolon=}
\newcommand{\eqdef}{=\vcentcolon}
\newcommand{\op}[1]{\left\| #1  \right\|_{\mathrm{op}}}
\newcommand{\dotp}[1]{\langle #1 \rangle}
\newcommand{\ones}{\mathbf{1}}
\newcommand{\dd}{\mathrm{d}}
\DeclareMathOperator*{\Leb}{Leb}
\newcommand{\vol}{\mathrm{vol}}
\DeclareMathOperator*{\id}{id}
\DeclareMathOperator*{\TV}{TV}
\DeclareMathOperator*{\diam}{diam}
\DeclareMathOperator*{\Var}{Var}
\DeclareMathOperator*{\argmin}{arg\,min}
\newcommand{\taumin}{\tau_{\min}}
\newcommand{\fmin}{f_{\min}}
\newcommand{\fmax}{f_{\max}}
\begin{document}

\title[]{Measure estimation on manifolds: an optimal transport approach}

\author{Vincent~Divol}
\address{Courant Institute of Mathematical Science, New York University}
\address{Center for Data Science, New York University}
\curraddr{}
\email{firstname.lastname@nyu.edu}
\thanks{}

\begin{abstract}
Assume that we observe i.i.d.~points lying close to some unknown $d$-dimensional $\CC^k$ submanifold $M$ in a possibly high-dimensional space. We study the problem of reconstructing the probability distribution generating the sample. After remarking that this problem is degenerate for a large class of standard losses ($L_p$, Hellinger, total variation, etc.), we focus on the Wasserstein loss, for which we build an estimator, based on kernel density estimation, whose rate of convergence depends on $d$ and the regularity $s\leq k-1$ of the underlying density, but not on the ambient dimension. In particular, we show that the estimator is minimax and matches previous rates in the literature in the case where the manifold $M$ is a $d$-dimensional cube.  The related problem of the estimation of the volume measure of $M$ for the Wasserstein loss is also considered, for which a minimax estimator is exhibited.
\end{abstract}

\maketitle

\section{Introduction}

Density estimation is one of the most fundamental tasks in non-parametric statistics. If efficient methods (from both a theoretical and a practical point of view) exist when the ambient space is of low dimension, minimax rates of estimation become increasingly slow as the dimension increases. To overcome this so-called \emph{curse of dimensionality}, some structural assumptions on the underlying probability are to be made in moderate to high dimensions, which may take different forms, including e.g.~the existence of a parametric component \cite{liu2007sparse}, the single-index model \cite{liu2013robust}, sparsity assumptions \cite{tibshirani1996regression},  or constraints on the shape of the support. We focus in this work on the latter, namely on the case where the probability distribution $\mu$ generating the observations is assumed to be concentrated around a submanifold $M$ of $\R^D$, of dimension $d$ smaller than $D$. This assumption, known as the manifold assumption, has been fruitfully studied, with an emphasis put on reconstructing different geometric quantities related to the manifold, such as $M$ itself \cite{genovese2012minimax,  aamari2018stability,aamari2019nonasymptotic,divol2020minimax}, its homology groups \cite{niyogi2008finding, balakrishnan2012minimax}, its dimension \cite{hein2005intrinsic,little2009multiscale, kim2019minimax} or its reach \cite{aamari2019estimating, berenfeld2020estimating}. The topic of density estimation in the manifold setting has itself been studied for over thirty years, with the emphasis initially being put on reconstructing the density in the case where the manifold $M$ is given---think for instance of datasets lying on the space of orthogonal matrices---notable works including \cite{hendriks1990nonparametric,hendriks1993strong, pelletier2005kernel, cleanthous2020kernel}.  Less attention has been dedicated to the more general setting where the manifold $M$ is unknown and acts as a nuisance parameter. Kernel density estimators on manifolds are designed in \cite{berry2017density, wu2020strong}, where rates are exhibited, respectively in the case where the manifold has a boundary and in the case where the density is H\"older continuous. In \cite{berenfeld2019density}, kernel density estimators are shown to be minimax, and an adaptive procedure is designed, based on Lepski's method, to estimate the unknown density in a point $x\in \R^D$ which is known to belong to the unknown (and possibly nonsmooth) manifold $M$.  

To go beyond the pointwise estimation of $\mu$, even the choice of a relevant loss is nontrivial. Indeed, most standard losses between probability measures (e.g.~the $L_p$ distance, the Hellinger distance or the Kullback-Leibler divergence) are degenerate when comparing mutually singular measures, which will typically be the case for measures on two distinct manifolds, even if they are very close to each other with respect to the Hausdorff distance. This implies that the estimation problem is degenerate from a minimax perspective when choosing such losses (see Theorem \ref{thm:choice_of_loss}). On the contrary, the Wasserstein distances $W_p$, $1\leq p \leq \infty$ are particularly adapted to this problem, as they are by design robust to small metric perturbations of the support of a measure.

Apart from this first motivation, the use of Wasserstein distances, and more generally of the theory of optimal transport, has shown to be an efficient tool in widely different recent problems of machine learning, with fast implementations and sound theoretical results (see e.g.~\cite{peyre2019computational} for a survey). From a statistical perspective, most of the attention has been dedicated to studying rates of convergence between a probability distribution $\mu$ and its empirical counterpart $\mu_n$ \cite{dudley1969speed,dereich2013constructive,fournier2015rate,singh2018minimax,weed2019sharp,lei2020convergence}. Unsurprisingly, if more regularity is assumed on $\mu$, then it is possible to build estimators with smaller risks than the empirical measure $\mu_n$. Assume for instance that $\mu$ is a probability distribution on the cube $[-1,1]^D$, with density $f$ of regularity $s$ (measured through the Besov scale $B^s_{p,q}$). In this setting, it has been shown in \cite{weed2019estimation} that, given $n$ i.i.d. points of law $\mu$, the minimax rate (up to logarithmic factors) for the estimation of $\mu$ with respect to the Wasserstein distance $W_p$ is of order 
\begin{equation}\label{eq:weed}
\begin{cases}
 n^{-\frac{s+1}{2s+D}} &\text{ if } D\geq 3\\
n^{-\frac 1 2}\log n &\text{ if } D= 2 \\
n^{-\frac 1 2} &\text{ if } D=1,
\end{cases}
\end{equation}
 and that this rate is attained by a modified linear wavelet density estimator.
Our main contribution consists in extending the results of \cite{weed2019estimation} by allowing the support of the probability to be any $d$-dimensional compact $\CC^k$ submanifold $M\subset \R^D$ for $k\geq 2$. More precisely, assume that some probability $\mu$ on $M$  has a lower and upper bounded density $f$ which belongs to the Besov space $B^s_{p,q}(M)$ for some $0< s \leq  k-1$, $1\leq p<\infty$, $1\leq q\leq \infty$ (see Section \ref{sec:model} for details). We first show (Theorem \ref{thm:estimator_M_known}) that some weighted kernel density estimator that we integrate against the volume measure $\vol_M$ on $M$ attains, for the $W_p$ distance,  the rate of estimation
\begin{equation}\label{eq:the_rate}
\begin{cases}
n^{-\frac{s+1}{2s+d}} &\text{ if } d\geq 3\\
n^{-\frac 1 2}\p{\log n}^{\frac 1 2} &\text{ if } d= 2 \\
n^{-\frac 1 2} &\text{ if } d=1.
\end{cases}
\end{equation}
In the case where the manifold $M$ is unknown, we do not have access to the volume measure $\vol_M$, so that the latter estimator is not computable. We therefore propose to estimate the volume measure $\vol_M$ in a preliminary step. Such an estimator $\widehat\vol_M$ is defined by using local polynomial estimation techniques from \cite{aamari2019nonasymptotic}. We show that this estimator is a minimax estimator of the volume measure up to logarithmic factors (Theorem \ref{thm:estimator_volume}), with a risk of order $\p{\log n/n}^{k/d}$. We then show (Theorem \ref{thm:M_unknown}) that a weighted kernel density estimator integrated against $\widehat \vol_M$ attains the rate \eqref{eq:the_rate}. Those rates are significantly faster than the rates of  \eqref{eq:weed} if $d\ll  D$ and are shown to be minimax up to logarithmic factors. 

Being able to estimate accurately the volume measure $\vol_M$ has also other useful implications, e.g. we provide an algorithm to sample points uniformly on a (possibly unknown) manifold, and we leverage results from \cite{trillos2020error} to provide precise estimates of the eigenvalues of the Laplace-Beltrami operator on $M$ (see Section \ref{sec:num}).
\medskip

In Section \ref{sec:model}, we define our statistical model and give some preliminary results on Wasserstein distances.  In Section \ref{sec:def_estim}, we define kernel density estimators on a manifold $M$, and state our main results.  
Proofs of the main theorems are then given in Section \ref{sec:proofs}. Section \ref{sec:num} discusses the implementation of our estimators, in particular proving that the local polynomial estimators of Aamari \& Levrard \cite{aamari2019nonasymptotic} can be efficiently computed. Additional proofs are found in the Appendix.

\section{Preliminaries}\label{sec:model}

\subsection{Regularity of manifolds}\label{sec:est_manifold}

For any $d>0$, we write $\dotp{\cdot,\cdot}$ for the dot product and $\vert v\vert $ for the norm of a vector $v\in \R^d$. The open ball centered at $x\in \R^d$ of radius $h>0$ is denoted by $\BB(x,h)$. For $\Omega\subset \R^d$ a set and $x\in \R^d$, we let $d(x,\Omega)\defeq \inf\{\vert x-y\vert ,\ y\in \Omega\}$ be the distance from $x$ to $\Omega$ and we write $\BB_\Omega(x,h)$ for $\BB(x,h)\cap \Omega$. Also,  we let $\Omega^h \defeq \{x\in \R^d,\ d(x,\Omega)<h\}$ be the $h$-tubular neighborhood of $\Omega$. Given a tensor  $A:(\R^{d_1})^i\to \R^{d_2}$ of order $i\geq 0$, the operator norm $\op{A}$ is defined as $\op{A} \defeq \max\{ \vert A[v_1,\dots,v_i] \vert,\ \vert v_1\vert ,\dots,\vert v_i\vert \leq 1\}$. Also, we let $A^\top :\R^{d_2}\to\R^{d_1}$ denote the adjoint of the operator $A:\R^{d_1}\to\R^{d_2}$. If $f:\Omega\to \R^{d_2}$ is a $\CC^k$ function defined on an open set $\Omega$ of $\R^{d_1}$, we let $\|f\|_{\CC^k(\Omega)} \defeq \max_{0\leq i\leq l} \sup\{ \op{d^i  f(x)},\ x\in\Omega\}$, where $d^i f(x)$ is the $i$th differential of $f$ at $x$.

 Let $D>0$ and let $\MM_{d}$ be the set of all smooth $d$-dimensional connected submanifolds in $\R^D$ without boundary, endowed with the metric induced by the standard metric on $\R^D$. We denote by $d_g$ the geodesic distance on $M$. The tangent space at a point $x\in M$ is denoted by $T_x M$.  It is identified with a $d$-dimensional subspace of $\R^D$, and the orthogonal projection on $T_x M$ is denoted by $\pi_x$. We also let $\t\pi_x:\R^D\to T_x M$ be defined by $\t\pi_x(y)=\pi_x(y-x)$. We denote by $T_x M^\bot$ the normal space at $x\in M$.
 The key quantity used to describe the regularity of a manifold $M$ is its reach $\tau(M)$. It is defined as the distance between $M$ and its medial axis, that is the set of points $x\in\R^D$ for which there are at least two points of $M$ which attain the distance from $x$ to $M$. In particular, the projection $\pi_M$ on the manifold $M$ is defined on $M^{\tau(M)}$. Originally introduced in \cite{federer1959curvature}, the reach $\tau(M)$ measures both the local regularity of $M$ (namely its curvature) and its global regularity, see e.g.~\cite{aamari2019estimating, berenfeld2020estimating} or \cite[Section 6.6]{delfour2001shapes} for precise results on the relationships between the reach of a manifold and its geometry. We then measure the regularity of $M$ through the regularity of local parametrizations of $M$ (see \cite{aamari2019nonasymptotic}).

\begin{definition}\label{def:manifold_holder}
Let $M\in \MM_{d}$, and $\taumin,L>0$, $k\geq 2$. Let $r_0=(\taumin\wedge L)/4$. We say that $M$ is in $\MM_{d,\taumin,L}^k$ if $M$ is closed, of reach larger than $\taumin$ and if, for all $x\in M$, the projection $\t\pi_x:M\to T_x M$ is a local diffeomorphism in $x$, with inverse $\Psi_x$ defined on $\BB_{T_x M}(0,r_0)$, satisfying $\|\Psi_x\|_{\CC^k(\BB_{T_x M}(0,r_0))}\leq  L$.
\end{definition}
\begin{remark}
\begin{enumerate}[label=(\roman*)]
\item Remark that we only consider manifolds $M$ that are smooth in the above definition, with a controlled $\CC^k$ norm. As the set of smooth submanifolds is dense in the set of $\CC^k$ submanifolds (for an appropriate topology), this is not a strong assumption. Dealing with smooth submanifolds is more convenient for us, as we do not have to deal with tricky existence issues for defining different functional spaces on manifolds.
\item For the sake of convenience, we use a definition slightly different from the definition of \cite{aamari2019nonasymptotic}, where authors assume the existence of local parametrizations $\t\Psi_x$ having controlled $\CC^k$ norms, with $\t \Psi_x$ not necessarily equal to the inverse $\Psi_x$ of the orthogonal projection. However, our definition is not restrictive. Indeed, on can write $\Psi_x = \t \Psi_x \circ (\t \pi_x \circ \t\Psi_x)^{-1}$, where the $\CC^k$ norm of $(\t \pi_x \circ \t\Psi_x)^{-1}$ is controlled by the inverse function theorem.  Therefore, the $\CC^k$ norm of $\Psi_x$ can always be controlled by the $\CC^k$ norms of other parametrizations $\t\Psi_x$. Both definitions can also be proven to be equivalent to assuming that the function $d^2(\cdot,M)$ has a controlled $\CC^k$ norm on $M^{\tau(M)}$, see e.g.~\cite{poly1984function}.
\item The value of the scale parameter $r_0$ is used for convenience. Other small scales could be used, or the radius $r_0$ could also be added as another parameter of the model, without any substantial gain in doing so.
\end{enumerate}
\end{remark}

  If $M_1\in \MM_{d_1}$, $M_2\in \MM_{d_2}$, $x\in M_1$ and $f:M_1\to M_2$ is a $\CC^1$ function, then we let $df(x): T_x M_1 \to T_{f(x)} M_2$ be the differential of $f$ at $x$. For $0\leq l \leq k$, if $f$ is $\CC^l$, we let $\op{d^if(x)} \defeq \max_{0\leq i\leq l} \op{d^i  (f\circ\Psi_x)(0)}$ and 
   $\| f\|_{\CC^l(M_1)}\defeq  \sup_{x\in M_1} \op{d^if(x)}.$  If $d_1\leq d_2$, then we define the Jacobian of $f$ at $x\in M_1$  as $Jf(x) =\sqrt{\det(df(x)^\top  df(x))}$. We let $\CC^l(M)$ be the space of all $\CC^l$ functions $f:M\to \R$ (with possibly $l=\infty$) and for $f\in\CC^1(M)$, we let $\nabla f$ denote the gradient of $f$. We also denote by $\nabla \cdot$ the divergence operator on $M$. 
 
 Let $\vol_M$ be the volume measure associated with the Riemannian metric on $M$. We will denote the integration with respect to $\dd\vol_M(x)$ by $\dd x$ when the context is clear.
 For $1\leq p \leq \infty$, we let $L_p(M)$ be the set of measurable functions $f:M\to \R$ with finite $p$-norm $\|f\|_{L_p(M)} \defeq \p{\int f \dd \vol_M}^{1/p}$ (and usual modification if $p=\infty$). We say that a locally integrable function $f$ is weakly differentiable if there exists a measurable section $\nabla f$ of the tangent bundle $T M$ (uniquely defined almost everywhere) such that for all smooth vector fields $w$ on $M$ with compact support, we have 
\[ \int f (\nabla\cdot w)\ \dd \vol_M=- \int (\nabla f)\cdot w \ \dd \vol_M.\]
  Furthermore, we will denote by $p^* \in [1,\infty]$ the number satisfying $\frac 1 p + \frac{1}{p^*}=1$.

\subsection{Besov spaces on manifolds}

Let $M\in \MM_{d,\taumin,L}^k$ for some $k\geq 2$, $\taumin,L>0$. As stated in the introduction, minimax rates for the estimation of a given probability will depend crucially on the regularity of its density $f$, which is assumed to belong to some Besov space $B^s_{p,q}(M)$. We first introduce Sobolev spaces $H^l_p(M)$ on $M$ for $l\leq  k$ an integer, and Besov spaces on $M$ are then defined by real interpolation.

\begin{definition}[Sobolev space on a manifold]\label{def:sobolev}
Let $0\leq  l \leq k$ , $1\leq p< \infty$ and let $f\in \CC^\infty(M)$ function.  We let
\begin{equation}\label{eq:def_sobolev}
\begin{split}
\|f\|_{H^l_p(M)} &\defeq \max_{0 \leq  i \leq l} \p{\int\op{d^i f(x)}^p \dd \vol_M(x)}^{1/p}.
\end{split}
\end{equation}
The space $H^l_p(M)$ is the completion of $\CC^\infty(M)$ for the norm $\|\cdot\|_{H^l_p(M)}$.
\end{definition}

\begin{remark}[On the case $p=\infty$]\label{rem:pinf}
The previous definition cannot be extended to the case $p=\infty$. Indeed, the completion of $\CC^\infty(M)$ for the norm $\|\cdot\|_{H^l_\infty(M)}$ is equal to $\CC^l(M)$, whereas for instance $H^0_\infty(M)$ should be equal to $L_\infty(M)$. For $l=1$, the space $H^1_p(M)$ can equivalently be defined as the space of weakly differentiable functions $f$ with $\|f\|_{H^1_p(M)} <\infty$, while this definition can be easily extended to the case $p=\infty$. In particular, if $f\in H^1_\infty(M)$, then one can verify that $f\circ\Psi_x \in H^1_\infty(\BB_{T_x M}(0,r_0))$ for any $x\in M$. It follows from standard results on Sobolev spaces on domains that $f\circ \Psi_x$ is Lipschitz continuous (see e.g.~\cite[Proposition 9.3]{brezis2010functional}). Hence, $f$ is also locally Lipschitz continuous. By Rademacher theorem, $f$ is therefore almost everywhere differentiable, and its differential coincides with the weak differential. As a consequence, a function $f\in H^1_\infty(M)$ is Lipschitz continuous, with Lipschitz constant for the distance $d_g$ equal to $\|f\|_{H^1_\infty(M)}$.
\end{remark}
For $1\leq p <\infty$, we introduce the negative homogeneous Sobolev norm $\|\cdot\|_{\dot{H}^{-1}_p(M)}$, defined, for $f\in L_p(M)$ with $\int f\dd \vol_M=0$, by
\begin{equation}
\|f\|_{\dot{H}^{-1}_p(M)}\defeq \sup\left\{\int fg \dd\vol_M,\ \|\nabla g\|_{L_{p^*}(M)}\leq 1\right\},
\end{equation}
where the supremum is taken over all functions $g\in H^1_{p^*}(M)$. For $f\in L_p(M)$, the negative Sobolev norm is defined by
\begin{equation}
\|f\|_{H^{-1}_p(M)}\defeq \sup\left\{\int fg \dd\vol_M,\ \|g\|_{H^1_{p^*}(M)}\leq 1\right\},
\end{equation}
and the corresponding Banach space is denoted by $H_p^{-1}(M)$.

\begin{proposition}\label{prop:congestionned}
Let $1\leq p<\infty$ and $f\in H^{-1}_p(M)$ with $\int f \dd \vol_M=0$. 
\begin{enumerate}[label=(\roman*)]
\item We have $C_{d,\taumin}\vert  \vol_M\vert ^{\frac{d-1}{p}-d}\|f\|_{\dot H^{-1}_p(M)}\leq \|f\|_{H^{-1}_p(M)} \leq \|f\|_{\dot H^{-1}_p(M)}$ for some positive constant $C_{d,\taumin}$ depending on $d$ and $\taumin$.\label{it:cong1}
\item We have $\|f\|_{\dot H^{-1}_p(M)} = \inf\{\|w\|_{L_p(M)},\ \nabla\cdot w= f\},$
where the infimum is taken over all measurable vector fields $w$ on $M$ with finite $p$-norm, and where $\nabla\cdot w=f$ means that $\int f g \dd \vol_M = -\int w\cdot \nabla g\dd \vol_M$ for all $g\in \CC^\infty(M)$.\label{it:cong2}
\end{enumerate}
\end{proposition}

 Following \cite{triebel1992theory}, Besov spaces on a manifold $M$ are defined as real interpolation of Sobolev spaces. We refer to \cite{lunardi2018interpolation} for definition of real interpolations of Banach spaces.

\begin{definition}[Besov space on a manifold]
Let $1\leq p <\infty$ and $0<  s <  k$. 
The Besov space $B^s_{p,q}(M)$ is defined as $B^s_{p,q}(M) \defeq (L_p(M),H^{k}_p(M))_{s/k,q},$ the real interpolation space between $L_p(M)$ and $H^{k}_p(M)$ of parameters $s/k$ and $q$.
\end{definition}
Basic results from interpolation theory then imply that $\|\cdot\|_{B^s_{p,q}(M)}\leq  \|\cdot\|_{B^{s'}_{p,q}(M)}$ if $0 < s\leq s'< k$. 

\subsection{Wasserstein distances and negative Sobolev distances}

Let $\PP$ be the set of finite Borel measures $\mu$ on $\R^D$, with $\vert  \mu \vert $ the total mass of $\mu$. Let $\PP_1$ be the set of measures in $\PP$ with $\vert  \mu \vert =1$. For $1\leq p\leq \infty$,  let $\PP^p$ be the set of measures $\mu \in \PP$ satifying $\p{\int \vert x\vert ^p\dd\mu(x)}^{1/p}<\infty$ (with usual modification for $p=\infty$) and let $\PP^p_1=\PP^p\cap \PP_1$. The pushforward of a measure $\mu$ by a measurable application $\phi:\R^D\to\R^D$ is defined by
\begin{equation}
\phi_\# \mu(A) \defeq \mu(\phi^{-1}(A))
\end{equation}
for any Borel set $A\subset \R^D$. For $\rho:\R^D\to [0,\infty)$ a measurable function, we denote by $\rho\cdot \mu$ the measure having density $\rho$ with respect to $\mu$.

\begin{definition}[Wasserstein distance]
Let $1\leq p\leq \infty$ and let $\mu,\nu\in \PP^p$ with the same total mass. Let $\Pi(\mu,\nu)$ be the set of transport plans between $\mu$ and $\nu$, i.e.~measures on $\R^D\times \R^D$ with first marginal $\pi^1$ (resp.~second marginal $\pi^2$) equal to $\mu$ (resp.~$\nu$). The cost $C_p(\pi)$ of $\pi\in \Pi(\mu,\nu)$ is defined as $\int \vert x-y\vert ^p\dd \pi(x,y)$. The $p$-Wasserstein distance between $\mu$ and $\nu$ is defined as
\begin{equation}
W_p(\mu,\nu) \defeq \inf_{\pi\in \Pi(\mu,\nu)} C_p(\pi)^{1/p},
\end{equation}
with usual modification for $p=\infty$.
\end{definition}

A crucial point in the study conducted in the following is the relation between Wasserstein distances and negative Sobolev norms. 
\begin{proposition}[Wasserstein distances and negative Sobolev norms]\label{prop:wass_neg} Let $1\leq p<\infty$. Let $M\in \MM_{d}$ be a manifold with reach $\tau(M)\geq \taumin$, and let $\mu,\nu\in \PP_1^p$ be two probability measures supported on $M$, absolutely continuous with respect to $\vol_M$, with densities $f$ and $g$. Assume that $f, g \geq \fmin \cdot \vol_M$ for some $\fmin>0$. Then, we have
\begin{equation}\label{eq:wass_neg}
\begin{split}
W_p(\mu,\nu) &\leq  p^{-1/p}\fmin^{1/p-1}\|f-g\|_{\dot H^{-1}_p(M)}\\
&\leq p^{-1/p}C_{d,\taumin,\fmin}\|f-g\|_{ H^{-1}_p(M)},
\end{split}
\end{equation}
for some constant $C_{d,\taumin,\fmin}$ depending on $d$, $\taumin$ and $\fmin$.
\end{proposition}
In particular, if $p=1$, then the first inequality in \eqref{eq:wass_neg} is actually an equality by the Kantorovitch duality formula \cite[Particular Case 5.16]{villani2008optimal}.
This inequality appears in \cite{peyre2018comparison} for $p=2$ and in \cite[Section 5.5.1]{santambrogio2015optimal} for measures having density with respect to the Lebesgue measure. We carefully adapt their proofs in Appendix \ref{sec:proof_preli}.

\subsection{Statistical models}\label{sec:stat}

  We consider the two following models, where points are sampled on a manifold, with possibly tubular noise. We fix in the following some parameters $\taumin,L_s,L_k>0$, $1\leq q\leq \infty$  and $0<\fmin<\fmax<\infty$. We also write $\MM_{d}^k$ instead of $\MM_{d,\taumin,L_k}^k$.

\begin{definition}[Noise-free model]
Let $d\leq  D$ be integers, $k\geq 2$, $0\leq s < k$ and $1\leq p<\infty$. Let $M\in \MM_{d}^k$. For $s=0$, the set $\QQ^0(M)$ is the set of probability distributions $\mu$ on $\R^D$ absolutely continuous with respect to the volume measure $\vol_M$, with a density $f$ satisfying $\fmin\leq f  \leq \fmax$ almost everywhere. For $s>0$, the set $\QQ^s(M)$ is the set of distributions $\mu \in \QQ^0(M)$, with density $f\in B^s_{p,q}(M)$ satisfying $\|f\|_{B^s_{p,q}(M)} \leq L_s$. The model $\QQ^{s,k}_{d}$ is equal to the union of the sets $\QQ^s(M)$ for $M\in \MM_d^k$. 
\end{definition}

\begin{remark}\label{remark:fmin}
If $\mu\in \QQ^s(M)$, then, as $\mu\geq \fmin \vol_M$, one has $\vol_M(M)\leq \fmin^{-1}$. One can then use standard packing arguments to show that this implies that $\diam(M) \leq C_d/(\fmin \taumin^{d-1})$ for some constant $C_d$ depending only on $d$. In particular, the manifold $M$ is automatically compact.
\end{remark}

Given a set of observations $X_1,\dots,X_n$ sampled according to $\mu\in \QQ^{s,k}_d$, the goal of statistical inference is to reconstruct some quantity $\vartheta(\mu)$ related to $\mu$. If $\LL$ is a loss function defined on the set of outputs of $\vartheta$, we define the minimax risk for this problem as
 \begin{equation}
 \RR_n(\vartheta,\QQ^{s,k}_d,\LL) \defeq \inf_{\hat \vartheta} \sup_{\mu\in \QQ} \E\LL(\hat\vartheta,\vartheta),
 \end{equation}
where $\hat\vartheta = \hat\vartheta(X_1,\dots,X_n)$ and $X_1,\dots,X_n$ is an i.i.d.~sample with law $\mu$.  We will focus here on reconstructing (i) the measure $\vartheta(\mu)=\mu$, and (ii) the uniform measure $\vartheta(\mu)=U_M \defeq \vol_M/\vert \vol_M\vert $, where $M$ is the support of $\mu$.  We first show that task (i) is impossible if the loss function $\LL$ is larger than the total variation distance $\TV$, which is defined by $\TV(\mu,\nu) \defeq \sup_{A}\vert  \mu (A)-\nu(A)\vert $ for $\mu,\nu\in \PP_1$, where the supremum is taken over all measurable sets $A\subset \R^D$. 

\begin{theorem}\label{thm:choice_of_loss}
Let $d\leq  D$ be integers, $k\geq 2$, $0\leq s< k$, $1\leq p<\infty$. Let $\LL: \PP\times \PP \to [0,\infty]$ be a measurable map with respect to the Borel $\sigma$-algebra associated with the total variation distance on $\PP\times \PP$. Assume that $\LL(\mu,\nu) \geq g(\TV(\mu,\nu))$ for a convex nondecreasing function $g:\R \to [0,\infty]$ with $g(0)=0$. Then, for any $\taumin>0$, if $\fmin$ is small enough and $L_k, L_s, \fmax$ are large enough, we have
\begin{equation}\label{eq:minimax_rate_bad}
\RR_n(\mu,\QQ^{s,k}_d,\LL) \geq g(c_d),
\end{equation}
for some constant $c_d>0$.
\end{theorem}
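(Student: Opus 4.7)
The proof follows a classical Le Cam-style two-point argument, exploiting the fact that mutually singular probability measures have total variation distance equal to $1$, while being well-approximated by pairs of measures that are statistically hard to tell apart from $n$ samples. The class $\QQ^{s,k}_d$ contains two such measures because it allows arbitrary manifolds $M_0,M_1\in\MM_{d,\taumin,L_k}^k$ that may differ on a disjoint pair of private regions---giving mutual singularity on those pieces---yet coincide on a large common region---giving likelihood overlap.

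I would first construct two compact $d$-submanifolds $M_0,M_1\in\MM_{d,\taumin,L_k}^k$ that agree on a common region $C$ with large volume and differ on disjoint private pieces $P_0\subset M_0$, $P_1\subset M_1$, obtained via a localized $\CC^k$ bump perturbation of a reference manifold; the construction preserves reach $\geq\taumin$ and chart-norm $\leq L_k$ by standard arguments from \cite{aamari2019nonasymptotic}. The densities are set to $f_i = \fmin$ on $C$ and $f_i = \fmax$ on $P_i$, smoothed near the boundary to lie in $B^s_{p,q}(M_i)$ with Besov norm $\leq L_s$ (requiring $L_s$ large enough). Writing $\mu_i=(1-\eta)\rho + \eta\sigma_i$, with $\rho$ the normalized common part on $C$ and $\sigma_i$ the normalized measure on $P_i$, one has $\sigma_0\perp\sigma_1$ and therefore $\TV(\mu_0,\mu_1) = \eta\,\TV(\sigma_0,\sigma_1) = \eta$, which can be fixed to a constant $c_d>0$ depending only on $d$ using the flexibility to take $\fmin$ small and $\fmax$ large.

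Le Cam's inequality then gives $\RR_n \geq \tfrac{1}{2}\, g(\eta)\,(1-\TV(\mu_0^{\otimes n},\mu_1^{\otimes n}))$, and the common component $(1-\eta)\rho$ in both measures yields $\mu_0^{\otimes n}\wedge \mu_1^{\otimes n} \geq (1-\eta)^n \rho^{\otimes n}$, hence $1-\TV(\mu_0^{\otimes n},\mu_1^{\otimes n}) \geq (1-\eta)^n$. The flexibility in $\fmin,\fmax,L_s$ allowed by the hypothesis lets us scale the volumes of $C$ and $P_i$ independently of $n$ so that the right-hand side is of constant order.

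The principal technical obstacle is obtaining a bound $g(c_d)$ that is independent of $n$: the naive two-point Le Cam bound above is of order $g(\eta)(1-\eta)^n$, which for any fixed $\eta>0$ decays with $n$. A sharper argument uses the observation that for any estimator $\hat\mu$ and any realization of samples, at most one element of a pairwise mutually singular family can lie within total variation distance $1/2$ of the output (by the triangle inequality, since two mutually singular probability measures are at TV-distance exactly $1$). Combined with an averaging argument over a sufficiently rich family of such mutually singular hypotheses in $\QQ^{s,k}_d$---whose existence is guaranteed by taking $\fmin$ small and $L_k,L_s,\fmax$ large enough to construct many disjoint bump perturbations of a reference manifold---this forces the failure probability to be bounded below uniformly in $n$, yielding the constant lower bound $g(c_d)$.
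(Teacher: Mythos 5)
Your first, two-point Le Cam argument is not the one the paper uses, and you correctly diagnose why it fails: the bound $g(\eta)(1-\eta)^n$ degrades with $n$ for any fixed private-mass $\eta$. The trouble is that your proposed ``sharper argument'' to repair this does not actually close the gap.

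The observation that at most one member of a pairwise mutually singular family can lie within total variation $1/2$ of an estimator's output is correct, but averaging over such a family does not yield a bound uniform in $n$. For the averaging (Fano/Le Cam-with-many-points) step to give a nontrivial lower bound, the $n$-fold product measures $\mu_i^{\otimes n}$ must remain statistically confusable --- otherwise the sample reveals the true hypothesis, the posterior concentrates on it, and the average risk decays. But if the one-sample measures $\mu_i$ are pairwise mutually singular with constant private mass, a single observed point typically identifies the correct manifold and hence the correct $\mu_i$; to keep the $\mu_i^{\otimes n}$ close you would need the private mass of order $1/n$, which drags the loss separation back down to order $g(1/n)$. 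So you are stuck in exactly the same bias--fluctuation bind you already identified in the two-point case, and invoking ``a sufficiently rich family'' does not by itself escape it.

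What actually makes the constant lower bound possible is Assouad's lemma applied to a $2^m$-point hypercube of \emph{non}-mutually-singular hypotheses, combined with a limit $\delta\to 0$. The paper places $m\asymp (\taumin/\delta)^d$ well-separated $\CC^k$ bumps of height $\Lambda\lesssim\delta^k$ on a reference manifold; for a single coordinate flip one has simultaneously $\TV(\mu_\sigma,\mu_{\sigma'})\lesssim \delta^d$ (adjacent hypotheses are statistically close, so the Assouad penalty $(1-C\delta^d)^{2n}$ is nontrivial) and $\TV(\mu_\sigma,\mu_{\sigma'})\gtrsim |\sigma-\sigma'|\delta^d$ (the loss is additive in the Hamming distance). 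Assouad's lemma then gives $\RR_n\gtrsim m\,\delta^d\,(1-C\delta^d)^{2n}$, and the crucial cancellation $m\,\delta^d\asymp 1$ means the loss term stays bounded away from zero while $\delta\to 0$ drives $(1-C\delta^d)^{2n}\to 1$ for fixed $n$. It is the \emph{local} (Hamming-distance-one) statistical closeness, paired with the \emph{global} additivity of the TV loss over the hypercube, that decouples the lower bound from $n$; mutual singularity of the hypotheses plays no role and is in fact inconsistent with the required local closeness. You also omit the preliminary reduction, via Jensen's inequality and monotonicity of $g$, from a general loss $\LL\geq g\circ\TV$ to the loss $\TV$ itself, which is what lets the constant be pushed through $g$.
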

Examples of such losses include the total variation distance, the Hellinger distance (with $g(x)=x$), the Kullback-Leibler divergence (with $g(x)=x^2/2$), and the $L_p$ distance with respect to some dominating measure (with $g(x)=x^p$). 
We give a proof of Theorem \ref{thm:choice_of_loss}, based on Assouad's lemma, in Appendix \ref{sec:lowerbounds}. A simple example of loss $\LL$ which is not degenerate for mutually singular measures is given by the $W_p$ distance. As stated in the introduction, we will therefore choose this loss, and study $\RR_n(\mu,\QQ^{s,k}_d,W_p)$, the minimax rate of estimation for $\mu$ with respect to $W_p$. 

We will also study this problem in the presence of tubular noise.

\begin{definition}[Tubular noise model]
Let $d\leq  D$ be integers, $k\geq 2$, $0\leq s < k$, $1\leq p<\infty$ and $\gamma \geq 0$.  The set $\QQ^{s,k}_{d}(\gamma)$ is the set of laws $\nu$ of random variables $Y+Z$ where $Y\sim \mu\in \QQ^{s,k}_{d}$ and $Z\in\BB(0, \gamma)$ is such that $Z\in T_YM^\bot$.
\end{definition}
Note that the tubular noise model is not identifiable, in the sense that there are several admissible couples $(Y,Z)$ (with possibly different supports) such that $X=Y+Z$ follows the same distribution $\nu$. For each $\nu\in\QQ^{s,k}_{d}(\gamma)$, we will make an arbitrary choice among the admissible couples, while this will not have any impact on the following study (all the admissible couples have their first marginal at most $2\gamma$ apart for the Wasserstein distance).

\begin{remark}
For ease of notation, we will write in the following $a\lesssim b$ to indicate that there exists a constant $C$ depending on the parameters $p,k,\taumin,L_s,L_k,\fmin,\fmax$, \textbf{but not on $D$}, such that $a\leq Cb$, and write $a\asymp b$ to indicate that $a\lesssim b$ and $b\lesssim a$. Also, we will write $c_\alpha$ to indicate that a constant $c$ depends on some parameter $\alpha$.

Note that in particular, the risk of the estimators proposed in the next section will not depend on the ambient dimension $D$, but only on intrinsic parameters such as the regularity of $\nu$ and its effective dimension $d$.
\end{remark}

\section{Kernel density estimation on an unknown manifold}\label{sec:def_estim}
Before building an estimator in the model $\QQ^{s,k}_{d}(\gamma)$, let us consider the easier problem of the estimation of $\mu$ in the case where $\gamma=0$ (noise free model) and the support $M$ is known. Let $\mu\in \QQ^s(M)$ and $Y_1,\dots,Y_n$ be a $n$-sample of law $\mu$. Let $\mu_n=\frac{1}{n}\sum_{i=1}^n\delta_{Y_i}$ be the empirical measure of the sample. Identify $\R^d$ with $\R^d\times\{0\}^{D-d}$ and consider a kernel $K:\R^D\to \R$ satisfying the following conditions:
\begin{itemize}
\item \textbf{Condition $A$:} The kernel $K$ is a smooth radial function with support $\BB(0,1)$ such that $\int_{\R^d} K = 1$.
\item \textbf{Condition $B(m)$:}  The kernel $K$ is of order $m\geq 0$ in the following sense. Let $\vert  \alpha \vert \defeq \sum_{j=1}^d \alpha_j$ be the length of a multiindex $\alpha=(\alpha_1,\dots,\alpha_d)$. Then, for all multiindexes $\alpha^0$, $\alpha^1$ with $0\leq\vert  \alpha^0 \vert < m$, $0\leq \vert  \alpha^1 \vert < m+\vert  \alpha_0 \vert $, and with $\vert  \alpha^1 \vert >0$ if $\alpha^0=0$, we have
\begin{equation}
\int_{\R^d} \partial^{\alpha^0}K(v)v^{\alpha^1}\dd v= 0,
\end{equation}
where $v^\alpha=\prod_{j=1}^d v_j^{\alpha_j}$ and $\partial^\alpha K$ is the partial derivative of $K$ in the direction $\alpha$.
\item \textbf{Condition $C(\beta)$:} The negative part $K_- \defeq -\min(0,K)$ of $K$ satisfies $\int_{\R^d} K_- \leq\beta$.
\end{itemize}
We show in Appendix \ref{sec:kernel} that for every integer $m\geq 0$ and real number $\beta>0$, there exists a kernel $K$ satisfying conditions $A$, $B(m)$ and $C(\beta)$. Define the convolution of $K$ with a measure $\rho\in\PP$ as
\begin{equation}
K* \rho(x) \defeq\int K(x-y)\dd\rho(y),\quad x\in \R^D,
\end{equation}
and, for $h>0$, let $K_h\defeq h^{-d}K(\cdot/h)$. Let $\rho_h \defeq K_h*\vol_M$ and let $\mu_{n,h}$ be the measure with density $f_{n,h}\defeq K_h*(\mu_n/\rho_h)$ with respect to $\vol_M$. Dividing by $\rho_h$ ensures that $\mu_{n,h}$ is a measure of mass $1$. Remark that the computation of $\mu_{n,h}$ requires to have access to $M$, that is $\mu_{n,h}$ is an estimator on $\QQ^s(M)$ but not on $\QQ^{s,k}_{d}$. By linearity, the expectation of $\mu_{n,h}$ is given by $\mu_h$, the measure having for density $f_h\defeq K_h*(\mu/\rho_h)$ on $M$.

\begin{theorem}\label{thm:estimator_M_known}
Let $d\leq  D$ be integers, $0< s\leq k-1$ with $k\geq 2$  and $1\leq   p<\infty$. Let $M\in \MM_d^k$ and $\mu \in \QQ^s(M)$ with density $f$. Let $Y_1,\dots,Y_n$ be a $n$-sample of law $\mu$. There exists a constant $\beta$ depending on the parameters of the model such that, if $K$ is a kernel satisfying conditions $A$, $B(k)$ and $C(\beta)$,  then the measure $\mu_{n,h}$ satisfies the following:
\begin{enumerate}[label=(\roman*)]
\item If $(\log n/n)^{1/d}\lesssim h\lesssim 1$, then, with probability larger than $1-cn^{-k/d}$, the density $f_{n,h}$ of $\mu_{n,h}$ is  larger than $\fmin/2$ and smaller than $2\fmax$ everywhere on $M$. \label{it:pointwise_control}
\item If $n^{-1/d}\lesssim h\lesssim 1$, then we have
\begin{align}
\E\|f-f_{n,h}\|_{H^{-1}_p(M)} &\leq \|f- f_h\|_{H^{-1}_p(M)} + \E\|f_{n,h}-f_h\|_{H^{-1}_p(M)}\label{eq:decomposition_bias} \\ 
& \lesssim h^{s+1}+ \frac{h^{1-d/2}I_d(h)}{\sqrt{n}}, \label{eq:decomposition_variance}
\end{align}
where $I_d(h) = 1$ if $d\geq 3$, $(-\log(h))^{1/2}$ if $d=2$ and $h^{-1/2}$ if $d=1$. \label{it:sobo_control}
\item Let $h\asymp n^{-1/(2s+d)}$ if $d\geq  3$, $h \asymp (\log n/n)^{1/d}$ if $d\leq 2$. Define $\mu^0_{n,h} =\mu_{n,h}$ if $\mu_{n,h}$ is a nonnegative measure and $\mu_{n,h}^0=\delta_{X_1}$ otherwise. Then, 
\begin{equation}
\E W_p( \mu^0_{n,h},\mu ) \lesssim
\begin{cases}
n^{-\frac{s+1}{2s+d}} &\text{ if } d\geq 3,\\
n^{-\frac 1 2}\p{\log n}^{\frac 1 2} &\text{ if } d= 2,\\
n^{-\frac 1 2} &\text{ if } d= 1.
\end{cases}
\end{equation}\label{it:wass_control}
\item \label{it:noiseless_minimax} Furthermore, for any $0\leq s<k$ and $\taumin>0$, if $\fmin$ is small enough and if $\fmax$ and $L_s$ are large enough, then there exists a manifold $M\in \MM^k_d$ such that
\begin{equation}
\RR_n(\mu,W_p,\QQ^s(M)) \gtrsim \begin{cases}
n^{-\frac{s+1}{2s+d}} &\text{ if } d\geq 3,\\
n^{-\frac 1 2} &\text{ if } d\leq 2.
\end{cases}
\end{equation}
\end{enumerate}
\end{theorem}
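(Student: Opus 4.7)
I would prove the four parts sequentially, since (i) provides the high-probability event on which Proposition~2.8 can be invoked in (iii).

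For part \emph{(i)}, I would combine a deterministic bias bound $\|\mu_h-f\|_{L_\infty(M)} = O(h)$ with a uniform empirical deviation bound. In the tangent chart $\Psi_x:\BB_{T_xM}(0,r_0)\to M$, the convolution $K_h*(\mu/\rho_h)(x)$ reduces to a Euclidean convolution whose Taylor expansion together with $\int K = 1$ and the $\CC^k$ control on $\Psi_x$ shows $\mu_h = f + O(h)$ uniformly. For the empirical deviation I would use a pointwise Bernstein inequality on $\frac{1}{n}\sum_i K_h(x-Y_i)/\rho_h(Y_i)$, followed by a union bound on a minimal $h$-net of $M$ (of cardinality $\lesssim h^{-d}$) and Lipschitz control of $x\mapsto K_h(x-\cdot)/\rho_h$, to obtain $\|\mu_{n,h}-\mu_h\|_{L_\infty(M)}\lesssim \sqrt{\log n/(nh^d)}$ with probability $\geq 1 - cn^{-k/d}$. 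Provided $h\gtrsim (\log n/n)^{1/d}$ with a large enough constant and $\beta$ small enough in $C(\beta)$ to keep the negative part of $K$ tame, this traps the density of $\mu_{n,h}$ in $[\fmin/2, 2\fmax]$.

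For part \emph{(ii)}, I would split into bias and variance. For the bias $\|\mu_h - \mu\|_{H^{-1}_p(M)}$, I would write $K_h - \delta_0 = \nabla\cdot W_h$ in $\R^d$ for a vector-valued primitive with $\|W_h\|_{L_1(\R^d)}\lesssim h$, then combine with the vanishing moments $B(k)$ and the approximation-space characterisation of $B^s_{p,q}$ (applied to $f\circ\Psi_x$ in charts) to produce a vector field $w$ on $M$ satisfying $\nabla\cdot w = \mu_h - \mu$ and $\|w\|_{L_p(M)}\lesssim h^{s+1}$; Proposition~2.5 then transfers this to $H^{-1}_p$. For the variance, duality gives
\begin{equation*}
\|\mu_{n,h}-\mu_h\|_{H^{-1}_p(M)} = \sup_{\|g\|_{H^1_{p^*}}\leq 1} \frac{1}{n}\sum_{i=1}^n\bigl(\Phi_{g,h}(Y_i)-\E\Phi_{g,h}(Y_1)\bigr),
\end{equation*}
with $\Phi_{g,h}(y) = \rho_h(y)^{-1}\int K_h(x-y)g(x)\,\dd\vol_M(x)$. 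Integration by parts against $W_h$ and Young's inequality in local tangent charts applied to $\nabla g$ (bounded in $L_{p^*}$) would yield $\sqrt{\Var\,\Phi_{g,h}(Y_1)}\lesssim h^{1-d/2}I_d(h)$, the factor $I_d(h)$ accounting for the logarithmic (resp.\ inverse-bandwidth) blowup of $\|K_h*\vol_M\|_{L_\infty}$ in dimension $d=2$ (resp.\ $d=1$); the supremum over $g$ is then propagated through the duality by working with a dense countable subset or a chaining argument on the $H^1_{p^*}$ unit ball.

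For part \emph{(iii)}, on the event of (i) the density of $\mu_{n,h}^0 = \mu_{n,h}$ is at least $\fmin/2$, so Proposition~2.8 yields $W_p(\mu,\mu_{n,h}^0)\lesssim \|\mu-\mu_{n,h}\|_{H^{-1}_p(M)}$; balancing the bias $h^{s+1}$ against the variance $h^{1-d/2}I_d(h)/\sqrt{n}$ produces $h\asymp n^{-1/(2s+d)}$ for $d\geq 3$ and $h\asymp (\log n/n)^{1/d}$ for $d\leq 2$. On the complementary event, the trivial bound $W_p(\delta_{X_1},\mu)\leq \diam(M)\lesssim 1$ contributes only $n^{-k/d}$, absorbed since $k\geq s+1$. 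For part \emph{(iv)}, I would pick $M\in\MM_d^k$ containing a flat patch of size $r_0$ isometric to a piece of $\R^d$ and transplant the construction from \cite{weed2019estimation}: a Hamming-indexed family $f_\omega = f_0\bigl(1 + h^s\sum_j \omega_j\psi_{j,h}\bigr)$ with $\omega\in\{-1,1\}^N$, $N\asymp h^{-d}$, and $\psi_{j,h}$ normalised bumps supported on a grid of $h$-balls in the flat patch; such densities lie in $\QQ^s(M)$ for well-chosen parameters, are pairwise at $W_p$-distance $\gtrsim h^{s+1}$ and at KL-divergence $\lesssim nh^{2s+d}$, so Assouad's lemma with $h\asymp n^{-1/(2s+d)}$ yields the rate, while for $d\leq 2$ a two-hypothesis Le Cam argument (shifting a local feature by $n^{-1/2}$) gives the $n^{-1/2}$ floor.

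\textbf{Main obstacle.} The hardest step will be the variance bound in (ii): obtaining a rate depending only on the intrinsic dimension $d$ (not the ambient $D$) requires executing the Young-convolution estimate in the local tangent charts $\Psi_x$ and managing the $\rho_h^{-1}$ factor uniformly, while simultaneously passing the supremum through the dual Sobolev ball $H^1_{p^*}(M)$; the parameters $\taumin$ and $L$ enter implicitly through the curvature and chart-regularity controls.
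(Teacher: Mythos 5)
Your decomposition into four parts and your treatment of (i), (iii) and (iv) track the paper's proof closely: (i) is indeed a Taylor-type bias bound plus a Bernstein-and-net argument, exactly as in Appendix F; (iii) is the paper's argument (invoke Proposition~\ref{prop:wass_neg} on the high-probability event of (i), pay $\diam(M)\cdot n^{-k/d}$ on the complement); (iv) is, as you say, a transplant of \cite{weed2019estimation}'s Assouad construction to a manifold containing a flat $d$-dimensional cube.

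Where you diverge is in (ii), and there is a genuine gap in your variance argument. Your plan writes
\begin{equation*}
\|\mu_{n,h}-\mu_h\|_{H^{-1}_p(M)} = \sup_{\|g\|_{H^1_{p^*}}\leq 1} \frac{1}{n}\sum_{i=1}^n\bigl(\Phi_{g,h}(Y_i)-\E\Phi_{g,h}\bigr)
\end{equation*}
and then proposes to control the expectation of this supremum ``by working with a dense countable subset or a chaining argument.'' Passing to a dense countable subset does \emph{not} bound $\E\sup_g$: it only shows the supremum is measurable. A chaining argument over the unit ball of $H^1_{p^*}(M)$ would require entropy estimates for that ball in a metric adapted to the increments $\Phi_{g,h}-\Phi_{g',h}$, and doing this tightly enough to recover the exact rate $h^{1-d/2}I_d(h)/\sqrt{n}$ without spurious logarithms is a substantial piece of work that your plan does not supply. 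The paper's resolution is a different and much cleaner idea: it first converts the dual norm into an honest $L_p$-norm of a random function via the Green function of the Laplace--Beltrami operator. Concretely, Proposition~\ref{prop:congestionned}(ii) gives $\|f\|_{H^{-1}_p(M)}\leq\|\nabla Gf\|_{L_p(M)}$ whenever $\int f=0$, so
\begin{equation*}
\|\mu_{n,h}-\mu_h\|_{H^{-1}_p(M)} \leq \left\| \frac{1}{n}\sum_{i=1}^n \nabla G\Bigl(K_h*\frac{\delta_{Y_i}}{\rho_h(Y_i)}\Bigr) - \E\left[\nabla G\Bigl(K_h*\frac{\delta_{Y_i}}{\rho_h(Y_i)}\Bigr)\right]\right\|_{L_p(M)},
\end{equation*}
an $L_p(M)$-valued sum of i.i.d.\ centered summands. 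This sidesteps the supremum entirely: one then applies Rosenthal's inequality in $L_p$ (Lemma~\ref{lem:rosenthal}) together with a pointwise estimate of $\E|\nabla G(K_h*\delta_Y/\rho_h(Y))(z)|^p$ derived from the classical bound $|\nabla_x G(x,y)|\lesssim |x-y|^{1-d}$ (Lemma~\ref{lem:fluc2}); the dichotomy $I_d(h)$ for $d=1,2$ versus $d\geq 3$ falls out of the integrability of $|x-y|^{(1-d)p}$ near the diagonal. This Green-function reduction is the key technical idea you are missing.

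Your bias route in (ii) (a vector primitive $W_h$ with $K_h-\delta_0 = \nabla\cdot W_h$, combined with the moment conditions and a moduli-of-smoothness characterization of $B^s_{p,q}$) is a genuinely different and plausible approach in the spirit of \cite{weed2019estimation}. However, on a curved $M$ you cannot just push $W_h$ through a chart: the intrinsic divergence on $M$ is not the flat divergence, and you must track the Jacobian of $\Psi_x$, the curvature correction $N_x$, and the normalization $\rho_h$. The paper handles all of this differently, via a symmetrization trick (Lemma~\ref{lem:bias2}) applied directly to the dual pairing $\int A_h\phi\cdot g$ and a real-interpolation argument to pass from integer Sobolev exponents to Besov exponents; what it buys is a proof that stays entirely inside the dual formulation and never needs to exhibit an explicit vector field on $M$. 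Your route could work but would need those chart corrections carried out carefully --- at which point it would be comparably technical.
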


\begin{remark}
The condition $C(\beta)$ on the kernel is only used to ensure that the measure $\mu_{n,h}$ has a lower and upper bounded density on $M$. An alternative possibility to ensure this property is to assume that the density of $\mu$ is H\"older continuous of exponent $\delta$ for some $\delta>0$. Techniques  from \cite{berenfeld2019density} then imply that $\|\mu_{n,h}-\mu\|_{L^\infty(M)} \lesssim h^\delta + n^{-1/2}h^{-d/2} \ll 1$ with high probability, ensuring in particular that the density is lowerbounded. If  $sp>d$, then every element of $B^s_{p,q}(M)$ is H\"older continuous \cite[Theorem 7.4.2]{triebel1992theory}, and condition $C(\beta)$ is no longer required. However, Theorem \ref{thm:estimator_M_known} also holds for non-continuous densities.
\end{remark}

\begin{remark}
Let $K$ be a nonnegative kernel satisfying conditions $A$, $B(0)$ and $C(\beta)$. It is straightforward to check that $W_p(\mu_n,\mu_{n,h})\lesssim h$. Therefore, Theorem \ref{thm:estimator_M_known}\ref{it:sobo_control} and Proposition \ref{prop:wass_neg} imply in particular that $W_p(\mu_n,\mu)\lesssim h + \frac{h^{1-d/2}I_d(h)}{\sqrt{n}}$. By choosing $h$ of the order $n^{-1/d}$, we obtain that 
\begin{equation}
W_p(\mu_n,\mu) \lesssim \begin{cases}
n^{-\frac 1 d} &\text{ if } d\geq 3\\
n^{-\frac 1 2}(\log n)^{\frac 1 2} &\text{ if } d=2\\
n^{-\frac 1 2} &\text{ if } d=1.
\end{cases}
\end{equation}
Such a result was already shown for $p=\infty$ \cite{trillos2020error} with additional logarithmic factors, with a proof very different than ours. See also \cite{divol2021short} for a short proof of this result when $M$ is the flat torus.
\end{remark}

\begin{remark}
There is a logarithmic gap between the minimax lower bound and the upper risk of the proposed estimator for $d=2$ in Theorem \ref{thm:estimator_M_known}. When $\mu$ is the uniform measure on the square $[0,1]^2$, it is known that the empirical measure $\mu_n$ attains exactly the rate $n^{-\frac 1 2}(\log n)^{\frac 1 2}$ \cite[Section 6.4]{talagrand2014upper}, suggesting that the factor $(\log n)^{1/2}$ is not a proof artifact. It is however not clear how one can transform Talagrand's tree construction used to lower bound $\E W_1(\mu_n,\mu)$ into a more general minimax lower bound on $\RR_n(\mu,W_p,\QQ^s(M))$, so that the (non-)existence of estimators attaining a rate of $n^{-1/2}$ for $d=2$ is still an open problem.
\end{remark}

In \eqref{eq:decomposition_variance}, a classical bias-variance trade-off appears. Namely, the bias of the estimator is of order $h^{s+1}$, whereas its fluctuations are of order $h^{1-d/2}/\sqrt{n}$ (at least for $d\geq 3$). This decomposition can be compared to the classical bias-variance decomposition for a kernel density estimator of bandwidth $h$, say for the pointwise estimation of a function of class $\CC^s$ on the cube $[0,1]^d$. It is then well-known (see e.g.~\cite[Chapter 1]{tsybakov2008introduction}) that the bias of the estimator is of order $h^s$ whereas its variance is of order $h^{-d/2}/\sqrt{n}$. The supplementary factor $h$ appearing both in the bias and fluctuation terms can be explained by the fact that we are using a norm $H^{-1}_p(M)$ instead of a pointwise norm to quantify the risk of the estimator: in some sense, we are estimating the antiderivative of the density rather than the density itself. This is particularly striking when $d=1$ and $p=1$, the Wasserstein distance between two measures being then given by the $L_1$ distance between the cumulative distribution functions of the two measures \cite[Proposition 2.17]{santambrogio2015optimal}.

Before giving a proof of Theorem \ref{thm:estimator_M_known}, let us explain how to extend it to the case where the manifold $M$ is unknown and in the presence of tubular noise. The measure $\mu_{n,h}$ is the measure having density $K_h*(\mu_n/\rho_h)$ with respect to $\vol_M$. Of course, if $M$ is unknown, then so is $\vol_M$, and we therefore propose the following estimation procedure of $\vol_M$, using local polynomial estimation techniques from \cite{aamari2019nonasymptotic}. Let $X_1,\dots,X_n$ be a $n$-sample in the model with tubular noise $\QQ^{s,k}_{d}(\gamma)$, with $X_i=Y_i+Z_i$, $Y_i$ of law $\mu\in \QQ^s(M)$ and $Z_i\in T_{Y_i}M^\bot$ with $\vert Z_i\vert \leq \gamma$. Let $\nu_n^{(i)}$ be the empirical measure $\frac{1}{n-1}\sum_{j\neq i}\delta_{X_j-X_i}$. For two positive parameters $\ell$, $\eps$, the local polynomial estimator $(\hat{\pi}_i,\hat{V}_{2,i},\dots,\hat{V}_{m-1,i})$ of order $m$ at $X_j$ is defined as an element of 
\begin{equation}\label{eq:def_poly}
\argmin_{\pi,\sup_{2\leq j\leq m-1} \op{V_j}\leq \ell}  \mu_n^{(i)}\p{ \left\vert  x-\pi(x)- \sum_{j=2}^{m-1} V_j[\pi(x)^{\otimes j}]\right \vert ^2 \ones\{x\in \BB(0,\eps)\} },
\end{equation}

where the argmin is taken over all orthogonal projectors $\pi$ of rank $d$ and symmetric tensors $V_j:(\R^D)^j\to \R^D$ of order $j$. 
Let $\hat T_i$ be the image of $\hat\pi_i$ and $\hat{\Psi}_i : v\in \R^D \mapsto X_i+v + \sum_{j=2}^{m-1} \hat V_{j,i}[v^{\otimes j}]$. Let $\angle(T_1,T_2)$ denote the angle between two $d$-dimensional subspaces, defined by $\op{\pi_{T_1}-\pi_{T_2}}$, where $\pi_{T_i}$ is the orthogonal projection on $T_i$ for $i=1,2$. We summarize the results of \cite{aamari2019nonasymptotic} in the following proposition (see Appendix \ref{sec:geom_prop} for details).

\begin{proposition}\label{prop:estim_manifold}
With probability at least $1- cn^{-k/d}$, if $m\leq k$, $\p{\log n/ n}^{1/d} \lesssim \eps \lesssim 1$, $\gamma \lesssim \eps$ and $ 1\lesssim \ell \lesssim \eps^{- 1}$, then, 
\begin{equation}\label{eq:control_angle}
\max_{1\leq i\leq n}\angle(T_{Y_i}M,\hat{T}_i) \lesssim \eps^{m-1} +  \gamma \eps^{-1}
\end{equation}
and, for all $1\leq i\leq n$, if $v\in \hat T_i$ with $\vert v\vert \leq 3\eps$, we have
\begin{align}
&\vert  \hat{\Psi}_i(v)-\Psi_{Y_i}\circ \pi_{Y_i}(v) \vert  \lesssim \eps^m + \gamma \label{eq:control_error_manifold}\\
&\op{d\hat{\Psi}_i(v) - d(\Psi_{Y_i}\circ\pi_{Y_i})(v)} \lesssim \eps^{m-1} + \gamma \eps^{-1}. \label{eq:control_error_manifold_derivative}
\end{align}
\end{proposition}
Hence, if $\gamma$ is of order at most $\eps^k$, then it is possible to approximate the tangent space at $Y_i$ with precision $\eps^{k-1}$ and the local parametrization with precision $\eps^k$. In particular, authors in \cite{aamari2019nonasymptotic} show that, with high probability, $ \bigcup_{i=1}^n \BB_{\hat\Psi_i (\hat T_i)}(X_i,\eps)$ is at Hausdorff distance of order at most $\eps^k + \gamma$ from $M$. We now define an estimator $\widehat \vol_M$ of $\vol_M$ by using an appropriate partition of unity $(\chi_j)_j$, which is built thanks to the next lemma. For $A,B\subset \R^D$, introduce the asymmetric Hausdorff distance $d_H(A\vert B) \defeq \sup_{x\in A} d(x,B)$ and the Hausdorff distance $d_H(A,B)\defeq d_H(A\vert B) \vee d_H(B\vert A)$. We say that a set $S$ is $\delta$-sparse if $\vert x-y\vert \geq \delta$ for all distinct points $x,y \in S$.

\begin{lemma}[Construction of partitions of unity]\label{lem:existence_pou}
Let $\delta \lesssim 1$. Let $S\subset M^\delta$ be a set which is $\frac 7 3 \delta$-sparse, with $d_H(M^\delta\vert S)\leq 4\delta$. Let $\theta:\R^D\to [0,1]$ be a smooth radial function supported on $\BB(0,1)$, which is equal to $1$ on $\BB(0,1/2)$. Define, for $y\in M^\delta$ and $x\in S$,
\begin{equation}
\chi_x(y) = \frac{\theta \p{\frac{y-x}{8\delta}}}{\sum_{x'\in S} \theta \p{\frac{y-x'}{8\delta}}}.
\end{equation}
Then, the sequence of functions $\chi_x:M^\delta\to [0,1]$ for $x\in S$, satisfies (i) $\sum_{x\in S} \chi_x \equiv 1$, with at most $c_d$ non zero terms in the sum at any given point of $M^{\delta}$, (ii) $\|\chi_x\|_{\CC^l(M^\delta)}\leq C_{l,d}\delta^{-l}$ for any $l\geq 0$ and, (iii) $\chi_x$ is supported on $\BB_{ M^\delta}(x,8\delta)$.
\end{lemma}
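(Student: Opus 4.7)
The plan is to verify (iii), (i), (ii) in that order; the only real work is the overlap count that feeds both the partition-of-unity identity in (i) and the derivative bound in (ii). Property (iii) is immediate: $\chi_x(y) \neq 0$ forces $\theta\p{(y-x)/(8\delta)} \neq 0$, hence $|y-x| < 8\delta$, since $\theta$ is supported on $\BB(0,1)$.

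For (i), I would first show that the denominator $\Theta(y) \defeq \sum_{x' \in S} \theta\p{(y-x')/(8\delta)}$ is bounded below by $1$ on $M^\delta$: by the hypothesis $d_H(M^\delta|S) \leq 4\delta$, every $y \in M^\delta$ admits some $x' \in S$ with $|y-x'| \leq 4\delta$, and then $\theta\p{(y-x')/(8\delta)} = 1$ because $\theta \equiv 1$ on $\BB(0,1/2)$. This makes every $\chi_x$ well defined and the identity $\sum_x \chi_x \equiv 1$ tautological. To bound the number of summands nonzero at $y$, I would project to $M$: assuming $\delta$ is small enough (implicit in $\delta \lesssim 1$), $\pi_M$ is defined on $M^{2\delta}$. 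For $x \in S$ with $\chi_x(y) \neq 0$, set $\tilde x \defeq \pi_M(x) \in M$, so that $|x - \tilde x| < \delta$ (since $S \subset M^\delta$), $|\tilde x - \pi_M(y)| \leq 10 \delta$, and for two distinct such $x, x'$ the $\tfrac{7}{3}\delta$-sparsity yields $|\tilde x - \tilde x'| \geq \tfrac{7}{3}\delta - 2\delta = \tfrac{\delta}{3}$. The balls $\BB(\tilde x, \delta/6) \cap M$ are therefore pairwise disjoint and all contained in $\BB(\pi_M(y), 11\delta) \cap M$. Standard volume comparison for submanifolds of reach $\geq \taumin$ (giving $\vol_M(\BB(z,r) \cap M) \asymp r^d$ at small scales) then caps the number of such $x$ by a constant $c_d$.

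For (ii), observe that $\Theta$ is pointwise in $[1, c_d]$, and each summand satisfies $\|\theta\p{(\cdot - x')/(8\delta)}\|_{\CC^l(M^\delta)} \leq C_l \delta^{-l}$ by the chain rule (for $\delta \lesssim 1$). Since at most $c_d$ summands are nonzero at any point, $\|\Theta\|_{\CC^l(M^\delta)} \leq C_{l,d} \delta^{-l}$. Because $\Theta \geq 1$, Fa\`a di Bruno applied to $t \mapsto 1/t$ delivers $\|1/\Theta\|_{\CC^l(M^\delta)} \leq C_{l,d} \delta^{-l}$. Writing $\chi_x = \theta\p{(y-x)/(8\delta)} \cdot (1/\Theta(y))$ and invoking Leibniz then produces the stated bound $\|\chi_x\|_{\CC^l(M^\delta)} \leq C_{l,d} \delta^{-l}$.

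The only subtle point is keeping all constants free of the ambient dimension $D$. A naive volume count of the points of $S$ inside $\BB(y, 8\delta) \subset \R^D$ would introduce a constant depending on $D$; the fix is precisely the projection to $M$ carried out above, which replaces the $D$-dimensional volume comparison by a $d$-dimensional one on the manifold of reach $\geq \taumin$.
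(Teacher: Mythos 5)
Your proof is correct and follows essentially the same route as the paper: lower bound on the denominator from the $4\delta$-covering hypothesis, chain rule / Fa\`a di Bruno / Leibniz for the derivatives, and a projection to $M$ followed by a $d$-dimensional volume comparison to get a dimension-free overlap count. One small point in your favor: the paper's proof asserts $|\pi_M(x)-\pi_M(x')|\geq |x-x'|-2\delta\geq 2\delta$ and then declares the balls $\BB_M(\pi_M(x),\delta)$ pairwise disjoint, but with $\frac{7}{3}\delta$-sparsity the correct lower bound is $\frac{7}{3}\delta-2\delta=\frac{\delta}{3}$, so the radius-$\delta$ balls do \emph{not} separate; your use of radius $\delta/6$ repairs this slip. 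The remaining difference is purely organizational: you bound $\Theta$ and $1/\Theta$ in $\CC^l$ globally before applying Leibniz, whereas the paper carries the cardinality $\sum_{x\in S}\ones\{|x-y|\leq 8\delta\}$ through the pointwise derivative estimate and bounds it at the end; both yield the same $C_{l,d}\delta^{-l}$ estimate.
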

A proof of Lemma \ref{lem:existence_pou} is given in Appendix \ref{sec:geom_prop}. Lemma \ref{lem:existence_pou} requires the set $S$ to be $7\delta/3$-sparse. Actually, given a set $S_0\subset M^\delta$ with $d_H(M^\delta\vert S_0)\leq 5\delta/3$, there always exist a subset $S\subset S_0$ that satisfies the assumptions of Lemma \ref{lem:existence_pou}. Such a subset $S$ can be computed using the farthest point sampling algorithm (see e.g.~\cite[Section 3.3]{aamari2018stability}): initialize $S$ with an arbitrary point of $S_0$, and, at each step, add the farthest point $s\in S_0$ from $S$ (that is the point that maximizes $d(s,S_0)$). We stop the algorithm when every point of $s\in S_0$ satisfies $d(s,S)\leq 7\delta/3$. By construction, the set $S$ is $7\delta/3$-sparse and satisfies $d_H(S,S_0) \leq 7\delta/3$. We then have $d_H(M^\delta\vert S) \leq 5\delta/3 + 7\delta/3=4\delta$, so that indeed one can construct a partition of unity using $S$.

 The next proposition describes how we may define a minimax  estimator $\widehat \vol_M$ of the volume measure on $M$ (up to logarithmic factors) using such a partition of unity.  

\begin{theorem}[Minimax estimation of the volume measure on $M$]\label{thm:estimator_volume}
Let $d\leq  D$ be integers and $k\geq 2$. Let $\nu\in \QQ^{0,k}_{d}(\gamma)$ and let $X_1,\dots,X_n$ be a $n$-sample of law $\nu$. Let $\p{\log n/ n}^{1/d} \lesssim \eps \lesssim 1$,  $\gamma \lesssim \eps$, $1\lesssim \ell \lesssim \eps^{-1}$.
\begin{enumerate}[label=(\roman*)]
\item Let $\{X_{i_1},\dots,X_{i_J}\}$ be the output of the farthest point sampling algorithm with parameter $7\eps/24$ and input $\{X_1,\dots,X_n\}$. With probability larger than $1-cn^{-k/d}$, there exists a sequence of smooth nonnegative functions $\chi_j: M^{\eps/8}\to [0,1]$ for  $1\leq j \leq J$,  such that $\chi_j$ is supported on $\BB_{M^{\eps/8}}(X_{i_j},\eps)$,  $\|\chi_j\|_{\CC^1(M^{\eps/8})} \lesssim \eps^{-1}$ and $\sum_{j=1}^J \chi_j(z) =1$ for $z\in M^{\eps/8}$, with at most $c_d$ non-zero terms in the sum. \label{it:pou}
\item \label{it:control_estimator_volume} Let $\hat\Psi_i$ be the local polynomial estimator of order $m\leq k$ with parameter $\eps$ and $\ell$, and $\hat T_i$ the associated tangent space. Let $\widehat\vol_M$ be the measure defined by, for all continuous bounded functions $ \phi:\R^D\to \R$,
\begin{equation}\label{eq:estimator_volume}
\int \phi(x)\dd \widehat\vol_M(x) = \sum_{j=1}^J \int_{\hat \Psi_{i_j}(\hat T_{i_j})}\phi(x) \chi_j(x)\dd x,
\end{equation} 
where the integration is taken against the $d$-dimensional Hausdorff measure on $\hat \Psi_{i_j}(\hat T_{i_j})$. Then, for $1\leq r \leq \infty$, with probability larger than $1- cn^{-k/d}$, we have
\begin{equation}\label{eq:bound_wass_volume}
\begin{split}
&W_r\p{ \frac{\widehat\vol_M}{\vert \widehat\vol_M\vert }, \frac{\vol_M}{\vert \vol_M\vert }} \lesssim (\gamma + \eps^m)(1+\gamma \eps^{-2}).
\end{split}
\end{equation} 
\item \label{it:minimax_volume} In particular, if $m=k$, $\eps \asymp (\log n/n)^{1/d}$ and $\gamma\lesssim \eps^2$, we obtain that 
\begin{equation}\label{eq:risk_volume_estimator}
\begin{split}
&\E W_r\p{ \frac{\widehat\vol_M}{\vert \widehat\vol_M\vert }, \frac{\vol_M}{\vert \vol_M\vert }} \lesssim \gamma + \p{\frac{\log n}{n}}^{\frac k d}.
\end{split}
\end{equation}
Also, for any $\taumin>0$ and $0\leq s <k$, if $\fmin$ is small enough, and if $\fmax, L_k, L_s$ are large enough, then
\begin{equation}\label{eq:minimax_risk_volume}
\RR_n\p{\frac{\vol_M}{\vert \vol_M\vert },\QQ^{s,k}_d,W_r} \gtrsim  \p{\frac{1}{n}}^{\frac k d}.
\end{equation} 
\end{enumerate}
\end{theorem}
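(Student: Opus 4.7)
The proof splits into three parts. For part~(i), I apply the farthest point sampling algorithm with parameter $7\eps/24$ to $\{X_1,\dots,X_n\}$: by construction the output $S=\{X_{i_1},\dots,X_{i_J}\}$ is $7\eps/24$-sparse and $7\eps/24$-close to every sample point. To invoke Lemma~\ref{lem:existence_pou} with $\delta=\eps/8$, I verify $S\subset M^\delta$ (true since $X_i\in M^\gamma$ and $\gamma\lesssim \eps$ with small enough hidden constant) and $d_H(M^{\eps/8}|S)\leq 4\delta=\eps/2$. The latter follows by a triangle inequality: for $z\in M^{\eps/8}$, the projection $\pi_M(z)\in M$ is within $\eps/8$ of $z$ and, by standard concentration of a density lower bounded by $\fmin$ on a compact manifold, with probability at least $1-cn^{-k/d}$ lies within $(\log n/n)^{1/d}$ of some $Y_i$; hence $z$ is within $\eps/8+(\log n/n)^{1/d}+\gamma+7\eps/24\leq \eps/2$ of $S$ under $\eps\gtrsim (\log n/n)^{1/d}$ and $\gamma\lesssim \eps$ with small enough constants. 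Lemma~\ref{lem:existence_pou} then delivers the partition of unity with the stated properties.

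For part~(ii), I construct a transport plan in two stages, working on the high-probability event of Proposition~\ref{prop:estim_manifold}. The piecewise map $F$ defined by $F|_{\hat\Psi_{i_j}(\hat T_{i_j})}=\Psi_{Y_{i_j}}\circ\pi_{Y_{i_j}}\circ\hat\Psi_{i_j}^{-1}$ has pointwise displacement $\lesssim \eps^m+\gamma$ on the support of each $\chi_j$ by \eqref{eq:control_error_manifold}, so that $W_\infty(\widehat\vol_M,F_\#\widehat\vol_M)\lesssim \eps^m+\gamma$. Computing the density $\rho$ of $F_\#\widehat\vol_M$ with respect to $\vol_M$ via change of variables, the Jacobian ratios $J\hat\Psi_{i_j}/J(\Psi_{Y_{i_j}}\circ\pi_{Y_{i_j}})$ equal $1+O(\eps^{m-1}+\gamma\eps^{-1})$ by \eqref{eq:control_angle} and \eqref{eq:control_error_manifold_derivative}, while the $\chi_j$ factors are displaced by $O(\eps^m+\gamma)$ and are Lipschitz with constant $\lesssim \eps^{-1}$; summing over the $\leq c_d$ overlapping patches and using $\sum_j\chi_j\equiv 1$, one obtains $|\rho-1|\lesssim \eps^{m-1}+\gamma\eps^{-1}$ on $M$. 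A crude bound $W_r\leq \diam(M)\TV^{1/r}$ would yield only $\eps^{m-1}+\gamma\eps^{-1}$, which is worse than the target; the missing factor of $\eps$ is recovered by exploiting the fact that the fluctuations of $\rho$ are localized at scale $\eps$. Writing $\rho-1=\sum_j g_j$ with each $g_j$ supported in a ball of radius $\lesssim \eps$ and of zero integral after a small mass rebalancing, I solve $\nabla\cdot w_j=g_j$ on each patch via a Bogovskii-type construction satisfying $\|w_j\|_{L_p(M)}\lesssim \eps\|g_j\|_{L_p(M)}$, and sum. Proposition~\ref{prop:congestionned}\ref{it:cong2} then bounds $\|\rho-1\|_{\dot H^{-1}_p(M)}\lesssim \eps(\eps^{m-1}+\gamma\eps^{-1})=\eps^m+\gamma$, and Proposition~\ref{prop:wass_neg} converts this into $W_p(F_\#\widehat\vol_M/|F_\#\widehat\vol_M|,\vol_M/|\vol_M|)\lesssim \eps^m+\gamma$, the hypothesis $\gamma\lesssim \eps^2$ ensuring that $|\widehat\vol_M|/|\vol_M|=1+o(1)$ so renormalization introduces no larger error. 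Combining with the first stage gives \eqref{eq:bound_wass_volume}. This sharpening from $\eps^{m-1}+\gamma\eps^{-1}$ down to $\eps^m+\gamma$ is the main technical obstacle.

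For part~(iii), the upper bound \eqref{eq:risk_volume_estimator} is immediate from (ii) with $\eps\asymp(\log n/n)^{1/d}$ and $m=k$, absorbing the $\lesssim n^{-k/d}$ failure probability into the risk since Wasserstein distances on the compact manifold $M$ are bounded by $\diam(M)\lesssim 1$. The minimax lower bound \eqref{eq:minimax_risk_volume} splits into two two-point constructions. For the $\gamma$-term, I take two concentric $d$-spheres $M_0,M_1\subset\R^D$ of radii $R$ and $R+\gamma$, both in $\MM_d^k$ for the fixed parameters, with radial tubular noises chosen so that the observed laws $\iota_\#\xi$ under the two hypotheses coincide exactly; since the normalized volume measures are at Wasserstein distance $\asymp\gamma$, Le Cam's two-point method delivers the $\gamma$ lower bound. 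For the $n^{-k/d}$-term, I use the standard bumped-manifold construction (as in~\cite{aamari2019nonasymptotic,divol2020minimax}): a family $\{M_\omega\}_{\omega\in\{0,1\}^N}$ with $N\asymp n$ obtained by raising $N$ disjoint normal $\CC^k$ bumps of height $h\asymp n^{-k/d}$ and width $w\asymp n^{-1/d}$ on a fixed base manifold in $\MM_d^k$, such that pairs differing on $\Theta(N)$ bumps have statistically close sample laws (KL of constant order) and normalized volume measures at Wasserstein distance $\gtrsim n^{-k/d}$; Assouad's lemma then closes the argument.
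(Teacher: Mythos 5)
Your treatment of parts (i) and (iii) is essentially correct and matches the paper. For (i) you verify the hypotheses of Lemma~\ref{lem:existence_pou} by chasing Hausdorff distances; the paper does the same. For (iii) the paper likewise combines the bound from (ii) with a two-point Le~Cam construction via two concentric spheres for the $\gamma$-term, and Assouad's lemma on a bumped-manifold family for the $n^{-k/d}$-term (see Lemma~\ref{lem:strong_lowerbound}).

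For part (ii) there is a genuine gap. You write the density $\rho$ of the transported estimator as $1+O(\eps^{m-1}+\gamma\eps^{-1})$, correctly identify this as a factor of $\eps$ short of the target, and propose to recover that factor by localizing $\rho-1$ into zero-mean pieces on $\eps$-balls and invoking a Bogovskii-type bound. But the decomposition $\rho-1=\sum_j g_j$ with $g_j=\t\chi_j-\chi_j$ does not have zero-mean pieces: each $\int g_j$ is the error in computing the $\chi_j$-weighted volume of the estimated patch versus the true one, and the naive estimate gives only $|\int g_j|\lesssim \eps^d(\eps^{m-1}+\gamma\eps^{-1})$. The hand-waved ``small mass rebalancing'' then leaves a residual $\sum_j(\int g_j)\ones_{B_j}/\vol(B_j)$ whose global transport cost is of order $\sum_j|\int g_j|\cdot\diam(M) \lesssim \eps^{m-1}+\gamma\eps^{-1}$ in the worst case, which is as bad as the starting point. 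To control this residual you would in effect need $|\int g_j|\lesssim\eps^d(\eps^m+\gamma)$, and the only available route to that estimate is the sharp pointwise bound $|\t\chi_j-\chi_j|\lesssim\eps^m+\gamma$ that you are trying to avoid proving. The paper instead proves that pointwise bound directly (Lemma~\ref{lem:pointwise_comparison}\ref{it:comp_diffeo3}), using two geometric facts you did not invoke: first, the displacement $\hat\Psi_j\circ S_j(z)-z$ has a tangential component that is smaller by an extra factor $\eps$, i.e.\ $|\pi_{Y_j}(z-\hat\Psi_j\circ S_j(z))|\lesssim\eps(\eps^m+\gamma)$, which combined with the \emph{radial} structure of $\theta$ (so $|\theta(x)-\theta(y)|\lesssim\bigl||x|^2-|y|^2\bigr|$, see Lemma~\ref{lem:radial}) yields $|\chi_j(\hat\Psi_j\circ S_j(z))-\chi_j(z)|\lesssim\eps^m+\gamma$; second, the Jacobian difference is \emph{quadratically} small because $J\Psi=\sqrt{\det(\id+(dN)^*dN)}$ with $dN=O(\eps)$ near the origin, giving $|J(\hat\Psi_j\circ S_j)-1|\lesssim\eps(\eps^{m-1}+\gamma\eps^{-1})+\angle(\hat T_j,T_{Y_j}M)^2\lesssim\eps^m+\gamma$. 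With the sharp pointwise bound in hand, Proposition~\ref{prop:wass_neg} applied to the $L_r$ norm of the density difference already gives $W_r\lesssim\eps^m+\gamma$ with no need for the Bogovskii detour.
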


Let $\hat\rho_h \defeq K_h*\widehat\vol_M$. We define $\hat\nu_{n,h}$ as the measure having density $K_h*(\nu_n/\hat\rho_h)$ with respect to the measure $\widehat \vol_M$, where $\nu_n=\frac{1}{n} \sum_{i=1}^n \delta_{X_i}$ is the empirical measure of the sample $(X_1,\dots,X_n)$.

\begin{theorem}\label{thm:M_unknown}
Let $d\leq  D$ be integers, $0< s\leq k-1$ with $k\geq 2$  and $1\leq  p<\infty$. Let $\nu\in \QQ^{s,k}_{d}(\gamma)$ and let $X_1,\dots,X_n$ be a $n$-sample of law $\nu$.  Decompose $X_i$ as $Y_i+Z_i$, where $Y_i$ has law $\mu\in\QQ^{s,k}_d$ and $Z_i$ is an orthogonal noise of size $\gamma$.  There exists a constant $\beta$ depending on the parameters of the model such that the following holds. Assume that $K$ is a kernel satisfying conditions $A$, $B(k)$ and $C(\beta)$, that $(\log n/n)^{1/d}\lesssim \eps \lesssim h \lesssim 1$, $\gamma\lesssim \eps$,  $1\lesssim \ell \lesssim \eps^{-1}$ and consider the estimator $\widehat \vol_M$  defined in \eqref{eq:estimator_volume} with parameters $m$, $\eps$ and $\ell$. We have the following:
\begin{enumerate}[label=(\roman*)]
\item The measure $\hat \nu_{n,h}$ is a nonnegative measure with probability larger than $1-cn^{-k/d}$. \label{it:nonnegative}
\item \label{it:proxim_estimation_volume} Define $\hat\nu^0_{n,h} =\hat\nu_{n,h}$ if $\hat\nu_{n,h}$ is a nonnegative measure and $\hat\nu_{n,h}^0=\delta_{X_1}$ otherwise. Then, with probability larger than $1-cn^{-k/d}$,
\begin{equation} 
W_p(\hat\nu^0_{n,h},\mu_{n,h}^0) \lesssim (\gamma+\eps^m)(1+\gamma \eps^{-2}).
\end{equation} 
\item \label{it:rate_final_estimator} In particular, let $m=\lceil s+1\rceil$, $\eps \asymp (\ln n/n)^{1/d}$, $\gamma\lesssim \eps^2$, $\ell \asymp\eps^{-1}$ and $h\asymp n^{-1/(2s+d)}$ if $d\geq  3$, $h \asymp (\log n/n)^{1/d}$ if $d\leq 2$. Then, \begin{equation}\label{eq:consequence_of_the_previous_eq}
\E W_p(\hat\nu_{n,h}^0,\mu) \lesssim \gamma+ \begin{cases}
n^{-\frac{s+1}{2s+d}} &\text{ if } d\geq 3,\\
n^{-\frac 1 2}\p{\log n}^{\frac 1 2}&\text{ if } d= 2, \\
n^{-\frac 1 2} &\text{ if } d=1.
\end{cases}
\end{equation} 
\end{enumerate}
\end{theorem}

We thus manage to obtain the same rate of convergence as in Theorem \ref{thm:estimator_M_known}, although not having access to the manifold $M$  (up to a noise factor $\gamma$).

\section{Proofs of the main theorems}\label{sec:proofs}

The proof of Theorem \ref{thm:estimator_M_known} relies on the classical bias variance decomposition displayed in \eqref{eq:decomposition_bias}, together with the linearization inequality given in Proposition \ref{prop:wass_neg}. We first bound the bias of the estimator, which can be expressed in term of a convolution operator on $M$.

\subsection{Bias of the kernel density estimator}\label{sec:study_estimator}
The bias of the estimator is given by the distance $\|\cdot\|_{H^{-1}_p(M)}$ between $f$ the density of $\mu$ and $f_h\defeq K_h*(\mu/\rho_h)$. For $\phi \in L_1(M)$, we write $\t\phi$ for $\phi/\rho_h$. Introduce the operator $A_h: B^s_{p,q}(M) \to H^{-1}_p(M)$ defined for $\phi\in L_1(M)$ by 
\begin{equation}
A_h\phi \defeq K_h*\p{\frac{\phi}{\rho_h}}-\phi =\int_M K_h(\cdot-y)\p{\t\phi(y)-\t\phi}\dd \vol_M(y).
\end{equation}
Then, 
\begin{equation}\label{eq:how_to_bound_bias}
\begin{split}
\|f_h-f\|_{H^{-1}_p(M)} = \|A_h f\|_{H^{-1}_p(M)} &\leq  \|A_h\|_{B^s_{p,q}(M), H^{-1}_p(M)} \|f\|_{B^s_{p,q}(M)}\\
&\leq \|A_h\|_{B^s_{p,q}(M), H^{-1}_p(M)} L_s. 
\end{split}
\end{equation} 
Therefore, it suffices to control the operator norm of $A_h$ to bound the bias.
\begin{proposition}\label{prop:bound_bias}
Let $0< s\leq k-1$ and $1\leq p <\infty$. Assume that the kernel $K$ is of order $k$. Then, if $h\lesssim 1$,
\begin{equation}
\|A_h\|_{B^s_{p,q}(M), H^{-1}_{p}(M)} \lesssim h^{s+1}.
\end{equation}
\end{proposition}

The proof of Proposition \ref{prop:bound_bias} consists in considering the Taylor expansion of a function $\phi\in B^s_{p,q}(M)$. In this Taylor expansion, all polynomial terms of low order disappear when integrated against $K$, as the kernel $K$ is of sufficiently large order. Namely, we have the following property, whose proof is given in Appendix \ref{sec:proof_bias}.

\begin{lemma}\label{lem:kernel_of_order_k}
Assume that the kernel $K$ is of order $k$ and let $B:(\R^D)^j\to \R$ be a tensor of order $1\leq j< k$.  Then, 
 for all $x\in  M$,
\begin{align}
&\left\vert \int_M K_h(x-y)B[(x-y)^{\otimes j}] \dd y\right \vert  \lesssim \op{B}h^{k} \label{eq:kernel_1} \\
&\left\vert \rho_h(x) - 1\right \vert  \lesssim h^{k-1}\ \text{ and }\ \|\rho_h\|_{\CC^j(M)} \lesssim h^{k-1-j}  \label{eq:kernel_0}
\end{align}
\end{lemma}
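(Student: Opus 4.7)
My plan is to exploit the local $\CC^k$ parametrization $\Psi_x$ of $M$ near $x$ to reduce the manifold integral to a Euclidean integral over $T_x M \cong \R^d$, and then to Taylor-expand the integrand in $h$ and use the moment-vanishing conditions of condition $B(k)$ to cancel the low-order terms.

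For \eqref{eq:kernel_1}: since $K_h$ has support in $\BB(0,h)$ and $h \lesssim 1$, only $M \cap \BB(x,h)$ contributes, and this piece is parametrized by $y = \Psi_x(v)$ for $v \in T_xM$ in a small ball. Setting $\phi_x(v) := \Psi_x(v) - x$ and using the Taylor expansion of $\Psi_x \in \CC^k$ at the origin, the rescaled map $\phi_x(hu)/h$ decomposes as $u + h\cdot Q(u,h)$ with $Q$ of class $\CC^{k-1}$ near $h=0$. After rescaling $v = hu$, the integral becomes $I = (-h)^j \int G(u,h)\,du$ with $G(u,h) := K(-\phi_x(hu)/h)\cdot B[(\phi_x(hu)/h)^{\otimes j}]\cdot J\Psi_x(hu)$, at least $\CC^{k-1}$-smooth in $h$. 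Taylor-expand $G$ in $h$ up to order $k-j-1$: $G(u,h) = \sum_{l=0}^{k-j-1} \frac{h^l}{l!} a_l(u) + h^{k-j} R(u,h)$, where $\|R\|_\infty$ is controlled by $\op{B}$ times constants depending only on $\taumin, L, k$.

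A Faà di Bruno computation identifies each coefficient $a_l(u) = \partial_h^l G(u,0)$ as a finite linear combination of terms of the form $c(B) \cdot (\partial^{\alpha^0} K)(-u)\cdot u^\alpha$, where the multi-indices satisfy $|\alpha^0| \leq l$ and $|\alpha| = l + j + |\alpha^0|$. The latter identity is the crucial bookkeeping: each $h$-differentiation either adds a $K$-derivative (raising $|\alpha^0|$ by $1$ and $|\alpha|$ by $2$) or extends an existing polynomial factor / differentiates $J\Psi_x$ (keeping $|\alpha^0|$ fixed and raising $|\alpha|$ by $1$), so $|\alpha| - |\alpha^0|$ increases by exactly one per differentiation, starting from the value $j$ at $l=0$ (since $G(u,0) = K(-u)B[u^{\otimes j}]$). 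For all $l \leq k-j-1$ the resulting multi-indices satisfy the hypotheses of condition $B(k)$: namely $|\alpha^0| \leq k-j-1 < k$, $|\alpha| = l + j + |\alpha^0| \leq k-1 + |\alpha^0| < k + |\alpha^0|$, and if $\alpha^0 = 0$ then $|\alpha| = l + j \geq j \geq 1 > 0$. Hence $\int (\partial^{\alpha^0} K)(-u)\,u^\alpha\,du = 0$ for every such term, and therefore $\int a_l(u)\,du = 0$ for each $l \in \{0,\dots,k-j-1\}$. Combining with the remainder bound gives $|I| \leq C\op{B}\, h^j \cdot h^{k-j} = C\op{B}\,h^k$.

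For \eqref{eq:kernel_0}: the bound $|\rho_h(x)-1| \lesssim h^{k-1}$ follows from exactly the same argument with $j = 0$ (no tensor $B$); the leading coefficient $\int K(-u)\,du = 1$ by condition $A$ (radiality and $\int_{\R^d}K = 1$), and the remaining coefficients $a_l$, $1 \leq l \leq k-2$, vanish by the identical multi-index argument (for $\alpha^0 = 0$ one has $|\alpha| = l \geq 1$, keeping $B(k)$ applicable). For the $\CC^j$-bound, differentiate the ambient representation $\rho_h = K_h * \vol_M$ on $\R^D$ to obtain $d^j \rho_h(x)[w_1,\dots,w_j] = h^{-j} \int (d^j K)(-\phi_x(hu)/h)[w_1,\dots,w_j]\cdot J\Psi_x(hu)\,du$, and apply the same Taylor/moment argument, now starting with initial $|\alpha^0| = j$. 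The $\R^D$-derivative bounds then translate to the intrinsic $\CC^j$-bound on $M$ via the chain rule applied to $\rho_h \circ \Psi_x$, using that $\|\Psi_x\|_{\CC^k} \leq L$.

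The main obstacle is the combinatorial bookkeeping underlying the identity $|\alpha| = l + j + |\alpha^0|$: one must verify via Faà di Bruno's formula that no $h$-differentiation can escape the range of multi-indices controlled by $B(k)$, and correctly track the coefficient bound by $\op{B}$. For the derivative bound in \eqref{eq:kernel_0} with $j \geq 2$, an additional refinement exploiting the radial structure of $K$ together with the fact that $d^n\Psi_x(0)$ takes values in $(T_xM^\perp)^{\otimes n}$ for $n \geq 2$ (a consequence of $\pi_x \circ \Psi_x = \id_{T_xM}$) is needed to achieve the sharp rate $h^{k-1-j}$, since the crude multi-index count alone only delivers the weaker $h^{k-2j}$.
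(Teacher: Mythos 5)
Your proposal follows the same route as the paper's: change variables to $T_xM$ via $y=\Psi_x(v)$, rescale $v=hu$, Taylor-expand the integrand around $h=0$ (i.e., around the flat situation $\Psi_x(v)=x+v$, $J\Psi_x\equiv 1$), and kill the low-order coefficients using condition $B(k)$. The paper organises this as a product of finite Taylor expansions of $\Psi_x$, $J\Psi_x$, $K$ and $B$ with indices $(i_0,i_1,i_2)$ tracking the $h$-power $i_0i_1+i_2-i_0$, whereas you organise it as a one-variable Taylor expansion of $G(u,h)$ in $h$ and a Fa\`a di Bruno bookkeeping giving $|\alpha|-|\alpha^0|=l+j$; these are two presentations of the same computation, and your multi-index arithmetic is correct. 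For \eqref{eq:kernel_1} and the first inequality in \eqref{eq:kernel_0}, the argument is the paper's argument with the bookkeeping made more explicit.

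Two remarks on where both you and the paper leave something implicit, one of which you flag yourself. First, the arguments fed to $d^{i_0}K(-u)$ in the Taylor coefficients are $d^{i_1}\Psi_x(0)[\cdot]\in T_xM^\perp$ for $i_1\ge 2$, so what actually appears is $K$ differentiated in normal directions evaluated at a tangent point; converting this back to the $d$-dimensional moment conditions of $B(k)$ requires the radiality of $K$ (after which the normal arguments must pair up, producing $\t K^{(p)}(|u|^2)$ factors). This step is silent in both proofs. Second, as you correctly observe, the crude count $|\alpha^0|\le j+l<k$ only delivers $\|\rho_h\|_{\CC^j(M)}\lesssim h^{k-2j}$; the stated rate $h^{k-1-j}$ needs precisely this radial/orthogonality structure (together with the chain-rule terms from $d^j(\rho_h\circ\Psi_x)(0)$, which bring in additional $d^m\Psi_x(0)\in T_xM^\perp$ arguments). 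You identify the obstruction and name the ingredients but do not carry out the cancellation; the paper does not carry it out either, writing only ``using the same argument as before.'' So your proposal is as complete as the paper's on this point, and more transparent in that it makes the difficulty visible rather than sweeping it under ``same argument.'' If you want to close the gap, the cleanest path is to convert every $d^{j+i_0}K(-u)[w_1,\dots,w_j,N_1,\dots,N_{i_0}]$ via the radial structure into $\t K^{(p)}(|u|^2)\cdot Q(u)$, observe that $\deg Q-2p=l-j$ for the $h^l$-coefficient, and relate these one-dimensional moments of the radial profile to the moment conditions through the integration-by-parts identity used in Appendix H, being careful about the boundary terms at $0$ when $\deg Q-2p+d\le 1$.
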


Let us now give a sketch of proof of Proposition \ref{prop:bound_bias} in the case $0< s\leq 1$. The $H^{-1}_{p}(M)$-norm of $A_h\phi$ is by definition equal to
\[
\|A_h \phi\|_{H^{-1}_{p}(M)} = \sup \left\{ \int (A_h\phi) g\dd\vol_M,\ \|g\|_{H^{1}_{p^*}(M)}\leq 1 \right\}.
\]
Let $g\in H^{1}_{p^*}(M)$ with $ \|g\|_{H^{1}_{p^*}(M)}\leq 1$. We use the following symmetrization trick:
\begin{align}
&\int (A_h\phi)(x)g(x)\dd x =  \iint K_h(x-y) (\t\phi(y)- \t\phi(x))g(x) \dd y\dd x \nonumber\\
&\quad =\iint K_h(y-x)(\t\phi(x)- \t\phi(y))g(y)\dd y \dd x \text{ (by swapping the indexes $x$ and $y$)}\nonumber \\
&\quad = \frac{1}{2} \iint K_h(x-y)(\t\phi(y)- \t\phi(x))(g(x)-g(y))\dd y\dd x \label{eq:the_trick}
\end{align}
where, at the last line, we averaged the two previous lines and used that $K$ is an even function. Informally, as $K_h(x-y)=0$ if $\vert x-y\vert \geq h$, and as $\rho_h$ is roughly constant, we expect $\vert \t\phi(y)- \t\phi(x)\vert $ to be of order $h^s$ and $\vert g(x)-g(y)\vert $ to be of order $h$, leading to a bound of $\int (A_h\phi)(x)g(x)\dd x$ of order $h^{s+1}$.  Such a trick can be generalized to a function $\phi$ of regularity $s>1$ by writing a Taylor expansion of $\phi$ and using that all the polynomial terms $(x-y)^\alpha$ vanish when integrated against $K_h(x-y)$ according to Lemma \ref{lem:kernel_of_order_k}.  More precisely, we prove in Appendix \ref{sec:proof_bias} the following higher order analogue of the symmetrization trick.

\begin{lemma}[Symmetrization trick]\label{lem:bias2}
There exists $h_0\lesssim 1$ such that the following holds. Let  $l$ be an even number between $0$ and $k-1$ and let $K^{(l)}(x) = \int_0^1 K_\lambda(x) \frac{(1-\lambda)^{l-1}\lambda^{-l}}{(l-1)!}\dd \lambda$ for $x\in\R^D$. Fix $x_0\in M$ and let $\phi \in \CC^\infty(M)$ be a function supported in $\BB_M(x_0,h_0)$. Define $\t \phi_l \defeq d^l (\t \phi \circ\Psi_{x_0}) \circ \t\pi_{x_0}$. Let $g\in L_{p^*}(M)$ with $\|g\|_{L_{p^*}(M)}\leq  1$.

  Then, for $h\lesssim 1$, $\int (A_h\phi)(x) g(x)\dd x$ is equal to
\begin{align}
&\frac{1}{2} \iint_{ \BB_M(x_0,h_0)^2} K^{(l)}_h(x-y)(\t \phi_l(y)-\t \phi_l(x)) \left[\pi_{x_0}(x-y) \right]^{\otimes l}\p{g(x)-g(y)}\dd y\dd x +R,
\end{align}
where $R$ is a remainder term satisfying $\vert R\vert \lesssim \|\t \phi\|_{H^{l}_p(M)} h^{l+1}$. Furthermore, if $l\leq k-2$, we have $\vert R\vert \lesssim \|\t \phi\|_{H^{l+1}_p(M)} h^{l+2}$.
\end{lemma}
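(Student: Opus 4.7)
The $l=0$ case is already the argument in (\ref{eq:the_trick}), so the focus is on even $l\geq 2$. The overall strategy is to perform a Taylor expansion of $\t\phi$ in the chart $\Psi_{x_0}$, use the order-$k$ property of $K$ to kill polynomial terms of degree $<l$, rescale the integral remainder so that the kernel $K_h^{(l)}$ emerges, and then symmetrize by swapping $x$ and $y$ as in (\ref{eq:the_trick}). All mismatches along the way---polynomial moments, Jacobian corrections on $M$, and diagonal terms produced by symmetrization---will be shown to be $O(h^{l+1})$ and absorbed into $R$.

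Set $F \defeq \t\phi\circ\Psi_{x_0}$ and, for $x,y\in\BB_M(x_0,h_0)$, let $u\defeq \t\pi_{x_0}(x)$, $v\defeq \t\pi_{x_0}(y)$, so that $v-u=\pi_{x_0}(y-x)$. Taylor's theorem with integral remainder gives
\begin{equation*}
F(v)-F(u)=\sum_{j=1}^{l-1}\tfrac{1}{j!}d^jF(u)[(v-u)^{\otimes j}]+\int_0^1\tfrac{(1-\lambda)^{l-1}}{(l-1)!}d^lF(u+\lambda(v-u))[(v-u)^{\otimes l}]\dd\lambda.
\end{equation*}
Inserting this into $\int A_h\phi(x)g(x)\dd x$ splits it into a polynomial part $P_l$ (the sum) and a remainder part $M_l$ (the integral). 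For each $1\leq j\leq l-1$, the tensor $B=d^jF(u)\circ\pi_{x_0}^{\otimes j}$ satisfies $\op{B}\lesssim\op{d^jF(u)}$, so Lemma~\ref{lem:kernel_of_order_k} gives $|\int_M K_h(x-y)B[(y-x)^{\otimes j}]\dd y|\lesssim h^k$; summing, using H\"older, and $k\geq l+1$, one gets $|P_l|\lesssim h^k\|\t\phi\|_{H^{l-1}_p(M)}\leq h^{l+1}\|\t\phi\|_{H^l_p(M)}$, contributing to $R$.

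For $M_l$, change variables in $y$ at fixed $x,\lambda$: set $w=\Psi_{x_0}(u+\lambda(v-u))$, which sweeps a curve between $x$ and $y$. In the flat case this gives $K_h(x-y)\dd y=\lambda^d K_{h\lambda}(w-x)\dd w$ and $\pi_{x_0}(y-x)=\lambda^{-1}\pi_{x_0}(w-x)$; the $\lambda$-integration then assembles into $K_h^{(l)}$ and, after renaming $w\to y$, yields
\begin{equation*}
M_l=\iint K_h^{(l)}(y-x)\t\phi_l(y)[\pi_{x_0}(y-x)^{\otimes l}]g(x)\dd x\dd y+R_1,
\end{equation*}
where $R_1$ is the discrepancy with the flat case. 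Since the leading deviation of $\Psi_{x_0}$ from the identity is quadratic, the Jacobian and projection errors are of size $O(h^2)$ relative to the integrand, keeping $|R_1|\lesssim h^{l+1}\|\t\phi\|_{H^l_p(M)}$. Now swap $x\leftrightarrow y$: radiality of $K$ makes $K_h^{(l)}$ even, and evenness of $l$ preserves $(y-x)^{\otimes l}$. Averaging the two expressions and comparing with the target form $(\t\phi_l(y)-\t\phi_l(x))(g(x)-g(y))$, the remaining gap reduces (via one more swap) to $\iint K_h^{(l)}(y-x)[\pi_{x_0}(y-x)^{\otimes l}]\t\phi_l(x)g(x)\dd y\dd x$; the inner $y$-integral is an $l$-th moment of $K_h^{(l)}$, which, upon writing $K_h^{(l)}=\int_0^1\tfrac{(1-\lambda)^{l-1}\lambda^{-l}}{(l-1)!}K_{h\lambda}\dd\lambda$ and applying Lemma~\ref{lem:kernel_of_order_k} to each $K_{h\lambda}$, is bounded by $h^k\int_0^1(1-\lambda)^{l-1}\lambda^{k-l}\dd\lambda\lesssim h^{l+1}$, and is absorbed into $R$.

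The technically delicate step is the change of variables on $M$: one must carefully quantify how the Jacobians, the projection $\pi_{x_0}$, and the map $\Psi_{x_0}$ deviate from their flat counterparts, relying on $M\in\MM_d^k$, reach $\geq\taumin$, and the $\CC^k$ control on $\Psi_{x_0}$; the hypothesis that $h_0$ is small enough keeps everything inside the chart $\BB_{T_{x_0}M}(0,r_0)$. For the sharper bound $|R|\lesssim\|\t\phi\|_{H^{l+1}_p(M)}h^{l+2}$ available when $l\leq k-2$ is even, push Taylor one order further: the extra polynomial term $j=l$ contributes a moment bound $h^k\leq h^{l+2}$ via Lemma~\ref{lem:kernel_of_order_k} (since now $k\geq l+2$), and the $O(h^2)$ curvature corrections in the change of variables upgrade the Jacobian error from $h^{l+1}$ to $h^{l+2}$---both improvements costing one additional derivative on $\t\phi$, which produces the $\|\t\phi\|_{H^{l+1}_p(M)}$ norm.
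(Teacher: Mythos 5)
Your decomposition for the weaker bound $|R|\lesssim\|\t\phi\|_{H^l_p(M)}h^{l+1}$ is a genuinely different route and it works. The paper's proof, after the order-$l$ Taylor expansion, \emph{subtracts} $d^l\t\phi_{x_0}(u)$ from the integral remainder before the $\lambda$-change of variables, so that the remainder carries the difference $d^l\t\phi_{x_0}(w)-d^l\t\phi_{x_0}(u)$ from the outset and the symmetrization only needs to produce the factor $(g(x)-g(y))$. You instead carry $d^l\t\phi_{x_0}(w)$ alone through the change of variables and recover the difference $\t\phi_l(y)-\t\phi_l(x)$ by a double symmetrization, which leaves a diagonal term $D=\iint K_h^{(l)}(y-x)\t\phi_l(x)[\pi_{x_0}(y-x)^{\otimes l}]g(x)\,\dd y\,\dd x$ that you correctly kill by the $l$-th moment of $K^{(l)}$ via Lemma~\ref{lem:kernel_of_order_k}. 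For the $h^{l+1}$ bound both routes land in the same place, and your way avoids one algebraic manipulation at the cost of one extra moment estimate.

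The argument for the sharper bound $|R|\lesssim\|\t\phi\|_{H^{l+1}_p(M)}h^{l+2}$ has a genuine gap, localized in your treatment of the change-of-variables error $R_1$. Your claim that ``the Jacobian and projection errors are of size $O(h^2)$ relative to the integrand'' is not correct: the discrepancy $\bigl|\lambda^{-d}K_h(x-\Psi_{x_0}(u+\tfrac{w-u}{\lambda}))J\Psi_{x_0}(u+\tfrac{w-u}{\lambda})-K_{h\lambda}(x-\Psi_{x_0}(w))J\Psi_{x_0}(w)\bigr|$ is of size $(\lambda h)^{-d}h$, i.e.\ \emph{relative} size $O(h)$, not $O(h^2)$ — the kernel argument is perturbed by $O(\lambda h^2)$ but the kernel scale is $\lambda h$, so the relative shift is $O(h)$. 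Consequently the contribution of $R_1$ is $h^{l+1}(\lambda h)^{-d}\int_{|w-u|\lesssim\lambda h}\op{d^lF(w)}\,\dd w$, which only yields $h^{l+1}\|\t\phi\|_{H^l_p(M)}$; there is no extra $h$ to extract, because $\op{d^lF(w)}$ appears by itself rather than as a difference. The paper's subtraction of $d^l\t\phi_{x_0}(u)$ is precisely what makes this estimate sharpen: it turns $\op{d^lF(w)}$ into $\op{d^lF(w)-d^lF(u)}\lesssim|w-u|\op{d^{l+1}F}\lesssim(\lambda h)\,\op{d^{l+1}F}$, which gives the extra $h$ and produces $\|\t\phi\|_{H^{l+1}_p(M)}$. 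Your fallback proposal to ``push Taylor one order further'' does not repair this: it changes the main term to one built from $K^{(l+1)}$ and $\t\phi_{l+1}$, not the $K^{(l)}$, $\t\phi_l$ form required by the lemma, and moreover when $l+1$ is odd the symmetrization step — which relies on $[\pi_{x_0}(x-y)^{\otimes l}]$ being even under $x\leftrightarrow y$ — breaks down.
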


There are two restrictions in the above lemma: (i) the function $\phi$  has to be supported on a small ball and (ii) the regularity $s$ of $\phi$ is an integer $l$. We will bypass the first restriction by using a partition of unity, whereas we will use interpolation inequalities to go from integer regularity $l$ to a regularity $s$ being any real number between $0$ and $k-1$. Let us first see how one can use Lemma \ref{lem:bias2} to bound the norm of $\int (A_h\phi)(x) g(x)\dd x$. We use the following technical lemma.

\begin{lemma}\label{lem:equiv_norm_sobolev}
Let $\eta\in \CC^\infty(M)$ and let $0\leq l\leq k-2$. Assume that either $l=0$ or that $\eta$ is supported on $\BB_M(x_0,h_0)$. Let $\eta_l= d^l(\eta\circ\Psi_{x_0}) \circ \t\pi_{x_0}$. Then,  for any $h\lesssim 1$,
\begin{equation}
\begin{split}
&\p{h^{-d} \iint_{\BB_M(x_0,h_0)^2} \ones\{\vert x-y\vert \leq  h\} \frac{\op{ \eta_l(x)- \eta_l(y)}^p}{\vert x-y\vert ^p}\dd x\dd y}^{1/p}\\
&\qquad \qquad \lesssim  \p{\int_{ \BB_M(x_0,h_0)}\op{\eta_{l+1}(x)}^p\dd x}^{1/p} \lesssim   \|\eta\|_{H^{l+1}_p(M)}.
\end{split}
\end{equation}
\end{lemma}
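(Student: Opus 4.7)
The plan is to dispatch the two inequalities separately, the first via the fundamental theorem of calculus combined with a bi-Lipschitz change of variables to the tangent space at $x_0$, the second by a direct chain-rule calculation.

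For the first bound, set $u=\t\pi_{x_0}(x)$, $v=\t\pi_{x_0}(y)$, and $F\defeq d^l(\eta\circ\Psi_{x_0})$, so that $\eta_l(x)-\eta_l(y)=F(u)-F(v)$. The fundamental theorem of calculus yields
$$F(u)-F(v)=\int_0^1 d^{l+1}(\eta\circ\Psi_{x_0})\bigl(v+t(u-v)\bigr)\bigl[u-v\bigr]\,\dd t.$$
Taking operator norms, applying Jensen's inequality (valid since $p\geq 1$), and using $|u-v|=|\pi_{x_0}(x-y)|\leq |x-y|$ gives the pointwise bound
$$\frac{\op{\eta_l(x)-\eta_l(y)}^p}{|x-y|^p} \leq \int_0^1 \op{d^{l+1}(\eta\circ\Psi_{x_0})\bigl(v+t(u-v)\bigr)}^p\,\dd t.$$
Integrating over $\BB_M(x_0,h_0)^2\cap\{|x-y|\leq h\}$, I switch to variables $(u,v)\in T_{x_0}M\times T_{x_0}M$. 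Because the reach of $M$ is at least $\taumin$ and $h_0\lesssim \taumin$, the restriction of $\t\pi_{x_0}$ to $\BB_M(x_0,h_0)$ is a bi-Lipschitz diffeomorphism onto a convex open set $U\subset T_{x_0}M$ with constants depending only on model parameters, so $\dd x\,\dd y\asymp \dd u\,\dd v$ and $|x-y|\asymp|u-v|$. A final unit-Jacobian substitution $(u,v)\mapsto(w,z)=(tu+(1-t)v,\,u-v)$, combined with Fubini in $t$, decouples the integrals and produces a factor $\int_{|z|\lesssim h}\dd z\lesssim h^d$; the remaining $w$-integral pulls back to $\int_{\BB_M(x_0,h_0)}\op{\eta_{l+1}(x)}^p\,\dd x$, which is the first inequality.

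For the second inequality, the chain rule (Fa\`{a} di Bruno's formula) expresses $d^{l+1}(\eta\circ\Psi_{x_0})(u)$ as a finite sum of multilinear contractions of $d^i\eta(\Psi_{x_0}(u))$ with products of derivatives of $\Psi_{x_0}$ at $u$ of total order $l+1$, for $1\leq i\leq l+1$. Because $l+1\leq k-1$ and $\|\Psi_{x_0}\|_{\CC^k(\BB_{T_{x_0}M}(0,r_0))}\leq L_k$, every factor involving $\Psi_{x_0}$ is bounded by a constant depending only on $l$, $d$ and $L_k$. Evaluating at $u=\t\pi_{x_0}(x)$ and using $\Psi_{x_0}(\t\pi_{x_0}(x))=x$ for $x\in\BB_M(x_0,h_0)$, I deduce $\op{\eta_{l+1}(x)}\lesssim \sum_{i=1}^{l+1}\op{d^i\eta(x)}$, and integrating on $M$ gives the second inequality directly from the definition of $\|\eta\|_{H^{l+1}_p(M)}$.

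The main technical obstacle I anticipate is making precise the bi-Lipschitz constants of $\t\pi_{x_0}$ and $\Psi_{x_0}$ on $\BB_M(x_0,h_0)$ and the resulting uniform Jacobian bounds, so that they depend only on $\taumin$, $L_k$ and $d$. These are standard consequences of the reach condition and the $\CC^k$ control on $\Psi_{x_0}$ from Definition~\ref{def:manifold_holder}, and the same estimates appear implicitly in the surrounding bias computations, so they should be extractable with minimal extra work.
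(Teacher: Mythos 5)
Your argument matches the paper's essentially line for line: the first inequality via the fundamental theorem of calculus in tangent coordinates, Jensen's inequality, and a linear change of variables (the paper keeps $u$ and sets $w=u+\lambda(v-u)$, picking up a $\lambda^{-d}$ Jacobian, while you use the unit-Jacobian map $(u,v)\mapsto(tu+(1-t)v,\,u-v)$, but these are interchangeable), and the second inequality via the chain rule bounding $\op{d^{l+1}(\eta\circ\Psi_{x_0})}$ by derivatives of $\eta$. The one small imprecision is the claim that $\t\pi_{x_0}(\BB_M(x_0,h_0))$ is a \emph{convex} open set — it generally is not — but since $u=\t\pi_{x_0}(x)$ and $v=\t\pi_{x_0}(y)$ both lie in the genuinely convex ball $\BB_{T_{x_0}M}(0,h_0)\subset\BB_{T_{x_0}M}(0,r_0)$, the segment $\{v+t(u-v)\}_{t\in[0,1]}$ stays in the domain of $\eta\circ\Psi_{x_0}$, which is all the FTC step actually requires.
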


Let $\phi \in \CC^\infty(M)$ be a function supported in $\BB_M(x_0,h_0)$ and $g\in H^{1}_{p^*}(M)$ with $ \|g\|_{H^{1}_{p^*}(M)}\leq 1$.

\paragraph{Case $1$: the regularity $s$ is an even number} Let $l=s$. 
Assume first that $p>1$ and that $g$ is smooth. We may use that $\vert K_{h\lambda}(x) \vert=0$ if $\vert x \vert>h\lambda$ to obtain
\begin{align}
&\iint_{  \BB_M(x_0,h_0)^2}\vert  K_{\lambda h}(x-y)\vert \op{\t \phi_l(y)-\t \phi_l(x)}\vert g(x)-g(y)\vert \vert x-y\vert ^{l}\dd x\dd y \nonumber\\
&\leq \|K\|_{\CC^0(\R^D)} (\lambda h)^{l+1-d} \iint_{  \BB_M(x_0,h_0)^2} \ones\{\vert x-y\vert \leq \lambda h\}\op{\t \phi_l(y)-\t \phi_l(x)} \frac{\vert g(x)-g(y)\vert }{\vert x-y\vert }\dd x\dd y\nonumber \\
&\lesssim (\lambda h)^{l+1}\hspace{-.1cm} \p{ (\lambda h)^{-d} \iint_{  \BB_M(x_0,h_0)^2} \ones\{\vert x-y\vert \leq \lambda h\}\op{\t \phi_l(y)-\t \phi_l(x)}^{p}\dd x\dd y}^{\hspace{-.1cm} 1/p}\nonumber \\
&\qquad\qquad \times \p{(\lambda h)^{-d} \iint_{  \BB_M(x_0,h_0)^2} \ones\{\vert x-y\vert \leq \lambda h\}  \frac{\vert g(x)-g(y)\vert ^{p^*}}{\vert x-y\vert ^{p^*}}\dd x\dd y}^{\hspace{-.1cm}1/p^*} \nonumber\\
&\lesssim (\lambda h)^{l+1} \p{(\lambda h)^{-d}\int_{x\in \BB_M(x_0,h_0)}\op{\t \phi^l(x)}^p \vol_M(\BB_M(x,\lambda h))\dd x}^{1/p}\hspace{-.2cm} \|g\|_{H^{1}_{p^*}(M)} \nonumber\\
&\lesssim\|\t\phi\|_{H^l_p(M)} (\lambda h)^{l+1} \lesssim \|\phi\|_{H^l_p(M)} (\lambda h)^{l+1},\label{eq:proofbias2}
\end{align}
where at the last line, we used Lemma \ref{lem:prop_proj}\ref{it:bound_volume} in the Appendix to control the volume of $\BB_M(x,\lambda h)$ and, at the second to last line, we used Lemma \ref{lem:equiv_norm_sobolev}. Furthermore, it follows from Leibniz formula for the derivative of a product and Lemma \ref{lem:kernel_of_order_k} that $\|\t\phi\|_{H^l_p(M)}\lesssim \|\phi\|_{H^l_p(M)}$.

As $\CC^\infty(M)$ is dense in $H^1_{p^*}(M)$, inequality \eqref{eq:proofbias2} actually holds for every $g\in H^1_{p^*}(M)$. If $p=1$, then every function $g\in H^1_{p^*}(M)$ with $\|g\|_{H^{1}_{p^*}(M)}\leq 1$ is Lipschitz continuous for the distance $d_g$ (see Remark \ref{rem:pinf}). Using that $d_g(x,y) \leq 2\vert x-y\vert $ if $\vert x-y\vert \leq \taumin/4$ (see \cite[Proposition 30]{aamari2018stability}), a similar computation than in the case $p<\infty$ shows that inequality \eqref{eq:proofbias2} also holds if $p=\infty$. 

 By integrating inequality \eqref{eq:proofbias2} against $\lambda\in (0,1)$ and by using Lemma \ref{lem:bias2}, we obtain the inequality $\|A_h \phi\|_{ H^{-1}_{p}(M)} \lesssim h^{s+1}\|\phi\|_{H^s_{p}(M)}$. 
 \medskip

\paragraph{Case $2$: the regularity $s$ is an odd number} 
 Similarly, we treat the case where $s\leq k-1$ is odd. Let $l=s-1$. 
Once again, assume first that $p>1$ and that $g$ is smooth. Then,
\begin{align}
&\iint_{  \BB_M(x_0,h_0)^2}\vert  K_{\lambda h}(x-y)\vert \op{\t \phi_l(y)-\t \phi_l(x)}\vert g(x)-g(y)\vert \vert x-y\vert ^{l}\dd x\dd y \nonumber\\
&\leq\iint_{ \BB_M(x_0,h_0)^2} \vert K_{\lambda h}(x-y)\vert \frac{\op{\t \phi_l(y)-\t \phi_l(x)}}{\vert x-y\vert }\frac{\vert g(x)-g(y)\vert }{\vert x-y\vert }\vert x-y\vert ^{l+2}\dd x\dd y \nonumber\\
&\leq \|K\|_{\CC^0(\R^D)}(\lambda h)^{l+2-d} \iint_{\BB_M(x_0,h_0)^2} \ones\{\vert x-y\vert \leq \lambda h\}\frac{\op{\t \phi_l(y)-\t \phi_l(x)}}{\vert x-y\vert }\frac{\vert g(x)-g(y)\vert }{\vert x-y\vert }\dd x\dd y \nonumber\\
&
\begin{aligned}
{}\lesssim (\lambda h)^{l+2}\hspace{-.1cm} \p{ (\lambda h)^{-d} \iint_{  \BB_M(x_0,h_0)^2} \ones\{\vert x-y\vert \leq \lambda h\}\op{\t \phi_l(y)-\t \phi_l(x)}^{p}\dd x\dd y}^{\hspace{-.1cm} 1/p}\nonumber \\
\times \p{(\lambda h)^{-d} \iint_{  \BB_M(x_0,h_0)^2} \ones\{\vert x-y\vert \leq \lambda h\}  \frac{\vert g(x)-g(y)\vert ^{p^*}}{\vert x-y\vert ^{p^*}}\dd x\dd y}^{\hspace{-.1cm}1/p^*} \nonumber
\end{aligned}\\*
&\lesssim  (\lambda h)^{l+2} \|\phi\|_{H^s_p(M)},\label{eq:proofbias1}
\end{align}
where at last line we used Lemma \ref{lem:equiv_norm_sobolev} and the inequality $\|\t\phi\|_{H^l_p(M)}\lesssim \|\phi\|_{H^l_p(M)}$. As in the previous case, the same inequality holds for $g\in H^1_{p^*}(M)$ non necessarily smooth and if $p=1$. By using Lemma \ref{lem:bias2} and by integrating \eqref{eq:proofbias1} against $\lambda\in (0,1)$, we obtain that  $\|A_h \phi\|_{ H^{-1}_{p}(M)} \lesssim h^{s+1}\| \phi\|_{H^{s}_{p}(M)}$. 
\medskip

So far, we have proven that
\begin{equation}
\|A_h \phi\|_{ H^{-1}_{p}(M)} \lesssim h^{s+1}\|\phi\|_{H^s_{p}(M)}
\end{equation} 
for all integers $0\leq s \leq k-1$ and $\phi$ a smooth function supported on $\BB_M(x_0,h_0)$.  
To obtain the result when $\phi$ is not supported on some ball $\BB_M(x_0,h_0)$, we use an appropriate partition of unity. Indeed, for $\delta =h_0/8$, standard packing arguments show the existence of a set $S_0$ of cardinality $N\leq c_d \vert \vol_M\vert \delta^{-d}$ with $d_H(M^\delta\vert S_0)\leq 5\delta/3$. By the remark following Lemma \ref{lem:existence_pou}, the output $S$ of the farthest point sampling algorithm with parameter $7\delta/3$ satisfies the assumption of Lemma \ref{lem:existence_pou}, and is of cardinality smaller than $N\lesssim 1$. We consider such a covering $(\BB_M(x,h_0))_{x\in S}$, with associated partition of unity $(\chi_x)_{x\in S}$ given by Lemma \ref{lem:existence_pou}. Then, $\|A_h \phi\|_{ H^{-1}_{p}(M)}$ is bounded by
\begin{align*}
  &\sum_{x\in S} \|A_h (\chi_x \phi)\|_{ H^{-1}_{p}(M)} \lesssim h^{s+1} \sum_{x\in S} \|\chi_x \phi\|_{H^s_{p}(M)} \lesssim  h^{s+1}\sum_{x\in S} \|\chi_x\|_{\CC^s(M)}\|\phi\|_{H^s_{p}(M)} \lesssim h^{s+1}\|\phi\|_{H^s_{p}(M)} ,
\end{align*}
where the second to last inequality follows from Leibniz rule for the derivative of a product. Also, the last inequality follows from the fact that $(\chi_x)_{\vert M}= \chi_x \circ i_M$, where $i_M : M\to M^\delta$ is the inclusion, which is a $\CC^k$ function with controlled $\CC^k$-norm. Hence,  $\|\chi_x\|_{\CC^s(M)}\lesssim \|\chi_x\|_{\CC^s(M^{\delta})}\lesssim 1$ by the chain rule.
 
 As $\CC^\infty(M)$ is dense in $H^s_p(M)$, this gives the desired bound on the operator norm of $A_h:H^s_p(M)\to H^{-1}_p(M)$ for $0\leq s\leq k-1$ an integer.  To obtain the conclusion for Besov spaces $B^s_{p,q}(M)$, we use an interpolation inequality \cite[Theorem 1.1.6]{lunardi2018interpolation}. By the reiteration theorem \cite[Theorem 1.3.5]{lunardi2018interpolation}, for $0<s< k-1$, $B^s_{p,q}(M) = (L_p(M),H^{k-1}_p(M))_{s/(k-1),q}$, with an equivalent norm. Hence, we have, for $0<s<k-1$, letting $\theta=s/(k-1)$,
\begin{align*}
 \|A_h\|_{B^s_{p,q}(M), H^{-1}_{p}(M)} &\lesssim \|A_h\|_{L_p(M), H^{-1}_{p}(M)}^{1-\theta}\|A_h\|_{H^{k-1}_p(M), H^{-1}_{p}(M)}^{\theta} \\
 &\lesssim  h^{1-\frac{s}{k-1}}h^{k\frac{s}{k-1}} \lesssim  h^{s+1},
\end{align*}
so that Proposition \ref{prop:bound_bias} is proven for $s<k-1$. It remains to prove the inequality in the case $s=k-1$. By Fatou's lemma and the definition of interpolation spaces \cite[Definition 1.1.2]{lunardi2018interpolation}, we have, for some constant $C$ not depending on $s$,
\begin{align*}
\| A_h f\|_{B^{k-1}_{p,q}(M)} &\leq \liminf_{\substack{s\to k-1\\s<k-1}} \| A_h f\|_{B^{s}_{p,q}(M)} \leq \liminf_{\substack{s\to k-1\\s<k-1}} \p{ C h^{s+1} \|f\|_{B^{s}_{p,q}(M)}}\leq   C h^{k} \|f\|_{B^{k-1}_{p,q}(M)},
\end{align*}
where we used that $\|f\|_{B^{s}_{p,q}(M)}\leq \|f\|_{B^{k-1}_{p,q}(M)}$. This concludes the proof of Proposition \ref{prop:bound_bias}.

\subsection{Fluctuations of the kernel density estimator}\label{sec:fluctuation}
The second step in bounding the Sobolev risk $\E\|f_{n,h}-f\|_{H^{-1}_p(M)}$ is to control the fluctuation term $\E \|f_{n,h}-f_h\|_{H^{-1}_p(M)}$. If we were considering a classical $L_p$-norm instead of a negative Sobolev norm, then we could simply express $f_{n,h}-f_h$ as a sum of the i.i.d.~terms to obtain the right order. The key idea to bound the fluctuation term is to show that  $\|f_{n,h}-f\|_{H^{-1}_p(M)}$ is smaller than the $L_p$-norm of a similar sum of i.i.d.~functions that can be expressed in term of the Green's function on $M$. We obtain the following control.

\begin{proposition}\label{prop:fluc}
Let $\mu \in \QQ^s(M)$ with $Y_1,\dots,Y_n$ a $n$-sample of law $\mu$. Assume that $h\lesssim 1$ and that $nh^d\gtrsim 1$. Then, 
\begin{equation}\label{eq:fluc0}
\E \|f_{n,h}-f_h\|_{H^{-1}_p(M)} \lesssim n^{-1/2}h^{1-d/2}I_d(h),
\end{equation}
where $I_d(h)$ is defined in Theorem \ref{thm:estimator_M_known}.
\end{proposition}

Let $\Delta$ be the Laplace-Beltrami operator on $M$ and  $G:\DD_M\to \R$ be a Green's function, defined on  $\DD_M\defeq \{(x,y)\in M\times M,\ x\neq y\}$ (see \cite[Chapter 4]{aubin1982nonlinear}). By definition, if $\phi\in \CC^\infty(M)$, then the function $G\phi:x\in M \mapsto  \int G(x,y)\phi(y)\dd y$ is a smooth function satisfying $\Delta G\phi=\phi$, with $ \nabla G \phi(x) = \int \nabla_x G(x,y) \phi(y) \dd y$ for $x\in M$. 
 Hence, if $w=\nabla G\phi$, then $\nabla\cdot w=\phi$, so that, Proposition \ref{prop:congestionned} yields
\[ \|\phi\|_{H^{-1}_p(M)} \leq \|\phi\|_{\dot H^{-1}_p(M)} \leq \|\nabla G \phi\|_{L_p(M)}.\]
By linearity, we have 
\begin{equation}\label{eq:fluc1}
\begin{split}
&\|f_{n,h}-f_h\|_{H^{-1}_p(M)}  \leq  \left\| \frac{1}{n} \sum_{i=1}^n \nabla G\p{K_h*\p{\frac{\delta_{Y_i}}{\rho_h(Y_i)}}} -  \E \left[ \nabla  G\p{K_h*\p{\frac{\delta_{Y_i}}{\rho_h(Y_i)}}}\right]\right\|_{L_p (M)}.
\end{split}
\end{equation}
The expectation of the $L_p$-norm of the sum of i.i.d.~centered functions is controlled using Rosenthal inequality.
\begin{lemma}\label{lem:rosenthal}
Let $U_1,\dots,U_n$ be i.i.d. functions on $L_p(M)$. Then, $\E \left\|\frac{1}{n}\sum_{i=1}^n (U_i-\E U_i)\right\|_{L_p(M)}^p$ is smaller than
\begin{equation}\label{eq:rosenthal}
 \begin{cases}
n^{-p/2} \int \p{\E \left[\vert U_1(z)\vert ^2\right]}^{p/2}\dd z &\text{ if $p\leq  2$},\\
C_pn^{-p/2} \int \p{\E \vert U_1(z)\vert ^2}^{p/2}\dd z + C_p n^{1-p}\int_M \E \left[\vert U_1(z)\vert ^p\right]\dd z  & \text{ if $p>2$}.
\end{cases}
\end{equation}
\end{lemma}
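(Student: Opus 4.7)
The plan is to apply Fubini to reduce the statement to a pointwise (scalar) moment inequality, and then to use Jensen's inequality in the easy regime $p \leq 2$ and Rosenthal's inequality in the regime $p > 2$.

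First I would write $S_n(z) \defeq \frac{1}{n}\sum_{i=1}^n (U_i(z)-\E U_i(z))$ for each $z \in M$, and observe that by Fubini
\[ \E\left\|\frac{1}{n}\sum_{i=1}^n (U_i-\E U_i)\right\|_{L_p(M)}^p = \int_M \E|S_n(z)|^p \dd z. \]
This transfers all the work to a pointwise estimate of $\E|S_n(z)|^p$ for a sum of i.i.d.\ centered real-valued random variables $V_i^z \defeq U_i(z)-\E U_i(z)$.

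For $p \leq 2$, I would use Jensen's inequality $\E|S_n(z)|^p \leq (\E|S_n(z)|^2)^{p/2}$ combined with the identity $\E|S_n(z)|^2 = n^{-1}\Var(U_1(z)) \leq n^{-1}\E|U_1(z)|^2$ (since the $V_i^z$ are i.i.d.\ and centered). This produces exactly the bound $n^{-p/2}(\E|U_1(z)|^2)^{p/2}$. Integrating over $z$ yields the first case.

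For $p > 2$, I would invoke Rosenthal's inequality for i.i.d.\ centered random variables (see e.g.~\cite[Theorem 15.11]{boucheron2013concentration}): there exists $C_p > 0$ such that for i.i.d.\ centered $V_1^z,\dots,V_n^z$,
\[ \E\left|\sum_{i=1}^n V_i^z\right|^p \leq C_p\Bigl((n\E|V_1^z|^2)^{p/2} + n\,\E|V_1^z|^p\Bigr). \]
Dividing by $n^p$ gives
\[ \E|S_n(z)|^p \leq C_p \bigl(n^{-p/2}(\Var U_1(z))^{p/2} + n^{1-p}\E|U_1(z)-\E U_1(z)|^p\bigr), \]
and then the crude bounds $\Var U_1(z) \leq \E|U_1(z)|^2$ together with $\E|U_1(z)-\E U_1(z)|^p \leq 2^p\E|U_1(z)|^p$ (by centering and the triangle inequality in $L_p$) absorb the constants into a new $C_p$. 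Integrating over $M$ concludes the second case.

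There is no real obstacle here; the only genuine ingredient is Rosenthal's inequality, which is classical and cited as a black box. The rest is Fubini and elementary inequalities. I should make sure to mention the measurability of $z \mapsto \E|S_n(z)|^p$ (which follows from the measurability of $U_1$ as an element of $L_p(M)$) in order to apply Fubini rigorously.
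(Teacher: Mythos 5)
Your proposal is correct and follows the same route as the paper: reduce to a pointwise scalar bound via Fubini, apply Jensen for $p\le 2$, and apply Rosenthal's inequality for $p>2$, then integrate over $z\in M$. The paper cites Rosenthal's original 1970 paper rather than Boucheron et al., but the argument is identical.
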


\begin{proof}
If $p\leq 2$, one has by Jensen's inequality
\[ \E\left\vert \sum_{i=1}^n (U_i(z)-\E U_i(z)) \right \vert ^p \leq \p{\E\left\vert \sum_{i=1}^n (U_i(z)-\E U_i(z)) \right \vert ^2}^{p/2} \leq n^{p/2}\p{\E \vert U_1(z)\vert ^2}^{p/2}\]
 and \eqref{eq:rosenthal} follows by integrating this inequality against $z\in M$. For $p>2$, we use Rosenthal inequality \cite[Theorem 3]{rosenthal1970subspaces} for a fixed $z\in M$, and then integrate the inequality against $z\in M$.
\end{proof}

It remains to bound $\E\left[\left\vert \nabla  G\p{K_h*\p{\frac{\delta_{Y}}{\rho_h(Y)}}}(z)\right \vert ^p\right]$  where $Y\sim \mu$, $z\in  M$ and $p\geq 2$.

\begin{lemma}\label{lem:fluc2}
Let $p\geq 2$. Then, for all $z\in M$ and $h\lesssim 1$,
\begin{equation}
\E\left[\left\vert \nabla  G\p{K_h*\p{\frac{\delta_{Y}}{\rho_h(Y)}}}(z)\right \vert ^p\right] \lesssim \begin{cases}
1 & \text{ if } d=1\\
-\log h & \text{ if } p=d=2\\
h^{p+d-dp} & \text{ else}.
\end{cases}
\end{equation}
\end{lemma}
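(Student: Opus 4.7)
The plan is to estimate, pointwise in $Y$ and $z$, the quantity
\[
\nabla G f_Y(z) = \int_M \nabla_z G(z,y)\, f_Y(y)\dd y,\qquad f_Y\defeq \frac{K_h(\cdot-Y)}{\rho_h(Y)},
\]
and then take the $p$-th moment with respect to $Y\sim \mu$. The argument rests on three inputs: (i) the standard estimate for the gradient of the Green's function of $\Delta$ on a compact Riemannian manifold without boundary, stating that $|\nabla_z G(z,y)|\lesssim |z-y|^{1-d}$ when $d\geq 2$ and $|\nabla_z G(z,y)|\lesssim 1$ when $d=1$ (see \cite[Chapter 4]{aubin1982nonlinear}); (ii) the bound $\rho_h\gtrsim 1$ for $h\lesssim 1$, from Lemma~\ref{lem:kernel_of_order_k}; and (iii) the volume bound $\vol_M(\BB_M(x,r))\lesssim r^d$ (Lemma~\ref{lem:prop_proj}\ref{it:bound_volume}), which together with $\|K\|_\infty\lesssim 1$ yields $\|f_Y\|_{L_1(M)}\lesssim 1$.

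For $d=1$ the claim is immediate: since $|\nabla_z G|$ is uniformly bounded,
\[ |\nabla G f_Y(z)| \leq \sup_{z,y}|\nabla_z G(z,y)|\cdot \|f_Y\|_{L_1(M)}\lesssim 1, \]
so the $p$-th moment is trivially $\lesssim 1$. For $d\geq 2$, I would derive the pointwise envelope
\[ |\nabla G f_Y(z)| \lesssim \min\bigl(|z-Y|^{1-d},\, h^{1-d}\bigr) \]
by a near/far split. In the far regime $|z-Y|\geq 2h$, the support of $f_Y$ lies in $\BB_M(Y,h)$, so $|z-y|\geq |z-Y|/2$ there, and $\|f_Y\|_{L_1(M)}\lesssim 1$ yields the first bound. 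In the near regime $|z-Y|\leq 2h$, the support of $f_Y$ lies in $\BB_M(z,3h)$, and $\|K\|_\infty\lesssim 1$ together with polar coordinates on $M$ gives
\[ |\nabla G f_Y(z)| \lesssim h^{-d}\int_{\BB_M(z,3h)} |z-y|^{1-d}\dd y \lesssim h^{-d}\int_0^{3h} r^{1-d}\cdot r^{d-1}\dd r \lesssim h^{1-d}. \]

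Taking the $p$-th moment and using that $\mu$ has density bounded above,
\begin{align*}
\E[|\nabla G f_Y(z)|^p] &\lesssim h^{(1-d)p}\vol_M(\BB_M(z,2h)) + \int_{\BB_M(z,2h)^c} |z-y|^{(1-d)p}\dd y \\
&\lesssim h^{(1-d)p+d} + \int_{2h}^{R} r^{(1-d)p+d-1}\dd r,
\end{align*}
with $R\lesssim 1$ an upper bound on $\diam(M)$ (see Remark~\ref{remark:fmin}). Letting $\alpha\defeq (1-d)p+d$, the remaining integral is $\lesssim h^\alpha = h^{p+d-dp}$ if $\alpha<0$, $\lesssim -\log h$ if $\alpha=0$, and $\lesssim 1$ if $\alpha>0$. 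Under the restriction $d\geq 2$ and $p\geq 2$ one checks that $\alpha>0$ is impossible, $\alpha=0$ corresponds exactly to $d=p=2$, and $\alpha<0$ otherwise, matching the three cases of the statement.

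The only mildly delicate step is to correctly invoke the near-diagonal estimate on $|\nabla_z G|$ on a compact Riemannian manifold; after that the proof reduces to straightforward radial integrations.
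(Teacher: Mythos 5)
Your argument is correct and essentially identical to the paper's: both use the near-diagonal Green's function estimate $|\nabla_z G(z,y)|\lesssim |z-y|^{1-d}$, the lower bound $\rho_h\gtrsim 1$, and a near/far split at scale $h$ to obtain the pointwise envelope $\min(|z-Y|^{1-d},h^{1-d})$, followed by the same radial integration and case analysis on $\alpha=(1-d)p+d$. The only cosmetic difference is that the paper carries out the far-field radial integral via an explicit change of variables through $\Psi_z$ up to the scale $r_0$ and bounds the remaining tail by a constant, whereas you invoke ``polar coordinates on $M$'' directly; both yield the same estimates.
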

A proof of Lemma \ref{lem:fluc2} is found in Appendix \ref{sec:proof_fluctuation}. From \eqref{eq:fluc1}, Lemma \ref{lem:rosenthal} and Lemma \ref{lem:fluc2}, we obtain, in the case $p\geq 2$ and $d\geq 3$
\begin{align*}
 \E\|f_{n,h}-f_h&\|_{H^{-1}_p(M)} \leq \p{\E\|f_{n,h}-f_h\|_{H^{-1}_p(M)}^p}^{1/p} \\
 &\leq C_p n^{-1/2} \p{\int \p{\E \left\vert \nabla  G\p{K_h*\p{\frac{\delta_{Y}}{\rho_h(Y)}}}(z)\right \vert ^2}^{p/2}\dd z}^{1/p} \\
 &\qquad + C_p n^{1/p -1}\p{\int \E \left[\left\vert \nabla  G\p{K_h*\p{\frac{\delta_{Y}}{\rho_h(Y)}}}(z)\right \vert ^p \right]\dd z}^{1/p} \\
 &\lesssim n^{-1/2} \vert \vol_M\vert ^{1/p} h^{1-d/2} +  n^{1/p -1}\vert \vol_M\vert ^{1/p} h^{1+d/p-d}. 
\end{align*}
 Recalling that $\vert \vol_M\vert \leq \fmin^{-1}\lesssim 1$ and that $nh^d\gtrsim 1$, one can check that this quantity is smaller up to a constant than $n^{-1/2}h^{1-d/2}$, proving Proposition \ref{prop:fluc} in the case $p\geq 2$ and $d\geq  3$.  A similar computation shows that  Proposition \ref{prop:fluc} also holds if $p\leq 2$ or $d\leq 2$.

\subsection{Proof of Theorem \ref{thm:estimator_M_known}}
We are now ready to conclude the proof of Theorem \ref{thm:estimator_M_known}. 
The proof of  \ref{it:pointwise_control} follows from a standard control of the $\infty$-norm between $f$ and $f_{n,h}$ and is found in Appendix \ref{sec:pointwise_proof}. 
Point \ref{it:sobo_control} is a combination of the results from the two previous sections, and we can obtain \ref{it:wass_control} using the linearization inequality (Proposition \ref{prop:wass_neg}). 
More precisely, let $E$ be the event described in \ref{it:pointwise_control}. If $E$ is realized, then $\mu_{n,h}^0$ is equal to $\mu_{n,h}$, and it satisfies $\mu_{n,h}^0 \geq \frac{\fmin}{2}\vol_M$.
 Thus, Proposition \ref{prop:wass_neg} yields $W_p(\mu_{n,h}^0,\mu) \lesssim  \left\|\mu_{n,h}-\mu\right\|_{H^{-1}_p(M)}$.
If $E$  is not realized, we bound $W_p(\mu_{n,h}^0,\mu)$ by $\diam(M)$, which is itself bounded by a constant depending only on the parameters of the model (see \cite[Lemma 2.2]{aamari2018stability}). Hence,
\begin{align*}
\E W_p(\mu_{n,h}^0,\mu) &\leq \E\left[ W_p(\mu_{n,h}^0,\mu) \ones\{E\}\right] + \diam(M)\P(E^c)\\
& \lesssim \E\|\mu_{n,h}-\mu\|_{H^{-1}_p(M)}  + n^{-k/d},
\end{align*}
and we conclude thanks to \eqref{eq:decomposition_variance}. 
Eventually, a proof of the minimax lower bound \ref{it:noiseless_minimax}, based on Assouad's lemma, is given in Appendix \ref{sec:lowerbounds}.

\subsection{Proofs of Theorem \ref{thm:estimator_volume} and Theorem \ref{thm:M_unknown}}\label{sec:estim_volume}

There are three different statement to prove in Theorem \ref{thm:estimator_volume}. The first one is a direct application of the construction of partitions of unity proposed in Lemma \ref{lem:existence_pou}.
\medskip

\paragraph*{}\textit{Proof of Theorem \ref{thm:estimator_volume}\ref{it:pou}.} 

Assume that $\gamma\leq \eps/24$. Let $\X=\{X_1,\dots,X_n\}$ and $\Y=\{Y_1,\dots,Y_n\}$. By the remark following Lemma \ref{lem:existence_pou}, the existence of a partition of unity satisfying the requirements of Theorem \ref{thm:estimator_volume}\ref{it:pou} is ensured as long as $d_H(M^{\eps/8}\vert  \X)\leq 5\eps/24$. We have $d_H(M^{\eps/8}\vert \X)\leq d_H(M^{\eps/8}\vert \Y)+\eps/24 \leq d_H(M\vert \Y)+4\eps/24$. Hence, the partition of unity exists if $d_H(M\vert \Y)\leq \eps/24$. This is satisfied with probability larger than $1-cn^{-k/d}$ if $\eps \gtrsim (\log n/n)^{1/d}$ by \cite[Lemma III.23]{aamari2017vitesses}.
\medskip

\paragraph*{}\textit{Proof of Theorem \ref{thm:estimator_volume}\ref{it:control_estimator_volume}.}
For ease of notation, we will assume that the output $\{X_{i_1},\dots,X_{i_J}\}$ of the farthest point sampling algorithm is equal to $\{X_1,\dots,X_J\}$. The measure $\widehat \vol_M$ can be written as $\sum_{j=1}^J \xi_j$, where $\xi_j$ is the measure having density $\chi_j$ with respect to the $d$-dimensional Hausdorff measure on $\hat\Psi_{j}(\hat T_j)$. 

Remark that the restriction of each function $\chi_j$ on $M$ is supported on a small neighborhood covered by the chart $\Psi_{Y_{j}}$. We may therefore write for any continuous bounded function $\phi$,
\[ \int \phi(x)\dd \vol_M(x) = \sum_{j=1}^J \int_{\Psi_{Y_j}(T_{Y_j})}\phi(x) \chi_j(x)\dd x.\]
As $\hat \Psi_{j}$ is at distance $\eps^k + \gamma$ from $\Psi_{Y_j}$ and $\chi_j$ is Lipschitz, one can  hope that the measure $\widehat \vol_M$ defined in \eqref{eq:estimator_volume} is also at distance $\eps^k + \gamma$ from $\vol_M$. The Wasserstein distance between $\xi_j$ and the measure $\chi_j\cdot\vol_M$ is bounded in two steps. First, we transport $\xi_j$ on $M$ by using the parametrizations $\hat \Psi_j$ and $\Psi_{Y_j}$. When doing so, we obtain a measure on $M$, whose density is a modification of $\chi_j$, that is distorted by the Jacobian of the transport map. We then crudely bound the Wasserstein distance between this new measure and $\chi_j\cdot\vol_M$ by their $L_1$-distance. We first need a technical result that controls how the density $\chi_j$ is impacted  by the transport map.

\begin{lemma}\label{lem:pointwise_comparison}
If $(\log n/n)^{1/d}\lesssim \eps \lesssim 1$ and $\gamma\lesssim \eps$, with probability larger than $1-cn^{-k/d}$, for all $j=1,\dots,J$:
\begin{enumerate}[label=(\alph*)]
\item The map $\Psi_{Y_j}\circ\pi_{Y_j}:\BB_{\hat T_j}(0,3\eps)\to M$ is a diffeomorphism on its image, which contains $\BB_M(Y_j,2\eps)$. Let $S_j:\BB_M(Y_j,2\eps)\to \BB_{\hat T_j}(0,3\eps)$ be the inverse of $\Psi_{Y_j}\circ\pi_{Y_j}$. Then, $\hat \Psi_j\circ S_j:\BB_M(Y_j,2\eps)\to \hat\Psi_j(\hat T_j)$ is also a diffeomorphism on its image, which contains $\BB_{\hat\Psi_j(\hat T_j)}(X_j,\eps)$. Furthermore, for all $z\in \BB_M(Y_j,2\eps)$, we have $\vert \hat\Psi_j \circ S_j(z)-X_j\vert \geq \frac{7}{8}\vert z-Y_j\vert $. \label{it:comp_diffeo}
\item The measure $(\hat \Psi_j\circ S_j)^{-1}_{\#}\xi_j$ has a density $\t\chi_j$ on $M$ equal to 
\begin{equation}
\t\chi_j(z)= \chi_j(\hat \Psi_j\circ S_j(z))J(\hat \Psi_j\circ S_j)(z) \text{ for $z\in M$},
\end{equation}
where the function is extended by $0$ for $z\in M \backslash \BB_M(Y_j,2\eps)$.\label{it:comp_diffeo2}
\item \label{it:comp_diffeo3} For $z\in \BB_M(Y_j,2\eps)$, we have
\begin{align}
&\vert \hat \Psi_j\circ S_j(z)-z\vert \lesssim \eps^{m} + \gamma, \label{eq:first_eq}\\
&\vert \t\chi_j(z)-\chi_j(z)\vert \lesssim (\eps^m+\gamma)(1+\gamma \eps^{-2}).\label{eq:third_eq}
\end{align}
\end{enumerate}
\end{lemma}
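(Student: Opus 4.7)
My plan is to deduce Lemma \ref{lem:pointwise_comparison} from Proposition \ref{prop:estim_manifold} combined with the $\CC^k$ regularity of the true local parametrization $\Psi_{Y_j}$ provided by Definition \ref{def:manifold_holder}. I would work throughout on the high-probability event on which the conclusions of Proposition \ref{prop:estim_manifold} hold simultaneously for every index $j$; under the standing hypothesis $\gamma\lesssim\eps^2$ and $\eps\lesssim 1$, both error terms $\eps^{m-1}+\gamma\eps^{-1}$ and $\eps^m+\gamma$ become arbitrarily small for appropriate constants, which is what will fuel every invertibility argument below.

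For part \ref{it:comp_diffeo} I would proceed in two layers. First, since $r_0\gtrsim 1$ and the angle $\angle(T_{Y_j}M,\hat T_j)$ is $o(1)$, the orthogonal projection $\pi_{Y_j}$ restricted to $\hat T_j$ is a near-isometry into $T_{Y_j}M$, and $\Psi_{Y_j}\circ\pi_{Y_j}$ is thus a diffeomorphism on $\BB_{\hat T_j}(0,3\eps)$ for $\eps$ small. A bilipschitz comparison, using $\Psi_{Y_j}(0)=Y_j$, $d\Psi_{Y_j}(0)=\id_{T_{Y_j}M}$ and $\|\Psi_{Y_j}\|_{\CC^2}\leq L$, then shows that its image contains $\BB_M(Y_j,2\eps)$, so the inverse $S_j$ is well defined there. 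Second, estimate \eqref{eq:control_error_manifold_derivative} gives $\op{d\hat\Psi_j-d(\Psi_{Y_j}\circ\pi_{Y_j})}=o(1)$ on $\BB_{\hat T_j}(0,3\eps)$, so by the inverse function theorem $\hat\Psi_j$ is itself a diffeomorphism on this ball, and composing with $S_j$ produces a diffeomorphism from $\BB_M(Y_j,2\eps)$ onto a neighborhood of $X_j$ whose differential differs from the identity by $o(1)$. Using $\hat\Psi_j(0)=X_j$ and $S_j(Y_j)=0$, this simultaneously gives the containment of $\BB_{\hat\Psi_j(\hat T_j)}(X_j,\eps)$ in the image and the bilipschitz estimate $|\hat\Psi_j\circ S_j(z)-X_j|\geq \tfrac{7}{8}|z-Y_j|$.

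Part \ref{it:comp_diffeo2} is then just the Riemannian area formula applied to the diffeomorphism $\hat\Psi_j\circ S_j:\BB_M(Y_j,2\eps)\to\hat\Psi_j(\hat T_j)$: the pushforward of $\chi_j$ times the $d$-dimensional Hausdorff measure on $\hat\Psi_j(\hat T_j)$ under the inverse diffeomorphism has density $\chi_j(\hat\Psi_j\circ S_j(z))\,J(\hat\Psi_j\circ S_j)(z)$ with respect to $\vol_M$, and vanishes outside $\BB_M(Y_j,2\eps)$ because the support of $\nu_j$ is contained in $\BB_{\hat\Psi_j(\hat T_j)}(X_j,\eps)$ by part \ref{it:pou}.

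For part \ref{it:comp_diffeo3}, equation \eqref{eq:first_eq} is immediate: write $z=\Psi_{Y_j}\circ\pi_{Y_j}(S_j(z))$ by definition of $S_j$, note $S_j(z)\in\BB_{\hat T_j}(0,3\eps)$, and apply \eqref{eq:control_error_manifold} at $v=S_j(z)$. For \eqref{eq:third_eq}, I would split
\begin{equation*}
\t\chi_j(z)-\chi_j(z)=\bigl[\chi_j(\hat\Psi_j\circ S_j(z))-\chi_j(z)\bigr]J(\hat\Psi_j\circ S_j)(z)+\chi_j(z)\bigl[J(\hat\Psi_j\circ S_j)(z)-1\bigr],
\end{equation*}
control the first bracket by combining \eqref{eq:first_eq} with the Lipschitz estimate on $\chi_j$ from part \ref{it:pou}, and control the Jacobian deviation in the second bracket by $\op{d(\hat\Psi_j\circ S_j)-\id}\lesssim \eps^{m-1}+\gamma\eps^{-1}$ via \eqref{eq:control_error_manifold_derivative}; the assumption $\gamma\lesssim\eps^2$ then converts the outcome into the claimed form. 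The main obstacle is part \ref{it:comp_diffeo}, where three maps ($\pi_{Y_j}|_{\hat T_j}$, $\Psi_{Y_j}$, $\hat\Psi_j$) must be interlocked on nested balls of radii $r_0\gg\eps$ and each diffeomorphism/surjectivity step must be carried through composition; once that skeleton is set, items \ref{it:comp_diffeo2} and \ref{it:comp_diffeo3} are essentially bookkeeping on top of Proposition \ref{prop:estim_manifold}.
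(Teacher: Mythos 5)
Your handling of parts~\ref{it:comp_diffeo} and~\ref{it:comp_diffeo2} is broadly compatible with the paper, although for the bilipschitz bound in~\ref{it:comp_diffeo} the paper follows a cleaner route than a differential comparison: it uses the structural fact (Lemma~\ref{lem:prop_poly_estim}\ref{it:prop_poly_estim_3}) that $\hat\Psi_j^{-1}$ is the $1$-Lipschitz map $\hat\pi_j(\cdot-X_j)$, combines this with $|\Psi_{Y_j}(u)-Y_j|\leq\tfrac{8}{7}|u|$ from Lemma~\ref{lem:prop_proj}\ref{it:param_contains_ball}, and then runs the nested-ball inclusion over every radius $\eps'\leq\eps$. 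A mean-value argument on $d(\hat\Psi_j\circ S_j)$ as you sketch would control arc lengths, not chord lengths, so you would still need curvature bounds on both $M$ and $\hat\Psi_j(\hat T_j)$ to land on the Euclidean estimate with the constant $7/8$.

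The genuine gap is in your treatment of~\eqref{eq:third_eq}. Bounding $|\chi_j(\hat\Psi_j\circ S_j(z))-\chi_j(z)|$ by $\Lip(\chi_j)\cdot|\hat\Psi_j\circ S_j(z)-z|\lesssim\eps^{-1}(\eps^m+\gamma)$ and $|J(\hat\Psi_j\circ S_j)(z)-1|$ by $\op{d\hat\Psi_j-d(\Psi_{Y_j}\circ\pi_{Y_j})}\lesssim\eps^{m-1}+\gamma\eps^{-1}$ both yield $\eps^{m-1}+\gamma\eps^{-1}$, which is \emph{larger} than $\eps^m+\gamma$, and $\gamma\lesssim\eps^2$ only gives $\gamma\eps^{-1}\lesssim\eps$, so it does not close the factor-$\eps$ deficit ($\eps^{m-1}\gg\eps^m$). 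Propagated through Theorem~\ref{thm:estimator_volume}, this would degrade the rate from $(\log n/n)^{k/d}$ to $(\log n/n)^{(k-1)/d}$. The paper recovers the missing $\eps$ through two cancellations your sketch omits. For the $\chi_j$ term, it first proves the refined estimate $|\pi_{Y_j}(z-\hat\Psi_j\circ S_j(z))|\lesssim\eps(\eps^m+\gamma)$ (the displacement is essentially normal to $T_{Y_j}M$), then exploits that $\chi_j$ is built from a \emph{radial} bump, so via Lemma~\ref{lem:radial} the increment is governed by $\eps^{-2}\,\big||\hat\Psi_j\circ S_j(z)-X_i|^2-|z-X_i|^2\big|$; expanding the squares, the dangerous cross term $(\hat\Psi_j\circ S_j(z)-z)\cdot(z-Y_i)$ pairs a mostly-normal vector of size $\eps^m+\gamma$ with a mostly-tangential chord of size $\eps$, gaining the extra $\eps$. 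For the Jacobian, the paper does not compare $d\hat\Psi_j$ and $d(\Psi_{Y_j}\circ\pi_{Y_j})$ directly but compares the Gram matrices $d\hat\Psi_j^*d\hat\Psi_j=\id_{\hat T_j}+(d\hat N_j)^*d\hat N_j$ and $d(\Psi_{Y_j}\circ\pi_{Y_j})^*d(\Psi_{Y_j}\circ\pi_{Y_j})=\hat\pi_j\pi_{Y_j}\hat\pi_j+(d(N_{Y_j}\circ\pi_{Y_j}))^*d(N_{Y_j}\circ\pi_{Y_j})$: the first-order pieces cancel to leave $\angle(T_{Y_j}M,\hat T_j)^2$, and the normal part is a product of $\op{d\hat N_j}\lesssim\eps$ with a factor $\lesssim\eps^{m-1}+\gamma\eps^{-1}$, both $O(\eps^m+\gamma)$. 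Without these cancellations,~\eqref{eq:third_eq} cannot be reached.
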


A proof of Lemma \ref{lem:pointwise_comparison} is found in Appendix \ref{sec:last_thm}. Let $\hat M_\eps = \bigcup_{j=1}^J \BB_{\hat\Psi_j(\hat T_j)}(X_j,\eps)$ be the support of $\widehat\vol_M$. We are now ready to state a stability result between the approximated measure $\widehat \vol_M$ and $\vol_M$. Note that we state a lemma that is slightly more general (considering measures having a density $\phi$ with respect to the volume measures), so that we can also control the distance between the kernel density estimators of Theorem \ref{thm:M_unknown}.

\begin{figure}
\centering
\includegraphics[width=\textwidth]{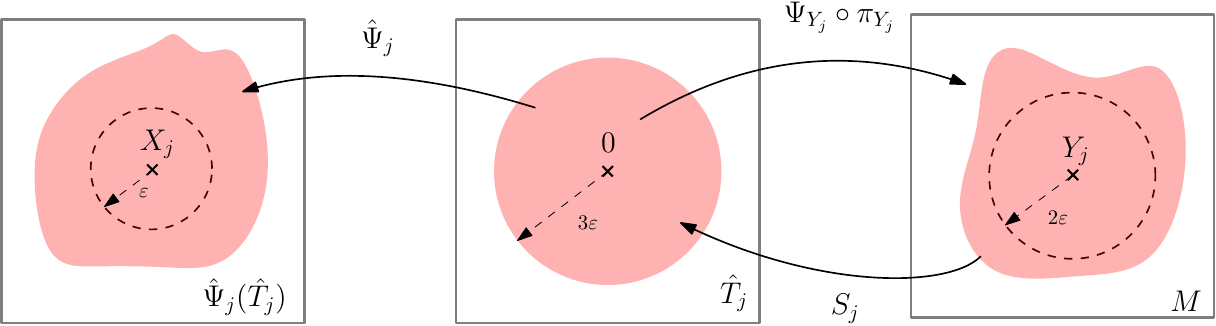}
\caption{Illustration of Lemma \ref{lem:pointwise_comparison}\ref{it:comp_diffeo}}
\end{figure}

\begin{lemma}\label{lem:the_smart_lemma}
Let $(\log n/n)^{1/d}\lesssim \eps \lesssim 1$ and $\gamma\lesssim\eps$. Fix $1\leq r \leq\infty$ and let  $\phi:M \to \R$, $\t\phi:\hat M_\eps\to \R$  be functions satisfying $\phi_{\min}\leq \phi,\t\phi \leq \phi_{\max}$ for some positive constants $\phi_{\min},\phi_{\max}>0$. Assume further that for all $j=1,\dots,J$ and for all $z\in M$ we have, $\vert \t\phi(\hat \Psi_j\circ S_j(z))-\phi(z)\vert \leq T \lesssim 1$. Then, with probability larger than $1-cn^{-k/d}$, we have
\begin{equation}\label{eq:the_smart_eq}
W_r\p{\frac{\t\phi\cdot \widehat\vol_M}{\vert \t\phi\cdot\widehat\vol_M\vert },\frac{\phi\cdot \vol_M}{\vert \phi\cdot\vol_M\vert }} \lesssim C_0 (T+(\eps^m+\gamma)(1+\gamma \eps^{-2})),
\end{equation}
where $C_0$ depends on $\phi_{\min}$ and $\phi_{\max}$.
\end{lemma}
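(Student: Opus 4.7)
The plan is to construct an intermediate measure $\tilde\mu$ on $M$ by pulling $\hat\mu := \t\phi \cdot \widehat\vol_M$ back through the local diffeomorphisms $F_j := \hat\Psi_j \circ S_j$ furnished by Lemma~\ref{lem:pointwise_comparison}, and then to estimate the two steps $\hat\mu/|\hat\mu| \to \tilde\mu/|\tilde\mu| \to \mu/|\mu|$ separately before combining with the triangle inequality. Concretely, decompose $\widehat\vol_M = \sum_{j=1}^J \nu_j$ with $\nu_j$ the measure of density $\chi_j$ on $\hat\Psi_j(\hat T_j)$, so that $\hat\mu = \sum_j \t\phi \cdot \nu_j$ and each piece is supported in $F_j(\BB_M(Y_j,2\eps))$ by Lemma~\ref{lem:pointwise_comparison}\ref{it:comp_diffeo}. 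Setting $\sigma_j := (F_j^{-1})_\#(\t\phi \cdot \nu_j)$, Lemma~\ref{lem:pointwise_comparison}\ref{it:comp_diffeo2} identifies the density of $\sigma_j$ with respect to $\vol_M$ as $(\t\phi \circ F_j)\t\chi_j$; I then set $\tilde\mu := \sum_j \sigma_j = g \cdot \vol_M$ with $g := \sum_{j=1}^J (\t\phi \circ F_j)\t\chi_j$.

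For the first step, the block coupling $\pi := \sum_j (\mathrm{id}, F_j^{-1})_\#(\t\phi \cdot \nu_j)$ has marginals $\hat\mu$ and $\tilde\mu$ (in particular $|\hat\mu| = |\tilde\mu|$) and, by \eqref{eq:first_eq}, is entirely supported on pairs $(x, F_j^{-1}(x))$ at distance $\lesssim \eps^m + \gamma$. After normalisation this gives a coupling of $\hat\mu/|\hat\mu|$ and $\tilde\mu/|\tilde\mu|$ witnessing $W_\infty(\hat\mu/|\hat\mu|, \tilde\mu/|\tilde\mu|) \lesssim \eps^m + \gamma$, and hence the same bound for every $W_r$ with $r \in [1,\infty]$.

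For the second step I control $\|g - \phi\|_{L^\infty(M)}$. Using $\sum_j \chi_j \equiv 1$ on $M^{\eps/8}$ and \eqref{eq:third_eq} (together with the fact that at most $c_d$ terms are nonzero at any $z$), I obtain $|\sum_j \t\chi_j(z) - 1| \lesssim \eps^m + \gamma$; the decomposition
\[
g(z) - \phi(z) = \sum_j \bigl(\t\phi(F_j(z)) - \phi(z)\bigr)\t\chi_j(z) + \phi(z)\sum_j \bigl(\t\chi_j(z) - \chi_j(z)\bigr),
\]
combined with $|\t\phi \circ F_j - \phi| \leq T$ and $\phi \leq \phi_{\max}$, yields $\|g - \phi\|_\infty \lesssim T + \eps^m + \gamma$. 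For $\eps, \gamma$ small enough this forces $g \in [\phi_{\min}/2, 2\phi_{\max}]$ and $||\tilde\mu| - |\mu|| \lesssim T + \eps^m + \gamma$, so $\|g/|\tilde\mu| - \phi/|\mu|\|_\infty \lesssim C_0(T + \eps^m + \gamma)$ with $C_0$ depending on $\phi_{\min}, \phi_{\max}$. For $r < \infty$, Proposition~\ref{prop:wass_neg} applied to these two probability measures (both bounded below by a constant depending on $\phi_{\min}, \phi_{\max}$) gives
\[
W_r\bigl(\tilde\mu/|\tilde\mu|, \mu/|\mu|\bigr) \lesssim \|g/|\tilde\mu| - \phi/|\mu|\|_{H^{-1}_r(M)} \leq |\vol_M|^{1/r}\,\|g/|\tilde\mu| - \phi/|\mu|\|_\infty \lesssim C_0(T + \eps^m + \gamma).
\]

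For the endpoint $r = \infty$, a direct coupling argument on $M$ is needed: since both normalised densities are bounded below and $L^\infty$-close, one can transport between them with displacement of the same order, for instance by flowing along $\nabla u/\bigl((1-t)(\phi/|\mu|) + t(g/|\tilde\mu|)\bigr)$, where $u$ solves the Poisson equation $\Delta_M u = g/|\tilde\mu| - \phi/|\mu|$ and satisfies $\|\nabla u\|_\infty \lesssim C_0(T+\eps^m+\gamma)$ by elliptic regularity on the compact manifold $M$. This is the most delicate step, since the $L^\infty$ endpoint lies outside the range of Proposition~\ref{prop:wass_neg}; the remaining ingredients are routine measure-theoretic bookkeeping---verifying the Jacobian identities across overlapping charts and confirming that $\sum_j \t\chi_j$ behaves as an approximate partition of unity despite being pulled back from $\hat\Psi_j(\hat T_j)$ rather than from $M$ itself.
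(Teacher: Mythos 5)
Your argument for $r<\infty$ is essentially the paper's proof: you introduce the same intermediate measure (pulling each block $\t\phi\cdot\nu_j$ back to $M$ by $(\hat\Psi_j\circ S_j)^{-1}$), you use the same block coupling and the bound \eqref{eq:first_eq} to handle the first Wasserstein leg, and you use the same $L^\infty$ estimate on $g-\phi$ together with Proposition~\ref{prop:wass_neg} and $\|\cdot\|_{H^{-1}_r(M)}\leq\|\cdot\|_{L_r(M)}\leq|\vol_M|^{1/r}\|\cdot\|_{L_\infty(M)}$ for the second leg. The only place you diverge is the $r=\infty$ endpoint, and there you work considerably harder than necessary.

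For $r=\infty$, the paper simply observes that every constant appearing in the $r<\infty$ estimate --- the factor $r^{-1/r}(2/\phi_{\min})^{1-1/r}\leq(2/\phi_{\min})\vee 1$, the $|\vol_M|^{1/r}$ factors, the transport cost bound $\phi_{\max}(\eps^m+\gamma)^r|\widehat\vol_M|$ --- is bounded uniformly in $r$, so the conclusion for $W_\infty$ follows by letting $r\to\infty$ (since $W_r\to W_\infty$ for compactly supported measures). Your alternative via a Benamou--Brenier flow is viable in principle: the crucial estimate $\|\nabla u\|_\infty\lesssim\|\Delta u\|_\infty$ does hold on $M$, but not by generic ``elliptic regularity'' (the $L^\infty$ endpoint of Calder\'on--Zygmund fails for $D^2u$); what saves you is the integrability of the Green's function gradient, $|\nabla_x G(x,y)|\lesssim|x-y|^{1-d}$, which is exactly Lemma~\ref{lem:green_function}. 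You would also need to justify that the time-dependent vector field $v_t=\nabla u/\rho_t$ (with the sign $\Delta u=\rho_0-\rho_1$, not $\rho_1-\rho_0$) generates a well-posed flow transporting $\rho_0\cdot\vol_M$ to $\rho_1\cdot\vol_M$, which requires more regularity of $\rho_0,\rho_1$ than you currently have on hand. So while your $r=\infty$ sketch is morally sound, it is not self-contained as written, and the paper's ``uniform constants, pass to the limit'' shortcut sidesteps all of these technicalities.
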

In particular, inequality \eqref{eq:bound_wass_volume} is a consequence of Lemma \ref{lem:the_smart_lemma} with $\phi\equiv\t\phi \equiv 1$.

\begin{proof}
 Assume first that $r<\infty$. 
 We have the bound

\begin{align}
W_r\p{ \frac{\t\phi\cdot\widehat\vol_M}{\vert \t\phi\cdot\widehat\vol_M\vert }, \frac{\phi\cdot\vol_M}{\vert \phi\cdot\vol_M\vert }} &= \frac{1}{\vert \t\phi\cdot\widehat\vol_M\vert ^{1/r}}W_r\p{ \t\phi\cdot\widehat\vol_M,\phi\cdot\vol_M \frac{\vert \t\phi\cdot\widehat\vol_M\vert }{\vert \phi\cdot\vol_M\vert }} \nonumber \\*
&\hspace{-4cm}\leq \frac{1}{\vert \t\phi\cdot\widehat\vol_M\vert ^{1/r}}\Bigg( W_r\p{\sum_{j=1}^J \t\phi\cdot\xi_j, \sum_{j=1}^J(\hat \Psi_j\circ S_j)^{-1}_{\#}(\t\phi\cdot\xi_j)}\nonumber  \\*
&\hspace{-4cm}+ W_r\p{\sum_{j=1}^J (\hat \Psi_j\circ S_j)^{-1}_{\#}(\t\phi\cdot\xi_j) ,\phi\cdot\vol_M\frac{\vert \t\phi\cdot\widehat\vol_M\vert }{\vert \phi\cdot\vol_M\vert }} \Bigg)\label{eq:wass_volume}
\end{align}

 We use Proposition \ref{prop:wass_neg} to bound the second term in \eqref{eq:wass_volume}. By a change of variables, the density of $(\hat \Psi_j\circ S_j)^{-1}_{\#}(\t\phi\cdot\xi_j)$ is given by $\t\phi_j:z\mapsto\t\phi(\hat \Psi_j\circ S_j(z))\t\chi_j(z)$. With probability larger than $1-cn^{-k/d}$, we have for $z\in M$, should $(\eps^m+\gamma)(1+\gamma\eps^{-2})$ be small enough,
\begin{align*}
\sum_{j=1}^J \t\chi_j(z)\geq \sum_{j=1}^J\chi_j(z) - Cc_d(\eps^{m} + \gamma)(1+\gamma\eps^{-2}) \geq 1 - \frac 1 2= \frac 1 2,
\end{align*}
where $c_d$ is the constant of Lemma \ref{lem:existence_pou}. 
Therefore, the density of $\sum_{j=1}^J (\hat \Psi_j\circ S_j)^{-1}_{\#}(\t\phi\cdot\xi_j)$ is larger than $\phi_{\min}/2$. Remark also that $\t\chi_j(z)\leq 2$ for any $z\in M$. Hence, we have according to Lemma \ref{lem:pointwise_comparison}, $\vert \t\phi_j(z)-\phi(z)\chi_j(z)\vert \leq T+2\phi_{\max}\vert \chi_j(z)-\t\chi_j(z)\vert \lesssim T+\phi_{\max}(\eps^{m} + \gamma)(1+\gamma\eps^{-2})$ for some constant $C_0$. This gives the bound,
\begin{align}
\vert \vert \t\phi\cdot\widehat\vol_M\vert -\vert \phi\cdot\vol_M\vert \vert  &\leq \left\|\sum_{j=1}^J \t\phi_j-\phi \right\|_{L_1(M)} \leq \left\|\sum_{j=1}^J \t\phi_j-\phi \right\|_{L_r(M)}\vert \vol_M\vert ^{1-1/r} \nonumber\\
& \leq \left\|\sum_{j=1}^J \t\phi_j-\phi \right\|_{L_\infty(M)}\vert \vol_M\vert  \nonumber\\
&\leq C_0\vert \vol_M\vert ( T+\phi_{\max}(\eps^{m} + \gamma)(1+\gamma\eps^{-2})).\label{eq:error_volume}
\end{align}
Therefore, $\phi\frac{\vert \t\phi\cdot\widehat\vol_M\vert }{\vert \phi\cdot\vol_M\vert }$ is larger than 
\begin{align*}
&\phi_{\min}\p{1-C_0\vert \vol_M\vert \frac{T+\phi_{\max}(\eps^{m} + \gamma)(1+\gamma\eps^{-2})}{\phi_{\min}\vert \vol_M\vert }} \\
&\qquad \geq \phi_{\min}- C_0(T+\phi_{\max}(\eps^{m} + \gamma)(1+\gamma\eps^{-2})) \geq \frac{\phi_{\min}}{2}
\end{align*} 
if $T, \eps$ and $\gamma$ are small enough. Hence, by Proposition \ref{prop:wass_neg} and using \eqref{eq:error_volume},
\begin{align*}
&W_r\p{\sum_{j=1}^J (\hat \Psi_j\circ S_j)^{-1}_{\#}(\t\phi\cdot\xi_j) ,\phi\cdot\vol_M\frac{\vert \t\phi\cdot\widehat\vol_M\vert }{\vert \phi\cdot\vol_M\vert }}  \\
&\leq r^{-1/r}\p{\frac{2}{\phi_{\min}}}^{1-1/r} \left\| \sum_{j=1}^J \t\phi_j - \phi\frac{\vert \t\phi\cdot\widehat\vol_M\vert }{\vert \phi\cdot\vol_M\vert }\right\|_{H^{-1}_r(M)}\\
&\leq \p{\frac{2}{\phi_{\min}} \vee 1}\left\| \sum_{j=1}^n \t\phi_j - \phi\frac{\vert \t\phi\cdot\widehat\vol_M\vert }{\vert \phi\cdot\vol_M\vert }\right\|_{L_r(M)} \\
&\leq \p{\frac{2}{\phi_{\min}} \vee 1}\p{\left\| \sum_{j=1}^J \t\phi_j - \phi \right\|_{L_r(M)} + \frac{\vert \vert \phi\cdot\vol_M\vert -\vert \t\phi\cdot\widehat\vol_M\vert \vert }{\vert \phi\cdot\vol_M\vert }\|\phi\|_{L_r(M)}}\\ 
&\leq  \p{\frac{2}{\phi_{\min}} \vee 1}C_0( T+\phi_{\max}(\eps^{m} + \gamma)(1+\gamma\eps^{-2})) \p{\vert \vol_M\vert ^{1/r}+ \frac{\vert \vol_M\vert }{\phi_{\min}\vert \vol_M\vert }\vert \vol_M\vert ^{1/r}\phi_{\max}}\\
&\leq C_{\phi_{\min},\phi_{\max}}\p{ T+ (\eps^{m} + \gamma)(1+\gamma\eps^{-2})},
\end{align*}
where we used that $\vert \vol_M\vert \leq \fmin^{-1}\lesssim 1$, and the constant $C_{\phi_{\min},\phi_{\max}}$ in the upper bound depending on $\phi_{\min}$ and $\phi_{\max}$, but not on $r$.

To bound the first term in \eqref{eq:wass_volume}, consider the transport plan 
$\sum_{j=1}^J (\id,(\hat \Psi_j\circ S_j)^{-1})_{\#}(\t\phi\cdot\xi_j),$
 which has, according to Lemma \ref{lem:pointwise_comparison}, a cost bounded  by 
 \[\sum_{j=1}^J \int \vert y-(\hat \Psi_j\circ S_j)^{-1}(y)\vert ^r\dd (\t\phi\cdot\xi_j)(y)\lesssim \phi_{\max}\p{ \eps^m + \gamma }^r\vert \widehat\vol_M\vert .\]
As $\vert \widehat\vol_M\vert  \lesssim \vert \vol_M\vert  + T+\phi_{\max}(\eps^{m} + \gamma)(1+\gamma\eps^{-2})\lesssim 1$, we obtain the desired bound. By letting $r\to \infty$, and remarking that the different constants involved are independent of $r$, we observe that the same bound holds for $r=\infty$.
\end{proof}
\begin{remark}
Inequality \eqref{eq:error_volume} with $\phi\equiv\phi'\equiv 1$ gives a bound on the distance between the total mass of $\widehat\vol_M$ and the volume $\vert \vol_M\vert $ of $M$: choosing $k=m$, it is of order $(\eps^k+\gamma)(1+\gamma\eps^{-2})$ with probability larger than $1-cn^{-k/d}$.
\end{remark}

\paragraph*{}\textit{Proof of Theorem \ref{thm:estimator_volume}\ref{it:minimax_volume}.}

Inequality \eqref{eq:risk_volume_estimator} is a consequence of Theorem \ref{thm:estimator_volume}\ref{it:control_estimator_volume}, whereas the lower bound on the minimax risk \eqref{eq:minimax_risk_volume} is proven in Appendix \ref{sec:lowerbounds}.
\medskip

\paragraph*{}\textit{Proof of Theorem \ref{thm:M_unknown}.}

Note first that $\hat\nu_{n,h}$ is indeed a measure of mass $1$. We show in Lemma \ref{lem:bound_on_T} that 
\[ T\defeq \max_{j=1\dots J}\sup_{z\in \BB(Y_j,\eps)}\left\vert  K_h*\p{\frac{\nu_n}{\hat\rho_h}}(\hat\Psi_j\circ S_j(z)) -K_h*\p{\frac{\mu_n}{\rho_h}}(z) \right \vert \]
 satisfies $T\lesssim (\eps^m + \gamma)(1+\gamma\eps^{-2})$ with probability larger than $1-cn^{-k/d}$, as long as $nh^d\gtrsim 1$.
 As $\fmin/2\leq K_h*\mu_n\leq 2\fmax$ on $M$ by Theorem \ref{thm:estimator_M_known}\ref{it:pointwise_control}, and as every $y\in \hat M_\eps$ is in the image of $\hat\Psi_j\circ S_j$ for some $j=1\dots J$, we have $\fmin/3\leq K_h*\nu_n\leq 3\fmax$ on $\hat M_\eps$ should $\eps^k+\gamma$ be small enough. This proves Theorem \ref{thm:estimator_M_known}\ref{it:nonnegative} and, together with Lemma \ref{lem:the_smart_lemma}, this also proves Theorem \ref{thm:M_unknown}\ref{it:control_estimator_volume}. Theorem \ref{thm:M_unknown}\ref{it:rate_final_estimator} is a consequence of Theorem \ref{thm:M_unknown}\ref{it:control_estimator_volume}.

\section{Numerical considerations}\label{sec:num}

\subsection{Computation of the local polynomial estimator}\label{sec:conv_opt}
A crucial step in the implementation of the estimators $\widehat \vol_M$ and $\hat \nu_{n,h}$ is the minimization procedure described in \eqref{eq:def_poly}. Assume for the sake of simplicity that no noise is present in the dataset. Further assume without loss of generality that $0\in M$, and let $X_1, \dots, X_N$ be $N$ points sampled according to some distribution $\tilde \mu$ supported on $\BB_M(0,\eps)$, with density lower bounded by $\fmin$ (in the setting of Section \ref{sec:def_estim}, $\tilde \mu$ is the conditional probability of $X\sim\mu$ given that $X\in\BB_M(0,\eps)$, where $\mu\in \QQ^s(M)$). Our goal is to find a minimizer of the functional 
\begin{equation}\label{eq:naive_functional}
\frac{1}{2N}\sum_{i=1}^N \left\vert X_i-\pi(X_i)-\sum_{j=2}^{m-1} V_j[\pi(X_i)^{\otimes j}]\right \vert ^2,
\end{equation}
where $\pi$ is an orthogonal projector of rank $d$ and each $V_j$ is a $j$-tensor with operator norm smaller than $\ell$.  We propose here a fast procedure with theoretical guarantees to solve this problem, answering a question raised in \cite{aamari2019nonasymptotic}. There are two issues that make this optimization problem not trivial. First, the objective functional is defined on a manifold, and second, it is not globally geodesically convex. We will actually show that the functional is $\lambda$-strongly geodesically convex and $\beta$-smooth on a small neighborhood of size $\eps$ around its minimizer, for both $\lambda$ and $\beta$ of order $\eps^2$. Furthermore, we show that it is possible to find a point in this neighborhood, by letting $V_j=0$, and $\pi$ being given by a PCA on the dataset $X_1,\dots,X_N$. Any standard optimization algorithm on Riemannian manifolds will then converge with such an initialization. As an example, we show that a classical gradient descent converges linearly, although it is expected that more refined algorithms such as a Riemannian SVRG will also converge, with possibly a better behavior in practice \cite{sato2019riemannian}. Before going further, we give some background on the Grassmannian manifold that will allow us to rewrite the objective functional in a more practical way. We refer to \cite{edelman1998geometry} for a more detailed introduction on the geometry of the Grassmannian. We also provide a short review of convex optimization on Riemannian manifolds in Appendix \ref{sec:det_num}.
\medskip

 Let $\OO(D)$ be the manifold of orthogonal matrices on $\R^D$, that we endow with the Riemannian structure induced by the inclusion in $\R^{D\times D}$. For $Q\in \OO(D)$, the tangent space of $\OO(D)$ at $Q$ is given by $T_Q \OO(D) = \{QA:\ A\text{ skew-symmetric}\}$, whereas the exponential map is given by $\exp_{Q}(QA) = Q\exp(A)$.
\medskip

 Let $\GG(d,D)$ be the Grassmannian manifold. It can be defined as the quotient $\OO(D)/(\OO(d)\times \OO(D-d))$: we identify a $d$-dimensional subspace $U$ of $\R^D$ with the set of matrices 
 \begin{equation}
 \left\{Q\begin{pmatrix}
 Q_1 & 0 \\ 0 & Q_2
 \end{pmatrix}:\ Q_1\in \OO(d), Q_2\in \OO(D-d)\right\},
 \end{equation}
 where $Q$ is any orthogonal matrix such that the vector space $\Pi(Q)$ spanned by its first $d$-columns is equal to $U$. An orthogonal projector of rank $d$ is then identified with $\pi_U$, the projection on the subspace $U$. The manifold $\GG(D,d)$ is endowed with the Riemannian structure given by the quotient map. Fix an element $U_*\in\GG(d,D)$, with an arbitrary representant $Q_*$. Then, the tangent space $T_{U_*}\GG(d,D)$ at $U_*$ is given by the set of matrices
 \begin{equation}
  \Delta = Q_*\begin{pmatrix}
0 & -B^\top  \\
B & 0
\end{pmatrix},
\end{equation} 
where $B$ is any $(D-d)\times d$ matrix. The exponential map on the Grassmannian is given by the quotient projection of the exponential on the orthogonal group, that is $\exp_{U_*}(\Delta)=\Pi\circ \exp_{Q_*}(\Delta)$. The exponential map being surjective, any element of the Grassmannian can be written as $\Pi\circ \exp_{Q_*}(\Delta)$ for some $\Delta\in T_{U_*}\GG(d,D)$. Furthermore, the quotient map $\Pi: \exp_{Q_*}( T_{U_*}\GG(d,D)) \to \GG(d,D)$ is a local isometry. As it will be convenient for us to work with deformations of a fixed orthogonal basis $Q_*$ of $\R^D$, we will consider the $d(D-d)$-dimensional submanifold $\OO_*(d,D) \defeq \exp_{Q_*}( T_{U_0}\GG(d,D))$ of $\OO(D)$ instead of $\GG(d,D)$. A small neighborhood of $U_*$ in $\GG(d,D)$ is isometric to a neighborhood of $Q_*$ in $\OO_*(d,D)$, so that both points of view are equivalent. Given $Q\in \OO_*(d,D)$, we write $Q= (Q_{[d]}\ Q_{[d,D]})$, where $Q_{[d]}=(e_1,\dots,e_d)$ is a $D\times d$ matrix, and $Q_{[d,D]}=(e_{d+1},\dots,e_D)$ is a $D\times (D-d)$ matrix.
\medskip

  Given $E$ and $F$ two vector spaces, we let $\SS^j(E,F)$ be the set of symmetric $j$-tensors from $E$ to $F$. Then, the functional in \eqref{eq:naive_functional} is defined on $\GG(d,D)\times \prod_{j=2}^{m-1} \SS^j(\R^D,\R^D)$, that is a manifold of possibly very large dimension (of order $D^{m}$). However, given a subspace $U\in \GG(d,D)$, one can always decrease the loss function by replacing a tensor $V_j$ by the tensor $\pi_U^\bot\circ V_j\circ \pi_U$. That is, we may assume that $V_j\in \SS^j(U,U^\bot)$. For $Q=(e_1,\dots,e_D)\in \OO_*(d,D)$ and $V_j = (V_{j,k})_{d+1\leq k \leq D} \in \SS_j(\R^d,\R^{D-d})$, we let $\iota_j(Q,V)$ be the $j$-tensor in $\SS_j(\Pi(Q),\Pi(Q)^\bot)$ defined by
\[\iota_j(Q,V)[x_1,\dots,x_j] = \sum_{k=d+1}^D e_k V_{j,k}[Q_{[d]}^\top  (x_1 \cdots x_j)]\] 
for $x_1,\dots,x_j\in \Pi(Q)^j$. Introduce the functional
\begin{equation}
G_{m,x}(Q,V) \defeq \frac{1}{2} \left\vert x-\sum_{k=1}^d \dotp{x,e_k}e_k -\sum_{j=2}^{m-1} \iota_j(Q,V)[x^{\otimes j}] \right\vert^2.
\end{equation}
The function $G_m \defeq \frac{1}{N} \sum_{i=1}^N G_{m,X_i}$ is defined on the manifold  
\[\MM \defeq \OO_*(d,D)\times \prod_{j=2}^{m-1}\SS_j(\R^d,\R^{D-d}).\] Note that $\MM$ is a manifold of dimension $(D-d)\sum_{j=1}^{m-1} \binom{d+j-1}{j}$, that is of order $Dd^{m-1}\ll D^m$. We endow $\MM$ with the Riemannian metric $g_{\MM}$ given by 
\begin{equation}\label{eq:def_metric}
(g_{\MM})_{(Q,V)}((\Delta_1,W_1),(\Delta_2,W_2)) = \dotp{\Delta_1,\Delta_2} + \sum_{j= 2}^{m-1} \eps^{2(j-1)}\dotp{W_{1,j},W_{2,j}},
\end{equation}
for $(Q,V)\in\MM$ and $(\Delta_1,W_1),(\Delta_2,W_2)\in T_{(Q,V)}\MM$. We also denote by $d_{\MM}$ the geodesic distance on $\MM$. Consider the submanifold $\MM_0$ of $\MM$, where we impose that each $j$-tensor $V_j$ has operator norm smaller than $\ell$. Then, solving \eqref{eq:naive_functional} is equivalent to minimizing $G_m$ on $\MM_0$.

Let us choose $(Q_*,V_*)$ that minimizes $G_m$ on $\MM_0$. Consider the (geodesically convex) neighborhood $\Omega$ of $(Q_*,0)$ in $\MM_0$ given by matrices $Q\in  \OO_*(d,D)$ that are $r$ close from $Q_*$ for the geodesic distance.
 
 \begin{proposition}\label{prop:strong_convexity}
 Let $\eps\lesssim 1$, $\delta >0$, and let $r\leq \delta\eps$. If $1\lesssim \ell \lesssim \eps^{-1}$ is small enough with respect to $\delta$, then, with probability $1-e^{-cN}$, the functional $G_m$ is geodesically $\lambda$-strongly convex and $\beta$-smooth on $\Omega$ with $\eps^{2} \lesssim \lambda \leq \beta \lesssim \eps^2$, where the constant $c$ depends on $d$, $m$, $\taumin$ and $\fmin$.
 \end{proposition}
This implies in particular that a gradient descent with step $\alpha$ of order $\eps^{-2}$ will converge at linear rate towards the minimizer of $G_m$, when initialized in $\Omega$ (see Appendix \ref{sec:det_num} for details). A number of steps of order $\vert \log \eps\vert  \simeq \log n$ is needed to attain a point at distance $\eps^{m-1}$ from the minimizer. Such a point $(Q,V)$ will then satisfy the same inequalities than the minimizer $(\hat \pi,\hat V)$ from Proposition \ref{prop:estim_manifold}. Proposition \ref{prop:estim_manifold} with $m=2$ implies that a local PCA yields a subspace that is $\delta\eps$-close from $\Pi(Q^*)$ for some $\delta>0$. Therefore, choosing $\ell$ small enough, Proposition \ref{prop:strong_convexity} implies that one can initialize the gradient descent at the local PCA, with linear convergence of the gradient descent.

Let us end this section with a word about the computational complexity of gradient descent. Evaluating the gradient of $G_m$ requires $O(NDd^{m-1})$ operations. Also, one need to compute the exponential map on $\MM$ at each step of the gradient descent. This boils down to computing a SVD, which can be made in $O(Dd^2)$ time. In total, each step of the gradient descent takes $O(NDd^{m-1})$ time. The dependence in $N$ is not an issue as $N$ is of order $\log n$ in our setting (that is the expected number of points of a $n$-sample in a ball of radius $\eps \simeq (\log n/n)^{1/d}$). Still, one could use a stochastic gradient descent algorithm to remove the $N$ factor if needed.

\subsection{Sampling from the estimators}

The issue of evaluating integrals and sampling from distributions supported  defined on manifolds has been addressed in several works using Monte Carlo methods: e.g. authors in \cite{diaconis2013sampling} propose rejection sampling and Gibbs sampling methods in the case where the manifold can be covered by a single known chart (except on a set of null measure), while authors in \cite{zappa2018monte} design a MCMC in the case where the manifolds is defined through equality constraints. 
We here address the problem of sampling from two estimators: the estimated uniform measure $\hat U_M\defeq \widehat\vol_M/\vert \widehat\vol_M\vert $ and the measure $\hat \mu_{n,h}$. Our goal is not to propose state-of-the-art procedures but to show that basic sampling algorithms already have good theoretical behaviors. 
 
Let 
\begin{equation}\label{eq:qj}
q_j \defeq \hat U_M(\hat\Psi_{i_j}(\hat T_{j})) = \int_{\hat T_j} \chi_j(\hat\Psi_j(u))J\hat\Psi_j(u)\dd u
\end{equation}
and $\hat U_M^{(j)} = q_j^{-1}\cdot (\hat U_M)_{\vert \hat \Psi_{j}(\hat T_{j})}$, the measure $\hat U_M$ conditioned on being in $\hat \Psi_{j}(\hat T_{j})$. Each $q_j$ can be estimated by a Monte Carlo method, for instance by sampling uniform points on $\BB_{\hat T_{j}}(0,\eps)$. As the integrand in \eqref{eq:qj} is bounded by $2$ (for $\eps$ small enough), the variance of the estimator of the integral can be bounded, and $\delta^{-2}$ samples are necessary to obtain a precision $\delta$. The volume $\vert \widehat \vol_M\vert $ can then be approximated by the sum of the $q_j$s. 
A $N$-sample of law $\hat U_M$ is built in two steps. First, by sampling $(N_1,\dots,N_J)$ that follows a multinomial distribution of parameters $n$ and $(q_1,\dots,q_J)$. Second, by sampling $N_j$ points from $\hat U_M^{(j)}$ for every $j$. 
  We propose the following method to produce a sample with approximate distribution $\hat U_M^{(j)}$. Let $\t U_M^{(j)}$ be the pushforward measure  of the uniform distribution on $\BB_{\hat T_{j}}(0,\eps)$ by $\hat\Psi_{j}$.
It is immediate to simulate from $\t U_M^{(j)}$, while $\hat U_M^{(j)}\ll \t U_M^{(j)}$, with density $\frac{d\hat U_M^{(j)}}{d\t U_M^{(j)}}$ being equal to, for $u\in \hat T_{j}$,
\begin{equation}\label{eq:density_tilde_U}
\frac{d\hat U_M^{(j)}}{d\t U_M^{(j)}}(\hat\Psi_{j}(u)) = \frac{\omega_d\eps^d}{q_j}\chi_j(\hat\Psi_{j}(u))J\hat\Psi_{j}(u).
\end{equation} 
Rejection sampling with proposal distribution $\t U_M^{(j)}$ then allows one to create a sample from law $\hat U_M^{(j)}$. One can check that the density  in \eqref{eq:density_tilde_U} is upper bounded by some constant depending only on $d$ (for $\eps$ small enough), so that the acceptance ratio of the procedure is bounded away from zero.

Another quantity of interest is the number $N$ of samples that are needed to obtain a good approximation of $\hat U_M$ based on the empirical distribution $(\hat U_M)_N$  of a $N$-sample from law $\hat U_M$. It is known \cite{trillos2020error} that if a measure is supported on a $d$-dimensional manifold, then the $W_\infty$-distance between a measure and a $N$-sample is of order $(\log N/N)^{1/d}$ (for $d\geq 3$). However, $\hat U_M$ is not supported on a manifold, but on a union of overlapping polynomial patches. We however show that the expected rate of convergence still holds in this case. A proof is provided in Appendix \ref{sec:last_thm}.

 \begin{proposition}\label{prop:hat_u_m_approx}
 Let $\hat U_M$ be the estimator of the uniform measure built on $n$ points sampled from $\mu \in \QQ^{0,k}_d$.  Let $W_1,\dots,W_N$ be a $N$-sample of law $\hat U_M$, with associated empirical measure $(\hat U_M)_N$. Then, 
 \begin{equation}\label{eq:hat_u_m_approx}
 \E[W_\infty((\hat U_M)_N,U_M)] \lesssim \p{\frac{\log n}{n}}^{k/d} + \begin{cases}
  \frac{(\log N)^{3/4}}{N^{1/2}} & \text{ if } d=2,\\
 \p{\frac{\log N}{N}}^{1/d}&  \text{ if } d\geq 3.
 \end{cases}
 \end{equation}
 \end{proposition}

\begin{remark}[Spectral estimation of the Laplace-Beltrami operator on $M$]
Garc\'ia Trillos \& al. \cite{trillos2020error} study the problem of estimating the spectral properties of the Laplace-Beltrami operator $\Delta$ on $M$. They show that the eigenvalues of a properly tuned graph Laplacian built on top of a uniform sample of points $W_1,\dots,W_n$ on $M$ will converge at rate $(\log n/n)^{1/(2d)}$ towards the eigenvalues of $\Delta$. We may build upon their results using the estimated volume measure $\widehat\vol_M$. Let $W_1,\dots,W_N$ be a $N$-sample of law $\hat U_M$. Then, Theorem 4 in \cite{trillos2020error} together with Proposition \ref{prop:hat_u_m_approx} yield that the eigenvalues of an appropriate graph Laplacian built on top of the sample $W_1,\dots,W_N$ will approximate the eigenvalues of $\Delta$ at rate (for $d\geq 3$)
\[ \sqrt{\p{\frac{\log n}{n}}^{k/d} +\p{\frac{\log N}{N}}^{1/d} }.\] 
In particular, the faster rate of convergence $(\log n/n)^{k/(2d)}$ is attained by such an estimator, at the price of of being able to sample $N=n^k$ points according to $\hat U_M$. This tradeoff between statistical accuracy and computational efficiency is a common phenomenon, discussed for instance in \cite[Section 6]{weed2019estimation} in the setting of Wasserstein density estimation. Another advantage of our procedure is that, unlike results from \cite{trillos2020error}, it allows one to recover the eigenvalues of the Laplace-Beltrami operator $\Delta$ even in the case where one has access to a sample $X_1,\dots,X_n$ of non-uniform points on $M$. 
\end{remark}

We now turn to the problem of sampling from $\hat \nu_{n,h}$. Computing the density of $K_h*(\nu_n/\hat\rho_h)$ of $\hat \nu_{n,h}$ requires first the computation of 
\begin{equation}\label{eq:hatrhoX}
\hat\rho_h(X_i) = \sum_{j=1}^J \int_{\hat T_j} K_h(\hat \Psi_j(u)-X_i)\chi_j(\hat\Psi_j(u))J\hat\Psi_j(u)\dd u
\end{equation}
 for every point $X_i$ of the sample. Note that there are only a small number of non-zero terms in this sum. Still, the previous basic Monte Carlo method using uniform samples will not perform well in this case, as the integrand in \eqref{eq:hatrhoX} has $\infty$-norm of order $h^{-d}\gg 1$. A slight modification of the estimator $\hat \nu_{n,h}$ is however possible: consider the measure $\tilde \nu_{n,h}$ with density proportional to $K_h*\nu_n$ with respect to $\widehat \vol_M$. Then, Lemma \ref{lem:the_smart_lemma} yields that $\tilde \nu_{n,h}$ is at distance $\eps^{k}$ from the measure with density $cK_h*\nu_n$ with respect to $\vol_M$, where $c =\frac{1}{n}\sum_{i=1}^n \int K_h(x-X_i)\dd  \vol_M(x)$ is a normalizing constant. Furthermore, Lemma \ref{lem:kernel_of_order_k} implies that both $c$ and $\hat\rho_h(X_i)$ deviates from $1$ with an error at most of order $h^{k-1}$. This implies, from the linearization inequality \eqref{eq:wass_neg} that the distance $W_p(\tilde\nu_{n,h},\hat \nu_{n,h})$ is also of order $h^{k-1}$. In particular, the risk of the estimator $\t\nu_{n,h}$ is the same as long as $s\leq k-2$ (with a deterioration of the rate for $k-2\leq s \leq k-1$), while sampling from $\tilde\nu_{n,h}$ does not require computing any integrals. Indeed, the measure $\tilde\nu_{n,h}$ has a density $\alpha$ with respect to the $d$-dimensional Hausdorff measure on $\bigcup_{j=1}^J \hat\Psi_j(\hat T_j)$ equal to, at the point $x$,
\begin{equation}
\alpha(x)  \propto \sum_{j=1}^J \ones\{x\in \hat\Psi_j(\hat T_j)\} \frac{1}{n} \sum_{i=1}^n K_h(x-X_i) \chi_j(x).
\end{equation}
A Metropolis-Hasting scheme with proposal distribution $\sum_{j=1}^J q_j\t U_M^{(j)}$ is then implementable. Furthermore, adapting the proof of Theorem \ref{thm:estimator_M_known}\ref{it:nonnegative}, one can check that $K_h*\nu_n(x)$ is upper bounded by (say) $3\fmax$ and lower bounded by $\fmin/3$ for $x$ in the support of $\widehat \vol_M$. This implies that the acceptance ratio of the Metropolis-Hasting scheme is lower bounded by a positive constant depending only on $\fmin$, $\fmax$ and $d$ (for $\eps$ small enough). The  behavior in practice of those different estimators remains to be investigated and is left for future work.

\subsection*{Acknowledgements} I am grateful to   E.~Aamari, C.~Berenfeld, F.~Chazal, C.~Levrard and P.~Massart for helpful discussions and valuable comments on different mathematical aspects of this work.

\appendix

\section{Geometric properties of \texorpdfstring{$\CC^k$}{Ck} manifolds with positive reach and their estimators}\label{sec:geom_prop}

Let $M \in \MM_{d,\taumin,L}^k$ for some $k\geq 2$ and $\taumin,L>0$. 
Recall that the angle between two $d$-dimensional subspaces $T_1$ and $T_2$ is given by $\angle(T_1,T_2) \defeq\op{\pi_{T_1}-\pi_{T_2}} = \|\pi_{T_1}^\bot\circ \pi_{T_2}\|_{\mathrm{op}}$, where $\pi_{T_1}$ (resp. $\pi_{T_2}$) is the orthogonal projection on $T_1$ (resp. $T_2$) and $\pi_{T_1}^\bot \defeq \id -\pi_{T_1}$.

\begin{lemma}\label{lem:prop_proj}
Let $x,y\in M$. The following properties hold:
\begin{enumerate}[label=(\roman*)]
\item One has $\vert \pi_y^\bot(x-y)\vert \leq \frac{\vert x-y\vert ^2}{2\taumin}$ and $\angle(T_xM,T_yM)\leq  2\frac{\vert y-x\vert }{\taumin}$.  \label{it:angle}
\item If $\pi_M(z) =x$  for some $z\in M^{\taumin}$, then $z-x \in T_x M^\bot$. \label{it:normal}
\item If $h\leq  \taumin/4$, then $c_dh^d\leq \vol_M(\BB_M(x,h))\leq C_dh^d$.\label{it:bound_volume}
\item If  $h\leq r_0$, then $\BB_M(x,h)\subset \Psi_x(\BB_{T_x M}(0,h))\subset \BB_M(x,8h/7)$. Also, if $u\in\BB_{T_x M}(0,r_0)$, then $\vert u\vert \leq \vert \Psi_x(u)-x\vert \leq 8\vert u\vert /7$.\label{it:param_contains_ball}
\item There exists a map $N_x:\BB_{T_x M}(0,r_0)\to T_xM^\bot$ satisfying $d N_x(0)= 0$, and such that, for $u\in \BB_{T_x M}(0,r_0)$, we have $\Psi_x(u) = x + u + N_x(u)$ with $\vert N_x(u)\vert \leq L\vert u\vert ^2$. \label{it:existence_N}
\item There exist tensors $B_x^1,\dots,B_x^{k-1}$ of operator norm controlled by a constant depending on $L$, $d$, $k$ and $\taumin$, such that, if $u\in T_xM$ satisfies $\vert u\vert \leq C_{k,d,L}$, then $J\Psi_x(u)= 1+ \sum_{i=2}^{k-1} B_{x}^i[u^{\otimes i}] + R_x(u)$, with $\vert R_x(u)\vert \leq C'_{k,d,L}\vert u\vert ^k$. \label{it:taylor_jacob}
\end{enumerate}
\end{lemma}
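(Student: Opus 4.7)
Items (i) and (ii) are classical consequences of the assumption $\tau(M)\geq \taumin$. Item (ii) is just the first-order optimality condition for $x=\pi_M(z)$ as a nearest point to $z$ on $M$: writing a $\CC^1$ curve $\gamma(t)\in M$ with $\gamma(0)=x$ and $\gamma'(0)=v\in T_x M$, the function $t\mapsto |z-\gamma(t)|^2$ is minimized at $t=0$, whence $\langle z-x,v\rangle=0$. For (i), the Federer bound $|\pi_y^\bot(x-y)|\leq |x-y|^2/(2\taumin)$ is the foundational estimate from \cite{federer1959curvature}, and the tangent-tilting bound $\angle(T_xM,T_yM)\leq 2|x-y|/\taumin$ is a standard consequence (see e.g.~\cite[Proposition 30 and its corollaries]{aamari2018stability} or the analogous lemma in \cite{aamari2019nonasymptotic}); I would simply invoke them.

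The technical content lies in (iv)--(vi), all driven by the graph parametrization $\Psi_x$. Define the normal displacement $N_x(u) \defeq \Psi_x(u) - x - u$. Since $\t\pi_x\circ\Psi_x = \id$ on $\BB_{T_xM}(0,r_0)$, we have $\pi_x(\Psi_x(u)-x)=u$, so the tangential part of $\Psi_x(u)-x$ is exactly $u$ and $N_x(u)\in T_xM^\bot$. In particular $N_x(0)=0$ and $dN_x(0) = d\Psi_x(0)-\id_{T_xM} = 0$, so Taylor's formula applied with $\|\Psi_x\|_{\CC^k}\leq L$ gives $|N_x(u)|\leq L|u|^2$, proving (v). For (vi), decompose $d\Psi_x(u) = \id_{T_xM} \oplus dN_x(u)$ into components valued in the orthogonal subspaces $T_xM$ and $T_xM^\bot$; the cross terms in $d\Psi_x(u)^{*}d\Psi_x(u)$ vanish, so
\[
J\Psi_x(u) = \sqrt{\det\bigl(\id_{T_xM} + dN_x(u)^{*}dN_x(u)\bigr)}.
\]
This function is $\CC^{k-1}$ with Lipschitz $(k-1)$-th derivative (all derivatives being controlled by those of $\Psi_x$), equals $1$ at $u=0$, and its linear Taylor coefficient vanishes because $dN_x(0)=0$ forces $dN_x(u)^{*}dN_x(u) = O(|u|^2)$. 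Taylor's formula at order $k-1$ then produces tensors $B_x^2,\dots,B_x^{k-1}$ with operator norms controlled by $L,d,k,\taumin$, and a remainder $|R_x(u)|\lesssim |u|^k$.

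Item (iv) combines (v) with the reach bound. Applying (i) to the pair $(\Psi_x(u),x)$ gives $|N_x(u)| = |\pi_x^\bot(\Psi_x(u)-x)|\leq |\Psi_x(u)-x|^2/(2\taumin)$; together with the Pythagorean identity $|\Psi_x(u)-x|^2 = |u|^2+|N_x(u)|^2$ this yields $|N_x(u)|\leq |u|^2/(2\taumin)$, whence $|\Psi_x(u)-x|\leq |u|\sqrt{1+|u|^2/(4\taumin^2)} \leq 8|u|/7$ as soon as $|u|\leq r_0\leq \taumin/4$. The lower bound $|u|\leq |\Psi_x(u)-x|$ is immediate from $u=\pi_x(\Psi_x(u)-x)$. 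The first inclusion in (iv) follows by taking $y\in \BB_M(x,h)$, setting $u = \t\pi_x(y)$ (which lies in $\BB_{T_xM}(0,h)$ since $|u|\leq |y-x|\leq h\leq r_0$), and using $\Psi_x(u)=y$; the second inclusion is the upper bound just obtained. Finally (iii) is a change of variables: by (iv),
\[
\vol_M\bigl(\Psi_x(\BB_{T_xM}(0,7h/8))\bigr)\leq \vol_M(\BB_M(x,h))\leq \vol_M\bigl(\Psi_x(\BB_{T_xM}(0,h))\bigr),
\]
and both sides equal $\int J\Psi_x$ over the corresponding tangent ball, with $1/2\leq J\Psi_x\leq 2$ by (vi) after further shrinking the constants.

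\textbf{Main obstacle.} There is no deep difficulty: the lemma collects standard facts about positive-reach $\CC^k$ submanifolds. The only point needing care is constant bookkeeping, particularly the explicit factor $8/7$ in (iv), which forces the use of the sharp reach-induced quadratic bound $|N_x(u)|\leq |u|^2/(2\taumin)$ extracted from (i), rather than the cruder estimate $|N_x(u)|\leq L|u|^2$ from (v) (which is too weak when $L$ is large compared to $\taumin$).
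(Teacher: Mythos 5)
Your proposal is essentially correct, and items (v) and (vi) match the paper's own argument line for line (decompose $d\Psi_x=\id_{T_xM}+dN_x$, observe the cross terms vanish, Taylor-expand $F\mapsto\sqrt{\det(\id+F)}$). For (i) and (ii) you invoke the same classical references/arguments the paper cites. The genuine divergence is in (iii) and (iv), where the paper simply cites [aamari2019nonasymptotic, Lemma A.2] and [aamari2018stability, Prop.~8.7] respectively, while you derive them directly; your route is self-contained and instructive, at the cost of two small inaccuracies worth flagging.

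First, in (iv), the bound $|N_x(u)|\leq |u|^2/(2\taumin)$ does not quite follow from the quadratic inequality you set up. From $|N_x(u)|\leq (|u|^2+|N_x(u)|^2)/(2\taumin)$ together with $|N_x(u)|\leq |u|\leq\taumin/4$, the best clean conclusion is $|N_x(u)|(2\taumin-|N_x(u)|)\leq|u|^2$, i.e.\ $|N_x(u)|\leq |u|^2/(2\taumin-|N_x(u)|)\leq \tfrac{4}{7}|u|^2/\taumin$, not $|u|^2/(2\taumin)$. This is harmless for your purpose since $\sqrt{1+(4/7)^2(|u|/\taumin)^2}\leq\sqrt{1+(4/7)^2/16}<8/7$ for $|u|\leq\taumin/4$, but the stated intermediate inequality is false as written. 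Second, in (iii), your argument requires both $h\leq r_0=(\taumin\wedge L)/4$ (for (iv) to apply) and $|u|$ small enough that $J\Psi_x\leq 2$ (which, via $\op{dN_x(u)}\leq L|u|$, needs $|u|\lesssim 1/L$); the lemma's range $h\leq\taumin/4$ can be strictly larger than this when $L$ is large. The cited Proposition 8.7 in [aamari2018stability] establishes the volume comparison under a pure reach hypothesis, with constants independent of $L$, which is why the paper quotes it rather than deriving (iii) from the parametrization. Your derivation proves a slightly weaker statement (valid on a possibly smaller range, with a constant depending on $L$); if you want the full range $h\leq\taumin/4$ you should either invoke that proposition or give a separate reach-only covering argument.
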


\begin{proof}
See Theorem 4.18 in \cite{federer1959curvature} and Lemma 6 in \cite{giesen2003shape} for \ref{it:angle}, Theorem 4.8 in \cite{federer1959curvature} for \ref{it:normal}, and Proposition 8.7 in \cite{aamari2018stability} for \ref{it:bound_volume}.
See Lemma A.2 in \cite{aamari2019nonasymptotic} for the second inclusion of balls in \ref{it:param_contains_ball}, which also implies the second inequality in \ref{it:param_contains_ball}. The first inclusion as well as the first inequality in \ref{it:param_contains_ball} follow from the fact that $\Psi_{x}$ is the local inverse of $\t\pi_x$, which is $1$-Lipschitz. 

By a Taylor expansion of $\Psi_x$ at $u=0$, we have $\Psi_x(u)=x+u + N_x(u)$, with $N_x(u) = \int_0^1 d^2\Psi_x(tu)[u^{\otimes 2}]\dd t$. Hence, $\vert N_x(u)\vert \leq L\vert u\vert ^2$. Furthermore, as $\t\pi_x\circ \Psi_x(u)=u$, we have $\pi_x(N_x(u))=0$, i.e.~$N_x$ takes its values in $T_x M^\bot$. This proves \ref{it:existence_N}.

Eventually, we prove \ref{it:taylor_jacob}. Let us write $d\Psi_x(u) = \id_{T_x M} + d N_x(u)$ and $d\Psi_x(u)^\top  d\Psi_x(u) = \id_{T_x M} + (d N_x(u))^\top  dN_x(u)$. We obtain 
\[J\Psi_x(u) = \sqrt{\det(d\Psi_x(u)^\top  d\Psi_x(u))}= \sqrt{\det(\mathrm{id}_{T_x M} + (d N_x(u))^\top  dN_x(u))}.\] One has $dN_x(u) = dN_x(0) + \sum_{j=2}^{k-1} \frac{d^j N_x(0)}{(j-1)!}[u^{\otimes (j-1)}] + R_x(u)$, with $\vert R_x(u)\vert \leq C_{k,L} \vert u\vert ^{k-1}$ and $dN_x(0)=0$. Hence, $ (d N_x(u))^\top  dN_x(u)$ is written as $\sum_{j= 2}^{k-1} B_j[u^{\otimes j}] + R'_x(u)$ with $\vert R'_x(u)\vert \leq C'_{k,l} \vert u\vert ^k$, for some $j$-tensors $B_j$ whose operator norms are bounded in terms of $L$. The operator norm of this operator is smaller than, say, $1/2$ for $\vert u\vert $ sufficiently small, and we conclude the proof by writing a Taylor expansion at $0$ of the function $F \mapsto \sqrt{\det(\id + F)}$.
\end{proof}

We now prove Lemma \ref{lem:existence_pou}, on the construction of smooth partitions of unity based on  some set $S$ which is sufficiently sparse and dense over a tubular neighborhood of $M$.
\begin{proof}[Proof of Lemma \ref{lem:existence_pou}]
Consider the functions $\theta$ and $(\chi_x)_{x\in S}$ as in the statement of the lemma, and, for $y\in M^\delta$, let $Z(y)=\sum_{x'\in S}\theta\p{\frac{y-x'}{8\delta}}$. As $d_H(M^{\delta}\vert S) \leq 4\delta$, we have $Z(y)\geq 1$ and the quantity $\chi_x(y)$ is well defined. The function $\chi_x$ is smooth and we have $\sum_{x\in S} \chi_x \equiv 1$ on $M^{\delta}$. One has $d^l \chi_x(y)$ which is written as a sum of terms of the form $d^{l-j}\theta\p{\frac{y-x}{8\delta}} d^j (Z^{-1})(y)$, and $d^j (Z^{-1})(y)$ is equal to a sum of terms of the form $Z^{j'-j-2}(y) d^{j'} Z(y)$ for $1\leq j'\leq j$. Also, $\op{d^j\theta \p{\frac{y-x'}{8\delta}} }\leq C_{j}\delta^{-j}$ and $\op{d^j Z(y)}\leq C_{j}\delta^{-j}\sum_{x\in S} \ones\{\vert x-y\vert \leq 8\delta\}$. Hence, as $Z\geq 1$, we have for any $l\geq 0$
\[ \op{d^l \chi_x(y)} \leq C'_{l} \delta^{-l}\sum_{x\in S} \ones\{\vert x-y\vert \leq 8\delta\}.\]
It remains to bound this sum. If $x\in \BB(y,8\delta)$, then $\pi_M(x)\in \BB(\pi_M(y),10\delta)$. Also, for $x\neq x'\in S$, we have $\vert \pi_M(x)-\pi_M(x')\vert \geq \vert x-x'\vert -2\delta \geq 2\delta$. In particular, the balls $\BB_M(\pi_M(x),\delta)$ for $x\in S$ are pairwise disjoint, and are all included in $\BB_M(\pi_M(y),11\delta)$ . Therefore, if $11\delta\leq \tau(M)/4$, using Lemma \ref{lem:prop_proj}\ref{it:bound_volume} twice, we obtain that $\vol_M(\BB_M(\pi_M(x),\delta))\geq c_d \delta^d$,  and that
\begin{align*}
 \sum_{x\in S} \ones\{\vert x-y\vert \leq 8\delta\} &\leq \sum_{x\in S} \ones\{\vert x-y\vert \leq 8\delta\}\frac{\vol_M(\BB_M(\pi_M(x),\delta))}{c_d \delta^d}\\
 & \leq  \frac{\vol_M(\BB_M(\pi_M(y),11\delta))}{c_d\delta^d} \leq c'_d.
\end{align*}
This concludes the proof.
\end{proof}

We end this section by detailing the properties of the local polynomial estimators $\hat \Psi_i$ and $\hat T_i$ defined in \cite{aamari2019nonasymptotic}. In particular, the next lemma implies Proposition \ref{prop:estim_manifold}. Recall that $X_i=Y_i+Z_i$ with $Y_i\in M$ and $\vert Z_i\vert \leq \gamma$. Aamari and Levrard introduce tensors $V_{j,i}^* $ which are defined as $d^j\Psi_{X_i}(0)/j!$, where $d^j\Psi_{X_i}(0)$ is the $j$th differential of $\Psi_{X_i}$ at $0$ (see the proof of Lemma 2 in \cite{aamari2019nonasymptotic} for details). In particular, we have $V_{1,i}^*  =\pi_{Y_i}$. Furthermore, as $\t\pi_{Y_j}\circ \Psi_{Y_j}=\id$, we have  $\pi_{Y_j}\circ V_{j,i}^*  = 0$ for $j\geq 2$.
\begin{lemma}\label{lem:prop_poly_estim}
With probability larger than $1-cn^{-k/d}$, for any $1\leq i \leq n$,
\begin{enumerate}[label=(\roman*)]
\item We have $\angle(T_{Y_i}M,\hat T_i) \lesssim \eps^{m-1} + \gamma \eps^{-1}$ \label{it:prop_poly_estim_1}.
\item For $v\in \hat T_i$, we have $\hat \Psi_i(v) = X_i + v + \hat N_i(v)$, where $\hat N_i : \hat T_i\to \hat T_i^\bot$ is defined by $\hat N_i(v) = \sum_{j=2}^{m-1} \hat V_{j,i}[v^{\otimes j}]$.\label{it:prop_poly_estim_3}
\item For any $2\leq j <m$, $\op{\hat V_{j,i}\circ \hat \pi_i-V_{j,i}^*  \circ \pi_{Y_i}} \lesssim \eps^{m-j} + \gamma \eps^{-j}.$\label{it:prop_poly_estim_2}
\item \label{it:prop_proly_estim_4} For $v\in \BB_{\hat T_i}(0,3\eps)$, we have 
\begin{align}
&\vert \hat{\Psi}_i(v)-\Psi_{Y_i}(\pi_{Y_i}(v))\vert  \lesssim \eps^m + \gamma, \label{eq:proofcontrol_error_manifold}\\
&\vert \hat N_i(v)-N_{Y_i}(\pi_{Y_i}(v))\vert \lesssim \eps^m + \gamma, \label{eq:control_N}\\
&\op{d\hat{\Psi}_i(v) - d(\Psi_{Y_i}\circ\pi_{Y_i})(v)} \lesssim \eps^{m-1} + \gamma \eps^{-1} \label{eq:proofcontrol_error_manifold_derivative}\\
&\op{d\hat{N}_i(v) - d(N_{Y_i}\circ\pi_{Y_i})(v)} \lesssim \eps^{m-1} + \gamma \eps^{-1}. \label{eq:proofcontrol_error_N_derivative}
\end{align}
\end{enumerate}
\end{lemma}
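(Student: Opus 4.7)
The plan is to derive each item from the convergence analysis for the local polynomial estimator of Aamari and Levrard \cite{aamari2019nonasymptotic}, all of whose quantitative bounds hold on a single event of probability at least $1-cn^{-k/d}$ uniformly in $i$. Item (i) is exactly \eqref{eq:control_angle} of Proposition \ref{prop:estim_manifold}. Item (iii), $\op{\hat V_{j,i}\circ\hat\pi_i - V_{j,i}^*\circ\pi_{Y_i}}\lesssim \eps^{m-j}+\gamma\eps^{-j}$, is extracted from the same analysis: the $V_{j,i}^*$ are the Taylor coefficients $d^j\Psi_{Y_i}(0)/j!$, and each $\hat V_{j,i}$ is shown to approximate them at the stated rate (the loss of one extra factor of $\eps$ per differentiation order being built into their estimates). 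Item (ii) is structural: once $\hat\pi_i$ is fixed, the squared residual in \eqref{eq:def_poly} decouples into its $\hat T_i$- and $\hat T_i^\bot$-components, so the $\hat T_i$-component of each $\hat V_{j,i}[\hat\pi_i(\cdot)^{\otimes j}]$ can be set to zero without raising the loss; we may therefore assume $\hat V_{j,i}$ takes values in $\hat T_i^\bot$, i.e.\ $\hat N_i(v)\in \hat T_i^\bot$ for $v\in\hat T_i$.

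For item (iv), I would Taylor-expand $\Psi_{Y_i}$ using the $\CC^k$ bound of Definition \ref{def:manifold_holder}:
\begin{equation*}
\Psi_{Y_i}(u) = Y_i + u + \sum_{j=2}^{m-1} V_{j,i}^*[u^{\otimes j}] + R(u),
\end{equation*}
with $|R(u)|\lesssim |u|^m$ and $\op{dR(u)}\lesssim |u|^{m-1}$ for $|u|\lesssim 1$. Applied at $u=\pi_{Y_i}(v)$ for $v\in\BB_{\hat T_i}(0,3\eps)$ (so $|u|\le 3\eps$) and subtracted from $\hat\Psi_i(v)=X_i+v+\sum_{j=2}^{m-1}\hat V_{j,i}[v^{\otimes j}]$, using $\hat\pi_i v=v$ on $\hat T_i$, this yields
\begin{equation*}
\hat\Psi_i(v) - \Psi_{Y_i}(\pi_{Y_i}(v)) = Z_i + (v-\pi_{Y_i}(v)) + \sum_{j=2}^{m-1}(\hat V_{j,i}\circ\hat\pi_i - V_{j,i}^*\circ\pi_{Y_i})[v^{\otimes j}] - R(\pi_{Y_i}(v)).
\end{equation*}
Each of the four terms is $O(\eps^m+\gamma)$: $|Z_i|\le\gamma$; $|v-\pi_{Y_i}(v)|\le \angle(T_{Y_i}M,\hat T_i)|v|\lesssim \eps^m+\gamma$ by item (i); the $j$-th tensor difference is bounded by $(\eps^{m-j}+\gamma\eps^{-j})(3\eps)^j\lesssim \eps^m+\gamma$ by item (iii); and $|R(\pi_{Y_i}(v))|\lesssim \eps^m$. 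This proves \eqref{eq:proofcontrol_error_manifold}, and \eqref{eq:control_N} follows by further subtracting $Z_i$ and the linear parts from both sides.

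The derivative estimates \eqref{eq:proofcontrol_error_manifold_derivative}--\eqref{eq:proofcontrol_error_N_derivative} are then obtained by differentiating the same identity in $v$: the polynomial-term differences become $j|v|^{j-1}(\eps^{m-j}+\gamma\eps^{-j})\lesssim \eps^{m-1}+\gamma\eps^{-1}$, the Taylor remainder contributes $\op{dR(\pi_{Y_i}(v))}\lesssim \eps^{m-1}$, and the discrepancy of the linear parts, $\id-\pi_{Y_i}$, has operator norm $\angle(T_{Y_i}M,\hat T_i)\lesssim \eps^{m-1}+\gamma\eps^{-1}$ when applied to directions in $\hat T_i$. The main obstacle is really a bookkeeping one: carefully extracting item (iii) in the composed form $\hat V_{j,i}\circ\hat\pi_i$ vs.\ $V_{j,i}^*\circ\pi_{Y_i}$ from \cite{aamari2019nonasymptotic} (the change of reference frame from $\hat T_i$ to $T_{Y_i}M$ produces cross-terms of size $\op{\hat V_{j,i}}\cdot\angle(T_{Y_i}M,\hat T_i)$, controlled by the bound $\op{\hat V_{j,i}}\lesssim\ell\lesssim\eps^{-1}$ from \eqref{eq:def_poly} combined with (i)), and verifying that all such cross-terms collapse to the clean rates $\eps^m+\gamma$ and $\eps^{m-1}+\gamma\eps^{-1}$ under the standing assumption $\gamma\lesssim \eps$.
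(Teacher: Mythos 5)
Your treatment of (i), (ii), and (iv) essentially matches the paper: (i) is exactly \eqref{eq:control_angle}; (ii) uses the same Pythagorean decomposition of the squared residual along $\hat T_i$ and $\hat T_i^\bot$ to conclude $\hat\pi_i\circ\hat V_{j,i}=0$; and your Taylor-expansion argument for (iv), with the four-term decomposition bounded via $|Z_i|\leq\gamma$, the angle bound, item (iii), and the remainder, is the same calculation the paper carries out.

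The gap is in item (iii). You assert that each $\hat V_{j,i}$ ``is shown to approximate [$V_{j,i}^*$] at the stated rate'' in \cite{aamari2019nonasymptotic}, but that reference only proves this for $j=2$ (this is Theorem 2 there). For $2<j<m$ there is no such direct statement. What \cite{aamari2019nonasymptotic} does provide (their Equation~(3)) is a bound $\op{V'_{j,i}\circ\pi_{Y_i}}\lesssim\eps^{m-j}+\gamma\eps^{-j}$ on auxiliary tensors $V'_{j,i}$ defined by re-expanding the estimated polynomial residual in the true coordinates at $Y_i$. Passing from $V'_{j,i}$ to the quantity you actually need, $\hat V_{j,i}\circ\hat\pi_i - V_{j,i}^*\circ\pi_{Y_i}$, is not the cosmetic change of reference frame you describe at the end: when you expand $\hat V_{j,i}[\hat\pi_i(y-Y_i)^{\otimes j}]$ in powers of $\pi_{Y_i}(y-Y_i)$, the substitution $\hat\pi_i(y-Y_i)=\hat\pi_i\bigl(\pi_{Y_i}(y-Y_i)+\sum_a V_{a,i}^*[\pi_{Y_i}(y-Y_i)^{\otimes a}]+\cdots\bigr)$ produces degree-$j$ cross-terms of the form $\hat V_{j',i}\bigl[\hat\pi_i\circ V_{a_1,i}^*\circ\pi_{Y_i},\dots,\hat\pi_i\circ V_{a_{j'},i}^*\circ\pi_{Y_i}\bigr]$ with $a_1+\cdots+a_{j'}=j$ and $j'<j$, which depend on the \emph{lower-order} coefficients $\hat V_{a_u,i}$ themselves. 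Your proposed naive estimate $\op{\hat V_{j,i}}\cdot\angle(T_{Y_i}M,\hat T_i)\lesssim\ell\cdot(\eps^{m-1}+\gamma\eps^{-1})$ does not capture this structure, and it is not even a bound on a term that arises in the decomposition. The paper instead runs an induction on $j$, starting from the base case $j=2$, and inside the inductive step exploits $\hat\pi_i\circ\hat V_{a_u,i}=0$ (from item (ii)) to replace each slot with $a_u>1$ by $\hat\pi_i\circ(V_{a_u,i}^*-\hat V_{a_u,i})\circ\pi_{Y_i}$, so that the induction hypothesis kicks in and the product of these factors, together with $\ell\lesssim\eps^{-1}$, collapses to $\eps^{m-j}+\gamma\eps^{-j}$. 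You acknowledge the bookkeeping but neither set up the inductive scheme nor identify the role of $\hat\pi_i\circ\hat V_{j,i}=0$ in making the cross-terms small; without this, the proof of (iii) is incomplete, and (iv) relies on it.
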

\begin{proof}
Lemma \ref{lem:prop_poly_estim}\ref{it:prop_poly_estim_1} is stated in Theorem 2 in \cite{aamari2019nonasymptotic}. Remark that for $x\in \BB(X_i,\eps)$, with $\t x=x-X_i$,
\begin{align*}
&\left\vert  \t x-\pi(\t x) - \sum_{j=2}^{m-1} V_j[\pi(\t x)^{\otimes j}]\right \vert ^2 = \left\vert  \t x-\pi(\t x) - \sum_{j=2}^{m-1} \pi^\bot\circ V_j[\pi(\t x)^{\otimes j}]\right \vert ^2 + \left\vert \sum_{j=2}^{m-1} \pi\circ V_j[\pi(\t x)^{\otimes j}]\right \vert ^2
\end{align*}  
so that we may always assume that the tensors $\hat V_{j,i}$ minimizing the criterion \eqref{eq:def_poly} satisfy $\hat \pi_i \circ \hat V_{j,i}= 0$ for $j\geq 2$. This proves Lemma \ref{lem:prop_poly_estim}\ref{it:prop_poly_estim_3}.

We prove Lemma \ref{lem:prop_poly_estim}\ref{it:prop_poly_estim_2} by induction on $2\leq j < m$. The result for $j=2$ is stated in \cite[Theorem 2]{aamari2019nonasymptotic}. It is shown in \cite{aamari2019nonasymptotic} (see Equation (3)) that there exist tensors $V_{j,i}'$ for $1\leq j <m$ satisfying with probability larger than $1-cn^{-k/d}$,
\begin{equation}\label{eq:control_V'ji}
\op{V_{j,i}'\circ\pi_{Y_i}} \lesssim \eps^{m-j}+ \gamma \eps^{-j}.
\end{equation}
The tensors $V_{j,i}'$ are defined by the relation, for $y\in M$ close enough to $Y_i$,
\begin{equation}\label{eq:def_V'ji}
\begin{aligned}
y- Y_i-\hat \pi_i(y- Y_i) - \sum_{j=2}^{m-1} \hat V_{j,i}[\hat\pi_i(y- Y_i)^{\otimes j}] =\hspace{-.1cm} \sum_{j=1}^{m-1} V'_{j,i}[\pi_{Y_i}(y- Y_i)^{\otimes j}] + R(y- Y_i),
\end{aligned}
\end{equation}
with $\vert R(y-Y_i)\vert  \lesssim \eps^m$, see the proof of Lemma 3 in \cite{aamari2019nonasymptotic}. We also may write
\begin{equation}\label{eq:to_plug_in_V'ji}
y- Y_i = \pi_{Y_i}(y- Y_i) + \sum_{j=2}^{m-1}V_{j,i}^* [\pi_{Y_i}(y- Y_i)^{\otimes j}] + R'(y- Y_i),
\end{equation}
with $\vert R'(y-Y_i)\vert  \lesssim \eps^m$. By plugging \eqref{eq:to_plug_in_V'ji} in the left hand side of \eqref{eq:def_V'ji} and by noting that $\pi_{Y_i}\circ V_{j,i}^* =0$ for $j\geq 2$, we see that $V'_{j,i}\circ \pi_{Y_i}$ is written as the sum of $(\pi_{Y_i}-\hat\pi_i)\circ V_{j,i}^*  + (V_{j,i}^*  \circ \pi_{Y_i} - \hat V_{j,i}\circ \hat \pi_i)$ and of a sum of terms proportional to 
\begin{equation}
\hat V_{j',i}[\hat\pi_i\circ V_{a_1,i}^* \circ \pi_{Y_i},\dots,\hat\pi_i\circ V_{a_{j'},i}^* \circ \pi_{Y_i}],
\end{equation}
where $2\leq j' <j$ and $a_1+\cdots +a_{j'} = j$, $1\leq a_1,\dots,a_{j'} <j$. There exists in particular an index in the sum which is larger than $2$. Assume without loss of generality that $a_1,\dots,a_l >1$ and $a_{l+1},\dots,a_{j'}=1$, so that $\hat \pi_i \circ \hat V_{a_{u},i}=0$ for $1\leq u \leq l$. Then,
\begin{align*}
 &\op{\hat V_{j',i}[\hat\pi_i\circ V_{a_1,i}^* \circ \pi_{Y_i},\dots,\hat\pi_i\circ V_{a_l,i}^* \circ \pi_{Y_i},\dots,\hat\pi_i\circ V_{a_{j'},i}^* \circ \pi_{Y_i}]}  \\
 & =\Big\|\hat V_{j',i}[\hat\pi_i\circ (V_{a_1,i}^* -\hat V_{a_1,i})\circ \pi_{Y_i},\dots,\hat\pi_i\circ (V_{a_l,i}^* -\hat V_{a_l,i})\circ \pi_{Y_i},\\
 &\qquad\qquad\qquad  \hat\pi_i\circ V_{a_{j'},i}^* \circ \pi_{Y_i},\dots,\hat\pi_i\circ V_{a_{l+1},i}^* \circ \pi_{Y_i}]\Big\|_{\mathrm{op}} \\
 &\lesssim \ell\prod_{u=1}^l\op{V_{a_u,i}^* \circ \pi_{Y_i}-\hat V_{a_u,i}\circ \pi_{Y_i}} \\
 &\lesssim  \ell \prod_{u=1}^l\p{\op{V_{a_u,i}^* \circ \pi_{Y_i}-\hat V_{a_u,i}\circ \hat\pi_{i}} + \ell\op{\pi_{Y_i} - \hat \pi_i}} \\
 & \lesssim \eps^{-1}\prod_{u=1}^l\p{\eps^{m-a_u}+ \gamma\eps^{-a_u}+\eps^{m-2}+ \gamma\eps^{-2}}\\
 &\lesssim \eps^{-1}(\eps^{lm-(j-l)}+\gamma^l\eps^{-(j-l)})\lesssim \eps^{m-j}+\gamma\eps^{-j},
\end{align*}
where at the last line we use  the induction hypothesis as well as  Lemma \ref{lem:prop_poly_estim}\ref{it:prop_poly_estim_1}, the fact that $\sum_{u=1}^la_u=j-l$ and that $\ell \lesssim \eps^{-1}$. As $\op{(\pi_{Y_i}-\hat\pi_i)\circ V_{j,i}^* } \lesssim \eps^{m-1}+ \gamma\eps^{-1}$, we obtain that \[\op{(V_{j,i}^*  \circ \pi_{Y_i} - \hat V_{j,i}\circ \hat \pi_i) - V'_{j,i}\circ\pi_{Y_i} }\lesssim \eps^{m-j}+\gamma \eps^{-j}.\] Hence, using \eqref{eq:control_V'ji},
\begin{align*}
\op{V_{j,i}^*  \circ \pi_{Y_i} - \hat V_{j,i}\circ \hat \pi_i} &\leq \op{(V_{j,i}^*  \circ \pi_{Y_i} - \hat V_{j,i}\circ \hat \pi_i) - V'_{j,i}\circ\pi_{Y_i} }+ \op{V'_{j,i}\circ\pi_{Y_i} }\\
&\lesssim \eps^{m-j}+ \gamma\eps^{-j}.
\end{align*}

We now may prove \eqref{eq:proofcontrol_error_manifold}. Indeed, for $v\in \BB_{\hat T_i}(0,3\eps)$, $\hat \Psi_i(v) = X_i + v + \sum_{j=2}^{m-1} \hat V_{j,i}[v^{\otimes j}]$, whereas by a Taylor expansion, $\Psi_{Y_i}\circ \pi_{Y_i}(v)=  Y_i + \pi_{Y_i}(v) + \sum_{j= 2}^{m-1} V_{j,i}[\pi_{Y_i}(v)^{\otimes j}] + R(v)$, with $\vert R(v)\vert \lesssim \eps^m$. By Lemma \ref{lem:prop_poly_estim}\ref{it:prop_poly_estim_2}, the difference between the two quantities is bounded with high probability by a sum of terms of order $(\eps^{m-j}+ \gamma\eps^{-j})\vert v\vert ^j\lesssim \eps^m+ \gamma$.  Inequality \eqref{eq:control_N} is directly implied by \eqref{eq:proofcontrol_error_manifold} and Lemma \ref{lem:prop_poly_estim}\ref{it:prop_poly_estim_1}. Inequality \eqref{eq:proofcontrol_error_manifold_derivative} is proven as \eqref{eq:proofcontrol_error_manifold}, by noting that, for $h\in\hat T_i$,
\begin{align*}
\begin{cases}&d(\Psi_{Y_j}\circ \pi_{Y_j})(v)[h] = \pi_{Y_j}(h) + \sum_{j=2}^{m-1} jV_{j,i}^* [\pi_{Y_j}(v),\pi_{Y_j}(h)^{\otimes(j-1)}] + R'(v)h\\
&d\hat\Psi_j(v)[h] = h + \sum_{j=2}^{m-1} j\hat V_{j,i}[v,h^{\otimes(j-1)}],
\end{cases}
\end{align*}
with $\op{R'(v)}\lesssim \eps^{m-1}$. Equation \eqref{eq:proofcontrol_error_N_derivative} is shown in a similar way.
\end{proof}

\section{Properties of negative Sobolev norms}\label{sec:proof_preli}

\begin{proof}[Proof of Proposition \ref{prop:congestionned}]
The second inequality in \ref{it:cong1} is trivial. The assertion \ref{it:cong2} is stated in \cite[Theorem 2.1]{brasco2010congested} for an open set $\Omega \subset\R^d$, and their proof can be straightforwardly  adapted to the manifold setting. It remains to prove the first inequality in \ref{it:cong1}. Note that for any $g$ with $ \|\nabla g\|_{L_{p^* }(M)}\leq 1$, one has $\int fg \dd \vol_M=\int f(g-\int g\dd \vol_M)\dd \vol_M$ as $\int f\dd \vol_M=0$. Also, by Poincar\'e inequality (see \cite[Theorem 0.6]{besson2018poincare}),
\[
\left\|g-\int_M g \right\|_{L_{p^* }(M)} \leq C^{\frac 1 p}R^{\frac{d}{p^* }+\frac{1}{p}}\|\nabla g\|_{L_{p^* }(M)} \leq C^{\frac 1 p}R^{\frac{d}{p^* }+\frac{1}{p}},\]
where $R= \max\{d_g(x,y),\ x,y\in M\}$ and $C$ depends on $d$ and on a lower bound $\kappa$ on the Ricci curvature of $M$. Therefore, $\left\|g-\int_M g \right\|_{H^1_{p^* }(M)}\leq C^{\frac 1 p}R^{\frac{d}{p^* }+\frac{1}{p}}$. The quantity $\kappa$ can be further lower bounded by a constant depending on $\taumin$ and $d$. Indeed, a bound on the second fundamental form of $M$ entails a bound on the Ricci curvature according to Gauss equation (see e.g.~\cite[Chapter 6]{do1992riemannian}), and the second fundamental form is controlled by the reach of $M$, see \cite[Proposition 6.1]{niyogi2008finding}. As $C^{\frac 1 p} \leq C\vee 1$, to conclude, it suffices to bound the geodesic diameter of $M$. This is done in the following lemma.
\end{proof}
\begin{lemma}\label{lem:geodesic_diameter}
 The geodesic diameter of $M$ satisfies  $\sup_{x,y\in M} d_g(x,y) \leq c_d \vert \vol_M\vert \taumin^{1-d}$.
\end{lemma}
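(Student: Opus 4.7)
The plan is to construct a pairwise disjoint family of small balls on $M$ whose centers are spread along a minimizing geodesic between $x$ and $y$, and then to sum their volumes against $\vol_M$.

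First I would reduce to the case $|\vol_M|<\infty$ (otherwise the bound is vacuous). In this regime $M$ must be bounded: an unbounded sequence in $M$ would yield, after extraction, infinitely many points at mutual Euclidean distance at least $\taumin/2$, and the associated sets $\BB_M(\cdot,\taumin/8)$ would be pairwise disjoint with volumes uniformly bounded below by Lemma \ref{lem:prop_proj}\ref{it:bound_volume}, contradicting finiteness of $|\vol_M|$. Since $M\in\MM^k_{d,\taumin,L}$ is closed in $\R^D$, being bounded makes it compact; connectedness and compactness of $M$ then ensure (Hopf--Rinow) the existence of a unit-speed minimizing geodesic $\gamma:[0,L]\to M$ with $L=d_g(x,y)$.

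Next I would place landmarks $p_i=\gamma(i\taumin)$ for $i=0,1,\ldots,N$, where $N=\lfloor L/\taumin\rfloor$. The crucial step is to upgrade their geodesic separation into an Euclidean one: for $i\neq j$ one has $d_g(p_i,p_j)=|i-j|\taumin\geq\taumin$, so taking the contrapositive of the chord-arc comparison $|p-q|\leq\taumin/4\Rightarrow d_g(p,q)\leq 2|p-q|$ from \cite[Proposition 30]{aamari2018stability} (already invoked in Section \ref{sec:study_estimator}) gives $|p_i-p_j|>\taumin/4$.

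Finally, the open Euclidean balls $\BB(p_i,\taumin/8)$ are pairwise disjoint, hence so are the restrictions $\BB_M(p_i,\taumin/8)$; by Lemma \ref{lem:prop_proj}\ref{it:bound_volume} each has $\vol_M$-measure at least $c_d(\taumin/8)^d$. Summing the $N+1\geq L/\taumin$ contributions yields $|\vol_M|\geq c'_d\,L\,\taumin^{d-1}$, and rearranging produces the claimed bound. The only delicate point in the argument is precisely this chord-arc step; without it one would have to build a tubular neighborhood of $\gamma$ inside $M$ and estimate its volume via a Rauch-type comparison, which is doable but heavier and unnecessary here.
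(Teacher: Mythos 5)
Your proof is correct and takes a genuinely different route from the paper's. The paper fixes a covering of $M$ by $N\lesssim|\vol_M|\taumin^{-d}$ Euclidean balls of radius $\taumin/4$ and bounds the length of any unit-speed curve from $x$ to $y$ by a ball-hopping argument: one tracks the last time the curve leaves each ball, producing a chain of at most $N$ distinct balls with geodesic hops of controlled length between consecutive exit points (via \cite[Proposition 6.3]{niyogi2008finding}), whence $d_g(x,y)\lesssim N\taumin$. You instead parametrize a minimizing geodesic, drop landmarks spaced $\taumin$ apart in arc length, upgrade their geodesic separation to Euclidean separation via the chord-arc inequality of \cite[Proposition 30]{aamari2018stability}, and pack disjoint geodesic balls to lower bound $|\vol_M|$ in terms of $d_g(x,y)$. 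Both proofs rest on the same two ingredients --- the local volume lower bound of Lemma \ref{lem:prop_proj}\ref{it:bound_volume} and a reach-driven chord-arc comparison --- but yours needs a length-minimizing geodesic and therefore the preliminary compactness reduction you supply, whereas the paper's covering argument applies to any unit-speed curve, so a nearly-minimizing competitor suffices and no completeness discussion is required. Your compactness reduction is correct and inexpensive, so this is a minor trade-off; both approaches are sound.
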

\begin{proof}
\begin{figure}
\centering
\includegraphics[width=0.5\textwidth]{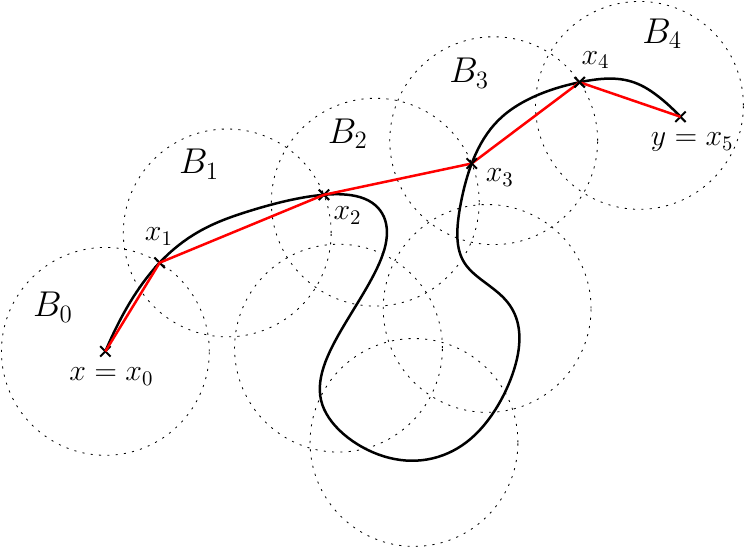}
\caption{Illustration of the construction in the proof of Lemma \ref{lem:geodesic_diameter}}
\end{figure}

Consider a covering of $M$ by $N$ open balls of radius $r_1=\taumin/4$ (for the Euclidean distance) and let $x,y\in M$. Such a covering exists with $N\leq c_d\vert \vol_M\vert  r_1^{-d}$ by standard packing arguments. Let  $\gamma:[0,\ell]\to M$ be a unit speed curve between $x$ and $y$. Let $B_0$ be the ball of the covering such that $x\in B_0$. If $y\in B_0$, then $\vert x-y\vert \leq 2r_1$, and by \cite[Proposition 6.3]{niyogi2008finding}, we have $d_g(x,y)\leq 4r_1$. Otherwise, let $t_0 = \inf\{t\in [0,\ell],\ \forall t'\geq t,\ \gamma(t')\not\in B_0\}$. Then $x_1\defeq \gamma(t_0)$ belong to the boundary of $B_0$, and is also in some other ball $B_1$. By the previous argument, we have $d_g(x,x_1)\leq 4r_1$. If $y\in B_1$, then $d_g(x_1,y)\leq 4r_1$ and $d_g(x,y)\leq 8r_1$. Otherwise, we define $t_1 = \inf\{t\in [t_0,\ell],\ \forall t'\geq t,\ \gamma(t')\not\in B_1\}$ and we iterate the same argument. At the end, we obtain a sequence $x=x_0,x_1,\dots,x_I$ of points in $M$ with associated balls $B_i$ which contain $x_i$, such that $y\in B_I$ and $d_g(x_i,x_{i+1})\leq 4r_1$. Furthermore, all the balls $B_i$ are pairwise distinct. As $d_g(x_I,y)\leq 4r_1$, we have $\ell\leq (I+1)4r_1 \leq  (N+1)4r_1\leq 8Nr_1$. By letting $\gamma$ be a geodesic, we obtain in particular $\ell=d_g(x,y) \leq 8Nr_1 \leq 8c_d\vert \vol_M\vert r_1^{1-d}$. 
\end{proof}
\begin{proof}[Proof of Proposition \ref{prop:wass_neg}]
Given a measurable map $\rho:[0,1]\to \PP^p$, $E_t$ a vectorial measure absolutely continuous with respect to $\rho_t$ (see \cite[Box 4.2]{santambrogio2015optimal}) and $v(x,t)$ a time-depending vector field, defined as the density of $E_t$ with respect to $\rho_t$, we define the Benamou-Brenier functional
\begin{equation}
\BB_p(\rho,E) \defeq \int \vert v(x,t)\vert ^p \dd \rho_t(x) \dd t.
\end{equation}
The Benamou-Brenier formula \cite{benamou2000computational, brenier2003extended}
 asserts that for $\mu,\nu\in \PP_1^p$ supported on some ball of radius $R$,
\begin{equation}\label{eq:benamou_brenier}
W_{p}^p(\mu,\nu ) = \min\left\{\BB_p(\rho,E),\ \partial_t \rho_t + \nabla \cdot E_t = 0, \rho_0=\mu, \rho_1= \nu\right\},
\end{equation}
where $\rho_t$ is supported on the ball of radius $R$, and the continuity equation $\partial_t \rho + \nabla \cdot E = \mu-\nu$ has to be understood in the distributional sense, i.e. 
\begin{equation}
\begin{split}
&\int_{[0,1]\times \R^D}\partial_t \phi(t,x)\dd \rho(t,x) + \int_{[0,1]\times \R^D} \nabla \phi(t,x) \cdot \dd E(t,x) =0,
\end{split}
\end{equation}
for all $\phi \in \CC^1((0,1)\times \BB(0,R))$ with compact support.

Assume that $\mu$ has a density $f_0$ and $\nu$ has a density $f_1$ on $M$. As $\tau(M)>0$, the existence of a probability measure of mass $1$, supported on $M$, with density larger than $\fmin$ implies that $M$ is compact, see Remark \ref{remark:fmin}. It is in particular included in a ball $\BB(0,R)$ for some $R$ large enough. Let $w$ be a vector field on $M$ with $\nabla \cdot w = \mu-\nu$ in a distributional sense, i.e. $\int \nabla g\cdot w=- \int g(\mu-\nu)$ for all $g\in \CC^1(M)$. Let  $\rho_t = (1-t)\mu + t\nu$ and define $E$ the vector measure having density $w$ with respect to $\Leb_1 \times \vol_M$, where $\Leb_1$ is the Lebesgue measure on $[0,1]$. Then $(\rho,E)$ satisfies the continuity equation and $E = v\cdot \rho$ where $v(t,x)=\frac{w(x)}{(1-t)f_0(x)+tf_1(x)}$ for $t\in[0,1]$, $x\in M$. Hence,
\begin{align*}
W_p^p(\mu,\nu) &\leq \int_0^1 \int \frac{1}{p}\vert v\vert ^p \dd \rho \\
&= \frac{1}{p}\int_0^1\int \frac{\vert w(x)\vert ^p}{\vert (1-t)f_0(x)+tf_1(x)\vert ^p}( (1-t)f_0(x)+tf_1(x)) \dd x\dd t \\
&\leq  \frac{1}{p}\int\vert w(x)\vert ^p\dd x \frac{1}{\fmin^{p-1}}.
\end{align*}
By taking the infimum on vector fields $w$ on $M$ satisfying $\nabla \cdot w = \mu-\nu$ and using Proposition \ref{prop:congestionned}, we obtain the conclusion. The second inequality in \eqref{eq:wass_neg} follows from Proposition \ref{prop:congestionned}.
\end{proof}

\section{Proofs of Section \ref{sec:study_estimator}}\label{sec:proof_bias}

\begin{proof}[Proof of Lemma  \ref{lem:kernel_of_order_k}]
We first prove \eqref{eq:kernel_1}. Note that if $\vert x-y\vert \geq h$ for $x,y\in M$, then $K_h(x-y)=0$. Hence, by a change of variable, using that $\BB_M(x,h)\subset \Psi_x(\BB_{T_xM}(0,h))$ according to Lemma \ref{lem:prop_proj}\ref{it:param_contains_ball},
\begin{align*}
\int_M K_h(x-y) B[(x-y)^{\otimes j}] \dd y &= \int_{\BB_{T_xM}(0,h)} K_h(x-\Psi_x(v)) B[(x-\Psi_x(v))^{\otimes j}] J\Psi_x(v)\dd v\\*
&= \int_{\BB_{T_xM}(0,1)} K\p{\frac{x-\Psi_x( h v)}{h}} B[(x-\Psi_x( hv))^{\otimes j}]J\Psi_x( hv)\dd v.
\end{align*}
As the functions $\Psi_x$ and $K$ are $\CC^{k}$, according to Lemma \ref{lem:prop_proj}\ref{it:existence_N} and Lemma \ref{lem:prop_proj}\ref{it:taylor_jacob}, we can write by a Taylor expansion, for $v,u \in \BB_{T_xM}(0,r_0)$,
\begin{equation}
\begin{cases}
\Psi_x(v) = x + v + \sum_{i=2}^{k-1} \frac{d^i\Psi_x(0)}{i!}[v^{\otimes i}] + R_1(x,v)\\
J\Psi_x(v) = 1 +  \sum_{i= 2}^{k-1} B_x^i [v^{\otimes i}] +  R_2(x,v)\\
K(v+u) = K(v) + \sum_{i=1}^{k-1} \frac{d^i K(v)}{i!}[u^{\otimes i}] + R_3(v,u)\\
B[(v+u)^{\otimes j}] = B[v^{\otimes j}] + \sum_{\emptyset \neq \sigma \subset \{1,\dots,j\}} B[v^{\sigma},u^{\sigma^c}],
\end{cases}
\end{equation}
where  $\vert R_j(x,v)\vert \leq C_j\vert v\vert ^{k}$ for $j=1,2$, $\vert R_3(v,u)\vert \leq C_3\vert u\vert ^{k}$ and $(v^{\sigma},u^{\sigma^c})$ is the $j$-tuple whose $l$th entry is equal to $v$ if $l\in \sigma$, $u$ otherwise.  We obtain that 
\[\frac{x-\Psi_x( hv)}{h}= -v - \sum_{i=2}^{k-1} \frac{d^i\Psi_x(0)}{i!}[(h v)^{\otimes i}] h^{-1} -R_1(x,hv)h^{-1},\] and that the expression $K\p{\frac{x-\Psi_x( h v)}{h}} B[(x-\Psi_x( hv))^{\otimes j}]J\Psi_x( hv)$ is written as a sum of terms of the form 
\begin{equation}\label{eq:big_dl}
\begin{split}
&C_{i_0,i_1,i_2}h^{-i_0}d^{i_0}K( v)[ (d^{i_1}\Psi_x(0)[(hv)^{\otimes i_1}])^{\otimes i_0}] F_{i_2}[ (h v)^{\otimes i_2}]
  \end{split}
\end{equation}
for $0\leq i_0\leq k-1$, $2\leq i_1\leq k-1$ and $j\leq i_2\leq k'$, where $F_{i_2}$ is some tensor of order $i_2$ and $k'$ is some integer depending on $k$ and $j$, plus a remainder term smaller than $\op{B}\vert  h v\vert ^{k-1+j}$ up to a constant depending on $k$, $j$, $L_k$ and $K$. The terms for which $i_0i_1+i_2-i_0\geq k$ are smaller than $\op{B}h^{k}$ up to a constant, whereas the integrals of the other the terms are null as the kernel is of order $k$. The first inequality in \eqref{eq:kernel_0} is proven in a similar manner. Let us now bound $\|\rho_h\|_{\CC^j(M)}$. Given $x\in M$, we have
\begin{align*}
d^j(\rho_h\circ\Psi_x)(0) = h^{-j} \int_{\BB_{T_xM}(0,h)}(d^j K)_h(x-\Psi_x(v)) J\Psi_x(v)\dd v.
\end{align*}
Therefore, using the same argument as before, we obtain that $\op{d^j(\rho_h\circ\Psi_x)(0)}\lesssim h^{k-1-j}$.
\end{proof}

\begin{proof}[Proof of Lemma \ref{lem:bias2}]
Let  $0\leq l \leq k-1$ be even, $\phi\in \CC^\infty(M)$  be supported in $\BB_M(x_0,h_0)$ for some $h_0$ small enough and $g\in L_{p^* }(M)$ with $\|g\|_{L_{p^* }(M)}\leq  1$.  Let $x=\Psi_{x_0}(u)\in \BB_M(x_0,h_0)$ and let $\t\phi_{x_0}=\t\phi\circ\Psi_{x_0}$. Recall that $\t\phi_l=d^l\t\phi_{x_0} \circ\t\pi_{x_0}$. We have $K_h(x-\Psi_{x_0}(v))\neq 0$ only if $\vert x-\Psi_{x_0}(v)\vert \leq h$. Hence, as $\vert x-\Psi_{x_0}(v)\vert \geq \vert u-v\vert $ (recall that $\Psi_{x_0}$ is the inverse of the projection $\t\pi_{x_0}$), the function $K_h(x-\Psi_{x_0}(\cdot))$ is supported on $\BB_{T_{x_0}M}(u,h)\subset \BB_{T_{x_0} M}(0,r_0)\eqdef B_0$ for $h,h_0$ small enough. 
Thus, 
\begin{align*}
 A_h \phi(x) &= \int_{\BB_M(x,h)} K_h(x-y)( \t\phi(y)- \t\phi(x))\dd y\\*
& = \int_{B_0} K_h(x-\Psi_{x_0}(v))( \t\phi_{x_0}(v)- \t\phi_{x_0}(u))J\Psi_{x_0}(v)\dd v.
\end{align*}
We may write  
\begin{align*}
&\t\phi_{x_0}(v)-\t\phi_{x_0}(u)= \sum_{i=1}^{l-1}\frac{d^i \t \phi_{x_0}(u)}{i!}[(v-u)^{\otimes i}] +\int_0^1 d^l \t \phi_{x_0}(u+\lambda (v-u))[(v-u)^{\otimes l}]\frac{(1-\lambda)^{l-1}}{(l-1)!}\dd \lambda.
\end{align*}
 Each term $\int_{B_0} K_h(x-\Psi_{x_0}(v))\frac{d^i \t \phi_{x_0}(u)}{i!}[(v-u)^{\otimes i}] J\Psi_{x_0}(v)\dd v$ is equal to
\[ \int_M K_h(x-y) \frac{d^i \t \phi_{x_0}(\t\pi_{x_0}(x))}{i!}[(\pi_{x_0}(y-x))^{\otimes i}]\dd y, \]
and is therefore of order smaller than $h^k\max_{1\leq i \leq  l}\op{\t\phi_i(x)}$ by Lemma \ref{lem:kernel_of_order_k}. Hence, $A_h \phi(x)$ is equal to the sum of a remainder term of order $h^{k}\max_{1\leq i \leq  l}\op{\t\phi_i(x)}$ and of
\begin{align*}
&\int_0^1 \int_{B_0} K_h(x-\Psi_{x_0}(v))d^l \t \phi_{x_0}(u+\lambda (v-u))[(v-u)^{\otimes l}]\frac{(1-\lambda)^{l-1}}{(l-1)!}J\Psi_{x_0}(v)\dd v\dd \lambda \\
&= \int_0^1 \hspace{-.1cm}\int_{B_0}K_h(x-\Psi_{x_0}(v))\p{ d^l \t \phi_{x_0}(u+\lambda (v-u))-d^l \t \phi_{x_0}(u)}[(v-u)^{\otimes l}] \times \frac{(1-\lambda)^{l-1}}{(l-1)!}J\Psi_{x_0}(v)\dd v\dd \lambda \\
&\qquad +R_1(x),
\end{align*} 
where $\vert R_1(x)\vert \lesssim h^{k}\max_{1\leq i \leq  l}\op{\t\phi_i(x)}$ by Lemma \ref{lem:kernel_of_order_k}. We now fix $\lambda\in (0,1)$ and write by a change of variables
\begin{align*}
U(x)&\defeq \int_{B_0}K_h(x-\Psi_{x_0}(v))( d^l \t \phi_{x_0}(u+\lambda (v-u))-d^l \t \phi_{x_0}(u))[(v-u)^{\otimes l}]J\Psi_{x_0}(v)\dd v\\
&= \int_{B_0}K_h\p{x-\Psi_{x_0}\p{u+\frac{w-u}{\lambda}}}( d^l \t \phi_{x_0}(w)-d^l \t \phi_{x_0}(u))\left[ \frac{(w-u)^{\otimes l}}{\lambda^l}\right]\\
&\qquad\qquad \times J\Psi_{x_0}\p{u+\frac{w-u}{\lambda}}\frac{\dd w}{\lambda^d}
\end{align*}
Note that $\vert K_h(u)-K_h(v)\vert  \lesssim h^{-d-1}\vert u-v\vert \ones\{\vert u\vert \leq h \text{ or } \vert v\vert \leq h\}$, and that, as $\Psi_{x_0}$ is $\CC^2$,
\begin{align*}
\left\vert x-\Psi_{x_0}\p{u+\frac{w-u}{\lambda}}-\frac{x-\Psi_{x_0}(w)}{\lambda}\right \vert &\leq \left\vert \frac{d\Psi_{x_0}(u)[w-u] -( x -\Psi_{x_0}(w))}{\lambda}\right \vert  + \frac{L_k\vert w-u\vert ^2}{2\lambda^2} \\
&\leq  \frac{L_k\vert w-u\vert ^2}{\lambda} \lesssim \frac{\vert w-u\vert ^2}{\lambda},
\end{align*} 
whereas, as $J\Psi_{x_0}$ is Lipschitz continuous,
\begin{align*}
&\left\vert  J\Psi_{x_0}\p{u+\frac{w-u}{\lambda}}-J\Psi_{x_0}(w)\right \vert \lesssim \left\vert  u+\frac{w-u}{\lambda}-w\right \vert  \lesssim \frac{\vert w-u\vert }{\lambda}.
\end{align*}
Hence, $U(x)$ is equal to the sum of
\begin{align*}
&\lambda^{-l}\int_{B_0}K_{h\lambda}\p{x-\Psi_{x_0}(w)}\p{ d^l \t \phi_{x_0}(w)-d^l \t \phi_{x_0}(u)}[(w-u)^{\otimes l}]J\Psi_{x_0}\p{w}\dd w\\*
&= \lambda^{-l}\int_{M}K_{h\lambda}\p{x-y}\p{ \t\phi_l(y)-\t\phi_l(x)}[(\pi_{x_0}(y-x))^{\otimes l}]\dd y,
\end{align*}
and of a remainder term  smaller than
\begin{align*}
&\lambda^{-l}\int_{B_0}\Big\vert  \lambda^{-d}K_h\p{x-\Psi_{x_0}\p{u+\frac{w-u}{\lambda}}}J\Psi_{x_0}\p{u+\frac{w-u}{\lambda}} -K_{h\lambda}\p{x-\Psi_{x_0}(w)}J\Psi_{x_0}\p{w} \Big\vert \\
&\qquad\times \op{ d^l \t \phi_{x_0}(w)-d^l \t \phi_{x_0}(u)}\vert w-u\vert ^l \dd w \\
&\lesssim \lambda^{-l}\int_{\vert w-u\vert \lesssim \lambda h}\Bigg(\frac{\vert w-u\vert ^2}{(\lambda h)^{d+1}}J\Psi_{x_0}\p{u+\frac{w-u}{\lambda}}+\vert K_{h\lambda}\p{x-\Psi_{x_0}(w)}\vert \frac{\vert w-u\vert }{\lambda}\Bigg)\\
&\qquad\qquad \times \op{ d^l \t \phi_{x_0}(w)-d^l \t \phi_{x_0}(u)}\vert w-u\vert ^l \dd w \\
&\lesssim h^{l+1}(\lambda h)^{-d}\int_{\vert w-u\vert \lesssim \lambda h} \op{ d^l \t \phi_{x_0}(w)-d^l \t \phi_{x_0}(u)} \dd w.
\end{align*}
 Putting all the estimates together, we may now write $\int_M A_h\phi(x)g(x)\dd x$ as $S+R_2$, where, by the symmetrization trick (using that $l$ is even)
\begin{align*}
S &= \iint_{M\times M}K^{(l)}_{h}\p{x-y}\p{ \t\phi_l(y)-\t\phi_l(x)}[(\pi_{x_0}(y-x))^{\otimes l}] g(x)\dd y\dd x \\
&= \iint_{M\times M}K^{(l)}_{h}\p{x-y}\p{ \t\phi_l(x)-\t\phi_l(y)}[(\pi_{x_0}(x-y))^{\otimes l}] g(y)\dd y\dd x \\
&=\frac{1}{2}\iint_{M\times M}K^{(l)}_{h}\p{x-y}\p{ \t\phi_l(y)-\t\phi_l(x)}[(\pi_{x_0}(x-y))^{\otimes l}]( g(x)- g(y))\dd y\dd x,
\end{align*}
and, as $A_h\phi$ is supported on $\BB_M(x_0,h_0+h)\subset\BB_M(x,2h_0)$ if $h$ is small enough, $R_2$ is smaller up to a constant than, 
\begin{align}
&h^{l+1}(\lambda h)^{-d}\hspace{-.1cm}\int_{x\in \BB_M(x_0,2h_0)} \hspace{-.05cm} \int_{\vert w-\t\pi_{x_0}(x)\vert \lesssim \lambda h}\hspace{-.05cm} \op{ d^l \t \phi_{x_0}(w)-d^l \t \phi_{x_0}(\t\pi_{x_0}(x))}\vert  g(x)\vert \dd w \dd x  \label{eq:longproof_bias}\\
&\qquad +\int_M h^{k}\max_{1\leq i \leq  l}\op{\t \phi_i(x)}\vert g(x)\vert \dd x \nonumber\\
&\lesssim h^{l+1}(\lambda h)^{-d}\int_{w\in \BB_M(x,3h_0)}\op{ d^l \t \phi_{x_0}(w)}\int_{\vert w-\t\pi_{x_0}(x)\vert \lesssim \lambda h} \vert  g(x)\vert  \dd  x\dd w \label{eq:longproof_bias2}\\
&\qquad + h^{l+1}\int_{x\in \BB_M(x,2h_0)} \op{\t \phi_l(x)}\vert g(x)\vert \dd x +\int_M h^{k}\max_{1\leq i \leq  l}\op{\t \phi_i(x)}\vert  g(x)\vert \dd x,\nonumber
\end{align}
where we also used Lemma \ref{lem:prop_proj}\ref{it:bound_volume}. By the chain rule, 
\begin{equation}\label{eq:bound_of_tildephi}
\begin{split}
\max_{1\leq i \leq  l}\op{\t \phi_i(x)} &= \max_{1\leq i \leq  l}\op{d^i(\tilde \phi\circ \Psi_{x_0})\circ \tilde\pi_{x_0}(x)} \\
&=\max_{1\leq i \leq  l}\op{d^i(\tilde \phi\circ\Psi_x \circ \tilde \pi_x \circ \Psi_{x_0})\circ \tilde\pi_{x_0}(x)} \\
& \lesssim \max_{1\leq i \leq l} \op{d^i (\tilde \phi\circ\Psi_x)(\tilde \pi_x \circ\Psi_{x_0}\circ\tilde\pi_{x_0}(x))}\\
 &\lesssim \max_{1\leq i \leq l} \op{d^i (\tilde \phi\circ\Psi_x)(0)} = \op{d^l \tilde\phi(x)}.
\end{split}
\end{equation}
 Hence, applying H\"older's inequality and using that $\|g\|_{L_{p^* }(M)} \leq  1$ show that the two last terms in \eqref{eq:longproof_bias2} are of order $h^{l+1}\|\t\phi\|_{H^l_p(M)}$. To bound the first term in \eqref{eq:longproof_bias2}, remark that by Young's inequality for integral operators \cite[Theorem 0.3.1]{sogge_2017}, if $\TT_{\lambda h}(g)(y) = (\lambda h)^{-d}\int_{\vert x-y\vert \lesssim \lambda h} \vert g(x)\vert \dd x$, then $\|\TT_{\lambda h}g\|_{L_{p^* }(M)} \lesssim \|g\|_{L_{p^* }(M)}$. This yields, by H\"older's inequality,
\begin{align*}
h^{l+1}\int_{w\in \BB_M(x,3h_0)}\op{ d^l \t \phi_{x_0}(w)}\TT_{h\lambda}(g)(\Psi_{x_0}(w))\dd w  \lesssim h^{l+1}\|\t\phi\|_{H^l_p(M)},
\end{align*}
which concludes the proof of the first statement of Lemma \ref{lem:bias2}. To bound the remainder term in terms of $\|\t\phi\|_{H^{l+1}_p(M)}$, we bound the second term in \eqref{eq:longproof_bias}  in the same fashion, while, to bound the first term, we write, by a change of variables,
\begin{align*}
&\int_{\BB_M(x_0,2h_0)}\int_{\vert w-\t\pi_{x_0}(x)\vert \lesssim \lambda h} \op{ d^l \t \phi_{x_0}(w)-d^l \t \phi_{x_0}(\t\pi_{x_0}(x))}\vert g(x)\vert \dd x \dd w\\
&\leq \int_0^1 \hspace{-.1cm} \int_{\BB_M(x_0,2h_0)}\int_{\vert w-\t\pi_{x_0}(x)\vert \lesssim \lambda h} \op{ d^{l+1} \t \phi_{x_0}(\t\pi_{x_0}(x)+\lambda'(w-\t\pi_{x_0}(x)))} \\
&\qquad\qquad \times \vert \t\pi_{x_0}(x)-w\vert \vert g(x)\vert \dd x \dd w \dd \lambda'\\
&\lesssim h \int_0^1\int_{\BB_M(x_0,2h_0)}\int_{\vert u-\t\pi_{x_0}(x)\vert \lesssim \lambda'\lambda h} \op{ d^{l+1} \t \phi_{x_0}(u)}\vert g(x)\vert \dd x \frac{\dd u}{\lambda'^d} \dd \lambda',
\end{align*}
and this term is bounded as the first term in \eqref{eq:longproof_bias2} by $h(h\lambda)^d \|\t\phi\|_{H^{l+1}_p(M)}$, concluding the proof of Lemma \ref{lem:bias2}.
\end{proof}

\begin{proof}[Proof of Lemma \ref{lem:equiv_norm_sobolev}]
The second inequality of Lemma \ref{lem:equiv_norm_sobolev} follows from the definition of the Sobolev norm and of the bound \eqref{eq:bound_of_tildephi} applied to $\tilde\phi=\eta$.
 To prove the first inequality, write
\begin{align*}
&h^{-d} \iint_{\BB_M(x_0,h_0)^2} \ones\{\vert x-y\vert \leq  h\} \frac{\op{\eta_l(x)-\eta_l(y)}^p}{\vert x-y\vert ^p}\dd x\dd y \\
&\lesssim h^{-d} \iint_{\BB_{T_{x_0}M}(0,h_0)^2} \ones\{\vert \Psi_{x_0}(u)-\Psi_{x_0}(v)\vert \leq  h\}\frac{\op{d^l(\eta\circ\Psi_{x_0})(u)-d^l(\eta\circ\Psi_{x_0})(v)}^p}{\vert \Psi_{x_0}(u)-\Psi_{x_0}(v)\vert ^p}\dd u\dd v \\
&\hspace{10cm}\text{ as $J\Psi_{x_0}(u)\lesssim 1$ for $\vert u\vert \lesssim 1$}\\
&\lesssim h^{-d} \int_0^1\iint_{\BB_{T_{x_0}M}(0,h_0)^2} \ones\{\vert u-v\vert \leq   h\} \op{d^{l+1}(\eta\circ\Psi_{x_0})(u+\lambda(v-u))}^p\dd u\dd v\dd \lambda \\
&\hspace{10cm}\text{ as $\vert v-u \vert \leq \vert \Psi_{x_0}(u)-\Psi_{x_0}(v)\vert$}\\
&\lesssim h^{-d} \int_0^1\iint_{\BB_{T_{x_0}M}(0,2h_0)^2} \ones\{\vert w-u\vert \leq  \lambda h\} \op{d^{l+1}(\eta\circ\Psi_{x_0})(w)}^p\dd u\dd w \lambda^{-d} \dd \lambda\\
&\lesssim  \int_0^1\int_{\BB_{T_{x_0}M}(0,2h_0)} \op{d^{l+1}(\eta\circ\Psi_{x_0})(w)}^p\dd w \lesssim  \int_{\BB_M(x_0,h_0)}\op{\eta_{l+1}(x)}^p\dd x,
\end{align*}
where at the second to last line, we used that $w=u+\lambda(v-u)$ is of norm smaller than $2h_0$ if $\vert u\vert \leq h_0$ and $\vert v-u\vert \leq h\leq h_0$, and, at the last line, we used that $J\Psi_{x_0}(w)\geq 1/2$ for $\vert w\vert $ small enough.
\end{proof}

\section{Proof of Lemma \ref{lem:fluc2}}\label{sec:proof_fluctuation}
Lemma \ref{lem:fluc2} is heavily based on the following classical control on the gradient of the Green function.
\begin{lemma}\label{lem:green_function}
Let $x,y\in M$, then
\begin{equation}
\vert \nabla_x G(x,y)\vert  \lesssim \frac{1}{d_g(x,y)^{d-1}}\leq \frac{1}{\vert x-y\vert ^{d-1}}.
\end{equation}
\end{lemma}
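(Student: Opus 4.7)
The second inequality is immediate from $|x-y|\leq d_g(x,y)$.

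For the main estimate, the plan is to use the heat kernel representation of the Green function on a compact manifold,
$$G(x,y) = \int_0^{\infty}\!\!\left( p_t(x,y) - \frac{1}{|\vol_M|} \right) \dd t,$$
where $p_t$ denotes the heat kernel of the Laplace--Beltrami operator $\Delta$ on $M$. The hypothesis $\tau(M)\geq\taumin$ controls the second fundamental form, hence (by Gauss's equation) the sectional and Ricci curvatures, and also bounds the injectivity radius from below; combined with Lemma~\ref{lem:geodesic_diameter} this places $M$ in a class of manifolds of bounded geometry with diameter $\lesssim 1$.

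In this setting, the classical Li--Yau type gradient estimate yields
$$|\nabla_x p_t(x,y)| \lesssim t^{-(d+1)/2}\exp\!\left(-c\,\frac{d_g(x,y)^2}{t}\right) \quad\text{for } 0<t\leq 1,$$
while the spectral gap of $\Delta$ on $M$ gives $|\nabla_x p_t(x,y)| \lesssim e^{-\lambda_1 t}$ for $t\geq 1$. Setting $r=d_g(x,y)$ and integrating in time, the change of variables $s=r^2/t$ in the near-diagonal piece gives
$$\int_0^{r^2\wedge 1}\!\! t^{-(d+1)/2} e^{-cr^2/t}\dd t \lesssim r^{1-d}\int_1^{\infty}\!\! s^{(d-3)/2} e^{-cs}\dd s \lesssim r^{1-d},$$
while the remaining ranges $[r^2\wedge 1,1]$ and $[1,\infty)$ contribute respectively $O(r^{1-d}\vee 1)$ and $O(1)$. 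Since $r\leq \diam(M)\lesssim 1$, the $O(1)$ contributions are absorbed by $r^{1-d}$ (trivially when $d\geq 2$, and because the asserted bound reads as a constant when $d=1$).

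The main technical ingredient is the Li--Yau gradient estimate itself, whose proof requires a lower Ricci bound and a positive injectivity radius, both provided by the reach hypothesis via the arguments already invoked in the proof of Proposition~\ref{prop:congestionned}; this is standard material (see e.g.\ Saloff-Coste's monograph \emph{Aspects of Sobolev-type inequalities}). An alternative, purely local approach uses a parametrix: in the geodesic ball of radius $\taumin/4$ around $y$, one writes $G(x,y) = c_d\, d_g(x,y)^{2-d} + R(x,y)$ (with a logarithmic or smooth replacement of the leading term when $d=2$ or $d=1$), where $R$ is of lower order in $d_g(x,y)$. The claimed gradient bound then follows from direct differentiation of the leading term together with standard elliptic regularity for $R$, using the local parametrizations $\Psi_x$ of bounded $\CC^k$ norm to transfer estimates from $M$ to $T_xM$. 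Either route reduces the bound to a Euclidean computation, so the only real obstacle is the careful accounting of geometric constants to ensure that the implicit multiplicative constant depends solely on the parameters of the model.
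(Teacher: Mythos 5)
Your main route (heat-kernel representation plus Li--Yau gradient estimate) is genuinely different from the paper's, which simply cites the parametrix construction in Aubin's book (\cite[Theorem~4.13]{aubin1982nonlinear}) together with \cite{hiroshima1996construction} and then checks that the constants in those references depend only on curvature bounds, $|\vol_M|$ and the geodesic diameter — all of which are controlled via the reach, $\fmin$ and Lemma~\ref{lem:geodesic_diameter} (exactly the observation you make about Proposition~\ref{prop:congestionned}). The heat-kernel argument is self-contained and avoids quoting a black-box parametrix theorem, which is a real advantage; the cost is more machinery (Gaussian gradient bounds, spectral gap) whose constant-tracking is no easier than in the parametrix route. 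The second, ``purely local'' alternative you sketch is essentially what Aubin does, so that part matches the paper.

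There is, however, a genuine gap in the case $d=1$. The form of the gradient bound you quote, $|\nabla_x p_t(x,y)|\lesssim t^{-(d+1)/2}e^{-c\,d_g(x,y)^2/t}$, does not make the middle range $[r^2\wedge 1,1]$ contribute $O(r^{1-d}\vee 1)=O(1)$: for $d=1$ and $r$ small, $\int_{r^2}^{1}t^{-1}e^{-cr^2/t}\,\dd t\asymp\int_{r^2}^{1}t^{-1}\,\dd t\asymp\log(1/r)$, which is unbounded, so your argument produces $|\nabla_x G|\lesssim\log(1/r)$ rather than $\lesssim 1$. To repair this you would need the sharper off-diagonal estimate $|\nabla_x p_t(x,y)|\lesssim d_g(x,y)\,t^{-d/2-1}e^{-c\,d_g(x,y)^2/t}$ (which restores the $r^{1-d}$ integrability uniformly in $d\geq 1$), or simply treat $d=1$ separately by the explicit Green function on the circle — which is what the paper does, citing \cite{burkhardt1894fonctions}.
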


\begin{proof}
For $d\geq  2$, a proof of Lemma \ref{lem:green_function} is found in \cite[Theorem 4.13]{aubin1982nonlinear}. See also \cite[Theorem 5.2]{hiroshima1996construction} for a proof with more explicit constants in the case $d\geq 3$. Constants in their proofs depend on $d$, bounds on the curvature of $M$, $\vert \vol_M\vert $ and the geodesic diameter of $M$. As, those three last quantities can be further bounded by constants depending on $\taumin$, $\fmin$ and $d$, see Lemma \ref{lem:geodesic_diameter} and \cite[Proposition 6.1]{niyogi2008finding}, this concludes the proof. For $d=1$, $M$ is isometric to a circle, for which a closed formula for $G$ exists \cite{burkhardt1894fonctions}, and satisfies $\vert \nabla_x G(x,y)\vert \leq  1$.
\end{proof}

Recall that, by Lemma \ref{lem:kernel_of_order_k}, $\vert \rho_h(x)\vert \geq 1/2$ for all $x\in M$. Therefore, Lemma \ref{lem:green_function} yields
\begin{align*}
\left\vert \nabla G\p{K_h*\p{\frac{\delta_{x}}{\rho_h}}}(z)\right \vert  &= \left\vert \int_{ M} \nabla_z G(z,y) \frac{K_h(x-y)}{\rho_h(x)}\dd y \right \vert \lesssim\int_{ \BB_M(x,h)} \frac{\|K\|_\infty h^{-d}}{\vert z-y\vert ^{d-1}}\dd y.
\end{align*}
If $d=1$, this quantity is smaller than a constant as $\vol_M(\BB_M(x,h))\lesssim h^d$ by Lemma \ref{lem:prop_proj}\ref{it:bound_volume}. We then obtain directly the result in this case by integrating this inequality against $f(x)\dd x$. If $d\geq 2$, we use the following argument. 
\begin{itemize}
\item If $\vert x-z\vert \geq 2h$ and $y\in \BB_M(x,h)$, then $\vert z-y\vert \geq \vert x-z\vert -h\geq \vert x-z\vert /2$. Therefore, by Lemma \ref{lem:prop_proj}\ref{it:bound_volume},
\begin{align*}
&\int_{\BB_M(x,h)}\frac{\|K\|_\infty h^{-d}}{\vert z-y\vert ^{d-1}}\dd y \leq \frac{2^{1-d}\|K\|_\infty h^{-d}}{\vert x-z\vert ^{d-1}}\vol_M(\BB_M(x,h)) \lesssim \frac{1}{\vert x-z\vert ^{d-1}}.
\end{align*}
\item If $\vert x-z\vert \leq 2h$, then 
\begin{align*}
\int_{\BB_M(x,h)}\frac{\|K\|_\infty h^{-d}}{\vert z-y\vert ^{d-1}}\dd y &\leq \int_{ \BB_M(z,3h)}\frac{\|K\|_\infty h^{-d}}{\vert z-y\vert ^{d-1}} \dd y  \\
&\leq  \int_{ \BB_{T_z M}(0,3h)}\frac{\|K\|_\infty h^{-d} J\Psi_z(u)}{\vert z-\Psi_z(u)\vert ^{d-1}} \dd u\\
&\lesssim h^{-d}\int_{\BB_{T_z M}(0,3h)}\frac{\dd u}{\vert u\vert ^{d-1}} \lesssim h^{1-d},
\end{align*}
where at the last line we used that $\vert z-\Psi_z(u)\vert \geq \vert u\vert $ and that $J\Psi_z(u)\lesssim 1$ by Lemma \ref{lem:prop_proj}. 
\end{itemize}
Hence, 
\begin{align*}
&\E\left[\vert \nabla (G(K_h*\delta_{X}))(z)\vert ^p\right] =  \int_M f(x)\vert \nabla (G(K_h*\delta_{x}))(z)\vert ^p \dd x \\*
&\leq \fmax \int_M \vert \nabla (G(K_h*\delta_{x}))(z)\vert ^p \dd x \\
&\lesssim \int_{\BB_M(z,2h)} \vert \nabla (G(K_h*\delta_{x}))(z)\vert ^p \dd x+ \int_{M\backslash \BB_M(z,2h)} \vert \nabla (G(K_h*\delta_{x}))(z)\vert ^p \dd x \\
&\lesssim  \int_{\BB_M(z,2h)} h^{(1-d)p} \dd x+ \int_{M\backslash \BB_M(z,2h)} \vert z-x\vert ^{(1-d)p} \dd x \\
&\lesssim h^{(1-d)p+d}+ \int_{M\backslash \BB_M(z,2h)} \vert z-x\vert ^{(1-d)p} \dd x.
\end{align*}
The latter integral is bounded by
\begin{align*}
& \int_{2h\leq \vert x-z\vert \leq r_0}\vert z-x\vert ^{(1-d)p}\dd x +\int_{\vert x-z\vert \geq r_0}\vert z-x\vert ^{(1-d)p}\dd x \\
&\qquad \leq  \int_{2h\leq \vert \Psi_z(u)-z\vert \leq r_0} \vert z-\Psi_z(u)\vert ^{(1-d)p}J\Psi_z(u)\dd u+\vert \vol_M\vert r_0^{(1-d)p} \\
&\qquad\lesssim \int_{14h/8\leq  \vert u\vert \leq r_0}\vert u\vert ^{(1-d)p}\dd u+1 \lesssim h^{(1-d)p+d} \text{ if $(1-d)p+d< 0$},
\end{align*}
where at the last line we use that $\vert u\vert \leq\vert z-\Psi_z(u)\vert \leq 8\vert u\vert /7$ by Lemma \ref{lem:prop_proj}. If $d>2$ or if $d=2$ and $p>2$, the condition $(1-d)p+d< 0$ is always satisfied. If $d=2$ and $p=2$, then $\int_{14h/8\leq  \vert u\vert \leq h_0}\vert u\vert ^{(1-d)p}\dd u$ is of order $-\log h$, concluding the proof.

\section{Proof of Theorem \ref{thm:estimator_M_known}\ref{it:pointwise_control}} \label{sec:pointwise_proof}
Let $f$ be the density of $\mu$ and $\t f=f/\rho_h$. By Lemma \ref{lem:kernel_of_order_k}, $\fmin(1-c_0h^{k-1})\leq \t f\leq \fmax(1+c_0h^{k-1})$ for $h$ small enough. We have
\begin{equation}\label{eq:point1}
\begin{split}
&K_h *\tilde f(x) =\int_M K_h(x-y)\t f(y)\dd y\\
& = \int_{\BB_{T_xM}(0,h)}K_h(x-\Psi_x(v)) \t f\circ \Psi_x(v)  J\Psi_x(v)\dd v \nonumber\\
&\geq \int_{\BB_{T_xM}(0,h)} K_h(v)\t f\circ \Psi_x(v)  J\Psi_x(v)\dd v - \hspace{-.1cm}\int_{\BB_{T_xM}(0,h)}\vert K_h(x-\Psi_x(v))-K_h(v)\vert  \t f\circ \Psi_x(v)  J\Psi_x(v)\dd v. 
\end{split}
\end{equation}
By Lemma \ref{lem:prop_proj}\ref{it:existence_N}, the quantity $\vert K_h(x-\Psi_x(v))-K_h(v)\vert $ is bounded by 
\[\frac{\|K\|_{\CC^1(\R^d)}}{h^{d+1}}\vert x-v-\Psi_x(v)\vert  \lesssim \frac{\vert v\vert ^2}{h^{d+1}},\]
 so that the second term in \eqref{eq:point1} is bounded by $C\fmax \int_{\BB_{T_x M}(0,h)}\frac{\vert v\vert ^2}{h^{d+1}}\dd v \lesssim  h$. Also, using that $\vert J\Psi_x(v)-1\vert \leq c_1 \vert v\vert $ by Lemma \ref{lem:prop_proj}, the first term is larger than
\begin{align*}
&\fmin(1-c_0h^{k-1})(1-c_1h)\int_{\R^d} K_+ - \fmax(1+c_1h)(1+c_0h^{k-1})\int_{\R^d} K_-\\
&=\fmin(1-c_2h)\p{1+\int_{\R^d} K_-} - \fmax(1+c_2h)\int_{\R^d} K_-\\
&=\fmin(1-c_2h) - (\fmax(1+c_2h)-\fmin(1-c_2h))\int_{\R^d} K_-(v) \dd v \\
&\geq \fmin(1-c_2h) -(\fmax(1+c_2h)-\fmin(1-c_2h))\beta \\
&\geq 3\fmin/4,
\end{align*}
if $\beta < \fmin/(4(\fmax-\fmin))$ and $h$ is small enough. Likewise, we show that $K_h*\t f(x)\leq 3\fmax/2$. It remains to show that $\vert K_h*\t f(x)-K_h*(\mu_n/\rho_h)(x)\vert $ is small enough for all $x\in M$ with high probability. Note that $K_h*\t f-K_h*(\mu_n/\rho_h)$ is $L$-Lipschitz with $L\lesssim h^{-d-1}$. Let $t=\fmin/4$ and consider a covering of $M$ by $N$ balls $\BB_M(x_j,t/(2L))$. By standard packing arguments, such a covering exists with $N\lesssim (L/t)^d$. If $\vert K_h*\t f(x_j)-K_h*\mu_n(x_j)\vert \leq t/2$ for all $j=1,\dots,N$, then $\|K_h*\t f-K_h*\mu_n\|_{L_\infty(M)} \leq t/2 + Lt/(2L) \leq t$. Hence,  using Bernstein inequality \cite[Theorem 3.1.7]{gine_nickl_2015}, as $\vert K_h(x_j-Y_i)\vert \leq \|K\|_{\CC^0(\R^D)}h^{-d}$ and $\Var(K_h(x_j-Y_i))\leq \|K^2\|_{\CC^0(\R^D)}h^{-d}$, we obtain
\begin{align*}
&\P(\|K_h*\t f-K_h*\mu_n\|_{L_\infty(M)} \geq t) \leq \P(\exists j,\ \vert K_h*\t f(x_j)-K_h*\mu_n(x_j)\vert \geq t/2) \\
&\qquad \lesssim (L/t)^d\P(\vert K_h*\t f(x_j)-K_h*\mu_n(x_j)\vert \geq t/2) \lesssim h^{-d(d+1)} \exp(-Cnh^d).
\end{align*}
Choosing $nh^d= C'\log n$ for $C'$ large enough yields the conclusion.

\section{Proofs of Section \ref{sec:estim_volume}}\label{sec:last_thm}

We first prove Lemma \ref{lem:pointwise_comparison}.
\medskip

\paragraph{}\textit{Proof of \ref{it:comp_diffeo}.} The application $\Psi_{Y_{j}}\circ\pi_{Y_{j}}: \BB_{\hat T_{j}}(0,3\eps)\to M$ is a diffeomorphism on $\BB_{\hat T_{j}}(0,3\eps)$, as the composition of the diffeomorphisms $\Psi_{Y_{j}}$ and $(\pi_{Y_{j}})_{\vert \hat T_{j}}$ (recall that $\angle(\hat T_{j},T_{Y_{j}}M) \lesssim \eps^{m-1}+ \gamma \eps^{-1} \lesssim 1$ by Proposition \ref{prop:estim_manifold}). Furthermore, by Lemma \ref{lem:prop_proj}\ref{it:param_contains_ball} and the bound on the angle,
\[ \BB_M(Y_j,2\eps)\subset \Psi_{Y_j}(\BB_{T_{Y_j}M}(0,2\eps)) \subset (\Psi_{Y_j}\circ\pi_{Y_j})(\BB_{\hat T_j}(0,3\eps)).\]
This proves the first part of Lemma \ref{lem:pointwise_comparison}\ref{it:comp_diffeo}. Let $S_j:\BB_M(Y_j,2\eps)\to \BB_{\hat T_j}(0,3\eps)$ be the inverse of $\Psi_{Y_j}\circ\pi_{Y_j}$. By Lemma \ref{lem:prop_poly_estim}\ref{it:prop_poly_estim_3}, $\hat\Psi_j$ is injective on $\hat T_j$, while, for $v\in \hat T_j$ with $\vert v\vert  \leq 3\eps$,
\begin{equation}\label{eq:pointwise2}
\op{\id-d\hat \Psi_j(v)} \leq \left\|\sum_{a= 2}^{m-1}a \hat V_{a,j}[\cdot,v^{\otimes (a-1)}] \right\| \lesssim \ell \eps \leq 1/2
\end{equation}
if $\ell\lesssim \eps^{-1}$ is small enough. Hence, $\hat\Psi_j:\BB_{\hat T_j}(0,3\eps)\to \hat\Psi_j(\hat T_j)$ is a diffeomorphism on its image, and $\hat\Psi_j\circ S_j$ is a diffeomorphism as a composition of diffeomorphisms. Note that the inverse of $\hat\Psi_j$ is given by $\hat\pi_j(\cdot-X_j)$, so that $\BB_{\hat\Psi_j(\hat T_j)}(X_j,\eps)\subset \hat\Psi_j(\BB_{\hat T_j}(0,\eps))$. Furthermore, by Lemma \ref{lem:prop_proj},
\[ (\Psi_{Y_j}\circ\pi_{Y_j})(\BB_{\hat T_j}(0,\eps)) \subset \Psi_{Y_j}(\BB_{T_{Y_j}}(0,\eps)) \subset \BB_M(Y_j,8\eps/7),\]
so that $(\hat\Psi_j\circ S_j)(\BB_M(Y_j,2\eps))$ contains $\BB_{\hat\Psi_j(\hat T_j)}(X_j,\eps)$. Furthermore, these inclusions of balls also hold for any $\eps'\leq \eps$, proving that $\vert \hat\Psi_j\circ S_j(z)-X_j\vert \geq (7/8)\vert z-Y_j\vert $ for any $z\in \BB_M(Y_j,2\eps)$.
\medskip

\paragraph{}\textit{Proof of \ref{it:comp_diffeo2}.} The formula for the density $\t\chi_j$ follows from a change of variables.
\medskip

\paragraph{}\textit{Proof of \ref{it:comp_diffeo3}.} The inequality \eqref{eq:first_eq} follows from Proposition \ref{prop:estim_manifold}. We now prove that, for $z\in \BB_M(Y_j,2\eps)$, 
\begin{equation}\label{eq:second_eq}
\vert \pi_{Y_i}(z-\hat \Psi_j\circ S_j(z))\vert \lesssim (\eps+\gamma\eps^{-1})(\eps^{m} + \gamma).
\end{equation}
Let $u\in\hat T_j$ be such that $z=\Psi_{Y_j}\circ\pi_{Y_j}(u)$ and $y=\hat \Psi_j(u)$.  Recall that $X_j\in T_{Y_j}M^\bot$ by assumption, so that $\pi_{Y_j}(X_j-Y_j)=0$. Also, by Lemma \ref{lem:prop_proj}\ref{it:existence_N}, we have $\Psi_{Y_j}(\pi_{Y_j}(u))=Y_j+ \pi_{Y_j}(u) + N_{Y_j}(\pi_{Y_j}(u))$ with $N_{Y_j}(\pi_{Y_j}(u))\in T_{Y_j}M^\bot$, while by Lemma \ref{lem:prop_poly_estim}\ref{it:prop_poly_estim_3}, we have $\hat \Psi_j(u)=X_j+u+\hat N_j(u)$ with $\hat N_j(u)\in \hat T_j^\bot$. Hence,
\begin{align*}
\vert \pi_{Y_j}(&z-y)\vert =\vert \pi_{Y_j}(Y_j+ \pi_{Y_j}(u) + N_{Y_j}(\pi_{Y_j}(u))-(X_j+u+\hat N_j(u)))\vert \\
&= \vert \pi_{Y_j}(N_{Y_j}(\pi_{Y_j}(u))-\hat N_j(u))\vert \\
&\leq \angle(T_{Y_j}M,\hat T_j)\vert N_{Y_j}(\pi_{Y_j}(u))-\hat N_j(u)\vert +\vert \hat\pi_j(N_{Y_j}(\pi_{Y_j}(u))-\hat N_j(u))\vert \\
&\lesssim (\eps^{m-1}+\gamma\eps^{-1})(\eps^{m}+\gamma) + \vert \hat\pi_j(\pi_{Y_j}^\bot(N_{Y_j}(\pi_{Y_j}(u))))\vert \\
&\lesssim(\eps^{m-1}+\gamma\eps^{-1})(\eps^{m}+\gamma) + \angle(T_{Y_j}M,\hat T_j)\vert N_{Y_j}(\pi_{Y_j}(u))\vert \\
&\lesssim(\eps^{m-1}+\gamma\eps^{-1})(\eps^{m}+\gamma + \eps^2) \lesssim (\eps^{m-1}+\gamma\eps^{-1})(\eps^2+\gamma),
\end{align*}
where we used Proposition \ref{prop:estim_manifold} to bound $\angle(T_{Y_j}M,\hat T_j)$, Lemma \ref{lem:prop_poly_estim} to bound $\vert N_{Y_j}(\pi_{Y_j}(u))-\hat N_j(u)\vert $ and Lemma \ref{lem:prop_proj} to bound $\vert N_{Y_j}(\pi_{Y_j}(u))\vert $. We obtain \eqref{eq:second_eq}.

To prove inequality \eqref{eq:third_eq}, we first bound $\vert \chi_j(\hat \Psi_j\circ S_j(z))-\chi_j(z)\vert $ and then bound $\vert J(\hat \Psi_j\circ S_j)(z)-1\vert $. The first bound is based on the following elementary lemma.
\begin{lemma}\label{lem:radial}
Let $\theta:\R^D\to \R$ be a smooth radial function. Then, $\vert \theta(x)-\theta(y)\vert \leq \frac{\|\theta\|_{\CC_2(\R^D)}}{2} \vert \vert x\vert ^2-\vert y\vert ^2\vert $.
\end{lemma}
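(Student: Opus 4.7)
The plan is to reduce the problem to a one-dimensional statement by rotational symmetry and then use a quadratic change of variable.

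Since $\theta$ is radial, I can write $\theta(x) = g(|x|)$ where $g: \R \to \R$ is the smooth even function $g(t) \defeq \theta(t e_1)$ (any unit vector works). Because $g$ is smooth and even, a standard application of Taylor's theorem (or Whitney's extension/Borel's lemma argument) shows that there exists a smooth function $f:[0,\infty)\to \R$ with $g(t) = f(t^2)$. Then for any $x,y\in\R^D$, by radial symmetry,
\begin{equation*}
\theta(x)-\theta(y) = f(|x|^2)-f(|y|^2) = \int_{|y|^2}^{|x|^2} f'(u)\,\dd u,
\end{equation*}
so it suffices to prove that $\sup_{u\geq 0} |f'(u)| \leq \|\theta\|_{\CC^2(\R^D)}/2$.

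For this, I differentiate $g(t)=f(t^2)$ to get $g'(t) = 2t\,f'(t^2)$ and $g''(t) = 2f'(t^2) + 4t^2 f''(t^2)$. In particular $g'(0)=0$ and $g''(0) = 2f'(0)$, which gives $|f'(0)|\leq \|\theta\|_{\CC^2(\R^D)}/2$. For $u > 0$, writing $t=\sqrt{u}$, the identity $g'(t) = \int_0^t g''(s)\,\dd s$ and the bound $|g''(s)| = |\partial_1^2 \theta(s e_1)| \leq \|\theta\|_{\CC^2(\R^D)}$ yield $|g'(t)|\leq t\,\|\theta\|_{\CC^2(\R^D)}$, so that
\begin{equation*}
|f'(u)| \;=\; \frac{|g'(\sqrt{u})|}{2\sqrt{u}} \;\leq\; \frac{\|\theta\|_{\CC^2(\R^D)}}{2}.
\end{equation*}

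Combining the two displays gives $|\theta(x)-\theta(y)| \leq \frac{\|\theta\|_{\CC^2(\R^D)}}{2}\bigl||x|^2-|y|^2\bigr|$, as claimed. The only mildly delicate step is the smoothness of $f$ near $0$; this is a classical fact for smooth even functions on $\R$, and for this lemma one could alternatively avoid it entirely by proving the inequality only on $\{x\neq 0\}\cap\{y\neq 0\}$ via the computation above and then extending by continuity.
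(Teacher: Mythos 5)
Your proof is correct and follows essentially the same route as the paper, which writes $\theta(x)=\tilde\theta(|x|^2)$ and observes that $\tilde\theta$ is Lipschitz with constant $\|d^2\theta\|_{\CC^0(\R^D)}/2$ since $d\theta(0)=0$. You have merely spelled out the computation of the Lipschitz constant (via $g'(t)=\int_0^t g''$ and $f'(u)=g'(\sqrt u)/(2\sqrt u)$) that the paper leaves implicit.
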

\begin{proof}
As $d\theta(0)=0$, one can write $\theta(x) =\t\theta(\vert x\vert ^2)$ for some function $\t \theta$ which is Lipschitz continuous with Lipschitz constant $\frac{\|d^2 \theta\|_{\CC^0(\R^D)}}{2}$. This implies the conclusion.
\end{proof}
Recall from the proof of Lemma \ref{lem:existence_pou} that we have $\chi_j(z) =\zeta_j(z)/\sum_{i=1}^J\zeta_{i}(z)$ where $\zeta_i=\theta\p{\frac{z-X_i}{\eps}}$ for some smooth radial function $\theta$, and that furthermore, there is at most $c_d$ non-zero terms in the sum in the denominator, which is always larger than $1$. Hence, if we control for every $i=1,\dots,J$ the difference $\vert \vert z-X_i\vert ^2-\vert \hat\Psi_j\circ S_j(z)-X_i\vert ^2\vert $, then we obtain a control on $\vert \chi_j(z)-\chi_j(\hat\Psi_j\circ S_j(z))\vert $. Let  $z\in M$ be such that $\vert z-X_j \vert\leq 2\eps$ (for otherwise both $\chi_j(z)$ and $\chi_j(\hat\Psi_j\circ S_j(z))$ are equal to zero). We have by \eqref{eq:first_eq} and \eqref{eq:second_eq},
\begin{equation}\label{eq:comp_to_do_again}
\begin{split}
\vert \vert \hat\Psi_j\circ S_j(z)-X_i\vert ^2-\vert &z-X_i\vert ^2\vert = \vert \vert \hat\Psi_j\circ S_j(z)-z\vert ^2 + 2\dotp{\hat\Psi_j\circ S_j(z)-z, z-X_i}\vert \\
&\lesssim  (\eps^m+\gamma)^2 + \vert \dotp{\hat\Psi_j\circ S_j(z)-z, z-Y_i}\vert  +  \vert \dotp{\hat\Psi_j\circ S_j(z)-z, X_i-Y_i}\vert \\
&\lesssim  (\eps^m+\gamma)^2 + \vert \dotp{\pi_{Y_j}(\hat\Psi_j\circ S_j(z)-z), \pi_{Y_j}(z-Y_i)}\vert \\
&\qquad\qquad + \vert \dotp{\pi_{Y_j}^\bot(\hat\Psi_j\circ S_j(z)-z), \pi_{Y_j}^\bot(z-Y_i)}\vert + (\eps^m+\gamma)\gamma\\
&\lesssim  (\eps^m+\gamma)^2 + (\eps+\gamma\eps^{-1})(\eps^m +\gamma)\vert z-Y_i\vert  \\
&\qquad\qquad+ (\eps^m+\gamma)\vert \pi_{Y_j}^\bot(z-Y_i)\vert + (\eps^m+\gamma)\gamma.
\end{split}
\end{equation}
By Lemma \ref{lem:prop_proj}\ref{it:angle} and the fact that $\vert z-Y_j\vert \leq \vert z-X_j\vert +\gamma\lesssim \eps$, we have $\vert \pi_{Y_j}^\bot(z-Y_i)\vert \leq\vert \t\pi_{Y_j}^\bot(z)\vert +\vert \t\pi_{Y_j}^\bot(Y_i)\vert  \lesssim \eps^2+\vert Y_i-Y_j\vert ^2$. Hence, we obtain  that 
\begin{equation}\label{eq:square_diff}
\vert \vert \hat\Psi_j\circ S_j(z)-X_i\vert ^2-\vert z-X_i\vert ^2\vert  \lesssim (\eps^m+\gamma)(\eps^2 +\gamma+ \vert Y_i-Y_j\vert ^2).
\end{equation}
Therefore,
\begin{equation}\label{eq:thetaclose}
\begin{split}
\left\vert \theta\p{\frac{z-X_i}{\eps}}-\theta\p{\frac{\hat\Psi_j\circ S_j(z)-X_i}{\eps}} \right \vert  &\lesssim \frac{(\eps^m+\gamma)(\eps^2 + \gamma+\vert Y_i-Y_j\vert ^2)}{\eps^2} \\
&\lesssim (\eps^m + \gamma)\p{1+ \gamma\eps^{-2}+\frac{\vert Y_i-Y_j\vert ^2}{\eps^2}}.
\end{split}
\end{equation}
Note also that if $\vert Y_i-Y_j\vert \geq 3\eps$, then $\vert z-X_i\vert \geq \vert X_i-X_j\vert -\vert z-X_j\vert  \geq 3\eps  -\eps-3\gamma\geq \eps$, while by the same argument $\vert \hat\Psi_j\circ S_j(z)-X_i\vert \geq \eps$. Hence, both terms in the left-hand side of \eqref{eq:thetaclose} are null in that case. Thus, we may assume that $\vert Y_i-Y_j\vert \leq 3\eps$, so that $\left\vert \theta\p{\frac{z-X_i}{\eps}}-\theta\p{\frac{\hat\Psi_j\circ S_j(z)-X_i}{\eps}} \right \vert  \lesssim (\eps^m + \gamma)(1+\gamma \eps^{-2})$. From the definition of $\chi_j(z)$, and as the function $t\mapsto 1/t$ is Lipschitz on $[1,\infty[$, we obtain that 
\[\vert \chi_j(z)-\chi_j(\hat\Psi_j\circ S_j(z))\vert \lesssim (\eps^m+\gamma)(1+\gamma\eps^{-2}).\]
We now provide a bound on $\vert J(\hat \Psi_j\circ S_j)(z)-1\vert $. One has, for $u= S_j(z)\in \hat T_j$,
\begin{align*}
\vert J(\hat \Psi_j\circ S_j)(z)-1\vert &= \frac{\vert J\hat\Psi_j(u)-J(\Psi_{Y_j}\circ\pi_{Y_j})(u)\vert }{J(\Psi_{Y_j}\circ\pi_{Y_j})(u)}.
\end{align*}
By Lemma \ref{lem:prop_proj}\ref{it:existence_N} and Lemma \ref{lem:prop_poly_estim}\ref{it:prop_poly_estim_3},  $\op{\id_{\hat T_j}-d(\Psi_{Y_j}\circ\pi_{Y_j})(u)}\lesssim 1 $ and $\op{\id_{\hat T_j}-d\hat\Psi_j(u)}\lesssim 1$ for $u$ small enough. As a consequence, both Jacobians are larger than, say $1/2$ for $u$ small enough, and, as the function $A\in \R^{d\times d} \mapsto \sqrt{\det(A)}$ is Lipschitz continuous on the set of matrices with $\det(A)\geq 1/2$ and $\op{A}\leq 2$, we have 
\begin{align}\label{eq:control_jacob}
\vert J(\hat \Psi_j\circ S_j)(z)-1\vert &\lesssim \hspace{-.1cm} \op{d\hat\Psi_j(u)^\top  d\hat\Psi_j(u)-d(\Psi_{Y_j}\circ\pi_{Y_j})(u)^\top  d(\Psi_{Y_j}\circ\pi_{Y_j})(u)}\hspace{-.1cm}.
\end{align}
Recall that $\hat\Psi_j(u)=X_j+u+\hat N_j(u)$ and $\Psi_{Y_j}\circ\pi_{Y_j}(u)=Y_j+\pi_{Y_j}(u)+N_{Y_j}\circ\pi_{Y_j}(u)$. We may write
\begin{align*}
&d\hat\Psi_j(u)^\top  d\hat\Psi_j(u) = \mathrm{id}_{\hat T_j} + (d\hat N_j(u))^\top  d \hat N_j(u) \quad \text{ and}\\
&d(\Psi_{Y_j}\circ\pi_{Y_j})(u)^\top  d(\Psi_{Y_j}\circ\pi_{Y_j})(u) = \hat\pi_j\pi_{Y_j}\hat\pi_j + (d(N_{Y_j}\circ\pi_{Y_j})(u))^\top  d(N_{Y_j}\circ\pi_{Y_j})(u).
\end{align*}
One has $\op{\mathrm{id}_{\hat T_j}-\hat\pi_j\pi_{Y_j}\hat\pi_j }=\op{\hat \pi_j \pi_{Y_j}^\bot \pi_{Y_j}^\bot \hat\pi_j}\leq \angle(T_{Y_j}M,\hat T_j)^2 \lesssim (\eps^{m-1}+\gamma\eps^{-1})^2\leq (\eps^m+\gamma)(1+\gamma\eps^{-2})$. Furthermore, by Lemma \ref{lem:prop_poly_estim}\ref{it:prop_proly_estim_4},
\begin{align*}
&\op{(d\hat N_j(u))^\top  d \hat N_j(u)-(d(N_{Y_j}\circ\pi_{Y_j})(u))^\top  d(N_{Y_j}\circ\pi_{Y_j})(u)} \\
&\qquad\leq \p{\op{d \hat N_j(u)}+\op{d(N_{Y_j}\circ\pi_{Y_j})(u)}}\op{d \hat N_j(u)-d(N_{Y_j}\circ\pi_{Y_j})(u)} \\
&\qquad \lesssim \eps(\eps^{m-1}+\gamma\eps^{-1})\lesssim \eps^m+\gamma.
\end{align*}
Putting together \eqref{eq:control_jacob} with those two inequalities, we obtain that $\vert J(\hat \Psi_j\circ S_j)(z)-1\vert \lesssim (\eps^m+\gamma)(1+\gamma\eps^{-2})$, concluding the proof of Lemma \ref{lem:pointwise_comparison}.

To conclude the proof of Theorem \ref{thm:M_unknown}, it remains to control the quantity $T$ appearing in Lemma \ref{lem:the_smart_lemma} for $\phi=K_h*(\nu_n/\hat\rho_h)$ and $\phi'=K_h*(\mu_n/\rho_h)$.
\begin{lemma}\label{lem:bound_on_T}
The quantity $T=\max_{j=1\dots J}\sup_{z\in \BB(Y_j,\eps)}\vert \phi(\hat\Psi_j\circ S_j(z))-\phi'(z)\vert $ satisfies $T\lesssim (\eps^m + \gamma)(1+\gamma\eps^{-2})$ with probability larger than $1-cn^{-k/d}$.
\end{lemma}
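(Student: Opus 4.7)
The plan is to fix $j$ and $z \in M$ with $|z - Y_j| \leq \eps$, set $w = \hat\Psi_j\circ S_j(z)$, and then use the bounds $|w - z| \lesssim \eps^m + \gamma$ and $|\pi_{Y_j}(w-z)| \lesssim \eps(\eps^m + \gamma)$ from Lemma \ref{lem:pointwise_comparison} to control the difference
\[
K_h*\!\left(\frac{\nu_n}{\hat\rho_h}\right)\!(w) - K_h*\!\left(\frac{\mu_n}{\rho_h}\right)\!(z) = A + B,
\]
where
\begin{align*}
A &\defeq \frac{1}{n}\sum_{i=1}^n K_h(w - X_i)\!\left(\frac{1}{\hat\rho_h(X_i)} - \frac{1}{\rho_h(Y_i)}\right), \\
B &\defeq \frac{1}{n}\sum_{i=1}^n \frac{K_h(w - X_i) - K_h(z - Y_i)}{\rho_h(Y_i)}.
\end{align*}
Since $\hat\rho_h, \rho_h \gtrsim 1$ by Lemma \ref{lem:kernel_of_order_k} and $\frac{1}{n}\sum_i |K_h(w - X_i)| \lesssim 1$ with high probability (Bernstein's inequality applied to $\ones\{|w-X_i|\leq h\}$), bounding $A$ will reduce to showing $|\hat\rho_h(X_i) - \rho_h(Y_i)| \lesssim \eps^m + \gamma$ uniformly in $i$.

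For $B$, the naive Lipschitz estimate $|K_h(w-X_i) - K_h(z-Y_i)| \lesssim h^{-d-1}(\eps^m + \gamma)$ combined with the $O(nh^d)$ effective indices would yield only $h^{-1}(\eps^m + \gamma)$. I would recover the missing factor of $h$ via a second-order Taylor expansion: writing $(w - X_i) - (z - Y_i) = (w-z) - Z_i$,
\[
B = \frac{1}{n}\sum_i \frac{\nabla K_h(z - Y_i) \cdot \bigl((w - z) - Z_i\bigr)}{\rho_h(Y_i)} + R_B,
\]
where I expect $|R_B| \lesssim h^{-d-2}(\eps^m + \gamma)^2 \cdot \#\{i : |z - Y_i| \leq 2h\}/n \lesssim h^{-2}(\eps^m + \gamma)^2 \lesssim \eps^m + \gamma$ since $\gamma \lesssim \eps^2 \leq h^2$ and $m \geq 2$. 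For the linear term, split as $B_1 - B_2$ with $B_1$ carrying $(w-z)$ and $B_2$ carrying $Z_i$. The radial structure of $K$ makes $\nabla K_h(z - Y_i)$ point along $(z - Y_i)/|z-Y_i|$; Lemma \ref{lem:prop_proj}\ref{it:angle} gives $|\pi_{Y_j}^\bot(z - Y_i)| \lesssim |z-Y_j|^2 + |Y_j-Y_i|^2 \lesssim h^2$ on the effective range $|z - Y_i| \leq h$, so $|\pi_{Y_j}^\bot \nabla K_h(z - Y_i)| \lesssim h^{-d}$. Combined with $|\pi_{Y_j}^\bot(w - z)| \lesssim \eps^m + \gamma$ and $|\pi_{Y_j}(w - z)| \lesssim \eps(\eps^m + \gamma)$, this yields $|\nabla K_h(z-Y_i) \cdot (w-z)| \lesssim (\eps^m + \gamma)h^{-d}$, and summing over the $O(nh^d)$ contributing indices gives $|B_1| \lesssim \eps^m + \gamma$. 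For $B_2$, since $Z_i \in T_{Y_i}M^\bot$, Lemma \ref{lem:prop_proj}\ref{it:angle} gives $|(z - Y_i) \cdot Z_i| \lesssim |z-Y_i|^2\gamma$, whence $|\nabla K_h(z-Y_i) \cdot Z_i| \lesssim h^{-d}\gamma$ and $|B_2| \lesssim \gamma$.

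To bound $|\hat\rho_h(X_i) - \rho_h(Y_i)|$, the plan is to apply the change-of-variables formula from Lemma \ref{lem:pointwise_comparison}\ref{it:comp_diffeo2} to pull back the partition-of-unity representation of $\widehat\vol_M$ to $M$, writing
\[
\hat\rho_h(X_i) - \rho_h(Y_i) = \sum_{j'} \int\!\Bigl[K_h(X_i - \hat\Psi_{j'}\!\circ\! S_{j'}(y))\t\chi_{j'}(y) - K_h(Y_i - y)\chi_{j'}(y)\Bigr] d\vol_M(y),
\]
and then splitting as $\int K_h(Y_i - y)\bigl(\sum_{j'}\t\chi_{j'}(y) - 1\bigr)d\vol_M(y) + \sum_{j'}\int\bigl(K_h(X_i - \hat\Psi_{j'}\!\circ\! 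S_{j'}(y)) - K_h(Y_i - y)\bigr)\t\chi_{j'}(y) d\vol_M(y)$ (using $\sum_{j'}\chi_{j'} \equiv 1$ on $M$). The first sum will be $\lesssim \eps^m + \gamma$ because $\sum_{j'}\t\chi_{j'} = 1 + O(\eps^m + \gamma)$ by Lemma \ref{lem:pointwise_comparison}\ref{it:comp_diffeo3}. The second I would handle exactly as $B$: the role of $(w-z)$ is now played by $\Delta_y^{j'} \defeq \hat\Psi_{j'}\!\circ\! S_{j'}(y) - y$, which is near-normal to $T_{Y_{j'}}M$ by the estimates in the proof of Lemma \ref{lem:pointwise_comparison}\ref{it:comp_diffeo3}, and $Z_i$ retains its role; although each $\t\chi_{j'}$ is supported on a set of volume $\lesssim \eps^d$, there are $\lesssim (h/\eps)^d$ active indices $j'$ on the relevant range, and these two factors will cancel.

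The main technical obstacle is the apparent $h^{-1}$ loss from differentiating $K_h$; circumventing it requires the linear Taylor term to simultaneously exploit the radial structure of $K$, the normality of $Z_i$ and of $\Delta_y^{j'}$ at the respective tangent planes, and the quadratic chord bound $|\pi_y^\bot(x - y)| \lesssim |x - y|^2/\taumin$ on $M$. The uniform-in-$(j, z)$ high-probability statement will follow from Bernstein-type concentration on the empirical counts $\#\{i : |z - Y_i| \leq 2h\}/n$ via a sufficiently fine net of $\BB_M(Y_j, \eps)$, with failure probability $\lesssim n^{-k/d}$ thanks to the assumption $\eps \gtrsim (\log n/n)^{1/d}$.
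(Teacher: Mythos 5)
Your proposal is correct and follows essentially the same skeleton as the paper's proof: a term-by-term comparison of the two sums, yielding a numerator contribution $|K_h(w-X_i)-K_h(z-Y_i)|\lesssim h^{-d}(\eps^m+\gamma)\ones\{Y_i\in\BB_M(z,2h)\}$ and a denominator contribution $|\hat\rho_h(X_i)-\rho_h(Y_i)|\lesssim\eps^m+\gamma$ (bounded by pulling $\widehat\vol_M$ back to $M$ via the partition of unity, exactly as you describe), followed by the high-probability control $N(z,2h)\lesssim nh^d$. The one genuine difference is the engine behind the kernel-difference bound. The paper packages the gain of a factor of $h$ over the naive Lipschitz estimate into Lemma \ref{lem:radial}, which converts $|K_h(u)-K_h(v)|$ into $h^{-d-2}\,\bigl||u|^2-|v|^2\bigr|$ and then bounds the square-norm differences via normality (Eq.~\eqref{eq:square_diff} and the $Z_i\perp T_{Y_i}M$ identity). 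You instead do a second-order Taylor expansion of $K_h$ directly and observe that $\nabla K_h(u)\propto u$ for smooth radial $K$, so that after splitting at $T_{Y_j}M$ (resp. $T_{Y_i}M$, resp. $T_{Y_{j'}}M$) the near-normality of $w-z$, $Z_i$ and $\Delta_y^{j'}$ kills the dangerous $h^{-d-1}$ term. These are the same geometric cancellation expressed in different coordinates: the paper's route is cleaner in that the gradient-vanishing-at-the-origin issue is absorbed once and for all into Lemma \ref{lem:radial}, while yours has to track it explicitly (which you do correctly, noting $|\pi_{Y_j}^\bot\nabla K_h(z-Y_i)|\lesssim h^{-d}$ rather than the crude $h^{-d-1}$). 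Both work. Two minor remarks: (a) the lower bound $\hat\rho_h\gtrsim 1$ does not come directly from Lemma \ref{lem:kernel_of_order_k} (which is about $\rho_h$), but is a \emph{consequence} of the bound $|\hat\rho_h(X_i)-\rho_h(Y_i)|\lesssim\eps^m+\gamma$ you establish in the same step, so this ordering should be made explicit; (b) for the uniform-in-$z$ concentration the paper avoids a net by noting that $N(z,2h)>\lambda nh^d$ forces $N(Y_i,4h)>\lambda nh^d$ for some sample point $Y_i$ — a slightly slicker alternative to your covering argument, though both give the $1-cn^{-k/d}$ probability.
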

\begin{proof}
For $z\in \BB(Y_j,\eps)$, we have
\[ \vert \phi(\hat\Psi_j\circ S_j(z))-\phi'(z)\vert \leq \frac{1}{n}\sum_{i=1}^n \left\vert  \frac{K_h*\delta_{X_i}(\hat\Psi_j\circ S_j(z))}{\hat\rho_h(X_i)}- \frac{K_h*\delta_{Y_i}(z)}{\rho_h(Y_i)}\right \vert .\]
The same computation than in \eqref{eq:comp_to_do_again} shows that
\[ \vert \vert \hat\Psi_j\circ S_j(z)-Y_i\vert ^2-\vert z-Y_i\vert ^2\vert  \lesssim (\eps^m+\gamma)(\eps^2 +\gamma+ \vert Y_i-Y_j\vert ^2).\]
This inequality together with Lemma \ref{lem:radial} yield
\begin{align*}
&\vert K_h(X_i-\hat\Psi_j\circ S_j(z))-K_h(Y_i-z)\vert \\
&\qquad\lesssim h^{-d-2}(\eps^m+\gamma)(\eps^2 +\gamma+ \vert Y_i-Y_j\vert ^2).
\end{align*}
We may assume that $\vert Y_i-Y_j\vert \leq 3h$ and $\vert z-Y_i\vert \leq 2h$, for otherwise both quantities in the left-hand site of the above equation are equal to zero. Hence, as $\eps\lesssim h$ by assumption, we have 
\begin{equation}\label{eq:control_kernel}
\vert K_h(X_i-\hat\Psi_j\circ S_j(z))-K_h(Y_i-z)\vert \lesssim h^{-d}(\eps^m+\gamma)(1+\gamma\eps^{-2})\ones\{Y_i\in \BB_M(z,2h)\}.
\end{equation}
Let us now bound $\vert \hat\rho_h(X_i)-\rho_h(Y_i)\vert $. By the triangle inequality, and using \eqref{eq:third_eq} and \eqref{eq:control_kernel}, we obtain that this quantity is smaller than
\begin{align*}
&\vert \sum_{j=1}^J \int_{\hat\Psi_j(\hat T_j)}\chi_j(w)K_h(X_i-w)\dd w - \sum_{j=1}^J \int_M \chi_j(z) K_h(Y_i-z)\dd z\vert\\
&\leq \sum_{j=1}^J \int_M\vert \t\chi_j(z)K_h(X_i-\hat\Psi_j\circ S_j(z))-\chi_j(z)K_h(Y_i-z)\vert \dd z \\
&\lesssim \sum_{j=1}^J \int_M( \ones\{z\in\BB_M(Y_j,2\eps)\}(\eps^m+\gamma)(1+\gamma\eps^{-2})\vert K_h(Y_i-z)\vert \\*
&\qquad\qquad\qquad+ \t\chi_j(z)h^{-d}(\eps^m+\gamma)(1+\gamma\eps^{-2})\ones\{z\in \BB_M(Y_i,2h)\})\dd z\\
&\lesssim  h^{-d}(\eps^m+\gamma)(1+\gamma\eps^{-2})\sum_{j=1}^J \int_M \ones\{z\in\BB_M(Y_j,2\eps)\}\ones\{z\in \BB_M(Y_i,2h)\}\dd z \\
&\lesssim \eps^dh^{-d}(\eps^m+\gamma)(1+\gamma\eps^{-2})\sum_{j=1}^J \ones\{\vert Y_j-Y_i\vert \leq 4h\} \\
&\lesssim h^{-d}(\eps^m +\gamma)(1+\gamma\eps^{-2})\sum_{j=1}^J \ones\{\vert Y_j-Y_i\vert \leq 4h\}\vol_M(\BB_M(Y_j,\eps/8))\\
&\lesssim  h^{-d}(\eps^m +\gamma)(1+\gamma\eps^{-2})\vol_M(\BB_M(Y_i,5h))\lesssim (\eps^m +\gamma)(1+\gamma\eps^{-2}),
\end{align*}
where  we use that $\{X_1,\dots,X_J\}$ is $7\eps/24$-sparse, so that $\{Y_1,\dots,Y_J\}$ is $\eps/4$-sparse. Therefore, the balls $\BB_M(Y_j,\eps/8)$ for $\vert Y_j-Y_i\vert \leq 4h$ are pairwise distincts, and are all included in $\BB_M(Y_i,4h+\eps/8)\subset \BB_M(Y_i,5h)$. We conclude by Lemma \ref{lem:prop_proj}\ref{it:bound_volume}.
Letting $N(z,2h)$ be the number of points $Y_i$ belonging to $\BB_M(z,2h)$, we obtain
\begin{align*}
&\vert \phi(\hat\Psi_j\circ S_j(z))-\phi'(z)\vert \\
&\lesssim \frac{1}{n}\sum_{i=1}^n (\vert K_h(Y_i-z)\vert (\eps^m +\gamma)(1+\gamma\eps^{-2})+h^{-d}(\eps^m+\gamma)(1+\gamma\eps^{-2})\ones\{Y_i\in \BB_M(z,2h)\})\\
&\lesssim \frac{N(z,2h)}{nh^{d}}(\eps^m+\gamma)(1+\gamma\eps^{-2}).
\end{align*}
If, for every $z\in M$ and some $\lambda >0$, $N(z,2h)\leq \lambda nh^d$, then we have the conclusion. Let us bound
\[ P_0= \P(\exists z\in M,\ N(z,2h)> \lambda nh^d). \]
If $N(z,2h)>\lambda nh^d$, then there exists a point $Y_i$ with $N(Y_i,4h)\geq N(z,2h)>\lambda nh^d$. Hence, $P_0 \leq n\P(N(Y_1,4h)>\lambda nh^d)$. Conditionally on $Y_1$, $N(Y_1,4h)=1+U$ with $U$ a binomial random variable of parameters $n-1$ and  $\mu(\BB_M(Y_1,4h))\leq \fmax \vol_M(\BB_M(Y_1,4h))\lesssim h^d$ (see Lemma \ref{lem:prop_proj}\ref{it:bound_volume}). In particular, for $\lambda$ large enough, the probability $P_0$ is smaller than $n^{-k/d}$ by Bernstein's inequality, as long as $nh^d\gtrsim 1$.
\end{proof}

We conclude this section by giving a proof of Proposition \ref{prop:hat_u_m_approx}.
\begin{proof}
Recall that $W_1,\dots,W_N$ is a $N$-sample of law $\hat U_M$, and that we are in the noiseless regime $\gamma =0$ with $m=k$. Define $j_a$ the index with $W_a\in \hat\Psi_{j_a}(\hat T_{j_a})$, and let $H_a = (\hat \Psi_{j_a} \circ S_{j_a})^{-1}(W_a)$. Then, Lemma \ref{lem:pointwise_comparison} implies that $\vert W_a-H_a\vert \lesssim \eps^k$. Furthermore, the sample $H_1,\dots,H_N$ has a law $\mu_H$ with density $\sum_{j=1}^J \tilde \chi_j$ on $M$.
We decompose the distance into
\begin{align*}
W_\infty((\hat U_M)_N,U_M) &\leq W_\infty((\hat U_M)_N,N^{-1}\sum_{a=1}^N \delta_{H_a}) + W_\infty(N^{-1}\sum_{a=1}^N \delta_{H_a},\mu_H) + W_\infty(\mu_H,U_M).
\end{align*} 
The first term is of order $\eps^k$, while the second term scales as the second term of \eqref{eq:hat_u_m_approx} according to \cite{trillos2020error}. The third term was already shown to be bounded by $\eps^k$ in the proof of Lemma \ref{lem:the_smart_lemma} (with $\phi=\tilde \phi=1$). As $\eps^k \simeq (\log n/n)^{k/d}$, this concludes the proof.
\end{proof}

\section{Lower bounds on minimax risks}\label{sec:lowerbounds}
In this section, we prove the different lower bounds on minimax risks stated in the article. The main tool used will be Assouad's lemma. Fix a statistical model $(\QQ,\vartheta,\LL)$, where we observe a sample of law $\mu\in\QQ$, while $\vartheta(\mu)$ is a quantity of interest to be estimated, with risk measured by the loss function $\LL$.

\begin{lemma}[Assouad's lemma \cite{yu1997assouad}]
Let $m\geq 1$ be an integer and let $\QQ_m = \{\mu_\sigma,\ \sigma\in \{-1,1\}^m\}\subset \QQ$ be a set of probability measures.  
Assume that for all $\sigma,\sigma' \in \{-1,1\}^m$, 
\begin{equation}
\LL(\vartheta(\mu_\sigma),\vartheta(\mu_{\sigma'})) \geq \vert \sigma-\sigma'\vert \delta ,
\end{equation}
where $\vert \sigma-\sigma'\vert =\sum_{i=1}^m \ones\{\sigma(i)\neq\sigma'(i)\}$ is the Hamming distance between $\sigma$ and $\sigma'$. Then,
\begin{equation}
\RR_n(\vartheta,\QQ,\LL) \geq m\frac{\delta}{16} \p{1-\max\left\{\TV(\mu_\sigma,\mu_{\sigma'}),\ \vert \sigma - \sigma'\vert =1\right\}}^{2n}.
\end{equation}
\end{lemma}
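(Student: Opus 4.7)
The plan is to proceed via a standard Bayesian reduction, followed by a coordinate-wise testing argument and a Hellinger-affinity tensorization step.

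Fix an arbitrary estimator $\hat\vartheta = \hat\vartheta(\iota(X_1),\dots,\iota(X_n))$, write $\P_\sigma = \iota_{\#}\xi_\sigma$, and define a measurable nearest-hypothesis rounding $\hat\sigma \in \argmin_{\sigma\in\{-1,1\}^m} \LL(\hat\vartheta,\vartheta(\xi_\sigma))$. The triangle inequality for the semi-metric $\LL$ together with the assumption $\LL(\vartheta(\xi_\sigma),\vartheta(\xi_{\sigma'})) \geq |\sigma-\sigma'|\delta$ yields
\[
\LL(\hat\vartheta,\vartheta(\xi_\sigma)) \;\geq\; \tfrac{1}{2}\LL(\vartheta(\xi_{\hat\sigma}),\vartheta(\xi_\sigma)) \;\geq\; \tfrac{\delta}{2}|\hat\sigma-\sigma|.
\]
Bounding the supremum by the uniform average over $\sigma\in\{-1,1\}^m$ and decomposing the Hamming distance coordinate-wise gives
\[
\sup_\sigma \E_\sigma \LL(\hat\vartheta,\vartheta(\xi_\sigma)) \;\geq\; \frac{\delta}{2}\sum_{i=1}^m 2^{-m}\sum_\sigma \P_\sigma^{\otimes n}\bigl(\hat\sigma(i)\neq\sigma(i)\bigr).
\]

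For each coordinate $i$, I pair every $\sigma$ with its flip $\sigma^{(i)}$ on the $i$-th coordinate. For $\sigma$ with $\sigma(i)=+1$, the event $A_i = \{\hat\sigma(i)=-1\}$ is a binary test between $\P_\sigma^{\otimes n}$ and $\P_{\sigma^{(i)}}^{\otimes n}$, so Le Cam's two-point inequality gives $\P_\sigma^{\otimes n}(A_i) + \P_{\sigma^{(i)}}^{\otimes n}(A_i^c) \geq 1-\TV(\P_\sigma^{\otimes n},\P_{\sigma^{(i)}}^{\otimes n})$. Summing over the $2^{m-1}$ such pairs yields
\[
2^{-m}\sum_\sigma \P_\sigma^{\otimes n}\bigl(\hat\sigma(i)\neq\sigma(i)\bigr) \;\geq\; \tfrac{1}{2}\Bigl(1-\max_{|\sigma-\sigma'|=1}\TV(\P_\sigma^{\otimes n},\P_{\sigma'}^{\otimes n})\Bigr).
\]

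Finally, to convert the $n$-sample total variation into single-observation total variation, I use the Hellinger affinity $\rho(P,Q) = \int\sqrt{dP\,dQ}$. The pointwise bound $\min(p,q)\leq\sqrt{pq}$ gives $\rho\geq 1-\TV$, while Cauchy--Schwarz applied to $(\int\sqrt{pq})^2$ produces $\rho^2 \leq (1+\TV)(1-\TV) \leq 2(1-\TV)$. Combining these with the tensorization identity $\rho(P^{\otimes n},Q^{\otimes n}) = \rho(P,Q)^n$ yields $1-\TV(P^{\otimes n},Q^{\otimes n}) \geq \tfrac{1}{2}\rho(P,Q)^{2n} \geq \tfrac{1}{2}(1-\TV(P,Q))^{2n}$. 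Stacking the three steps and taking the infimum over $\hat\vartheta$ gives the stated lower bound; the constant $1/16$ absorbs the three factors of $1/2$ (from the triangle inequality, the Le Cam pairing, and the Hellinger step), with some slack. The main technical content lies in the Hellinger tensorization; the first two steps are essentially combinatorial bookkeeping.
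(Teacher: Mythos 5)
Your proof is correct. The paper itself does not prove this lemma but cites Yu (1997), so there is no paper proof to compare against; what you have written is the standard derivation of this particular ``Hellinger-tensorized'' form of Assouad's lemma. The three ingredients you identify are exactly right: (i) the nearest-hypothesis rounding combined with the triangle inequality for the semi-metric $\LL$ reduces the estimation risk to the average Hamming error of $\hat\sigma$; (ii) the coordinate-wise pairing and Le Cam's two-point bound reduce each Hamming coordinate to a testing affinity $1-\TV(\P_\sigma^{\otimes n},\P_{\sigma'}^{\otimes n})$; (iii) the chain $1-\TV(P^{\otimes n},Q^{\otimes n}) \geq \tfrac12\rho(P,Q)^{2n} \geq \tfrac12(1-\TV(P,Q))^{2n}$, using $\rho\geq 1-\TV$, $\rho^2\leq(1-\TV)(1+\TV)$ by Cauchy--Schwarz, and $\rho(P^{\otimes n},Q^{\otimes n})=\rho(P,Q)^n$, produces the $(1-\TV)^{2n}$ factor in terms of the single-observation total variation. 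Stacking the three $\tfrac12$ factors gives $m\delta/8$, which is in fact slightly stronger than the stated $m\delta/16$; the lemma as written then follows a fortiori, as you note. One point worth making explicit is that the reduction step uses the triangle inequality for $\LL$, which is legitimate here only because the appendix restricts $\LL$ to be a semi-metric (in contrast to the general loss of Section~\ref{sec:stat}); you rely on this tacitly, and it is worth flagging since the main text allows arbitrary losses.
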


The lower bound on the minimax rates we prove are actually going to hold on the smaller model of uniform distributions on manifolds.

\begin{definition}
Let $k\geq 2$ and $\gamma \geq 0$.  The set $\QQ^{k}_{d}$ is the set of uniform distributions on some manifold $M\in\MM^k_d$ with $\fmax^{-1}\leq \vert \vol_M\vert \leq \fmin^{-1}$.
\end{definition}
One can check that $\QQ^k_d\subset \QQ^{k,s}_d$, with parameter $L_s= \fmin^{-1/p}\vee \fmax^{1-1/p}$. Therefore, a lowerbound on the minimax risk on the model $\QQ^k_d$ yields a lowerbound on the minimax risk on the model $\QQ^{k,s}_d$ should the parameter $L_s$ be large enough.

We build a subfamily of manifolds indexed by $\sigma \in \{-1,1\}^m$ following \cite{aamari2019nonasymptotic}. By \cite[Section C.2]{aamari2019nonasymptotic}, there exists a $d$-dimensional manifold $M \subset \R^{d+1}$ of reach $2\taumin$, of volume $C_d\taumin^d$ which contains $\BB_{\R^d}(0,\taumin)\times \{0\}$ (that we identify with $\BB_{\R^d}(0,\taumin)$). Let $\delta>0$ and consider a family of $m$ points $x_1,\dots,x_m \in \BB_{\R^d}(0,\taumin/2)$, with $\vert x_i-x_{i'}\vert \geq 4\delta$ for $i\neq i'$ and $c_d(\taumin/\delta)^d\leq m \leq C_d(\taumin/\delta)^d$. Let $0<\Lambda <\delta$ and let $\phi:\R^{d+1}\to [0,1]$ be a smooth radial function supported on $\BB(0,1)$, with $\phi\equiv 1$ on $\BB(0,1/2)$. Let $e$ be the unit vector in the $(d+1)$th direction. We then let, for $\sigma\in \{-1,1\}^m$,
\begin{equation}
\Phi_\sigma^{\Lambda}(x) =x + \sum_{i=1}^m \frac{\sigma(i) +1}{2}\Lambda\phi\p{\frac{x-x_i}{\delta}} e.
\end{equation}
Let $M_\sigma^\Lambda = \Phi_\sigma^\Lambda(M)$ and $\mu_\sigma^\Lambda$ be the the uniform measure on $M_\sigma^\Lambda$. Informally, the manifold $M_\sigma^\Lambda$ is obtained by adding bumps of height $\Lambda$ to the base manifold $M$ at locations $x_i$ such that $\sigma(i)=+1$.
If $\Lambda \leq c_{k,d,\taumin}\delta^k$, then $\mu_\sigma^\Lambda \in \QQ^{k}_{d}$, provided that  $L_k$ is large enough \cite[Lemma C.13]{aamari2019nonasymptotic}. 
If $\sigma(i)=1$, the volume of $\Phi_\sigma^\Lambda(\BB_{\R^{d}}(x_i,\delta))$ satisfies (with $\omega_d$ denoting the volume of the $d$-dimensional unit ball)
\begin{align*}
&\left\vert \vol_{M_\sigma^\Lambda}(\Phi_\sigma^\Lambda(\BB_{\R^{d}}(x_i,\delta))-\omega_d\delta^d\right \vert  \leq \int_{\BB_{\R^{d}}(x_i,\delta)} \vert J\Phi_\sigma^\Lambda(x)-1\vert \dd x \\
&\qquad\leq  \int_{\BB_{\R^{d}}(x_i,\delta)} \left\vert \sqrt{1+\Lambda^2\delta^{-2}\left\vert \nabla\phi\p{\frac{x-x_i}{\delta}}\right \vert ^2}-1\right \vert \dd x \leq C_d \delta^d\Lambda^2 \delta^{-2}.
\end{align*} 
Hence, for $\delta$ small enough, we have $\vert \vert \vol_{M_\sigma^\Lambda}\vert -C_d\taumin^d\vert \leq mC_d\delta^d\Lambda^2\delta^{-2} \leq C_d\taumin^d/3$, as $m\leq C_d(\taumin/\delta)^d$ and $\Lambda\leq c_{k,d,\taumin}\delta^k$. As a consequence, if $\vert \sigma- \sigma'\vert =1$, with for instance $\sigma(i)=1$ and $\sigma'(i)=-1$, then
\begin{align}
\TV(\mu_\sigma^\Lambda,\mu_\sigma'^\Lambda) &\leq \max(\mu_{\sigma}^\Lambda(\Phi_\sigma^\Lambda(\BB_{\R^{d}}(x_i,\delta))),\mu_{\sigma'}^\Lambda(\BB_{\R^{d}}(x_i,\delta))\leq  C_{d,\taumin}\delta^d.
\end{align}
We may now prove the different minimax lower bounds using Assouad's Lemma on the family $\{\mu_\sigma^\Lambda,\ \sigma\in \{-1,1\}^m\}$.

\begin{proof}[Proof of Theorem \ref{thm:choice_of_loss}]
As $g$ is nondecreasing and convex, by Jensen's inequality, we may assume without loss of generality that $\LL=\TV$. Let $\Gamma = \vert (\mu_\sigma^\Lambda-\mu_{\sigma'}^\Lambda)(B_i)\vert $, where $B_i= \BB_{\R^{d}}(x_i,\delta)$ and $\sigma(i)\neq \sigma'(i)$. Then, $\TV(\mu_\sigma^\Lambda,\mu_{\sigma'}^\Lambda)\geq \vert \sigma-\sigma'\vert \Gamma$. Furthermore, if for instance $\sigma'(i)=1$, $\Gamma\geq \mu_{\sigma'}^\Lambda(B_i) = (\omega_d \delta^d)/\vert \vol_{M_{\sigma'}^\Lambda}\vert  \geq c_d \delta^d/\taumin^d.$
 By Assouad's Lemma,
\begin{align*}
\RR_n(\mu,\QQ^{s,k}_d,\TV) &\geq\RR_n(\mu,\QQ^{k}_d,\TV) \geq \frac{m}{16} c_d \frac{\delta^d}{\taumin^d} \p{1-C_{d,\taumin}\delta^d}^{2n}\\
&\geq C_d\p{1-C_{d,\taumin}\delta^d}^{2n}.
\end{align*} 
We obtain the conclusion by letting $\delta$ go to $0$.
\end{proof}

\begin{proof}[Proof of Theorem \ref{thm:estimator_volume}\ref{it:minimax_volume}]
As, $W_r\geq W_1$, we may assume that $r=1$. Let $\sigma,\sigma'\in \{-1,1\}^m$ with $\sigma(i)\neq \sigma'(i)$. Let $p_{\sigma(i)}=\vol_{M_\sigma^\Lambda}(\BB(x_i,\delta))$ and $U_{\sigma,i}^\Lambda = p_{\sigma(i)}^{-1}(\vol_{M_\sigma^\Lambda})_{\vert \BB(x_i,\delta)}$. By the Kantorovitch-Rubinstein duality formula, $W_1(\mu,\nu) = \max \int f\dd(\mu-\nu)$, where the maximum is taken over all $1$-Lipschitz continuous functions $f:\R^D\to \R$. Recall that $e$ is the unit vector in the $(d+1)$th direction and let $f:x\mapsto x\cdot e$. Assume for instance that $\sigma(i)=-1$ and $\sigma'(i)=1$. We have $f(x) = 0$ for $x\in \BB_{M_\sigma^\Lambda}(x_i,\delta)$ and $f(x)=\Lambda$ for $x\in \BB_{M_{\sigma'}^\Lambda}(x_i,\delta/2)$. Therefore, we have, as $p_{\sigma'(i)}\leq c\delta^{-d}$,
\begin{align*}
W_1(U_{\sigma,i}^\Lambda,U_{\sigma',i}^\Lambda ) \geq p_{\sigma'(i)}^{-1}\Lambda \omega_d(\delta/2)^d \geq c_1\Lambda.
\end{align*}
Note also that $\vert p_{\sigma(i)}-p_{\sigma'(i)}\vert \leq \left\vert \vol_{M_\sigma^\Lambda}(\Phi_\sigma^\Lambda(\BB_{\R^{d}}(x_i,\delta))-\omega_d\delta^d\right \vert  \leq C_d\delta^{d}\Lambda^2 \delta^{-2}$. Furthermore, $\vert \vert \vol_{M_\sigma^\Lambda}\vert -\vert \vol_{M_{\sigma'}^\Lambda}\vert \vert  \leq \sum_{i=1}^m\vert p_{\sigma(i)}-p_{\sigma'(i)}\vert \leq \vert \sigma-\sigma'\vert C_d\delta^{d}\Lambda^2 \delta^{-2}$. Let $f_i$ be a $1$-Lipschitz continuous function such that $W_1(U_{\sigma,i}^\Lambda,U_{\sigma',i}^\Lambda)=\int f_i d(U_{\sigma,i}^\Lambda-U_{\sigma',i}^\Lambda)$. One can choose $f_i$ such that $f_i(x_i)=0$, so that the maximum of $\vert f_i\vert $ on $\BB(x_i,\delta)$ is at most $\delta$. One can then change the value of $f_i$ outside the ball without changing the value of the integral, so that $f_i$ is supported on $\BB(x_i,2\delta)$ and is $1$-Lipschitz continuous. Consider the function $f$ obtained by gluing together the different functions $f_i$. The function $f$ is $1$-Lipschitz continuous, so that
\begin{align*}
&W_1\p{\mu_\sigma^\Lambda,\mu_{\sigma'}^\Lambda} \geq \sum_{i=1}^m  \p{ \frac{p_{\sigma(i)}}{\vert \vol_{M_\sigma^\Lambda}\vert }U_{\sigma,i}^\Lambda-\frac{p_{\sigma'(i)}}{\vert \vol_{M_{\sigma'}^\Lambda}\vert }U_{\sigma',i}^\Lambda }(f)\\
& \geq  \sum_{i=1}^m \frac{p_{\sigma(i)}}{\vert \vol_{M_\sigma^\Lambda}\vert }(U_{\sigma,i}^\Lambda-U_{\sigma',i}^\Lambda)(f) - \vert p_{\sigma(i)}-p_{\sigma'(i)}\vert \frac{\vert U_{\sigma',i}^\Lambda(f)\vert }{\vert \vol_{M_\sigma^\Lambda}\vert }- p_{\sigma'(i)}\vert U_{\sigma',i}^\Lambda(f)\vert \left\vert \frac{1}{\vert \vol_{M_\sigma^\Lambda}\vert }-\frac{1}{\vert \vol_{M_{\sigma'}^\Lambda}\vert }\right \vert \\
&\geq \sum_{i=1}^m  \frac{p_{\sigma(i)}}{\vert \vol_{M_\sigma^\Lambda}\vert }W_1(U_{\sigma,i}^\Lambda,U_{\sigma',i}^\Lambda) -\sum_{i=1}^m c_4\vert p_{\sigma(i)}-p_{\sigma'(i)}\vert \delta\ones\{\sigma(i)\neq \sigma'(i)\}- c_5\delta \vert \sigma-\sigma'\vert \delta^{d}\Lambda^2 \delta^{-2}\\
&\geq \sum_{i=1}^m  \ones\{\sigma(i)\neq \sigma'(i)\} (c_6\delta^d\Lambda-c_4\delta^{d}\Lambda^2 \delta^{-1}) - c_5\delta \vert \sigma-\sigma'\vert \delta^{d}\Lambda^2 \delta^{-2} \geq c_7\delta^d \Lambda \vert \sigma-\sigma'\vert,
\end{align*}
where we used at the last line that we choose $\Lambda\leq c\delta^2$ for some constant $c$ small enough. More precisely, we let $\Lambda = c_{k,d,\taumin,L_k}\delta^k$ and $\delta=n^{-1}$, and obtain, by Assouad's Lemma,
\[
\RR_n\p{\frac{\vol_M}{\vert \vol_M\vert },\QQ^k_d(\gamma),W_r}\gtrsim n^{-k/d}.
\]
\end{proof}

\begin{proof}[Proof of Theorem \ref{thm:estimator_M_known}\ref{it:noiseless_minimax}]
Let $a_n=n^{-\frac{s+1}{2s+d}}$ if $d\geq 3$ and $a_n = n^{-1/2}$ if $d\leq 2$. As $W_p\geq W_1$, we may assume without loss of generality that $p=1$, and up to rescaling, we assume that $\taumin=\sqrt{d}$. Consider the manifold $M \subset \R^{d+1}$ containing $\BB_{\R^d}(0,\sqrt{d})$ of the previous proof. In particular, $M$ contains the cube $[-1,1]^d$. We adapt the proof of Theorem 3 in \cite{weed2019estimation}, where authors consider a family of functions $f_\sigma: [-1,1]^d \to M$ indexed by $\sigma\in \{-1,1\}^m$, with $f_\sigma = 1+n^{-1/2}\sum_{j=1}^m \sigma_j\psi_j$, where $(\psi_j)_{j=1,\dots,m}$ are elements of a wavelet basis of $[-1,1]^d$ that satisfy $\int \psi_j=0$ (see \cite[Appendix E]{weed2019estimation}  for details on the construction of the wavelet basis). If $m\lesssim n^{d/(2s+d)}$, then $t_0\leq f_\sigma\leq t_1$ for some positive constants $t_0<1<t_1$, and $\|f_\sigma\|_{B^s_{p,q}([-1,1]^d)} \lesssim 1$. Note that each $\psi_j$ is supported on a small rectangle inside $[-1,1]^d$, and can be extended to a smooth function on $M$ (by simply defining $\psi_j=0$ outside $[-1,1]^d$). Therefore, we can also consider $f_\sigma$ as being defined on $M$. This extension (that we still denote by $f_\sigma$) also satisfies $t_0\leq f_\sigma\leq t_1$ and $\|f_\sigma\|_{B^s_{p,q}(M)} \lesssim 1$ (this last inequality is clear for the  $\|\cdot\|_{H^l_p(M)}$ norm for $l$ an integer, while the result follows from interpolation for Besov spaces \cite[Corollary 1.1.7]{lunardi2018interpolation}).

As $\int\psi_j=0$, we have $\int_M f_\sigma = \vert \vol_M \vert$. Let $\tilde f_\sigma = f_\sigma/\vert \vol_M \vert$, that is larger than $\fmin = t_0/ \vert \vol_M\vert $ and smaller than $\fmax=t_1/ \vert \vol_M\vert $. Hence, identifying measures with their densities, the set 
\[ \QQ_m = \{\t f_{\sigma},\ \sigma\in \{-1,1\}^m\}\]
is a subset of $\QQ^{s,k}_d$ for $\fmin$ small enough and $L_k$, $L_s$, $\fmax$ large enough. Furthermore, for $\sigma, \sigma'\in \{-1,1\}^m$, we have $\TV(\t f_\sigma,\t f_{\sigma'}) = \TV(f_\sigma,f_\sigma')/\vert\vol_M\vert$. Also, for any function $\phi:\R^{d+1}\to \R$ that is $1$-Lipschitz, we have
\begin{align*}
\int_M \phi(\t f_\sigma - \t f_{\sigma'}) &= \int_{[-1,1]^d} \frac{\phi(f_{\sigma} - f_{\sigma'})}{\vert\vol_M\vert},
\end{align*}
so that $W_1(\t f_\sigma,\t f_{\sigma'}) = W_1(f_\sigma,f_\sigma')/\vert\vol_M\vert$. Hence, we have reduced our problem to the case of the cube, and applying Assouad's inequality in the same fashion than in \cite[Theorem 3]{weed2019estimation} yields that $\RR_n(\mu,\QQ^{s,k}_d,W_1)\gtrsim a_n$.
\end{proof}

\section{Existence of kernels satisfying conditions \texorpdfstring{$A$}{A}, \texorpdfstring{$B(m)$}{B(m)} and \texorpdfstring{$C(\beta)$}{C(beta)}}\label{sec:kernel}
The goal of the section is to prove the existence of a kernel $K$ satisfying the conditions $A$, $B(m)$ and $C(\beta)$ stated at the beginning of Section \ref{sec:def_estim}.

If $K$ is a radial kernel, we have by integration by parts, as $K$ is smooth with compact support,
\begin{align*}
 \int_{\R^d}\partial^{\alpha_0}K(v)v^{\alpha_1}\dd v &= C_{\alpha_0,\alpha_1}\int_{\R^d}K(v)v^{\alpha_1+\alpha_0}\dd v\\
 &= C'_{\alpha_0,\alpha_1}\int_{\R} K(r)r^{d+\vert  \alpha_0 \vert +\vert  \alpha_1 \vert -1}\dd r.
\end{align*}
Hence, to show the existence of such a kernel, it suffices to find, for every $m\geq 0$ and every positive constant $\kappa$, a smooth even function $K:\R\to \R$ supported on $[-1,1]$ satisfying 
\begin{itemize}
\item \textbf{Condition $A'$:} $\int_{\R} K(r)r^{d-1}\dd r= \kappa$,
\item \textbf{Condition $B'(m)$:} $\int_{\R} K(r)r^{d+i-1}\dd r=0$ for $i= 1,\dots,m$,
\item \textbf{Condition $C'(\beta)$:} $\int_{\R} K(r)^- r^{d-1}\dd r \leq \beta$.
\end{itemize}
We show by recursion on $m$ that for any $\beta>0$, there exists a such a kernel. For $m=0$, let $K_0$ be any smooth even nonnegative function supported on $[-1,1]$. Then, letting $K=\kappa K_0/\int_{\R}K_0$, we obtain a kernel $K$ satisfying the desired conditions for any $\beta>0$. Consider now the case $m>0$. Let $\beta >0$. 
\begin{itemize}
\item If $m+d$ is even, then any $K$ satisfying conditions $A'$, $B'(m-1)$ and $C'(\beta)$ will also satisfy $B'(m)$. Indeed, as $K$ is even, we have $\int_{\R} K(r)r^{m+d-1}\dd r=0$, so that the induction step is proven.
\item If $m+d$ is odd, let $K$ be a kernel satisfying conditions $A'$, $B'(m-1)$ and $C'(\beta/2)$. We use the following lemma.
\end{itemize}

\begin{lemma}\label{lem:appendix_kernel}
For $i\geq 0$, let $e_i:x\in \R \mapsto x^{i+d-1}$ and fix an integer $m>0$. For any $a\in \R$, let $F_a$ be the set of smooth functions $f:(1,\infty)\to \R$ with compact support satisfying $\int fe_i=0 \text{ for } 0\leq i<m\text{ and } \int fe_{m}=a$. Then,
\begin{equation}
\inf\left\{\int \vert f(r)\vert r^{d-1}\dd r,\ f\in F_a \right\} = 0.
\end{equation}
\end{lemma}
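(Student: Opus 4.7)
I would prove this by constructing $f$ explicitly as a linear combination of bump functions placed at widely separated points, using the fact that the moment matrix becomes essentially Vandermonde and its inverse shrinks when the nodes are spread out.

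First, I would reduce the problem by substituting $g(r) = r^{d-1} f(r)$. Since $r^{d-1}$ is smooth and nonvanishing on $(1,\infty)$, this is a bijection on smooth compactly supported functions, and the cost becomes $\int |f(r)| r^{d-1} dr = \int |g(r)| dr$, while the constraints become the pure moment conditions $\int g(r) r^i dr = \delta_{im} a$ for $0 \leq i \leq m$. The claim then reduces to: for every $\eps > 0$, there exists a smooth compactly supported $g$ on $(1,\infty)$ with these moments and $\int |g| < \eps$.

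Next, I would seek $g$ of the form $g = \sum_{j=0}^m \alpha_j \psi_{j,\eta}$, where $\psi_{j,\eta}(r) = \eta^{-1}\psi((r-R_j)/\eta)$ for a fixed nonnegative smooth bump $\psi$ with $\int \psi = 1$ supported in $[-1/2,1/2]$, and with widely separated centers $R_j = (j+2)M$ for a large parameter $M$. The moment constraints become the linear system $Y(\eta)\alpha = a\, e_m$ with $Y(\eta)_{ij} = \int \psi_{j,\eta}(r) r^i dr = R_j^i + O(\eta)$. As $\eta \to 0$, $Y(\eta)$ converges to the Vandermonde matrix $V_{ij} = R_j^i$, whose solution is given by Lagrange interpolation:
\begin{equation*}
\alpha_j^{(0)} = \frac{a}{\prod_{k \neq j}(R_j - R_k)} = \frac{a}{M^m \prod_{k \neq j}(j-k)}.
\end{equation*}
In particular $\sum_j |\alpha_j^{(0)}| \leq C_m |a|/M^m$, which is less than $\eps/2$ once $M$ is taken large enough.

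Third, since $V$ is invertible and $Y(\eta) = V + O(\eta)$, for $\eta$ sufficiently small $Y(\eta)$ remains invertible and its solution $\alpha$ differs from $\alpha^{(0)}$ by $O(\eta)$, so still $\sum_j |\alpha_j| < \eps$. Choosing $\eta$ small enough that the supports of the $\psi_{j,\eta}$ are disjoint and contained in $(1,\infty)$ gives $\int |g| = \sum_j |\alpha_j| \int \psi_{j,\eta} = \sum_j |\alpha_j| < \eps$.

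The main obstacle is ensuring that the moments hold \emph{exactly} (not just approximately) while keeping $\int |g|$ small. This is settled in the third step by not correcting a near-solution but solving the exact perturbed system $Y(\eta)\alpha = a\, e_m$; openness of invertibility makes the exact solution a small perturbation of the Vandermonde one. A cleaner but less explicit alternative is $L^1$--$L^\infty$ duality on $(1,\infty)$: the dual of the constrained minimization is $\sup\{a\,p_m : |\sum_{i=0}^m p_i r^i| \leq 1 \text{ on } (1,\infty)\}$, which vanishes because any polynomial of positive degree is unbounded on $(1,\infty)$ and so the constraints force $p_1 = \cdots = p_m = 0$; one then recovers smooth compactly supported functions by density. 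I would present the explicit bump construction as the main argument.
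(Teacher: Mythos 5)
Your argument is correct but takes a genuinely different route from the paper's. The paper works directly in $L_2([r_0,2r_0])$: it takes $f$ proportional to the component of $e_m$ orthogonal to $\mathrm{span}(e_0,\dots,e_{m-1})$ (automatically a restricted polynomial of degree $m$, so the first $m$ constraints hold for free and the last is fixed by normalization), uses a scaling argument to show the squared norm of that orthogonal component grows like $r_0^{2(d+m)-1}$, deduces $\int|f|\,r^{d-1}\,dr \lesssim a\,r_0^{-m}$, and finally restores smoothness by convolving with a mollifier whose first $m+d-1$ moments vanish. You instead reduce to a pure moment problem via the substitution $g=r^{d-1}f$ and build $g$ as a combination of already-smooth bumps at widely separated nodes $R_j\sim M$, solving the perturbed transposed Vandermonde system via Lagrange interpolation; this yields the same $M^{-m}$ decay. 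Both proofs exploit the identical phenomenon --- moments of a fixed degree become cheap in weighted $L_1$ when mass is pushed to large $r$ --- but yours is more elementary (no Hilbert-space projection, no mollification step, explicit coefficient bounds) and bypasses the smoothing argument entirely since bump functions are smooth from the start. The duality sketch you add is a nice heuristic, but as you implicitly acknowledge, turning it into a proof requires strong duality for an infinite-dimensional linear program (weak duality alone only gives a lower bound of $0$, which is vacuous); your explicit bump construction is cleaner and self-contained.
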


Assume first that the lemma is true. Let $a=-\frac{1}{2}\int_{\R} K(r)r^{m+d-1}$ and $f\in F_a$. Then,
\begin{align*}
\begin{cases}
\int (K(r)+f(\vert r\vert ))r^{d-1}\dd r= \kappa+\int f(\vert r\vert )r^{d-1}\dd r=\kappa \\
\int (K(r)+f(\vert r\vert ))r^{i+d-1} \dd r=\int f(\vert r\vert )r^{i+d-1}\dd r = 0\text{ for } 0<i<m \\
\int (K(r)+f(\vert r\vert ))r^{m+d-1} \dd r=\int K(r)r^{m+d-1}\dd r + 2\int_1^{\infty} f(r)r^{m+d-1}\dd r = 0.
\end{cases}
\end{align*} 
Hence, the kernel $K+f(\vert \cdot\vert )$ satisfies the conditions $A$ and $B'(m)$. Also, we have, as $K(r)=0$ if $\vert r\vert \geq 1$,
\begin{align*}
&\int_{\R} (K(r)+f(\vert r\vert ))_-r^{d-1} \dd r= \int_{\R} K(r)_- \dd r + 2\int_1^{\infty}f(r)_-r^{d-1}\dd r \\
&\qquad \leq \beta/2 + \int_1^\infty \vert f(r)\vert r^{d-1}\dd r,
\end{align*}
where we used at the last line that $\int_1^{\infty}f(r)_-r^{d-1}\dd r=\int_1^{\infty}f(r)_+r^{d-1}\dd r=\frac{1}{2}\int_1^{\infty}\vert f(r)\vert r^{d-1}\dd r$.
Lemma \ref{lem:appendix_kernel} asserts the existence of $f\in F_a$ with $\int \vert f(r)\vert r^{d-1}\dd r\leq  \beta /2$. For such a choice of $f$, the kernel $\t K=K+f(\vert \cdot\vert )$ satisfies also $C'(\beta)$. Finally, $f$ has a compact support, included in $[0,R]$ for some $R>0$. The kernel $\t K_{1/R}$  is supported on $\BB(0,1)$, and satisfies conditions $A'$, $B'(m)$ and $C'(\beta)$. This concludes the induction step, and the proof of the existence of kernels satisfying conditions $A$, $B(m)$ and $C(\beta)$.

\begin{proof}[Proof of Lemma \ref{lem:appendix_kernel}]
Consider functions $f$ supported on $[r_0,r_1]$ for some constants $1< r_0\leq r_1$ to fix. Let $G_{r_0,r_1}$ be the subspace of $L_2([r_0,r_1])$ spanned by the functions $e_i$ for $0\leq i \leq m-1$ and let $g_{m}$ be the projection of $e_{m}$ on $G_{r_0,r_1}^\bot$, the orthogonal space of $G_{r_0,r_1}$. Let $\ell=\|g_m\|_{L_2[r_0,r_1]}$. The function $f = \frac{ag_{m}}{\ell^2} $ is a polynomial of degree $m$ restricted to $[r_0,r_1]$ and satisfies $\int fe_i=0$ for $0\leq i\leq m-1$ by construction, with $\int fe_{m} = \frac{a}{\ell^2} \int e_{m} g_{m} = a$. Also, we have for any polynomial $P\in G_{r_0,r_1}$,
\begin{align*}
\|e_{m}-P\|^2_{L_2([r_0,r_1])} &= \int_{r_0}^{r_1} \vert r^{m+d-1}-P(r)\vert ^2\dd r=\int_{1}^{\frac{r_1}{r_0}} r_0\vert (r_0r)^{d+m-1}-P(rr_0)\vert ^2\dd r\\
&= r_0^{2(d+m)-1}\int_{1}^{\frac{r_1}{r_0}} \vert r^{d+m-1}-r_0^{-(d+m-1)}P(rr_0)\vert ^2\dd r.
\end{align*}
As $r\mapsto r_0^{-(d+m-1)}P(rr_0)$ is an element of $G_{1,r_1/r_0}$, letting $r_1=2r_0$, we obtain
\begin{align*}
\ell^2 &= \|g_{m}\|_{L_2([r_0,r_1])}^2 = \min_{P\in G_{r_0,r_1}}\|e_{m}-P\|^2_{L_2([r_0,r_1])} \\
&= r_0^{2(d+m)-1} \min_{P\in G_{1,2}}\|e_{m}-P\|^2_{L_2([1,2])}=Cr_0^{2(d+m)-1},
\end{align*} 
where $C=C_m>0$ is the distance between $e_m$ restricted to $[1,2]$ and $G_{1,2}$.  The function $f$ is not smooth so that it does not belong to $F_a$. To overcome this issue, we consider a smooth kernel $\rho$ on $\R$ satisfying  $\int \rho=1$ and $\int\rho(r)r^i \dd r=0$ for $i=1,\dots,m+d-1$, with support included in $\BB_{\R}(0,r_0/2)$. See e.g.~\cite[Section 3.2]{berenfeld2019density} for the construction of such a kernel $\rho$. The map $\rho*f$ is supported on $(1,\infty)$ and it is straighforward to check that $\rho*f \in F_a$ for $r_0>2$. By Young's inequality, $\|\rho*f\|_{L_2(\R)} \leq  \|\rho\|_{L_\infty(\R)} \|f\|_{L_2(\R)}$, so that
\begin{align*}
\int \vert \rho*f(r)\vert r^{d-1}\dd r &\leq \p{\int_{r_0/2}^{5r_0/2}r^{2d-2}\dd r}^{1/2}\|\rho*f\|_{L_2(\R)}\\
&\leq \p{c_dr_0^{2d-1}}^{1/2}  \|\rho\|_{L_\infty(\R)}\|f\|_{L_2(\R)}  \leq C_{d,m} a r_0^{-m}.
\end{align*}
 By letting $r_0$ goes to $\infty$, we see that $\inf\left\{\int \vert f(r)\vert r^{d-1}\dd r,\ f\in F_a \right\} = 0$.
\end{proof}

\section{Details on Section \ref{sec:conv_opt}}\label{sec:det_num}

\subsection{Optimization of convex functions on Riemannian manifolds}

Let $\MM$ be a complete Riemannian manifold endowed with a metric $g_{\MM}$. We write $\mathrm{Exp}_x$ for the exponential map at $x\in \MM$. The geodesic distance is written as $d_{\MM}$. We say that a set $\Omega\subset \MM$ is geodesically convex if every geodesic joining two points of $\Omega$ is included in $\Omega$. We will assume that $\Omega$ is small enough so that the logarithmic map $\mathrm{Exp}_x^{-1}$ is defined on $\Omega$ for every $x\in \Omega$. We say that a $\CC^2$ function $G:\Omega \to \R$ is $\lambda$-strongly geodesically convex and $\beta$-smooth if $G\circ \gamma$ is $\lambda$-strongly convex and $\beta$-smooth for every unit-speed geodesic $\gamma$ in $\Omega$. In particular, this implies 
\begin{equation}\label{eq:strong_cvx}
\begin{split}
&G(y)\geq G(x) + \dotp{\nabla G(x),\mathrm{Exp}_x^{-1}(y)} + \frac{\lambda}{2}d_{\MM}(x,y)^2 \\
&G(y)\leq G(x) + \dotp{\nabla G(x),\mathrm{Exp}_x^{-1}(y)} + \frac{\beta}{2}d_{\MM}(x,y)^2 .
\end{split}
\end{equation}

A fundamental result of convex optimization \cite{udriste2013convex} states that a $\beta$-smooth and $\lambda$-strongly geodesically convex function can be optimized efficiently through a gradient descent.

\begin{proposition}\label{prop:grad_descent}
Assume that $\MM$ has nonnegative curvature. Let $G:\Omega\to \R$ be  $\beta$-smooth and $\lambda$-strongly geodesically convex, with minimizer $x^*$. Assume that $\Omega$ contains a geodesic ball centered at $x^*$ of radius $r_0$. Fix $x^0$ a point in this geodesic ball and let $0\leq \alpha \leq \beta^{-1}$.
\begin{enumerate}
\item The sequence of iterates $x^{a+1} = \mathrm{Exp}_{x^a}(-\alpha \nabla G(x^a))$ is well-defined for any $a\in \N$.
\item The sequence of iterates satisfies
\begin{equation}\label{eq:sequence_converge}
d_{\MM}(x^a,x^*)^2 \leq (1-\lambda \alpha)^t d_{\MM}(x^0,x^*)^2.
\end{equation}
\end{enumerate} 
\end{proposition}
Such a result is standard, although we could not find it in this form in the literature. We provide a short proof here.
\begin{proof}
The fact that the sequence of iterates is well-defined follows from $\MM$ being complete, inequality \eqref{eq:sequence_converge},  and the fact that $\Omega$ contains a geodesic ball centered at $x^*$. It therefore suffices to show \eqref{eq:sequence_converge}. As the manifold $\MM$ has nonnegative curvature, we have
\begin{align*}
d_{\MM}(x^{a+1},x^*)^2 &\leq d_{\MM}(x^a,x^*)^2 + d_{\MM}(x^a,x^{a+1})^2 -2 \dotp{\mathrm{Exp}^{-1}_{x^a}(x^{a+1}),\mathrm{Exp}^{-1}_{x^a}(x^{*})} \\
&\leq d_{\MM}(x^a,x^*)^2  + \alpha^2 \vert \nabla G(x^a)\vert ^2 + 2\alpha \dotp{\nabla G(x^a), \mathrm{Exp}^{-1}_{x^a}(x^{*})} \\
&\leq  (1-\lambda \alpha)d_{\MM}(x^a,x^*)^2  + \alpha^2 \vert \nabla G(x^a)\vert ^2 + 2\alpha (G(x^*)-G(x^a)),
\end{align*}
where we used \eqref{eq:strong_cvx} at the last line. Also, we have by \eqref{eq:strong_cvx}
\begin{align*}
G(x^*)-G(x^a) &\leq G(\mathrm{Exp}_{x^a}(-\beta^{-1}\nabla G(x^a))) - G(x^a) \\
&\leq \dotp{\nabla G(x^a),-\beta^{-1}\nabla G(x^a) } + \frac{\beta}{2} \vert \nabla G(x^a)\vert ^2 \leq - \frac{\beta}{2} \vert \nabla G(x^a)\vert ^2,
\end{align*}
concluding the proof.
\end{proof}

Proposition  \ref{prop:strong_convexity} that is proven just below asserts that $G_m$ is with high probability $\beta$-smooth and $\lambda$-strongly geodesically convex with both $\beta$ and $\lambda$ of order $\eps^2$ on $\Omega=\{(Q,V)\in \MM:\ d_{\OO_*(d,D)}(Q,Q^*)\leq \delta\eps,\ \op{V}\leq \ell\}$. Our initialization point is given by $(Q^0,0)$, where $Q^0$ is the output of a PCA, that satisfies with high probability $d_{\OO_*(d,D)}(Q^0,Q^*)\leq c\eps$ for some constant $c$. The geodesic distance between $(Q^0,0)$ and $(Q^*,V^*)$ is smaller than $C\eps$ for some larger constant $C$ (using the definition of the metric \eqref{eq:def_metric}). Hence, for $\delta$ large enough, $\Omega$ contains the geodesic ball centered at $(Q^*,V^*)$ of radius $C\eps$, and we can apply Proposition \ref{prop:grad_descent}.

Letting $\alpha=\beta^{-1}$, the iterates of a gradient descent converge at rate
\begin{equation}
d_{\MM}((Q^a,V^a),(Q^*,V^*))^2 \leq c^t d_{\MM}((Q^0,V^0),(Q^*,V^*))^2,
\end{equation}
where $c\in (0,1)$ depends on the parameter of the model.

\subsection{Convexity of \texorpdfstring{$G_m$}{Gm}}
We prove in this section Proposition \ref{prop:strong_convexity}. We assume without loss of generality that $\delta\geq 1/(2\taumin)$ and that $\delta\leq \ell$.
Fix $(Q,V)\in \Omega$ and let $(B,W)\in T_{(Q,V)}\MM$ be a tangent vector with unit norm. Write $U$ for the vector space spanned by the first $d$ columns of $Q$.  
 The exponential map $\mathrm{Exp}_{(Q,V)}$ on $\MM$ is given by 
\[\mathrm{Exp}_{(Q,V)}(B,W) = \p{ Q\exp \begin{pmatrix}
0 & -B^\top  \\
B & 0
\end{pmatrix}, V+W}.\]
Introduce the function $F_x:t\mapsto G_{m,x}(\mathrm{Exp}_{(Q,V)}(tB,tW))$. We denote by $\E_N$ the expectation with respect to the empirical distribution associated with $X_1,\dots,X_N$, so that $\E_N F_X = \frac{1}{N}\sum_{i=1}^N F_{X_i}$. To show that $G_m$ is geodesically $\lambda$-strongly convex and $\beta$-smooth on $\Omega$, we need to show that 
\[\lambda \leq \frac{d^2}{dt^2} \E_N F_X(t)_{\vert t=0} \leq \beta. \]
To simplify the notation, write $(Q^t,V^t)= \mathrm{Exp}_{(Q,V)}(tB,tW)$, and let $Q^t= (e_1^t \cdots e_D^t)$. We will also write $\dot a$ (resp. $\ddot a$) for the first (resp. second) time derivative of a function $a$ evaluated at $0$. Let $\V_j^t(x) =\iota_j(Q^t,V^t)[x^{\otimes j}]$ and let $\V = \sum_{j=2}^{m-1}\V_j$. Remark that
\begin{equation}
F_x = \frac{1}{2}\p{\vert x\vert ^2 -\sum_{k=1}^d \dotp{e_k,x}^2 + \vert \V(x)\vert ^2 - 2\dotp{x,\V(x)}}.
\end{equation}
One can directly compute
\begin{equation}\label{eq:def_Fx}
\begin{split}
&\ddot F_x = -\sum_{k=1}^d \p{\dotp{\ddot e_k,x}\dotp{e_k,x} + \dotp{\dot e_k,x}^2}  + \dotp{\ddot \V(x) ,\V(x)-x} +\vert \dot \V(x)\vert ^2 \\
& \E_N \ddot F_X  = \frac{1}{N}\sum_{i=1}^N \ddot F_{X_i}.
\end{split}
\end{equation}
Also, we have 
\begin{equation}\label{eq:expression_derivative}
\begin{split}
&\dot Q= (\dot e_1 \cdots \dot e_D) = Q\begin{pmatrix}
0 & -B^\top  \\
B & 0
\end{pmatrix} = (Q_{[d,D]}B\ \vert -Q_{[d]}B^\top) \\
&\ddot Q= (\ddot e_1 \cdots \ddot e_D)  =Q\begin{pmatrix}
0 & -B^\top  \\
B & 0
\end{pmatrix}^2 = -(Q_{[d]}B^\top B \ \vert\ Q_{[d,D]}B B^\top).
\end{split}
\end{equation}
Note that \eqref{eq:expression_derivative} yields the following identities: for $1\leq k,l\leq D$,
\begin{equation}\label{eq:identity_ek}
\begin{cases}
\dotp{\dot e_k, e_l} = -\dotp{e_k,\dot e_l} \text{ and } \dotp{\ddot e_k,e_l} = \dotp{e_k,\ddot e_l},\\
\text{for $k\leq d$, } \dot e_k \in U^\bot \text{  and }\ddot e_k \in U,\\
\text{for $k>d$, } \dot e_k \in U\text{  and }\ddot e_k \in U^\bot.
\end{cases}
\end{equation}
We let $\tilde x = Q_{[d]}^\top x\in \R^d$. Also, we insist on the distinction between the tensor $\V_j$ (that is a tensor from $\R^D$ to $\R^D$) and the tensor $V_j$ (that is a tensor from $\R^d$ to $\R^{D-d}$). The two are related by the identity $\V_j(x) = Q_{[d,D]}V_j[\tilde x^{\otimes j}]$. We will also write $\W_j$ for the tensor given by $\W_j=\iota_j(Q,W_j)$ and let $\W= \sum_{j=2}^{m-1}\W_j$. We write $B=u\Sigma v^\top$ for the SVD of $B$, with $u$ (resp.~$v$) a $(D-d)\times d$ (resp.~$d\times d$) matrix with orthogonal columns $u_k$ (resp.~$v_k$) and $\Sigma$ a $d\times d$ diagonal matrix with nonnegative entries $\sigma_1,\dots,\sigma_d$. In particular, we have $\vert B \vert^2 = \sum_{k=1}^d \sigma_k^2$. We will use the following fact.

\begin{lemma}\label{lem:one_for_all}
Let $a=(a_{d+1},\dots,a_{D})\in\R^{D-d}$. Then,
\begin{equation}
\vert \sum_{k=d+1}^D \dot e_k a_k \vert \leq \vert a \vert \vert B \vert.
\end{equation}
\end{lemma}
\begin{proof}
We have $\sum_{k=d+1}^D \dot e_k a_k =\dot Q_{[d,D]} a = Q_{[d]}v\Sigma u^\top a$. As $Q_{[d]}$ and $v$ have orthogonal columns, the squared norm of this vector is equal to the squared norm of $\Sigma u^\top a$, that is equal to 
\[\sum_{k=d+1}^D \sigma_k^2 \dotp{u_k,a}^2 \leq  \vert a \vert^2 \vert B \vert^2,\] as each $u_k$ is of norm $1$.
\end{proof}

\subsubsection*{Step 1} We first give bounds on $-\sum_{k=1}^d \p{\dotp{\ddot e_k,x}\dotp{e_k,x} + \dotp{\dot e_k,x}^2}$. First, the dot product $\vert \dotp{\dot e_k,x}\vert $ is negligible.

\begin{lemma}\label{lem:dot_ek_is_perpendicular}
For $1\leq k \leq d$, we have $\vert \dotp{\dot e_k,x}\vert  \leq 2\vert \dot e_k\vert \eps^2\delta$.
\end{lemma}

\begin{proof}
Let $x\in M$ with $\vert x\vert \leq \eps$. Recall that $U$ is the vector space spanned by $Q_{[d]}$. It holds that
\begin{align*}
\vert \dotp{\dot e_k,x}\vert  &= \vert \dotp{\dot e_k,\pi_U^\bot(x)}\vert  \leq \vert \dot e_k\vert  (\vert (\pi_U^\bot-\pi_{T_0 M}^\bot)(x)\vert  + \vert \pi_{T_0 M}^\bot(x)\vert )\\
&\leq \vert \dot e_k\vert (r\eps + \eps^2/(2\taumin)).
\end{align*}
The fact that $\delta\geq 1/(2\taumin)$ and that $r =\delta \eps$ gives the conclusion.
\end{proof}
Lemma \ref{lem:dot_ek_is_perpendicular} implies that $\sum_{k=1}^d \dotp{\dot e_k,x}^2 \leq 4\delta^2\vert B\vert ^2 \eps^4\leq 4\delta^2 \eps^4$. Also, we have
\begin{align*}
-\sum_{k=1}^d \dotp{\ddot e_k,x}\dotp{e_k,x} &= -\sum_{k=1}^d x^\top  \ddot e_k e_k^\top  x = -x^\top  \ddot Q_{[d]}Q_{[d]}^\top  x \\
&= x^\top  Q_{[d]}B^\top B Q_{[d]}^\top x =\vert B  \tilde x\vert ^2.
\end{align*}
Therefore, we may lower bound the first term in the expression of $\ddot F$:
\begin{equation}\label{eq:low_step1}
\begin{split}
&-\E_N \sum_{k=1}^d \p{\dotp{\ddot e_k,X}\dotp{e_k,X} + \dotp{\dot e_k,X}^2}\geq \E_N \vert B  \tilde X\vert ^2 -4\delta^2\eps^4 
\end{split}
\end{equation}
 Also, as $\dotp{e_k,X_i}\leq \eps$ and as $(B,W)$ is of norm $1$, we have the upper bound
\begin{equation}\label{eq:up_step1}
-\E_N\sum_{k=1}^d \p{\dotp{\ddot e_k,X}\dotp{e_k,X} + \dotp{\dot e_k,X}^2} \leq c_d\eps^2\vert B\vert ^2 \leq c_d\eps^2.
\end{equation}

\subsubsection*{Step 2} One can compute
\begin{align*}
\dot \V &= \sum_{j=2}^{m-1}\sum_{k=d+1}^D \Bigg( \dot e_k \sum_{1\leq i_1\leq \dots \leq i_j\leq d} V_{j,k}^{i_1,\dots ,i_j} e_{i_1}\otimes \cdots \otimes e_{i_j} \\*
&\qquad + e_k \sum_{1\leq i_1\leq \dots \leq i_j\leq d} W_{j,k}^{i_1,\dots ,i_j} e_{i_1}\otimes \cdots \otimes e_{i_j}\\*
&\qquad + e_k \sum_{1\leq i_1\leq \dots \leq i_j\leq d} V_{j,k}^{i_1,\dots ,i_j} \sum_{a=1}^j e_{i_1}\otimes \cdots \dot e_{i_a}\cdots \otimes e_{i_j} \Bigg)
\end{align*}
Let us lower bound $\vert \dot \V(x)\vert ^2$. As $\dot e_k \in U$ and $e_k\in U^\bot$ for $d+1\leq k \leq D$,
\begin{align*}
\vert \dot \V(x)\vert ^2 &\geq \sum_{k=d+1}^D \Bigg(\sum_{j=2}^{m-1} \sum_{1\leq i_1\leq \dots \leq i_j\leq d} W_{j,k}^{i_1,\dots ,i_j} \prod_{c=1}^j \dotp{e_{i_c},x} \\
&\qquad +\sum_{1\leq i_1\leq \dots \leq i_j\leq d} V_{j,k}^{i_1,\dots ,i_j} \sum_{a=1}^j  \dotp{\dot e_{i_a},x}\prod_{c\neq a} \dotp{e_{i_c},x} \Bigg)^2 \\
&= \sum_{k=d+1}^D (A_{1,k}+A_{2,k})^2.
\end{align*} 
We lower bound $(A_{1,k}+A_{2,k})^2$ by $A_{1,k}^2 - 2\vert A_{1,k}\vert \vert A_{2,k}\vert $. Notice first that 
\begin{equation}\label{eq:e_dot_x}
\dotp{\dot e_{i_a},x} = \sum_{f=d+1}^D \dotp{\dot e_{i_a},e_f}\dotp{e_f,x} =  -\sum_{f=d+1}^D \dotp{e_{i_a},\dot e_f}\dotp{e_f,x} = \dotp{e_{i_a}, z},
\end{equation}  
where $z = -\sum_{f=d+1}^D \dot e_f \dotp{e_f,x} = -\dot Q_{[d,D]} Q_{[d,D]}^\top x = Q_{[d]}B^\top Q_{[d,D]}^\top x$. We have $\vert z\vert^2 = \sum_{k=1}^d \sigma_k^2 \vert \tilde y_k\vert^2$, where $\tilde y_k$ is the $k$th entry of the vector $\t y = u^\top Q_{[d,D]}^\top x\in \R^d$, that is equal to $\dotp{x,Q_{[d,D]}u_k}$. As $Q_{[d,D]}u_k\in U^\bot$ and $u_k$ is of unit norm, we have $\vert \t y_k \vert \leq c \delta \eps^2$ by the same argument than in Lemma \ref{lem:dot_ek_is_perpendicular}. Therefore, $\vert z \vert \leq c \delta \vert B \vert\eps^2$. Write $\t z=Q_{[d]}^\top z\in \R^d$. This implies
\begin{equation}\label{eq:bound_A2k}
\begin{split}
\p{\sum_{k=d+1}^D \vert A_{2,k}\vert ^2}^{1/2} &= \p{\sum_{k=d+1}^D \p{\sum_{j=2}^{m-1}\sum_{1\leq i_1\leq \dots \leq i_j\leq d} V_{j,k}^{i_1,\dots ,i_j} \sum_{a=1}^j  \dotp{e_{i_a},z}\prod_{c\neq a} \dotp{e_{i_c},x}^2}}^{1/2} \\
&= \p{\sum_{k=d+1}^D \sum_{j=2}^{m-1}V_{j,k}[\t z, \t x^{\otimes (j-1)}]^2}^{1/2} \leq \sum_{j=2}^{m-1} \vert V_{j}[\t z,\t x^{\otimes (j-1)}]\vert  \\
&\leq \ell\sum_{j=2}^{m-1} \eps^{j-1}  \vert z\vert  \leq C_{d,m}\delta(\ell \eps)  \eps^{2} \vert B\vert.
\end{split}
\end{equation}

Also, we have
\[\vert A_{1,k}\vert \leq \sum_{j=2}^{m-1}\eps^j \sum_{1\leq i_1\leq \dots \leq i_j\leq d} \vert W_{j,k}^{i_1,\dots ,i_j}\vert  \leq C_{d,m}\sum_{j=2}^{m-1}\eps^j \vert W_{j,k}\vert ,\]
so that
\begin{align*}
&\sum_{k=d+1}^D \vert A_{1,k}\vert \vert A_{2,k}\vert  \leq C_{d,m}\p{\sum_{k=d+1}^D \p{\sum_{j=2}^{m-1}\eps^j \vert W_{j,k}\vert}^2}^{1/2} \p{\sum_{k=d+1}^D \vert A_{2,k}\vert ^2}^{1/2} \\
&\leq C'_{d,m} \sum_{j=2}^{m-1}\eps^j \vert W_{j}\vert \delta(\ell \eps)\eps^2  \vert B\vert \leq C''_{d,m}\delta(\ell \eps)(\eps^2 \vert B\vert^2 +  \sum_{j=2}^{m-1}\eps^{2j+2} \vert W_{j}\vert^2) \\
&\leq C''_{d,m}\delta(\ell \eps) \eps^2,
\end{align*}
where we used at the last line that $\vert B\vert ^2 + \sum_{j=2}^{m-1}\eps^{2(j-1)}\vert W_j\vert ^2$ is the norm of the vector $(B,W)$, that we assume is equal to $1$. As $\sum_{k=d+1}^D A_{1,k}^2 = \vert \W(x)\vert^2$, we obtain that
\begin{equation}\label{eq:low_step2}
\E_N \vert \dot \V(X)\vert ^2 \geq \E_N \vert \W(X)\vert ^2 -  C_{d,m}\delta(\ell \eps)\eps^2.
\end{equation}
Let us now upper bound $\vert \dot \V(x)\vert$. We have $\vert\sum_{k=d+1}^D  \dot e_k V_{j,k}[\t x^{\otimes j}]\vert^2  = \vert\dot Q_{[d,D]} V_j[\t x^{\otimes j}]\vert^2$ (where $V_j[\t x^{\otimes j}]$ is the vector in $\R^{D-d}$ with entries $V_{j,k}[\t x^{\otimes j}]$). Therefore,
\begin{align*}
&\vert\sum_{k=d+1}^D  \dot e_k V_{j,k}[\t x^{\otimes j}]\vert^2 = \vert Q_{[d]}B^\top V_j[\t x^{\otimes j}]\vert^2 = \vert Q_{[d]}v\Sigma u^\top V_j[\t x^{\otimes j}]\vert^2 \\
&= \sum_{l=1}^d \sigma_l^2 \dotp{u_l,V_{j}[\t x^{\otimes j}]}^2 \leq \vert B \vert^2 \ell^2 \eps^{2j} \leq \vert B \vert^2 \ell^2 \eps^{4},
\end{align*} 
where we used that each $u_l$ is of norm $1$. We therefore obtain the upper bound (recalling that $\ell \leq c \eps^{-1}$ for a certain constant $c$)
\begin{equation}\label{eq:up_step2}
\begin{split}
&\E_N\vert \dot \V(X)\vert ^2 \leq C_{d,m} \sum_{j=2}^{m-1}\E_N \vert \sum_{k=d+1}^D  \dot e_k V_{j,k}[\t X^{\otimes j}]\vert ^2  +  2\sum_{k=d+1}^D( A_{1,k}^2 + A_{2,k}^2)\\
&\leq C'_{d,m} ( \vert B \vert^2 \ell^2 \eps^{4} + \sum_{j=2}^{m-1}\eps^{2j}\vert W_j\vert^2 + \delta^2 (\ell \eps)^2 \eps^4 \vert B \vert^2) \leq C''_{d,m}\eps^2,
\end{split}
\end{equation}
where we used that $(B,W)$ is of norm $1$.

\subsubsection*{Step 3} Eventually, we upper bound $\vert \ddot \V(x)\vert $. We first compute
\begin{align*}
\ddot \V &= \sum_{j=2}^{m-1}\sum_{k=d+1}^D \Bigg( \ddot e_k \sum_{1\leq i_1\leq \dots \leq i_j\leq d} V_{j,k}^{i_1,\dots ,i_j} e_{i_1}\otimes \cdots \otimes e_{i_j} \\
& + 2\dot e_k \sum_{1\leq i_1\leq \dots \leq i_j\leq d} W_{j,k}^{i_1,\dots ,i_j} e_{i_1}\otimes \cdots \otimes e_{i_j} \\
& + 2 \dot e_k \sum_{1\leq i_1\leq \dots \leq i_j\leq d} V_{j,k}^{i_1,\dots ,i_j} \sum_{a=1}^j e_{i_1}\otimes \cdots \dot e_{i_a}\cdots \otimes e_{i_j} \\
& + 2e_k \sum_{1\leq i_1\leq \dots \leq i_j\leq d} W_{j,k}^{i_1,\dots ,i_j} \sum_{a=1}^j e_{i_1}\otimes \cdots \dot e_{i_a}\cdots \otimes e_{i_j} \\
& + 2e_k \sum_{1\leq i_1\leq \dots \leq i_j\leq d} V_{j,k}^{i_1,\dots ,i_j} \sum_{a=1}^j \sum_{b> a} e_{i_1}\otimes \cdots \dot e_{i_a} \otimes \dot e_{i_b} \cdots \otimes e_{i_j}  \\
&+ e_k \sum_{1\leq i_1\leq \dots \leq i_j\leq d} V_{j,k}^{i_1,\dots ,i_j} \sum_{a=1}^j e_{i_1}\otimes \cdots \ddot e_{i_a}\cdots \otimes e_{i_j} \Bigg)\\
&= A_3 + A_4+A_5+A_6 + A_7+A_8.
\end{align*}
\begin{itemize}
\item Bound on $A_3$. We have \[\vert A_3(x)\vert  \leq \sum_{j=2}^{m-1}\p{\sum_{k=d+1}^D \vert \ddot e_k\vert ^2}^{1/2}\vert \V_j(x)\vert  \leq C_{d,m}\ell \vert B\vert ^2 \eps^2\leq  C_{d,m}\ell \eps^2.\]
\item Bound on $A_4$. By Lemma \ref{lem:one_for_all} applied to $a= W_j[\t x^{\otimes j}]\in \R^{D-d}$, we have 
\begin{align*}
\vert A_4(x) \vert &\leq 2 \vert B \vert \sum_{j=2}^{m-1}\p{\sum_{k=d+1}^D  W_{j,k}[\t x^{\otimes j}]^2}^{1/2}  \leq C_{d,m}\sum_{j=2}^{m-1}\eps^j \vert B \vert \vert W_j \vert\\
&\leq C'_{d,m} \eps(\vert B \vert^2 + \sum_{j=2}^{m-1} \eps^{2(j-1)}\vert W_j\vert^2 ) \leq C'_{d,m} \eps.
\end{align*}
Note also that \[A_4(x) =  \sum_{j=2}^{m-1}2\dot Q_{[d,D]}W_j[\t x^{\otimes j}]= -2 Q_{[d]}B^\top \sum_{j=2}^{m-1} W_j[\t x^{\otimes j}].\]
\item Bound on $A_5$. By Lemma \ref{lem:one_for_all} applied to $a= (A_{2,1},\dots,A_{2,D-d})\in \R^{D-d}$ (where the $A_{2,k}$s were introduced in Step 2) and by \eqref{eq:bound_A2k}, we have
\[ \vert A_5(x) \vert \leq 2 \vert B \vert \p{\sum_{k=d+1}^D  A_{2,k}^2}^{1/2}\leq  C_{d,m}\delta(\ell \eps)\eps^2 \vert B\vert^2\leq C_{d,m}\delta(\ell \eps)\eps^2.\]
\item Bound on $A_6$. The quantity $\vert A_6(x)\vert$ is smaller than
\begin{align*}
&2\p{\sum_{j=2}^{m-1} \sum_{k=d+1}^D \p{ \sum_{1\leq i_1\leq \dots \leq i_j\leq d} W_{j,k}^{i_1,\dots ,i_j} \sum_{a=1}^j \dotp{\dot e_{i_a},x}\prod_{c\neq a} \dotp{e_{i_c},x}}^2 }^{1/2}\\
&\leq 2\p{ \sum_{j=2}^{m-1}\sum_{k=d+1}^D \vert W_{j,k}\vert ^2\p{ \sum_{1\leq i_1\leq \dots \leq i_j\leq d} \p{\sum_{a=1}^j \dotp{\dot e_{i_a},x}\prod_{c\neq a} \dotp{e_{i_c},x}}^2 }}^{1/2} \\
&\leq C_{d,m}\sum_{j=2}^{m-1}\eps^{j-1}\p{ \sum_{k=d+1}^D \vert W_{j,k}\vert ^2 \sum_{l=1}^d\dotp{\dot e_{l},x}^2}^{1/2} \\
&\leq C'_{d,m}\sum_{j=2}^{m-1}\eps^{j-1}\p{ \sum_{k=d+1}^D \vert W_{j,k}\vert ^2 \delta^2\eps^4\sum_{l=1}^d\vert \dot e_{l}\vert ^2}^{1/2} \text{ using Lemma \ref{lem:dot_ek_is_perpendicular}}\\
&\leq C'_{d,m}\delta\sum_{j=2}^{m-1}\eps^{j+1}\vert B\vert \vert W_j\vert \leq C'_{d,m}\delta \eps^2(\vert B\vert^2 +\sum_{j=2}^{m-1} \eps^{2(j-1)}\vert W_j\vert^2) \\
&\leq C'_{d,m}\delta\eps^2.
\end{align*}
\item Bound on $A_7$. Using \eqref{eq:e_dot_x}, we obtain that $\vert A_7(x)\vert$ is smaller than
\begin{align*}
&2\left\vert \sum_{j=2}^{m-1}  \sum_{k=d+1}^D e_k \sum_{1\leq i_1\leq \dots \leq i_j\leq d} V_{j,k}^{i_1,\dots ,i_j} \sum_{a=1}^j \sum_{b> a} \dotp{ e_{i_a},z}\dotp{e_{i_b},z}\prod_{c\neq a,b} \dotp{e_{i_c},x} \right \vert  \\
&\leq C_{d,m} \vert \sum_{j=2}^{m-1}  V_j[\t z,\t z,\t x^{\otimes (j-2)}] \leq C_{d,m}\ell \vert z\vert ^2 \leq C'_{d,m}\delta^2 \ell \eps^{4}\vert B\vert ^2 \leq C'_{d,m}\delta^2 \ell \eps^{4}.
\end{align*}
\item Bound on $A_8$. We have 
\[ \dotp{\ddot e_{i_a},x} = \sum_{f=1}^d \dotp{\ddot e_{i_a},e_f}\dotp{e_f,x} =  \sum_{f=1}^d \dotp{e_{i_a},\ddot e_f}\dotp{e_f,x} = \dotp{e_{i_a}, y},\]
where $y= \sum_{f=1}^d \dotp{e_f,x}\ddot e_f$. In particular, $\vert y\vert \leq \eps \sum_{f=1}^d \vert \ddot e_f\vert \leq c_d \eps \vert B\vert^2$. Therefore, letting $\t y =Q_{[d]}^\top y$,
\begin{align*}
&\vert A_8(x)\vert = \left\vert \sum_{j=2}^{m-1}  \sum_{k=d+1}^D e_k \sum_{1\leq i_1\leq \dots \leq i_j\leq d} V_{j,k}^{i_1,\dots ,i_j} \sum_{a=1}^j \dotp{y, e_{i_a}}\prod_{c\neq a} \dotp{e_{i_c},x}\right \vert   \\
&\leq C_{d,m} \vert \sum_{j=2}^{m-1} V_j[\t y,\t x^{\otimes (j-1)}]\vert \leq C_{d,m} \ell \eps^2 \vert B\vert ^2\leq C_{d,m} \ell \eps^2.
\end{align*}
\end{itemize}
Putting the different terms together, and recalling that $\ell \geq \delta$, we obtain that 
\[\ddot \V(x) =-2 Q_{[d]}B^\top \sum_{j=2}^{m-1}W_j[\t x^{\otimes j}] + R,\]
 where $R$ is a remainder term of norm smaller than $C_{d,m}\ell \eps^2$. Also, we have $\vert \dotp{A_4(x),\V(x)}\vert\leq  C_{d,m}(\ell\eps)\eps^2$. We may therefore write 
\begin{equation}\label{eq:up_step4}
\begin{split}
\dotp{\ddot \V(x), \V(x)-x} &= 2 x^\top Q_{[d]}B^\top \sum_{j=2}^{m-1} W_j[\t x^{\otimes j}] + R'\\
&=2 (B\tilde x)^\top \sum_{j=2}^{m-1} W_j[\t x^{\otimes j}] + R',
\end{split}
\end{equation}
where $R'$ has norm smaller than $C'_{d,m}(\ell \eps)\eps^2$.

\subsubsection*{Step 4} Putting the lower bounds \eqref{eq:low_step1} and \eqref{eq:low_step2} together with identity \eqref{eq:up_step4},  we obtain the lowerbound
\begin{align}
\E_N\ddot F_X &\geq \E_N \vert B \tilde X \vert^2 + \E_N \vert \sum_{j=2}^{m-1}W_j[\tilde X^{\otimes j}]\vert ^2 + 2\sum_{j=2}^{m-1}\E_N (B\tilde X)^\top W_j[\tilde X^{\otimes j}]   -C_{d,m}(\ell \eps)\eps^2 \nonumber\\
&\geq \E_N\vert B\tilde X +  \sum_{j=2}^{m-1}W_j[\tilde X^{\otimes j}]\vert^2 -C_{d,m}(\ell \eps)\eps^2.\label{eq:the_last_eq}
\end{align}
Let us now lower bound the quantity $\E\vert B\tilde X +  \sum_{j=2}^{m-1}W_j[\tilde X^{\otimes j}]\vert^2$, where we take the expectation with respect to the density $f$ of the sample $X_1,\dots,X_N$. Letting $Y= \tilde X/\eps$ and $Z_j = \eps^{j-1}W_j$, we have
\begin{align*}
\E\vert B\tilde X +  \sum_{j=2}^{m-1}W_j[\tilde X^{\otimes j}]\vert^2 &= \eps^2 \E \vert BY + \sum_{j=2}^{m-1} Z_j[Y^{\otimes j}]\vert^2,
\end{align*}
where $\vert B\vert^2 + \sum_{j=2}^{m-1}\vert Z_j\vert^2=1$. We may decompose this expectation as
\begin{align*}
\eps^2\sum_{k=d+1}^D \E ( B_k^\top Y + \sum_{j=2}^{m-1} Z_{j,k}[Y^{\otimes j}])^2.
\end{align*}
We  show in the next lemma that each term in the sum is larger than $c_{d,m}\fmin (\vert B_k\vert^2 + \sum_{j=2}^{m-1}\vert Z_{j,k}\vert^2)$. By summing over $k$, we obtain that the expectation is larger than $c_{d,m}\fmin \eps^2$.

\begin{lemma}
Let $S_j$ be a $j$-tensor from $\R^d$ to $\R$ for each $j=1,\dots,m-1$. Then,
\begin{equation}
\E\left[ \p{\sum_{j=1}^{m-1} S_j[Y^{\otimes j}]}^2\right] \geq \fmin c_{d,m} \sum_{j=1}^{m-1}\vert S_j\vert^2.
\end{equation}
\end{lemma}
\begin{proof}
The random variable $\tilde X$ has entries $\dotp{e_k,X}$ for $1\leq k \leq d$. As $(e_1,\dots,e_d)$ is an orthonormal basis of $U$ that is $\delta\eps$-close from $T_0 M$, the random variable $\tilde X$ has a density lower bounded by $\fmin/2$ on its support, and this support contains $\BB(0,\eps/2)$. Therefore, the expectation with respect to $Y$ is larger than
\[ \fmin\int_{\BB(0,1/2)} \p{\sum_{j=1}^{m-1} S_j[y^{\otimes j}]}^2 \dd y.\]
We may also write $\sum_{j=1}^{m-1} S_j[y^{\otimes j}]$ as the dot product $\dotp{\mathbf{S},\mathbf{y}}$, where $\mathbf{S}$ and $\mathbf{y}$ are vectors indexed by $\sigma\in \bigcup_{j=1}^{m-1} \{1,\dots,d\}^j$, with the entries corresponding to $\sigma=(i_1,\dots,i_j)$ given by $\mathbf{S}_\sigma= S_j^{i_1,\dots,i_j}$ and $\mathbf{y}_\sigma= \prod_{a=1}^j y_{i_a}$. Therefore, this integral is exactly equal to $\mathbf{S}^\top \mathbf{C} \mathbf{S}$, where $\mathbf{C}$ is the matrix with entries $\mathbf{C}_{\sigma,\sigma'} = \int_{\BB(0,1/2)} \mathbf{y}_\sigma \mathbf{y}_{\sigma'}\dd y$. To conclude, we need to show that this matrix is positive definite. This follows from $\mathbf{C}$ being a Gram matrix for the $L_2$ dot product on $\BB(0,1/2)$ associated with the $L_2$ functions $y\mapsto \mathbf{y}_\sigma$ that are linearly independent. Therefore, we have $\mathbf{S}^\top \mathbf{C} \mathbf{S} \geq c_{d,m} \vert \mathbf{S}\vert^2 = c_{d,m}\sum_{j=1}^{m-1}\vert S_j\vert^2$.
\end{proof}

Eventually, by Hoeffding's inequality, with probability at least $1-\exp(-cN)$, the empirical expectation $\E_N\vert B\tilde X +  \sum_{j=2}^{m-1}W_j[\tilde X^{\otimes j}]\vert^2$ is larger than $c_{d,m}\fmin \eps^2/2$. From \eqref{eq:the_last_eq}, we obtain a lower bound of order $\eps^2$ by choosing $\ell \leq c\eps$ for $c$ small enough.

The upper bound, also of order $\eps^2$, is obtained by gathering the different upper bounds \eqref{eq:up_step1}, \eqref{eq:up_step2} obtained in Steps 1 and 2 as well as the identity \eqref{eq:up_step4}.

\bibliographystyle{sn-mathphys}
\bibliography{biblio.bib}

\newcommand{\etalchar}[1]{$^{#1}$}
\begin{thebibliography}{GPPIW12}

\bibitem[Aam17]{aamari2017vitesses}
Eddie Aamari.
\newblock {\em Vitesses de convergence en inf{\'e}rence g{\'e}om{\'e}trique}.
\newblock PhD thesis, Paris Saclay, 2017.

\bibitem[AKC{\etalchar{+}}19]{aamari2019estimating}
Eddie Aamari, Jisu Kim, Fr{\'e}d{\'e}ric Chazal, Bertrand Michel, Alessandro
  Rinaldo, and Larry Wasserman.
\newblock Estimating the reach of a manifold.
\newblock {\em Electronic Journal of Statistics}, 13(1):1359--1399, 2019.

\bibitem[AL18]{aamari2018stability}
Eddie Aamari and Cl{\'e}ment Levrard.
\newblock Stability and minimax optimality of tangential {D}elaunay complexes
  for manifold reconstruction.
\newblock {\em Discrete \& Computational Geometry}, 59(4):923--971, 2018.

\bibitem[AL19]{aamari2019nonasymptotic}
Eddie Aamari and Cl{\'e}ment Levrard.
\newblock Nonasymptotic rates for manifold, tangent space and curvature
  estimation.
\newblock {\em The Annals of Statistics}, 47(1):177--204, 2019.

\bibitem[Aub82]{aubin1982nonlinear}
Thierry Aubin.
\newblock {\em Nonlinear Analysis on Manifolds. Monge-Amp{\`e}re Equations}.
\newblock Grundlehren der mathematischen Wissenschaften. Springer New York,
  1982.

\bibitem[BB00]{benamou2000computational}
Jean-David Benamou and Yann Brenier.
\newblock A computational fluid mechanics solution to the monge-kantorovich
  mass transfer problem.
\newblock {\em Numerische Mathematik}, 84:375--393, 01 2000.

\bibitem[BCH18]{besson2018poincare}
G{\'e}rard Besson, Gilles Courtois, and Sa'ar Hersonsky.
\newblock Poincar{\'e} inequality on complete {R}iemannian manifolds with
  {R}icci curvature bounded below.
\newblock {\em Mathematical Research Letters}, 25(6):1741--1769, 2018.

\bibitem[BCS10]{brasco2010congested}
Lorenzo Brasco, Guillaume Carlier, and Filippo Santambrogio.
\newblock Congested traffic dynamics, weak flows and very degenerate elliptic
  equations.
\newblock {\em Journal de math{\'e}matiques pures et appliqu{\'e}es},
  93(6):652--671, 2010.

\bibitem[BH19]{berenfeld2019density}
Cl{\'e}ment Berenfeld and Marc Hoffmann.
\newblock Density estimation on an unknown submanifold.
\newblock {\em arXiv preprint arXiv:1910.08477}, 2019.

\bibitem[BHHS21]{berenfeld2020estimating}
Cl{\'e}ment Berenfeld, John Harvey, Marc Hoffmann, and Krishnan Shankar.
\newblock Estimating the reach of a manifold via its convexity defect function.
\newblock {\em Discrete \& Computational Geometry}, pages 1--36, 2021.

\bibitem[Bre03]{brenier2003extended}
Yann Brenier.
\newblock Extended {M}onge-{K}antorovich theory.
\newblock In {\em Optimal transportation and applications}, pages 91--121.
  Springer, 2003.

\bibitem[Bre10]{brezis2010functional}
Haim Brezis.
\newblock {\em Functional Analysis, Sobolev Spaces and Partial Differential
  Equations}.
\newblock Universitext. Springer New York, 2010.

\bibitem[BRS{\etalchar{+}}12]{balakrishnan2012minimax}
Sivaraman Balakrishnan, Alesandro Rinaldo, Don Sheehy, Aarti Singh, and Larry
  Wasserman.
\newblock Minimax rates for homology inference.
\newblock In {\em Artificial Intelligence and Statistics}, pages 64--72, 2012.

\bibitem[BS17]{berry2017density}
Tyrus Berry and Timothy Sauer.
\newblock Density estimation on manifolds with boundary.
\newblock {\em Computational Statistics \& Data Analysis}, 107:1--17, 2017.

\bibitem[Bur94]{burkhardt1894fonctions}
Heinrich Burkhardt.
\newblock Sur les fonctions de {G}reen relatives {\`a} un domaine d'une
  dimension.
\newblock {\em Bulletin de la Soci{\'e}t{\'e} Math{\'e}matique de France},
  22:71--75, 1894.

\bibitem[CGK{\etalchar{+}}20]{cleanthous2020kernel}
Galatia Cleanthous, Athanasios~G. Georgiadis, Gerard Kerkyacharian, Pencho
  Petrushev, and Dominique Picard.
\newblock Kernel and wavelet density estimators on manifolds and more general
  metric spaces.
\newblock {\em Bernoulli}, 26(3):1832--1862, 2020.

\bibitem[dC92]{do1992riemannian}
Manfredo~P. do~Carmo.
\newblock {\em Riemannian Geometry}.
\newblock Mathematics (Boston, Mass.). Birkh{\"a}user, 1992.

\bibitem[DHS13]{diaconis2013sampling}
Persi Diaconis, Susan Holmes, and Mehrdad Shahshahani.
\newblock Sampling from a manifold.
\newblock In {\em Advances in modern statistical theory and applications: a
  Festschrift in honor of Morris L. Eaton}, pages 102--125. Institute of
  Mathematical Statistics, 2013.

\bibitem[Div21a]{divol2020minimax}
Vincent Divol.
\newblock Minimax adaptive estimation in manifold inference.
\newblock 2021.

\bibitem[Div21b]{divol2021short}
Vincent Divol.
\newblock A short proof on the rate of convergence of the empirical measure for
  the wasserstein distance.
\newblock {\em arXiv preprint arXiv:2101.08126}, 2021.

\bibitem[DSS13]{dereich2013constructive}
Steffen Dereich, Michael Scheutzow, and Reik Schottstedt.
\newblock Constructive quantization: Approximation by empirical measures.
\newblock In {\em Annales de l'IHP Probabilit{\'e}s et statistiques},
  volume~49, pages 1183--1203, 2013.

\bibitem[Dud69]{dudley1969speed}
Richard~Mansfield Dudley.
\newblock The speed of mean {G}livenko-{C}antelli convergence.
\newblock {\em The Annals of Mathematical Statistics}, 40(1):40--50, 1969.

\bibitem[DZ01]{delfour2001shapes}
Michel~C. Delfour and Jean-Paul Zol{\'e}sio.
\newblock {\em Shapes and Geometries: Analysis, Differential Calculus, and
  Optimization}.
\newblock Advances in Design and Control. Society for Industrial and Applied
  Mathematics, 2001.

\bibitem[EAS98]{edelman1998geometry}
Alan Edelman, Tom{\'a}s~A Arias, and Steven~T Smith.
\newblock The geometry of algorithms with orthogonality constraints.
\newblock {\em SIAM journal on Matrix Analysis and Applications},
  20(2):303--353, 1998.

\bibitem[Fed59]{federer1959curvature}
Herbert Federer.
\newblock Curvature measures.
\newblock {\em Transactions of the American Mathematical Society},
  93(3):418--491, 1959.

\bibitem[FG15]{fournier2015rate}
Nicolas Fournier and Arnaud Guillin.
\newblock On the rate of convergence in {W}asserstein distance of the empirical
  measure.
\newblock {\em Probability Theory and Related Fields}, 162(3-4):707--738, 2015.

\bibitem[GN15]{gine_nickl_2015}
Evarist Giné and Richard Nickl.
\newblock {\em Mathematical Foundations of Infinite-Dimensional Statistical
  Models}.
\newblock Cambridge Series in Statistical and Probabilistic Mathematics.
  Cambridge University Press, 2015.

\bibitem[GPPIW12]{genovese2012minimax}
Christopher~R. Genovese, Marco Perone~Pacifico, Verdinelli Isabella, and Larry
  Wasserman.
\newblock Minimax manifold estimation.
\newblock 2012.

\bibitem[GW03]{giesen2003shape}
Joachim Giesen and Uli Wagner.
\newblock Shape dimension and intrinsic metric from samples of manifolds with
  high co-dimension.
\newblock In {\em Proceedings of the nineteenth annual symposium on
  Computational geometry}, pages 329--337, 2003.

\bibitem[HA05]{hein2005intrinsic}
Matthias Hein and Jean-Yves Audibert.
\newblock Intrinsic dimensionality estimation of submanifolds in
  {$\mathbb{R}^d$}.
\newblock In {\em Proceedings of the 22nd international conference on Machine
  learning}, pages 289--296. ACM, 2005.

\bibitem[Hen90]{hendriks1990nonparametric}
Harrie Hendriks.
\newblock Nonparametric estimation of a probability density on a {R}iemannian
  manifold using {F}ourier expansions.
\newblock {\em The Annals of Statistics}, pages 832--849, 1990.

\bibitem[Hir96]{hiroshima1996construction}
Tsutomu Hiroshima.
\newblock Construction of the {G}reen function on {R}iemannian manifold using
  harmonic coordinates.
\newblock {\em Journal of Mathematics of Kyoto University}, 36(1):1--30, 1996.

\bibitem[HJR93]{hendriks1993strong}
H~Hendriks, JHM Janssen, and FH~Ruymgaart.
\newblock Strong uniform convergence of density estimators on compact euclidean
  manifolds.
\newblock {\em Statistics \& probability letters}, 16(4):305--311, 1993.

\bibitem[KRW19]{kim2019minimax}
Jisu Kim, Alessandro Rinaldo, and Larry Wasserman.
\newblock Minimax rates for estimating the dimension of a manifold.
\newblock {\em Journal of Computational Geometry}, 10(1):42--95, 2019.

\bibitem[Lei20]{lei2020convergence}
Jing Lei.
\newblock Convergence and concentration of empirical measures under
  {W}asserstein distance in unbounded functional spaces.
\newblock {\em Bernoulli}, 26(1):767--798, 2020.

\bibitem[LJM09]{little2009multiscale}
Anna~V. Little, Yoon-Mo Jung, and Mauro Maggioni.
\newblock Multiscale estimation of intrinsic dimensionality of data sets.
\newblock In {\em 2009 AAAI Fall Symposium Series}, 2009.

\bibitem[LLW07]{liu2007sparse}
Han Liu, John Lafferty, and Larry Wasserman.
\newblock Sparse nonparametric density estimation in high dimensions using the
  rodeo.
\newblock In {\em Artificial Intelligence and Statistics}, pages 283--290,
  2007.

\bibitem[Lun18]{lunardi2018interpolation}
Alessandra Lunardi.
\newblock {\em Interpolation Theory}.
\newblock Publications of the Scuola Normale Superiore. Scuola Normale
  Superiore, 2018.

\bibitem[LZZL13]{liu2013robust}
Jicai Liu, Riquan Zhang, Weihua Zhao, and Yazhao Lv.
\newblock A robust and efficient estimation method for single index models.
\newblock {\em Journal of Multivariate Analysis}, 122:226--238, 2013.

\bibitem[NSW08]{niyogi2008finding}
Partha Niyogi, Stephen Smale, and Shmuel Weinberger.
\newblock Finding the homology of submanifolds with high confidence from random
  samples.
\newblock {\em Discrete \& Computational Geometry}, 39(1-3):419--441, 2008.

\bibitem[PC19]{peyre2019computational}
Gabriel Peyr{\'e} and Marco Cuturi.
\newblock Computational optimal transport: With applications to data science.
\newblock {\em Foundations and Trends{\textregistered} in Machine Learning},
  11(5-6):355--607, 2019.

\bibitem[Pel05]{pelletier2005kernel}
Bruno Pelletier.
\newblock Kernel density estimation on {R}iemannian manifolds.
\newblock {\em Statistics \& probability letters}, 73(3):297--304, 2005.

\bibitem[Pey18]{peyre2018comparison}
R{\'e}mi Peyre.
\newblock Comparison between {$W_2$} distance and {$\dot{H}^{- 1}$} norm, and
  localization of {W}asserstein distance.
\newblock {\em ESAIM. Control, Optimisation and Calculus of Variations}, 24(4),
  2018.

\bibitem[PR84]{poly1984function}
Jean-Baptiste Poly and Gilles Raby.
\newblock Fonction distance et singularités.
\newblock {\em Bulletin de Sciences Mathématiques}, 108:187--195, 1984.

\bibitem[Ros70]{rosenthal1970subspaces}
Haskell~P. Rosenthal.
\newblock On the subspaces of {$L_p$} ({$p> 2$}) spanned by sequences of
  independent random variables.
\newblock {\em Israel Journal of Mathematics}, 8(3):273--303, 1970.

\bibitem[San15]{santambrogio2015optimal}
Filippo Santambrogio.
\newblock Optimal transport for applied mathematicians.
\newblock {\em Birk{\"a}user, NY}, 2015.

\bibitem[SKM19]{sato2019riemannian}
Hiroyuki Sato, Hiroyuki Kasai, and Bamdev Mishra.
\newblock Riemannian stochastic variance reduced gradient algorithm with
  retraction and vector transport.
\newblock {\em SIAM Journal on Optimization}, 29(2):1444--1472, 2019.

\bibitem[Sog17]{sogge_2017}
Christopher~D. Sogge.
\newblock {\em Fourier Integrals in Classical Analysis}.
\newblock Cambridge Tracts in Mathematics. Cambridge University Press, 2
  edition, 2017.

\bibitem[SP18]{singh2018minimax}
Shashank Singh and Barnab{\'a}s P{\'o}czos.
\newblock Minimax distribution estimation in {W}asserstein distance.
\newblock {\em arXiv preprint arXiv:1802.08855}, 2018.

\bibitem[Tal14]{talagrand2014upper}
M~Talagrand.
\newblock Upper and lower bounds of stochastic processes. modern surveys in
  mathematics 60, 2014.

\bibitem[TGHS20]{trillos2020error}
Nicol{\'a}s~Garc{\'\i}a Trillos, Moritz Gerlach, Matthias Hein, and Dejan
  Slep{\v{c}}ev.
\newblock Error estimates for spectral convergence of the graph laplacian on
  random geometric graphs toward the laplace--beltrami operator.
\newblock {\em Foundations of Computational Mathematics}, 20(4):827--887, 2020.

\bibitem[Tib96]{tibshirani1996regression}
Robert Tibshirani.
\newblock Regression shrinkage and selection via the lasso.
\newblock {\em Journal of the Royal Statistical Society: Series B
  (Methodological)}, 58(1):267--288, 1996.

\bibitem[Tri92]{triebel1992theory}
Hans Triebel.
\newblock {\em Theory of Function Spaces II}.
\newblock Monographs in Mathematics. Springer Basel, 1992.

\bibitem[Tsy08]{tsybakov2008introduction}
Alexandre Tsybakov.
\newblock {\em Introduction to Nonparametric Estimation}.
\newblock Springer Series in Statistics. Springer New York, 2008.

\bibitem[Udr13]{udriste2013convex}
Constantin Udriste.
\newblock {\em Convex functions and optimization methods on Riemannian
  manifolds}, volume 297.
\newblock Springer Science \& Business Media, 2013.

\bibitem[Vil08]{villani2008optimal}
C{\'e}dric Villani.
\newblock {\em Optimal transport: old and new}, volume 338.
\newblock Springer Science \& Business Media, 2008.

\bibitem[WB19a]{weed2019sharp}
Jonathan Weed and Francis Bach.
\newblock Sharp asymptotic and finite-sample rates of convergence of empirical
  measures in {W}asserstein distance.
\newblock {\em Bernoulli}, 25(4A):2620--2648, 2019.

\bibitem[WB19b]{weed2019estimation}
Jonathan Weed and Quentin Berthet.
\newblock Estimation of smooth densities in wasserstein distance.
\newblock In {\em Conference on Learning Theory}, pages 3118--3119. PMLR, 2019.

\bibitem[WW20]{wu2020strong}
Hau-Tieng Wu and Nan Wu.
\newblock Strong uniform consistency with rates for kernel density estimators
  with general kernels on manifolds.
\newblock {\em arXiv preprint arXiv:2007.06408}, 2020.

\bibitem[Yu97]{yu1997assouad}
Bin Yu.
\newblock Assouad, {F}ano, and {L}e {C}am.
\newblock In {\em Festschrift for Lucien Le Cam}, pages 423--435. Springer,
  1997.

\bibitem[ZHCG18]{zappa2018monte}
Emilio Zappa, Miranda Holmes-Cerfon, and Jonathan Goodman.
\newblock Monte carlo on manifolds: sampling densities and integrating
  functions.
\newblock {\em Communications on Pure and Applied Mathematics},
  71(12):2609--2647, 2018.

\end{thebibliography}

\end{document}